\newcommand*{\DashedArrow}[1][]{\mathbin{\tikz [baseline=-0.25ex,-latex, dashed,#1] \draw [#1] (0pt,0.5ex) -- (1.3em,0.5ex);}}%
\newcommand*\bigcdot{\mathpalette\bigcdot@{.5}}
\newcommand*\bigcdot@[2]{\mathbin{\vcenter{\hbox{\scalebox{#2}{$\m@th#1\bullet$}}}}}
\newcommand{\xrightarrowdbl}[2][]{%
  \xrightarrow[#1]{#2}\mathrel{\mkern-14mu}\rightarrow
}
\newcommand\Z{\mathbb{Z}}
\newtheorem*{thm}{Theorem}
\newtheorem*{theorem*}{Theorem}
\newtheorem{theorem}{Theorem}
\newtheorem{defi*}{Definition}
\numberwithin{theorem}{section}
\newtheorem{cor}{Corollary}
\newtheorem{def-lemma}{Definition-Lemma}
\newtheorem{rem}{Remark}
\newtheorem*{cor*}{Corollary}
\newtheorem{lemma}{Lemma}
\newtheorem*{rem*}{Remark}
\numberwithin{lemma}{section}
\newtheorem*{lemma*}{Lemma}
\newtheorem*{claim*}{Claim}
\tikzset{
	symbol/.style={
		draw=none,
		every to/.append style={
			edge node={node [sloped, allow upside down, auto=false]{$#1$}}}
	}
}
\theoremstyle{definition}
\newenvironment{customthm}[1]
  {\innercustomthm}
  {\endinnercustomthm}
  \newenvironment{customcor}[1]
	{\innercustomcor}
	{\endinnercustomcor}
\newcommand{\p}{\mathbb{P}}
\newcommand{\ri}{\rightarrow}
\newcommand{\J}{\mathscr{J}}
\newcommand{\mo}{\mathcal{O}}
\newcommand{\K}{\mathbb{C}}
\newcommand{\W}{\mathcal{W}}
\newcommand{\s}{\sigma}
\newcommand{\E}{\mathscr{E}}
\newcommand{\e}{\mathcal{E}}
\newcommand{\Q}{\mathbb{Q}}
\newcommand{\C}{\mathscr{C}}
\newcommand{\ka}{\kappa}
\newcommand{\A}{\alpha}
\newcommand{\T}{\Theta}
\newcommand{\B}{\beta}
\newcommand{\lb}{\mathcal{L}}
\newcommand{\Sym}{\operatorname{Sym}}
\newcommand{\Hom}{\operatorname{Hom}}
\newcommand{\la}{\lambda}
\newcommand{\hr}{\hookrightarrow}
\newcommand{\hollowslash}{\setbox0=\hbox{/}\def\holwd{3pt}%
  \stackengine{-.3pt}{/}{\rlap{\kern.5pt\rule{\holwd}{0.4pt}}}{O}{r}{F}{F}{S}%
  \kern\dimexpr\holwd-\wd0-.2pt\relax%
  \stackengine{-.4pt}{/}{\llap{\rule{\holwd}{0.4pt}\kern-0.1pt}}{U}{l}{F}{F}{S}%
}
\title[\MakeUppercase{chow ring of the moduli space of rank two bundles on genus two curves}]{\MakeUppercase{Rational chow ring of the universal moduli space of semistable rank two bundles over genus two curves} }
\author{Shubham Saha}
\address{Department of Mathematics, University of California, San Diego, La Jolla, CA 92093, USA}
\email{shsaha@ucsd.edu}
\begin{document}
	\begin{abstract}
	We determine the rational Chow ring of the universal moduli space of rank $2$ semistable bundles over smooth curves of genus $2$, and show that it is generated by certain tautological classes.  
	In the process, we obtain Chow rings of universal Jacobians over genus $2$ curves with marked Weierstrass points, and the Chow ring of the universal pointed Jacobian. This further provides alternate computations to \cite{larson-24} in the genus $2$ case. 
	\end{abstract}
	\subjclass[2020]{14C17, 14D20, 14H45}
	\maketitle
	\vspace{-1em}
\section{Introduction}
	\noindent The universal moduli stack of vector bundles over algebraic curves has been the focus of extensive study in \cite{bae-thesis, fringuelli,brauer, {larson-24},{melo},{younghan}} among others.
	 Notably, the universal Picard stack has emerged as a central object in the intersection theory of the moduli space of curves, particularly through its connection to the double ramification cycle \cite{younghan}. 
	 Despite these developments, the global geometry of these stacks, in rank one or higher, and their associated good moduli spaces remains only partially understood. 
	 Aside from computations of their Picard groups \cite{fringuelli,larson-24,melo}; and Chow rings of universal Picard stacks over the hyperelliptic locus \cite{larson-24}, much of their intersection-theoretic properties are still unknown in higher ranks (beyond genus $0$ where the Chow ring was calculated in \cite{larson-18}).\\
Let $\mathscr{U}(r,d,g)\xrightarrow{}\mathcal{M}_g$ be the stack of semistable bundles of rank $r$, degree $d$ over smooth curves of genus $g$. The stack $\mathscr{U}(r,d,g)$ is a smooth Artin stack of finite type over $\mathcal{M}_g$ \cite[Theorem 1.2.2]{fringuelli} with a good moduli space $U(r,d,g)\xrightarrow{}{M}_g$.\\
Much like $\mathcal{M}_g$, we can define {tautological classes} on the moduli stacks of bundles over curves. For notational convenience, let $\mathcal{E}_d $ be the Poincar\'e bundle on
	$\C_g\times_{\mathcal{M}_g}\mathscr{U}(r,d,g)\xrightarrow{\pi_\mathscr{U}}\mathscr{U}(r,d,g)$ where $\C_g\ri \mathcal{M}_g$ is the universal curve.
	The \textit{twisted-kappa classes} have been defined in \cite{larson-24} for the universal Picard stack and the definition can be adapted to higher ranks as discussed in \S\ref{tautological-intro}.\\
	%  $$\kappa_{a_0,a_1,\cdots, a_r}:= \pi_{\mathscr{U}*}(K_{\pi_\mathscr{U}}^{a_0+1}\Pi_{i=1}^r c_i(\e_d)^{a_i})\in A^{a_0+\Sigma_{1\leq i\leq r} ia_i}(\mathscr{U}(r,d,g)) \text{ for } a_0\geq -1, a_i\geq 0\forall i\geq 1$$
%This paper is a step towards understanding $A^*(\mathscr{U}(r,d,g))$, following the substantial interest in $\mathscr{U}(1,d,g)$.\\
We compute the rational Chow ring of the good moduli space $U(2,d,2)$ for all values of $d$.
Due to the Artin-stack nature of $\mathscr{U}(2,d,2)$, the natural pullback $A^*({U}(2,d,2))\hr A^*(\mathscr{U}(2,d,2))$ fails to be an isomorphism. \\
We explicitly describe this map by expressing the images of the generators of $A^*(U(2,d,2))$ in terms of the tautological classes on $\mathscr{U}(2,d,2)$.\\
For even degree, the canonical isomorphisms between $\mathscr{U}(2,2d,2)$ for different values of $d$ with the same parity, obtained by suitable twisting of the relative canonical, reduce the problem to the cases $d= 0,1$. 
\begin{customthm}{A}\label{A}
	There are divisors $\nu_d, \tilde{Z}_d\in A^1(U(2,2d,2))\forall d\in \Z$, such that 
	$$A^*(U(2,2d,2))\simeq \Q[\nu_d, \tilde{Z}_d]/(\nu_d^4, \tilde{Z}_d^3).$$
	 The image of the pullback $A^*(U(2,2d,2))\ri A^*(\mathscr{U}(2,2d,2))$ for $d\in \{0,1\}$ is given by 
	$$\nu_d\mapsto \ka_{-1,0,1}-\dfrac{1}{4}\ka_{-1,2,0}, \tilde{Z}_d\mapsto 4d\ka_{0,1,0}-2\ka_{-1,2,0}.$$
\end{customthm}
\noindent As a byproduct of these computations, we recover \cite[Theorem 1.1]{larson-24} for genus $2$, and the rational Picard group as presented in \cite[Theorem A.2]{fringuelli} for the rank $2$, genus $2$ case.\\
For all values of $d$, $A^*(U(2,2d,2))$ are shown to be isomorphic despite the lack of canonical isomorphisms between $U(2,0,2),U(2,2,2)$. 
However, we show that the images of pullbacks 
$A^*(U(2,2d,2))\ri A^*(\mathscr{U}(2,2d,2))$
 have different structures based on the parity of $d$.
\begin{customcor}{A.1}
	The class of the strictly semistable locus in $A^*(U(2,2d,2))$ is given by $4\nu_d\in A^1(U(2,2d,2))$. Applying excision, we have 
	$$A^*(U^s(2,2d,2))\simeq \Q[\tilde{Z}_d]/(\tilde{Z}_d^3).$$
\end{customcor}
\noindent For odd degree, we have canonical isomorphisms between $\mathscr{U}(2,2d-1,2)$ for different values of $d$, obtained by suitable twisting of the relative canonical and taking duals, based on the parity of $d$.\\
This reduces the problem to the case $d=3$. The specific choice $d=3$ is numerically motivated as it makes the description of the generators in the Chow ring simpler and allows the application of tools described in \cite{bertram}.
\begin{customthm}{B}\label{B}
	Let $B_*\in A^2(U(2,3,2))$ be the universal Brill-Noether locus of bundles with $2$ independent sections. 
	There are divisors $H_U,\T_U\in A^1(U(2,3,2))$ such that 
	\begin{gather*}A^*(\mathcal{U}(2,3,2))\simeq A^*(\tilde{U}(2,3,2))=\Q[H_U,\T_U,B_*]/(\T_U^3,H_U^2\T_U^2-4B_*\T_U^2,B_*H_U^2-B_*H_U\T_U+\dfrac{1}{2}B_*\T_U^2,\\
B_*H_U^2-B_*^2,H_U^3+H_U^2\T_U+\dfrac{1}{2}H_U\T_U^2-4B_*H_U).\end{gather*}
	The image of the pullback $A^*(U(2,3,2))\ri A^*(\mathscr{U}(2,3,2))$ is given by
	\begin{gather*}H_U\mapsto \dfrac{1}{2}\left(4\ka_{-1,0,1}-\ka_{-1,2,0}\right), 
		 B_*\mapsto \dfrac{1}{2}\left(\dfrac{1}{2}(\kappa_{-1,2,0}-\kappa_{0,1,0})-\kappa_{-1,0,1}\right)^2- \dfrac{1}{2}\left( \dfrac{\kappa_{-1,3,0}}{3}-{\ka_{-1,1,1}}-\dfrac{\ka_{0,2,0}}{2}+{\ka_{0,0,1}}\right),\\ 
\T_U\mapsto \dfrac{1}{2}\left(3\ka_{0,1,0}-\ka_{-1,2,0}\right).\end{gather*}
\end{customthm}
\noindent We consider $M_*$, the coarse moduli space of $\mathcal{H}^w_{2,6}$ (stack of genus $2$ curves with marked Weierstrass points). It is known that $\mathcal{H}^w_{2,6}$ is a trivial $\mu_2$-gerbe over $M_*$ by \cite[Proposition 6.3]{zhengning}.
The associated section $M_*\ri \mathcal{H}^w_{2,6}$ gives a ``universal" family of genus $2$ curves $\C_*\xrightarrow{\pi}M_*$ with sections $\{\s_i\}_{1\leq i\leq 6}$ parametrizing Weierstrass points.
For degree $d$, the relative Jacobian for the family $\pi$ is denoted by $J_*(d)$. Our computations of $A^*(J_*(d))$ show that the pullback map $A^*(J_2^d)\ri A^*(J_*(d))$ is an isomorphism for all values of $d$ where $J_2^d$ is the universal Jacobian of degree $d$ over genus $2$ curves.
\begin{customthm}{C}\label{C}
The Chow ring of $J_*(d)$ is given by 
$$A^*(J_*(d))\simeq \Q[\T_d]/(\T_d^3)$$ for all $d\in \mathbb{Z}$. Consequently, the Chow rings of $J_{2}^d$ are given by 
$$A^*(J_{2}^1)\simeq \Q[\T]/(\T^3), A^*(J_{2}^2)\simeq \Q[Z]/(Z^3)$$\label{chow-j-2} where $\Theta$ is the theta-divisor in $J_{2}^1$ and $Z\hr J_{2}^2$ is the image of $Z_\C$ under the map $J_*(2)\ri J_{2}^2$.
	\end{customthm}
\noindent Additionally, the computations for $A^*(U(2,3,2))$ uses the Chow ring of $C_2\times_{M_2}J_2^d$ where $C_2\ri M_2$ is the universal curve over $M_2$. To simplify notation, we compute the image of its pullback to $A^*(\C_*\times_{M_*}J_*(3))$. We further denote $\C_*\times_{M_*}J_*(3)$ by $\C_J$.
\begin{customthm}{D}\label{D}
Let $S^*(\C_J)$ be the image of the pullback $q_C^*:A^*(C_2\times_{M_2}J_2^d)\hr A^*(\C_J)$. We have 
$$S^*(\C_J)\simeq \Q[K_{\pi_J}, \pi_J^*\Theta_*,\xi]/(\pi_J^*\Theta_*^3, K_{\pi_J}^2,\xi^2 +K_{\pi_J}\pi_J^*\T_*,\xi K_{\pi_J}, (\xi-\dfrac{3}{2}K_{\pi_J})\pi_J^*\T_*^2).$$
\end{customthm}
% The fact that there are no stritly semistable bundles of rank $2$ with odd degree further implies $A^*(\mathcal{U}(2,3,2))\simeq A^*(U(2,3,2))$.
\subsection{Outline of the paper}
The foundational strategy of this paper is to extend the classical results in \cite{bertram} and \cite{nar-ram} to ${U}(2,d,2)$, from its fiber over the universal Jacobian. \\
% We calculate $A^*(U(2,2d,2))$ by calculating the Chow ring of one of its finite covers. The finite cover is defined by pullback to $\mathcal{H}_{2,6}^w$, an $S_6$-cover of $\mathcal{M}_2$, given by the moduli of genus $2$ curves with a fixed ordering of its Weierstrass points. \\
We use $\pi$ to construct $\tilde{U}(2,d,2)$, a finite cover of $U(2,d,2)$, and compute the image of the pullback map $A^*(U(2,d,2))\ri A^*(\tilde{U}(2,d,2))$.
% We present two proofs that establish this claim. The first one is by invoking the dimension of Picard groups computed in \cite{fringuelli} and the other has been outlined in \S\ref{tautological}, where we show that the generators of $A^*(\tilde{U}(2,d,2))$ are pullbacks of $\ka$-classes on $\mathscr{U}(2,d,2)$.
\subsubsection{Even Degree}\label{even-1.1}
We begin by setting up the problem in \S \ref{even-degree} by defining the necessary spaces and morphisms. 
Following the necessary constructions, we extend classical results from \cite{nar-ram} to the universal setting in \S\ref{univ-ext-even}.
The calculation of the Chow rings of universal Jacobians and their defined covers are in \S \ref{uni-jac-chow}. 
Previous computations and arguments are applied in \S\ref{final-steps} to compute $A^*(U(2,2d,2))$ for $d=0,1$. 
The universal moduli spaces of stable bundles are studied in \S\ref{stable-locus}.
We find explicit expressions for the seemingly abstract generators in terms of tautological classes by computing their pullbacks over a universal extension space with a ``Poincar\'e bundle" in \S\ref{even-taut}.\\
\subsubsection{Odd Degree}\label{odd-1.1}
We use the succesive blowup of extension spaces described in \cite{bertram} for degree $3$ bundles. 
The map from the blowup of the extension space to the moduli space is a blowdown along a Brill-Noether locus.
The geometry of this blowup-blowdown configuration is studied in \S \ref{bertram-construct}.
To explictly carry out the constructions in the universal setting, we extend Bertram's arguments to families of genus $2$ curves with a line bundle of relative degree $3$ in \S \ref{gen-U}.
This is followed by certain computations in the Chow ring of the aforementioned blowup, namely $\tilde{\p}_\xi$, which blows down to $\tilde{U}(2,3,2)$ in \S \ref{chow-blowup-ss}.
We use the sections $\{\s_i\}_{1\leq i\leq 6}$ to define several loci in \S\ref{loci-blowup} to understand the blowdown $\tilde{\p}_\xi\xrightarrow{\Phi_\xi} \tilde{U}(2,3,2)$. Specifically, we find the classes contracted by $\Phi_\xi$.
The linear generators of the Chow ring of the universal Brill-Noether locus, namely $B_*$, are constructed in \S \ref{gen-b-*}.\\
We use the computations from \S\ref{chow-blowup-ss}, \S\ref{loci-blowup}, \S\ref{gen-b-*}, and an excision argument in \S\ref{U_*} to compute $A^*(U(2,3,2))$.
The final theorem states generators and relations to describe $A^*(U(2,3,2))$ via its pullback into $A^*(\tilde{U}(2,3,2))$. 
We find explicit expressions for the seemingly abstract generators in terms of tautological classes via their pullback to $\mathscr{U}(2,3,2)$ in \S \ref{odd-taut}.
\subsection*{Acknowledgements} I would like to thank Elham Izadi for suggesting this problem and for her continued guidance. 
	I would also like to thank Dragos Oprea for his valuable suggestions and for the references \cite{king-newstead}, \cite{hitchin}.
	\section*{Notation}\label{notation}
	\noindent All schemes in this paper are taken over the field of complex numbers. 
	All Chow rings are taken with rational coefficients. We use the subspace convention for projective bundles.
	For a fiber product $X\times_S Y$, we denote the projection map onto the $i$-th factor as $\pi_i$ for $i=1,2$. 

	%%\end{ack}
	\begin{rem*}
The underlying assumptions on the ground field, as outlined above, are used throughout the paper to establish certain isomorphisms via Theorem \ref{zmt}. 
\end{rem*}
	\section{Background and known results}
	\noindent Zariski's Main Theorem, as stated in \cite[Theorem 4.4.3]{ega}, can be used to find a sufficient condition for schemes to be isomorphic over an algebraically closed field of characteristic $0$.\\
	We use the following theorem throughout the paper to establish isomorphisms between varieties. A variety in this context is a reduced scheme of finite-type. 
	\begin{theorem}\label{zmt}
	Let $f:X\ri Y$ be a morphism of quasi-projective varieties over an algebraically closed field of characteristic $0$. Then $f$ is an isomorphism if the following conditions are satisfied:
	\begin{itemize}
		\item $f$ is a bijection on closed points
		\item $Y$ is normal
		\item $X$ is connected.
	\end{itemize}
	\end{theorem}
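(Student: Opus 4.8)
The plan is to deduce the statement from Zariski's Main Theorem in the cited form \cite[Theorem 4.4.3]{ega}, and then to use the normality of $Y$ together with characteristic-zero generic smoothness to upgrade the finite morphism it produces to an isomorphism.

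First I would verify that $f$ is quasi-finite, separated and of finite type. Finite type and separatedness are immediate since $X$ and $Y$ are quasi-projective. For quasi-finiteness: a bijection on closed points has $0$-dimensional fibres over closed points, so by upper semicontinuity of fibre dimension the locus $\{x\in X : \dim_x X_{f(x)}\ge 1\}$ is closed; it contains no closed point, hence is empty (Jacobson property), and $f$ is quasi-finite. Zariski's Main Theorem then factors $f$ as $X\xrightarrow{j} Z\xrightarrow{g} Y$ with $j$ an open immersion with dense image and $g$ finite; replacing $Z$ by $Z_{\mathrm{red}}$ I may assume $Z$ is a variety.

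Next, the image of $f$ is constructible and contains every closed point of $Y$, so $f$ (hence $g$) is surjective; since $X$ is connected, $Y$ is connected, and being normal it is irreducible, with generic point $\eta$. I would then show the generic degree of $g$ equals $1$. The fibre $Z_\eta$ is a finite reduced $k(\eta)$-scheme, and by generic smoothness in characteristic $0$ one can shrink $Y$ to a dense open $V$ over which the restriction of $g$ to the dominant components of $Z$ is finite étale of degree $d=\operatorname{length} Z_\eta$ and, moreover, $g^{-1}(V)\subseteq j(X)$ — this last is possible because the closed set $Z\setminus j(X)$ contains no dominant component and hence has image a proper closed subset of $Y$. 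For a closed point $v\in V$ this forces $\lvert f^{-1}(v)\rvert=d$, so $d=1$ by injectivity. Therefore $Z$ has a unique component $Z_1$ dominating $Y$ and $g|_{Z_1}\colon Z_1\to Y$ is finite and birational; as $Y$ is normal, a finite birational morphism onto $Y$ is an isomorphism, so $g|_{Z_1}$ is an isomorphism.

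Finally, set $U_1=j(X)\cap Z_1$ and $Y_1=g(U_1)$, a dense open of $Y$. Running the degree argument componentwise, together with the isomorphism $g|_{Z_1}$ and injectivity of $f$ on closed points, I would check that every other component of $Z$ maps into $Y\setminus Y_1$; hence $g^{-1}(Y_1)=U_1$, so that $f^{-1}(Y_1)=j^{-1}(U_1)=j^{-1}(Z_1)$ is simultaneously open (preimage of the open $Y_1$) and closed (preimage of the closed $Z_1$) in $X$. Connectedness of $X$ then gives $f^{-1}(Y_1)=X$, so $f$ restricts to an isomorphism onto $Y_1$, and surjectivity of $f$ forces $Y_1=Y$, whence $f$ is an isomorphism. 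The main obstacle is precisely this last step: the compactification $Z$ furnished by Zariski's Main Theorem may a priori carry lower-dimensional components that add spurious points to the fibres of $g$, and excluding them is exactly where the hypotheses are used — without normality one cannot trivialize $g|_{Z_1}$, and without connectedness the clopen set $f^{-1}(Y_1)$ need not be all of $X$ (both hypotheses are genuinely necessary, as the normalization $\mathbb{A}^1\to$ (cuspidal cubic) and a disjoint union respectively show).
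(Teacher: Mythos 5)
The paper does not actually prove this statement; it is asserted as a direct consequence of the EGA form of Zariski's Main Theorem, which is precisely the tool you invoke, so your write-up supplies the argument the paper leaves implicit. Your proof is essentially correct: quasi-finiteness via upper semicontinuity of fibre dimension and the Jacobson property, the factorization $X\xrightarrow{j}Z\xrightarrow{g}Y$ with $j$ a dense open immersion and $g$ finite, surjectivity and irreducibility of $Y$, the generic-degree-one argument using separability in characteristic $0$ together with injectivity on closed points, the fact that a finite birational morphism from a variety onto a normal variety is an isomorphism, and the final clopen argument using connectedness all hold up; your closing remarks on the necessity of the hypotheses are also apt.

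One intermediate step is stated more strongly than you can justify (and more strongly than you need): the claim that ``every other component of $Z$ maps into $Y\setminus Y_1$,'' from which you deduce $g^{-1}(Y_1)=U_1$. A non-dominant component $Z_i$ ($i\ge 2$) may contain points of the boundary $Z\setminus j(X)$ whose images under $g$ land in the dense open $Y_1$; nothing you have established excludes this, so $g^{-1}(Y_1)$ may properly contain $U_1$. What is true, and what suffices, is the weaker equality $j(X)\cap g^{-1}(Y_1)=U_1$: if $x\in X$ satisfies $f(x)\in Y_1=g(U_1)$, the unique point of $Z_1$ over $f(x)$ lies in $U_1\subseteq j(X)$, and injectivity of $f=g\circ j$ on closed points forces $j(x)$ to coincide with it, hence $j(x)\in Z_1$. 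This already yields $f^{-1}(Y_1)=j^{-1}(U_1)=j^{-1}(Z_1)$, which is open as the $f$-preimage of $Y_1$ and closed as the $j$-preimage of the component $Z_1$, and the remainder of your argument proceeds unchanged. With that one assertion weakened, the proof is complete.
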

	\subsection{A cover of ${M}_2$}\label{cover}
	Let $M_* = \mathcal{M}_{0,6}\simeq \{(p_1,p_2,p_3)|\quad p_i\in \mathbb{A}^1 \setminus \{0,1\}\forall i; p_i \text{ mutually distinct}\}\subset \mathbb{A}^3$. The map $\mathcal{M}_2\rightarrow D_6 = [M_*/S_6]$ is a $\mu_2$-gerbe by \cite[Remark 2.1]{zhengning}.
	 Hence the corresponding coarse moduli spaces for $\mathcal{M}_2, D_6$ are isomorphic. That is, $M_2\simeq M_*/S_6$ where $M_*/S_6$ is the GIT-quotient of the quasi-projective $M_*$.\\
	Therefore, we have a morphism $M_*\rightarrow M_2$ which makes $M_*$ an \'etale $S_6$-cover of $M_2$. We can in fact show that the morphism $M_*\rightarrow M_2$ lifts to $\mathcal{M}_2$. That is, there is a ``universal" family of curves on $M_*$.\\
	To construct such a family, we use arguments presented in \cite[Proposition 3.1]{vis-m2}. We make use of the following sections for the projection $\p^1\times M_*\rightarrow M_*$:
	$$\sigma_1 = {0}\times M_*, \sigma_2 = {1}\times M_*, \sigma_3 = {\infty}\times M_*, (\sigma_{i+3})_{(p_1,p_2,p_3)} = p_i\forall 1\leq i\leq 3$$ 
	We construct the family $\mathscr{C}\rightarrow M_*$ by setting $$\mathscr{C}_*:=\underline{Proj}_{\p^1\times M_*}(\mathcal{O}\oplus \mathcal{O}(-3)),$$ 
	where the product on this sheaf of rings is given by the map $\mathcal{O}(-6)\hr\mathcal{O}$,
	 defined by the Cartier divisor $D_\s:=\Sigma_{1\leq i\leq 6} \sigma_i$. 
	$$\begin{tikzcd}
		\mathscr{C}_*\arrow[r,"\phi"] \arrow[d, "\pi"]&\p^1\times M_*\arrow[ld]\\
		M_*\arrow[ru, , bend right=20]\arrow[ru , "\sigma_1\, \cdots\, \sigma_6", bend right = 80]&
	\end{tikzcd}.$$
	Now, $\phi$ is a degree $2$ cover ramified along $\sigma_i$'s. Using Theorem \ref{zmt}, $(\phi^{-1}(\sigma_i))_{red}$ maps isomorphically to $M_*$ via $\pi$. 
	Therefore, the sections $\sigma_i$ lift to sections of $\mathscr{C}_*\rightarrow M_*$. We identify $\s_i$ with its lift to $\C_*$.
	\\Furthermore, we have a moduli interpretation of $M_*$. Following the notation in \cite{zhengning}, the map $\mathcal{H}_{2,6}^w\rightarrow M_*$ is a $\mu_2$-gerbe. Therefore, $M_*$ is the coarse moduli space to the stack of genus $2$ curves with marked Weierstrass points.

	\begin{rem}\label{c-s6}\noindent
The induced $S_6$-action on $\p^1\times M_*\simeq \p^1\times M_{0,6}$ (see \cite[Proposition 6.1]{zhengning} for details) permutes the sections $(\s_i)_{1\leq i\leq 6}$ accordingly. 
Hence, the divisor $D_\s$ is invariant under this action.\\
Let $\mathcal{R}:= \mo\oplus\mo(-3)\ri \p^1\times M_*$. We thus have natural isomorphisms $g^*\mathcal{R}\simeq \mathcal{R} \forall g\in S_6$.
These isomorphisms induce an $S_6$-action on $\C_*$ with the claimed properties.
We can therefore define an $S_6$-action on $\C_*$ so that $\pi$ is equivariant.
\end{rem}

\subsection{Chow Rings and Stacks}
Much like homology, there is an excision sequence for Chow groups. This is used in the paper to obtain generators for Chow groups under a given stratification.
	\begin{theorem}\label{excision}
		Let $X$ be a scheme and $Z\xrightarrow{i_Z} X$ be a closed subscheme. Let $U:= X\setminus Z\xrightarrow{i_U}X$ be the open complement of $Z\hr X$. Then, the following sequence is exact
$$A_j(Z)\xrightarrow{i_{Z *}}A_j(X)\xrightarrow{i_U^*}A_{j}(U)\rightarrow 0\quad \forall j\geq 0 .$$
	\end{theorem}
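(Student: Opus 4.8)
The plan is to prove exactness by first establishing the sequence at the level of cycle groups and then lifting rational equivalences from $U$ to $X$. At the cycle level, a $j$-dimensional subvariety $V\hr X$ is either contained in $Z$ or not; in the latter case $V\cap U$ is a $j$-dimensional subvariety of $U$, and $V\mapsto V\cap U$ gives a bijection between $j$-dimensional subvarieties of $X$ not contained in $Z$ and $j$-dimensional subvarieties of $U$, with inverse $W\mapsto \overline{W}$ (closure in $X$). This yields a short exact sequence of free abelian groups $0\ri Z_j(Z)\xrightarrow{i_{Z*}} Z_j(X)\xrightarrow{i_U^*} Z_j(U)\ri 0$, where $i_U^*$ is restriction of cycles and $i_{Z*}$ is the inclusion of cycles supported on $Z$.

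Granting this, surjectivity of $i_U^*$ on Chow groups is immediate, and the composition $i_U^*\circ i_{Z*}=0$ already holds on cycles since $Z\cap U=\emptyset$. The only substantive point is exactness in the middle. So suppose $\alpha\in Z_j(X)$ with $i_U^*\alpha=0$ in $A_j(U)$. By definition of rational equivalence on $U$, we may write $i_U^*\alpha=\sum_i [\operatorname{div}(f_i)]$ for finitely many rational functions $f_i$ on $(j+1)$-dimensional subvarieties $W_i\hr U$.

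The key step is to lift each such rational equivalence to $X$. For each $i$, let $\overline{W_i}\hr X$ be the closure; since $W_i$ is open and dense in $\overline{W_i}$ they share a function field, so $f_i$ is also a rational function on $\overline{W_i}$. Set $\beta:=\sum_i[\operatorname{div}_{\overline{W_i}}(f_i)]\in Z_j(X)$, so that $\beta\sim 0$ in $A_j(X)$. Because for a prime divisor $D\hr \overline{W_i}$ meeting $U$ the local ring, hence the order function $\operatorname{ord}_D$, is computed identically on $W_i$ and on $\overline{W_i}$, restriction gives $i_U^*\beta=\sum_i[\operatorname{div}(f_i)]=i_U^*\alpha$. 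Hence $i_U^*(\alpha-\beta)=0$ on cycles, so by the cycle-level exactness $\alpha-\beta=i_{Z*}\gamma$ for some $\gamma\in Z_j(Z)$. Since $\beta\sim 0$, we conclude $\alpha=i_{Z*}\gamma$ in $A_j(X)$, placing $\alpha$ in the image of $i_{Z*}$, as required.

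The main obstacle is precisely this lifting: one must check that extending $f_i$ from $W_i$ to its closure $\overline{W_i}$ does not perturb the orders along the prime divisors surviving in $U$, so that $i_U^*\beta$ recovers $i_U^*\alpha$ exactly rather than up to a correction supported on $Z$. This is guaranteed by the equality of local rings $\mathcal{O}_{\overline{W_i},D}=\mathcal{O}_{W_i,D\cap U}$ for $D$ meeting $U$; the extra components of $\operatorname{div}_{\overline{W_i}}(f_i)$ coming from prime divisors contained in $Z$ are exactly absorbed into the ambiguity $\gamma\in Z_j(Z)$, which is harmless since we only need $\alpha$ modulo rational equivalence. No properness or smoothness hypotheses on $X$ are required, so the argument applies verbatim to the possibly singular, non-proper moduli spaces considered later in the paper.
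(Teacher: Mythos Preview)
Your proof is correct and is essentially the standard argument (as in Fulton, \emph{Intersection Theory}, Proposition~1.8). The paper itself does not prove this theorem: it is stated in the background section as a known result and used without proof, so there is no in-paper argument to compare against.
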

% \noindent Projectivizations of vector bundles have a well-defined relation between their Chow ring and the Chow ring of the base. 
% 	\begin{theorem}\label{pbundle}
% 		Let $\e$ be a vector bundle of rank $r+1$ on a smooth quasi-projective scheme $X$,
%  $\zeta  := \mo_{\p(\e)(1)}\in A^1(\p(\e))$, and $\pi: \p(\e)\ri X$ be the projection. The map $\pi^*: A^*(X)\ri A^*(\p(\e))$ is an injection of rings and we have
% $$A^*(\p(\e))\simeq A^*(X)[\zeta]/(\zeta^{r+1}+\zeta^rc_1(\e)+\cdots+ c_{r+1}(\e)).$$
% 	\end{theorem}
% \noindent Rational Chow rings are well-behaved under finite-group actions as stated below.
% 	\begin{theorem}\label{finite}
% 	Let $G$ be a finite group acting on a quasi-projective variety $X$ and let $Y = X/G$ be the quotient. Then, 
% 	$$A^*(Y)\simeq A^*(X)^G.$$ 
% 	That is, pullback along the quotient map $X\ri Y$ is injective and maps $A^*(Y)$ isomorphically onto $A^*(X)^G$.
% 	\end{theorem}
	\noindent Let $Y$ be a smooth variety and $X\hr Y$ be a smooth subvariety with $d = \dim Y-\dim X$. 
	We denote $f:\tilde{Y}\ri Y$ as the blowup of $Y$ along $X$:
	$$\begin{tikzcd}
		\tilde{X}\arrow[r,"j"]\arrow[d,"g"]\arrow[rd,phantom,"\square"]& \tilde{Y}\arrow[d,"f"] \\
		{X}\arrow[r,"i"]& Y
	\end{tikzcd}$$
	where $\tilde{X}$ is the exceptional locus of $f$.\\
	Under the aforementioned hypothesis, we have explicit and linearly independent generators for $A^*(\tilde{Y})$ in terms of $A^*(Y)$ and $A^*(X)$ from \cite[Proposition 0.1.3]{beauville}. 
	The intersection product on $A^*(\tilde{Y})$ is described in \cite[Proposition 13.12]{3264}. We state the results below.
	\begin{theorem}\label{blowup-chow}
		The Chow ring $A^*(\tilde{Y})$ is generated by $f^*(A^*(Y))$ and $j_*(A^*(\tilde{X}))$, that is,
		classes pulled back from $Y$ and classes supported on $\tilde{X}$. Let $\zeta := -c_1(\mathcal{N}_{\tilde{X}/\tilde{Y}}) = c_1(\mo_{\tilde{X}}(1))$ 
		be the relative hyperplane class of the projective bundle defined by $g$. The product satisfies
		\begin{gather*}
			f^*\alpha \cdot f^*\beta = f^*(\alpha \cdot \beta)\quad \forall \alpha, \beta\in A^*(Y),\\
			f^*\alpha.j_*\gamma = j_*(\gamma\cdot g^*i^*\alpha) \quad \forall \alpha\in A^*(Y), \gamma\in A^*(\tilde{X}),\\
			j_*\gamma.j_*\delta = -j_*(\zeta.\gamma\cdot \delta) \quad \forall \gamma, \delta\in A^*(\tilde{X}).
		\end{gather*}
	\end{theorem}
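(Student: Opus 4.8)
The statement is classical; the plan is to assemble it from three standard inputs available for smooth quasi-projective varieties: excision for Chow groups (Theorem~\ref{excision}), the projective bundle formula, and the projection and self-intersection formulas for Gysin maps along regular embeddings.

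\textbf{Generation.} Since $f$ restricts to an isomorphism $\tilde Y\setminus\tilde X\xrightarrow{\sim} Y\setminus X$, writing $U=Y\setminus X$, excision gives exact sequences $A_*(\tilde X)\xrightarrow{j_*}A_*(\tilde Y)\to A_*(U)\to 0$ and $A_*(X)\xrightarrow{i_*}A_*(Y)\to A_*(U)\to 0$, compatible through $f^*$ and the identification $\tilde Y\setminus\tilde X\cong U$. Given $z\in A_*(\tilde Y)$, lift its image in $A_*(U)$ to some $\alpha\in A_*(Y)$; then $z-f^*\alpha$ maps to $0$ in $A_*(U)$, hence $z-f^*\alpha=j_*\gamma$ for some $\gamma\in A_*(\tilde X)$. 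This proves $A^*(\tilde Y)=f^*A^*(Y)+j_*A^*(\tilde X)$. Because $g:\tilde X=\mathbb{P}(\mathcal{N}_{X/Y})\to X$ is a projective bundle of relative dimension $d-1$, the projective bundle theorem writes $A^*(\tilde X)=\bigoplus_{k=0}^{d-1}\zeta^k\cdot g^*A^*(X)$, so in fact $A^*(\tilde Y)$ is generated by $f^*A^*(Y)$ together with the classes $j_*(\zeta^k g^*\beta)$, $\beta\in A^*(X)$.

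\textbf{The product formulas.} That $f^*\alpha\cdot f^*\beta=f^*(\alpha\beta)$ is just the multiplicativity of $f^*$ (the morphism $f$ of smooth varieties is lci, so its pullback is a ring homomorphism). For the mixed term, the blowup square commutes, $f\circ j=i\circ g$, and the projection formula gives $f^*\alpha\cdot j_*\gamma=j_*(j^*f^*\alpha\cdot\gamma)=j_*(g^*i^*\alpha\cdot\gamma)$. For the last, $\tilde X$ is a Cartier divisor in $\tilde Y$ with $\mathcal{N}_{\tilde X/\tilde Y}=\mathcal{O}_{\tilde Y}(\tilde X)|_{\tilde X}$, and the exceptional divisor of a blowup along a smooth center satisfies $\mathcal{N}_{\tilde X/\tilde Y}\cong\mathcal{O}_{\mathbb{P}(\mathcal{N}_{X/Y})}(-1)$ in the subspace convention; hence the self-intersection formula yields $j^*j_*\gamma=c_1(\mathcal{N}_{\tilde X/\tilde Y})\cdot\gamma=-\zeta\cdot\gamma$, and one more application of the projection formula gives $j_*\gamma\cdot j_*\delta=j_*(j^*j_*\gamma\cdot\delta)=-j_*(\zeta\cdot\gamma\cdot\delta)$.

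\textbf{Main obstacle.} None of the above is difficult once the inputs are granted; the substantive point — imported from \cite{beauville} (or proved via Fulton's deformation to the normal cone) — is the \emph{linear independence} of these generators, i.e.\ that the map $A^*(Y)\oplus\bigoplus_{k=1}^{d-1}A^*(X)\to A^*(\tilde Y)$ given by $f^*$ on the first summand and $\beta\mapsto j_*(\zeta^{k-1}g^*\beta)$ on the $k$-th is injective. That injectivity — which is what makes the decomposition usable in the Chow computations of \S\ref{chow-blowup-ss} — is the one step I would cite rather than reprove. The only other place demanding care is the normal bundle identification $\mathcal{N}_{\tilde X/\tilde Y}\cong\mathcal{O}(-1)$, where the sign, and hence the $-1$ in the last product formula, depends on the subspace convention fixed in the Notation section.
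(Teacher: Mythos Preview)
The paper does not prove this theorem; it is stated as background material, cited from \cite[Proposition 0.1.3]{beauville} and \cite[Proposition 13.12]{3264}. Your argument is the standard one and is correct: excision for generation, the projection formula for the mixed product, and the self-intersection formula (with the identification $\mathcal{N}_{\tilde X/\tilde Y}\cong\mathcal{O}(-1)$) for the third. You are also right that linear independence is the only nontrivial part, and that is precisely what the paper imports from \cite{beauville} in the subsequent theorem rather than reproving.
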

	\begin{theorem}
	For $q\geq 1$, the Chow groups $A^q(\tilde{Y})$ (resp., $H^q(\tilde{Y})$) are given by the isomorphisms
	$$A^q(\tilde{Y}) \rightleftarrows A^q(Y)\oplus_{r=0}^{c} A^{q-1-r}({X})\quad \left(\text{resp., }H^q(\tilde{X})\rightleftarrows H^q(X)\oplus_{r=0}^{c} H^{q-2-2r}({X}) \right)$$
	where $c=d-2$. 
	\begin{itemize}
		\item The map $A^q(\tilde{Y}) \rightarrow A^q(Y)\oplus_{r=0}^{c} A^{q-1-r}({X})$ is given by
		$$z\mapsto f^*f_*z +\oplus_{r=0}^c g_*(\gamma_{c-r}\cdot j^*z)$$
		where $\gamma_r:= \zeta^r+\zeta^{r-1}g^*c_1(N)+\cdots+ g^*c_r(N)\forall 0\leq r\leq c $ and $N:= \mathcal{N}_{X/Y}$.
		\item The map $A^q(Y)\oplus_{r=0}^{c} A^{q-1-r}({X})\ri A^q(\tilde{Y})$ is given by
		$$x+\Sigma_{r=0}^cy_r\mapsto f^*x - j_*(\Sigma_{r=0}^c\zeta^r\cdot g^*y_r).$$
	\end{itemize}
	\end{theorem}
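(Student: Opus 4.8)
The two displayed formulas are the classical blow-up decompositions, and the plan is to derive them along the lines of \cite{beauville} and \cite{3264}: first a structural step isolating a direct sum decomposition $A^*(\tilde Y)=f^*A^*(Y)\oplus\bigoplus_{r=0}^{c}j_*\big(\zeta^r\cdot g^*A^*(X)\big)$ with $c=d-2$, then a purely formal verification that the two stated maps realise it and are mutually inverse. The structural step rests on two facts about the exceptional divisor $\tilde X=\p(N)\xrightarrow{g}X$, a $\p^{d-1}$-bundle in the subspace convention: the projective bundle formula, which makes $A^*(\tilde X)$ free over $A^*(X)$ on $1,\zeta,\dots,\zeta^{d-1}$ with $g_*\zeta^{d-1}=1$, $g_*\zeta^{d-1+k}=s_k(N)$ and $g_*\zeta^{k}=0$ for $0\le k<d-1$; and the identification $\mathcal{N}_{\tilde X/\tilde Y}=\mo_{\tilde X}(-1)$, equivalently the self-intersection formula $j^*j_*\gamma=-\zeta\cdot\gamma$ already recorded in Theorem~\ref{blowup-chow}.

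For the structural step I would argue as follows. Since $f$ is proper and birational, $f_*f^*=\mathrm{id}$, so $f^*$ is split injective and $A^*(\tilde Y)=f^*A^*(Y)\oplus\ker f_*$. Each class $j_*(\zeta^r g^*\alpha)$ with $0\le r\le c$ lies in $\ker f_*$, by the computation $f_*j_*(\zeta^r g^*\alpha)=i_*g_*(\zeta^r g^*\alpha)=i_*(\alpha\cdot g_*\zeta^r)=0$ using $f\circ j=i\circ g$ and $g_*\zeta^r=0$ for $r<d-1$. Conversely, these classes together with $f^*A^*(Y)$ generate $A^*(\tilde Y)$: Theorem~\ref{blowup-chow} gives generation by $f^*A^*(Y)$ and $j_*A^*(\tilde X)$, the projective bundle formula expresses any class of $A^*(\tilde X)$ as $\sum_{r=0}^{d-1}\zeta^r g^*(\cdot)$, and the excess-intersection identity $f^*i_*\alpha=j_*(\gamma_{d-1}\cdot g^*\alpha)$ — valid because the excess bundle $g^*N/\mo_{\tilde X}(-1)$ has top Chern class $\gamma_{d-1}$ — trades the $\zeta^{d-1}$-term for a class pulled back from $Y$ plus lower $\zeta$-powers. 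Freeness of the decomposition (no relations among the $j_*(\zeta^r g^*A^*(X))$, $r\le c$, nor with $f^*A^*(Y)$) is Beauville's \cite[Prop.~0.1.3]{beauville}, and may also be recovered by running the excision sequence (Theorem~\ref{excision}) for $\tilde X\hr\tilde Y$ against the one for $X\hr Y$ through the isomorphism $\tilde Y\setminus\tilde X\cong Y\setminus X$. This is the main obstacle of the proof: it is the only place where one genuinely uses that blowing up leaves the Chow groups of the complement untouched, and everything afterwards is bookkeeping.

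Granting the decomposition, the backward map $x+\sum_r y_r\mapsto f^*x-j_*\big(\sum_r\zeta^r g^*y_r\big)$ is manifestly a well-defined bijection onto $A^*(\tilde Y)$, so it remains only to check that it inverts the forward map $z\mapsto f^*f_*z\oplus\bigoplus_r g_*(\gamma_{c-r}\cdot j^*z)$. On $f^*x$ the forward map returns $f^*f_*f^*x=f^*x$ in the first slot and $g_*(\gamma_{c-r}\cdot g^*i^*x)=(g_*\gamma_{c-r})\cdot i^*x=0$ in the others, since $\gamma_{c-r}$ involves only $\zeta^0,\dots,\zeta^{c}$ and $c<d-1$. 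On $-j_*(\zeta^s g^*\alpha)$ the first slot vanishes because $f_*j_*(\zeta^s g^*\alpha)=i_*(\alpha\cdot g_*\zeta^s)=0$ for $s\le c$, while by the self-intersection formula the $r$-th slot equals $g_*\big(\gamma_{c-r}\cdot\zeta^{s+1}\cdot g^*\alpha\big)$; expanding $\gamma_{c-r}$ and applying the pushforward rules for $g$ turns this into $\big(\sum_{i+j=s-r}s_i(N)c_j(N)\big)\cdot\alpha=\delta_{rs}\,\alpha$ by the Chern--Segre inversion $s(N)c(N)=1$, a routine calculation. Hence the two maps are mutually inverse, which is the Chow-ring statement. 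The cohomology statement follows by the identical argument with Chow groups replaced by singular cohomology — the projective bundle formula, the Gysin pushforwards and the blow-up excision sequence (via Borel--Moore homology) all being available there — the only difference being that every codimension index is doubled, which produces the shift to $q-2-2r$.
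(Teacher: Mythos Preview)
The paper does not prove this statement; it records the blow-up decomposition as a known result, attributing it to \cite[Proposition~0.1.3]{beauville} (for the linear decomposition and its inverse) and to \cite[Proposition~13.12]{3264} (for the ring structure). Your argument is correct and is precisely the standard proof underlying those references: you establish the splitting $A^*(\tilde Y)=f^*A^*(Y)\oplus\ker f_*$ via $f_*f^*=\mathrm{id}$, identify $\ker f_*$ with $\bigoplus_{r=0}^{c}j_*(\zeta^r g^*A^*(X))$ using the projective bundle formula and excess intersection, and then verify the Chern--Segre identity $\sum_{i+j=s-r}s_i(N)c_j(N)=\delta_{rs}$ that makes the two maps mutually inverse. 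One minor remark: the first component of the forward map, written in the statement as $f^*f_*z$, should strictly be $f_*z\in A^q(Y)$ (as Beauville has it); you have silently identified $A^q(Y)$ with its image under the injective map $f^*$, which is harmless but worth flagging.
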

	\noindent For a subvariety $V\hr Y$, the class of its strict tranform $\tilde{V}\hr \tilde{Y}$ in $A^*(\tilde{Y})$ is given by \cite[Theorem 6.7]{fulton} as stated below.
	\begin{theorem}\label{transform}
Let $V$ be a $k$-dimensional subvariety of $Y$, and let $\tilde{V}\hr \tilde{Y}$ be the strict transform of $V$, i.e. the blow-up of $V$ along $V\cap X$. Then,
$$[\tilde{V}]= f^*[V]-j_*\{c(E)\cap g^*s(V\cap X, V)\}_k \in A_k(\tilde{Y})$$
where $E:= g^*N/\mathcal{N}_{\tilde{X}/\tilde{Y}} = g^*N/\mo_{\tilde{X}}(-1)$ and $N:= \mathcal{N}_{X/Y}$.
	\end{theorem}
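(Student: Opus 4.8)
The formula records precisely how the total transform $f^*[V]$ exceeds the strict transform $[\tilde V]$: the defect is a correction supported on the exceptional divisor $\tilde X$, and the content of the theorem is that this correction is governed by the Segre class of $V\cap X$ in $V$ together with the excess bundle $E$. The plan is to first localize the defect on $\tilde X$ by excision, and then to identify it through deformation to the normal cone.

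\emph{Step 1: localizing the defect.} Over $Y\setminus X$ the map $f$ is an isomorphism, and $\tilde V\ri V$ is an isomorphism over $V\setminus(V\cap X)$; hence $f^*[V]$ and $[\tilde V]$ restrict to one and the same cycle on the open complement $\tilde Y\setminus \tilde X$. By the excision sequence of Theorem \ref{excision}, $A_k(\tilde X)\xrightarrow{j_*}A_k(\tilde Y)\ri A_k(\tilde Y\setminus\tilde X)\ri 0$, the difference lies in the image of $j_*$, so $f^*[V]-[\tilde V]=j_*\alpha$ for some $\alpha\in A_k(\tilde X)$. It remains to compute $\alpha$.

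\emph{Step 2: the normal cone inside $\tilde X$.} Write $W:=V\cap X$ for the scheme-theoretic intersection and $C:=C_WV$ for its normal cone in $V$. Since $i:X\hr Y$ is regular with normal bundle $N$, pulling the ideal of $X$ back to $V$ realizes $C$ as a closed subcone of the total space of $N|_W$; projectivizing gives $\p(C)\hr\p(N|_W)=g^{-1}(W)\subseteq\tilde X$. By construction of the blow-up as the $\underline{\operatorname{Proj}}$ of the Rees algebra, $\tilde V=\mathrm{Bl}_WV$ has exceptional divisor exactly $\p(C)=\tilde V\cap\tilde X$, and $\mo_{\tilde Y}(-\tilde X)$ restricts on $\tilde X$ to $\mo_{\tilde X}(1)$, which further restricts on $\p(C)$ to its tautological bundle.

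\emph{Step 3: identifying $\alpha$.} This is the heart of the argument. The coefficients of $\alpha$ on $\p(C)$ are read off from the self-intersections of the exceptional divisor of $\tilde V$, which by the blow-up formula for Segre classes assemble into the $g$-pushforward of the powers of the tautological class on $\p(C)$; these are, by definition, the components of $s(W,V)\in A_*(W)$, pulled back by the flat map $g$. The passage from $\p(C)$ to all of $\tilde X$ introduces the excess: the honest normal bundle $\mathcal N_{\tilde X/\tilde Y}=\mo_{\tilde X}(-1)$ differs from $g^*N$ by the quotient $E=g^*N/\mo_{\tilde X}(-1)$, and comparing the self-intersection formula for the Cartier divisor $\tilde X$ against $g^*N$ produces the factor $c(E)=c(g^*N)/(1-\zeta)$. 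Assembling these contributions dimension by dimension gives $\alpha=\{c(E)\cap g^*s(W,V)\}_k$, which is the claimed formula after $j_*$. As a check: if $V$ meets $X$ transversally then $\dim W=k-d$, the top term of $g^*s(W,V)$ sits in dimension $k-1$, and $\{c(E)\cap g^*s(W,V)\}_k=0$, recovering $f^*[V]=[\tilde V]$; while if $V\subseteq X$ then $W=V$, $s(W,V)=[V]$, and $\alpha=c_{d-1}(E)\cap g^*[V]$, recovering $f^*i_*[V]=j_*(c_{d-1}(E)\cap g^*[V])$.

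\emph{Main obstacle.} Steps 1 and 2 are formal; the real work is the identification in Step 3, which requires the deformation-to-the-normal-cone machinery and a careful accounting of how the subcone $C\subseteq g^*N$ contributes the excess Chern class $c(E)$. Controlling the dimensional truncation $\{\,\cdot\,\}_k$ and matching the tautological bundles on $\p(C)$ and on $\tilde X$ is where all the bookkeeping concentrates.
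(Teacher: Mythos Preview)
The paper does not prove this theorem; it is quoted as background from \cite[Theorem~6.7]{fulton} and no argument is given. Your sketch is a faithful outline of Fulton's own proof (excision to localize the defect on $\tilde X$, identification of the exceptional locus of $\tilde V$ with $\p(C_{V\cap X}V)$, then deformation to the normal cone to extract the excess term $c(E)\cap g^*s(V\cap X,V)$), so there is nothing in the paper to compare against beyond the citation.
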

	\begin{cor}
		The computations in the previous theorem get simplified when $\dim (V\cap X)$ is close to the expected dimension. In particular, 
		\begin{itemize}
			\item If $\dim (V\cap X) = \dim V - d$, we have 
			$$f^*V = [\tilde{V}]$$
			\item If $\dim (V\cap X) = \dim V - d +1$ and equidimensional, we have 
			$$ [\tilde{V}] = f^*V - j_*g^*(V\cap X).$$ 
		\end{itemize} 
	\end{cor}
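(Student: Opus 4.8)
The plan is to read off both statements directly from Theorem~\ref{transform}, by isolating the degree-$k$ part of the correction term $\{c(E)\cap g^{*}s(V\cap X,V)\}_{k}$ via a dimension count. The only inputs beyond Theorem~\ref{transform} are two standard facts about Segre classes (cf.\ \cite[Ch.~4]{fulton}): $s(V\cap X,V)$ is supported in dimensions $\le\dim(V\cap X)$, and its component in the top dimension $\dim(V\cap X)$ equals the fundamental cycle $[V\cap X]$, which in turn coincides with the class of the subvariety when $V\cap X$ is reduced and equidimensional.

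I would begin with the numerology. As $X\hr Y$ is smooth of codimension $d$, the bundle $N=\mathcal N_{X/Y}$ has rank $d$, so $E=g^{*}N/\mo_{\tilde X}(-1)$ has rank $d-1$ and $c(E)=1+c_{1}(E)+\dots+c_{d-1}(E)$. Over $V\cap X$, the morphism $g$ restricts to the $\p^{d-1}$-bundle $\p(N|_{V\cap X})\to V\cap X$; hence $g^{*}$ carries $A_{m}(V\cap X)$ into $A_{m+d-1}$, while capping with $c_{i}(E)$ lowers dimension by $i$. Therefore the dimension-$m$ summand of $s(V\cap X,V)$ contributes to $c_{i}(E)\cap g^{*}s(V\cap X,V)$ a class in dimension $m+(d-1)-i$, which equals $k$ exactly when $i=m+d-1-k$. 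The constraint $0\le i\le d-1$ then forces $k-d+1\le m\le k$, and of course $m\le\dim(V\cap X)$.

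The two cases follow immediately. If $\dim(V\cap X)=\dim V-d=k-d$, every index $m$ permitted above would need $m\ge k-d+1>\dim(V\cap X)$, so the correction term vanishes and $[\tilde V]=f^{*}[V]$. If $\dim(V\cap X)=k-d+1$, the only admissible index is $m=k-d+1=\dim(V\cap X)$, which forces $i=0$; hence $\{c(E)\cap g^{*}s(V\cap X,V)\}_{k}=g^{*}\{s(V\cap X,V)\}_{k-d+1}=g^{*}[V\cap X]$, the last equality being the top-component fact, where equidimensionality guarantees that $[V\cap X]$ has no lower-dimensional pieces. Plugging this into Theorem~\ref{transform} yields $[\tilde V]=f^{*}[V]-j_{*}g^{*}[V\cap X]$. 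I would also remark that $\dim(V\cap X)\ge k-d$ whenever the intersection is nonempty, because $X$ is locally cut out by $d$ equations in $Y$, so the two cases are precisely ``expected dimension'' and ``one above expected dimension''.

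The step I expect to require the most care is the identification of the top-dimensional part of $s(V\cap X,V)$ with $[V\cap X]$: one must ensure that the scheme-theoretic intersection introduces no multiplicity or embedded-component correction in top dimension, which is exactly what the reduced/equidimensional hypothesis secures. This can be cited from \cite[Ch.~4]{fulton} or verified locally using that $X$ is a complete intersection in $Y$; the remainder is pure dimension bookkeeping, so I do not anticipate further difficulty.
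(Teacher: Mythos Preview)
Your proposal is correct and is exactly the standard dimension-count that the paper implicitly relies on; the paper states the corollary without proof, simply as an immediate consequence of Theorem~\ref{transform}, and your argument spells out that consequence in the expected way. One small note: the top-dimensional part of $s(V\cap X,V)$ is always the fundamental cycle $[V\cap X]$ (with its possible multiplicities), so reducedness is not needed for the identification---equidimensionality alone suffices to ensure the correction term is $j_*g^*[V\cap X]$, as you correctly use in the final step.
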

\noindent The following remark shall be used extensively in the upcoming sections and can be found in \cite[Remark 2.2]{vakil}.
	\begin{rem}\label{rem1}
		If $f:X\rightarrow Y$ is a finite, flat map of degree $d$. 
		Then $f^*:A^*(Y)\hookrightarrow A^*(X)$ is injective with $\dfrac{1}{d}f_*$ as its one-sided inverse. 
		In particular, $A^*(X)=\Q\implies A^*(Y)=\Q$. 
	\end{rem}
	\begin{rem}
		Using a Moving Lemma, one can relax the condition of flatness in Remark \ref{rem1} to surjective for $X, Y$ smooth and quasi-projective.
	\end{rem}
\subsection{The Two Stacks}
There are two versions of the moduli functor for families of (semi)stable bundles on smooth curves.
Consequently, we get two different stacks $\mathscr{U}(r,d,g),\mathcal{U}(r,d,g)$ for the moduli of (semi)stable bundles of rank $r$, degree $d$ and genus $g$. 
The categories $\mathscr{U}(r,d,g),\mathcal{U}(r,d,g)$ consist of identical objects 
$$\begin{tikzcd}
	\e_S\arrow[d]\\\C_S\arrow[d,"\pi_S"]\\S
\end{tikzcd}$$
which are families of fiberwise (semi)stable bundles of rank $r$ and relative degree $d$ over smooth curves $\C_S\ri S$ of genus $g$.\\
For both stacks, morphisms between two such families can be defined using the following diagram 
$$\begin{tikzcd}
	\e_S\arrow[d]&\e_T\arrow[d]\\
	\C_S\arrow[d,"\pi_S"]\arrow[r,"\pi_f"]\arrow[rd,phantom,"\square"]&\C_T\arrow[d,"\pi_T"]\\
	S\arrow[r,"f"]&T.
\end{tikzcd}$$
For $\mathscr{U}(r,d,g)$, the morphisms between objects shown above are given by a pair $(\pi_f, \phi_f)$ where $\pi_f:\C_S\ri\C_T$ is such that square above is Cartesian and $\phi_f:\pi_f^*\e_T\simeq \e_S$ is an isomorphism.\\
For $\mathcal{U}(r,d,g)$, a morphism consists of the map $\pi_f$ where $\pi_f:\C_S\ri\C_T$ is such that square above is Cartesian and there exists $ \phi_f:\pi_f^*\e_T\simeq \e_S\otimes \pi_S^*L$ for some line bundle $L\in Pic(S)$.\\
\\ In order to motivate the two versions of the moduli functor, we state a generalized version of \cite[Lemma 5.10]{newstead} below.
	\begin{theorem}\label{newstead}
		Let $\C\xrightarrow{\pi_S} S$ be a family of smooth curves over a (quasi-projective) base $S$ and $\mathcal{E}_1,\mathcal{E}_2$ be families of stable bundles over $\pi_S$. We have $\mathcal{E}_1|_{\C_s}\simeq \mathcal{E}_2 |_{\C_s}\forall s \in S$ if and only if $\exists L\in Pic(S)$ such that $\mathcal{E}_1\simeq \mathcal{E}_2\otimes \pi_S^*L$.
	\end{theorem}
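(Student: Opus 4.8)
The plan is to handle the trivial implication by restriction, and to prove the substantive one by building $L$ as the direct image of a sheaf of homomorphisms, with stability entering exactly once, to control the dimension of the fibrewise $\Hom$.

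The reverse implication is immediate: if $\mathcal{E}_1\simeq\mathcal{E}_2\otimes\pi_S^*L$, then restricting to a fibre $\C_s$ and noting that $(\pi_S^*L)|_{\C_s}$ is pulled back from the point $s$, hence trivial, yields $\mathcal{E}_1|_{\C_s}\simeq\mathcal{E}_2|_{\C_s}$. For the forward implication, the crucial observation is that for every $s\in S$ one has $\Hom(\mathcal{E}_1|_{\C_s},\mathcal{E}_2|_{\C_s})\cong\K$: by hypothesis $\mathcal{E}_1|_{\C_s}\simeq\mathcal{E}_2|_{\C_s}$, a nonzero homomorphism between stable bundles of the same slope is an isomorphism, and a stable bundle has only scalar endomorphisms. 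Hence, setting $\mathcal{F}:=\mathcal{E}_1^\vee\otimes\mathcal{E}_2$, which is locally free on $\C$ and flat over $S$, the function $s\mapsto h^0(\C_s,\mathcal{F}|_{\C_s})$ is constantly equal to $1$.

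Next I would invoke cohomology and base change. Reducing to the case where $S$ is reduced and of finite type over $\K$, which covers every application in this paper, Grauert's theorem gives that $L:=\pi_{S*}\mathcal{F}$ is an invertible sheaf on $S$ whose formation commutes with arbitrary base change, and the counit of adjunction $\pi_S^*\pi_{S*}\mathcal{F}\to\mathcal{F}$ is a morphism $\pi_S^*L\to\mathcal{F}$, i.e. a morphism $\Phi\colon\mathcal{E}_1\otimes\pi_S^*L\to\mathcal{E}_2$. By base change, $\Phi|_{\C_s}$ is, under the canonical identification $L|_s\cong H^0(\C_s,\mathcal{F}|_{\C_s})$, the tautological generator of $\Hom(\mathcal{E}_1|_{\C_s},\mathcal{E}_2|_{\C_s})$; in particular it is a nonzero homomorphism of stable bundles of the same slope, hence an isomorphism on $\C_s$ for every $s$.

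Finally, $\Phi$ is a morphism of locally free sheaves of equal rank on $\C$ that is an isomorphism on every fibre $\C_s$, so $\det\Phi$ is nowhere vanishing on $\C$ and is therefore an isomorphism of line bundles; locally $\Phi$ is then a square matrix of functions with invertible determinant, so $\Phi$ is an isomorphism, giving $\mathcal{E}_1\otimes\pi_S^*L\simeq\mathcal{E}_2$. The one genuinely delicate point is this base-change step: stability is used precisely to make $h^0$ constant, which is what promotes $\pi_{S*}\mathcal{F}$ to a line bundle and, more to the point, guarantees that the evaluation map $\Phi$ is fibrewise an isomorphism rather than merely generically nonzero.
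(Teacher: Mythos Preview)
Your proof is correct and follows essentially the same route as the paper: define $L$ as $\pi_{S*}$ of the relevant Hom sheaf, use cohomology and base change (via stability and the constant value $h^0=1$) to see that $L$ is a line bundle, and then argue that the adjunction/evaluation map is a fibrewise isomorphism between stable bundles of equal slope, hence a global isomorphism. The only cosmetic differences are that the paper writes $L=\pi_{S*}\mathscr{H}om(\mathcal{E}_2,\mathcal{E}_1)$ (so the roles of $\mathcal{E}_1,\mathcal{E}_2$ are swapped, amounting to replacing your $L$ by $L^\vee$) and checks the isomorphism by the ``nonzero map between stable bundles is an isomorphism'' principle directly, whereas you phrase the last step via $\det\Phi$.
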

\begin{proof}
	The fiberwise isomorphism over $S$ follows immediately from the existence of $L$. To show the converse, we prove that fiberwise isomorphism implies that the claimed isomorphism is satisfied by 
	$$L= \pi_{S*}{\mathscr{H}om}(\e_2,\e_1)$$
	Cohomology and Base-Change imply that $L$ is a line bundle since $\e_1,\e_2$ are families of stable bundles. The induced morphism 
	$$\e_2\otimes \pi_S^*L = \e_2\otimes \pi_S^*\pi_{S*}\mathscr{H}om(\e_2,\e_2)\ri \e_2\otimes \mathscr{H}om(\e_2,\e_1)\ri \e_1$$
is the claimed isomorphism. \\
To show that the induced morphism is indeed an isomorphism, it is enough to show that it is non-zero since fibers of $\e_1,\e_2$ are stable. This is equivalent to showing that the canonical map 
$$\pi_S^*\pi_{S*}\mathscr{H}om(\e_2,\e_2)\ri \mathscr{H}om(\e_2,\e_1)$$
 is non-zero, which can be verified by restricting the morphism over any fiber of $\pi_S$.
\end{proof}	
\begin{rem}
	It has been established in \cite[Theorem 1.2.2]{fringuelli} that $\mathscr{U}(r,d,g), \mathcal{U}(r,d,g)$ are irreducible, smooth Artin stacks of finite type. 
\end{rem}
\begin{theorem}\label{artin-chow}
For $d$ odd, we have %can recover the Chow ring of $\mathcal{U}(2,d,2)$ from the moduli space $U(2,d,2)$. That is,
$$A^*(U(2,d,2))\simeq A^*(\mathcal{U}(2,d,2))$$
However, for $d$ even, we have the following relations between the Chow rings of $\mathcal{U}(2,d,2)$ and $U(2,d,2)$
$$A^*(U(2,d,2))\hr A^*(\mathcal{U}(2,d,2)), A^*(U^s(2,d,2))\simeq A^*(\mathcal{U}^s(2,d,2))$$
where $U^s(2,d,2)$ is the coarse moduli space of $\mathcal{U}^s(2,d,2)$, the open substack of $\mathcal{U}(2,d,2)$ determined by stable bundles.
\end{theorem}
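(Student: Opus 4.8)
The plan is to compare the two moduli stacks $\mathscr{U}(2,d,2)$ and $\mathcal{U}(2,d,2)$ through the good moduli space map, exploiting Theorem~\ref{newstead} to see that the two stacks differ only by a $\mathbb{G}_m$-gerbe-type twisting. Concretely, by the descriptions of the two moduli functors given above, there is a natural morphism $\mathscr{U}(r,d,g)\to \mathcal{U}(r,d,g)$ which is the identity on objects and collapses the richer isomorphisms of $\mathscr{U}$; on the stable loci this exhibits $\mathcal{U}^s(r,d,g)$ as the rigidification $\mathscr{U}^s(r,d,g)\git \mathbb{G}_m$, killing the central $\mathbb{G}_m$ of automorphisms coming from scaling a bundle. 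Since rigidifying by $\mathbb{G}_m$ and then taking coarse spaces gives the same coarse space, $U^s(2,d,2)$ is the coarse space of both $\mathscr{U}^s(2,d,2)$ (after $\mathbb{G}_m$-rigidification) and $\mathcal{U}^s(2,d,2)$, and the latter map $\mathcal{U}^s(2,d,2)\to U^s(2,d,2)$ is a gerbe-free coarse space map inducing an isomorphism on rational Chow rings (coarse space maps of Deligne–Mumford type stacks induce $A^*$-isomorphisms with $\mathbb{Q}$-coefficients); this gives $A^*(U^s(2,d,2))\simeq A^*(\mathcal{U}^s(2,d,2))$, which is the second assertion in the even case and the restriction of the first assertion to the stable locus.

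For the odd-degree case, the point is that every semistable bundle of rank $2$ and odd degree on a smooth curve is automatically stable (a semistable bundle of coprime rank and degree is stable), so $\mathscr{U}(2,d,2)=\mathscr{U}^s(2,d,2)$ and likewise for $\mathcal{U}$, and there are no strictly semistable points to worry about. Thus the statement reduces to showing $A^*(U(2,d,2))\simeq A^*(\mathcal{U}^s(2,d,2))$, which follows from the gerbe-free coarse space argument of the previous paragraph applied on the whole space. I would package this by first noting $\mathcal{U}(2,d,2)\to U(2,d,2)$ is proper, quasi-finite, representable and bijective on closed points (this is where Theorem~\ref{newstead}, or rather its consequence that fiberwise-isomorphic stable families differ by a twist, guarantees injectivity on closed points), then invoking $\mathbb{Q}$-coefficient Chow-group invariance under such coarse-moduli maps, with Theorem~\ref{zmt} available to pin down the requisite identifications of underlying varieties.

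For even degree the extra subtlety is the strictly semistable locus. Here $A^*(U(2,d,2))\hookrightarrow A^*(\mathcal{U}(2,d,2))$ rather than being an isomorphism: the map $\mathcal{U}(2,d,2)\to U(2,d,2)$ is no longer quasi-finite/bijective over the S-equivalence classes of strictly semistable bundles (whole positive-dimensional families of non-isomorphic strictly semistable bundles get identified to a single point of $U$), so one only gets injectivity of the pullback on $A^*$. Injectivity itself I would get from the existence of a one-sided inverse in the spirit of Remark~\ref{rem1}: the coarse space map admits a section over the stable locus and, more robustly, $A^*(U)\to A^*(\mathcal{U})$ followed by restriction to a suitable substack (or proper pushforward along the good moduli map, using that $\mathcal{U}$ has a good moduli space $U$ so $p_*p^* = \mathrm{id}$ on $A^*(U)_{\mathbb{Q}}$ when $p$ has geometrically connected fibers of the expected type) recovers the identity.

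The main obstacle I expect is making the gerbe/rigidification comparison between $\mathscr{U}$ and $\mathcal{U}$ fully rigorous at the level of Chow rings — in particular justifying that passing from $\mathscr{U}^s$ to $\mathcal{U}^s$ by killing the central $\mathbb{G}_m$ does not change rational Chow (one wants a clean statement that $A^*(\mathcal{X})_{\mathbb{Q}}\simeq A^*(\mathcal{X}\git\mathbb{G}_m)_{\mathbb{Q}}$ for a trivializable-$\mathbb{G}_m$-gerbe-type situation, or an argument via the associated $\mathbb{P}^N$-bundles), and controlling the behaviour over the strictly semistable locus in the even case where the naive pushforward/pullback identities degenerate. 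The rest — coprimality forcing stability in the odd case, properness and bijectivity on closed points, and the invocation of Theorem~\ref{zmt} — is essentially formal.
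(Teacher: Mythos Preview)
Your overall strategy matches the paper's: use that $\mathcal{U}^s(2,d,2)$ is Deligne--Mumford so that the coarse-moduli map induces an isomorphism on rational Chow (the paper cites \cite{vistoli-stacks} and \cite{olsson} for this), and for odd $d$ reduce to the stable locus via coprimality of rank and degree. The paper's proof is essentially a sequence of citations, and you have reconstructed the right skeleton.

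Two points of divergence. First, the extended discussion of $\mathscr{U}$ versus $\mathcal{U}$ and $\mathbb{G}_m$-rigidification is unnecessary here: the theorem only concerns $\mathcal{U}$ and its good moduli space $U$, and the paper does not invoke $\mathscr{U}$ at all in this proof. Second, and more substantively, the step you flag as the ``main obstacle''---injectivity of $A^*(U)\hookrightarrow A^*(\mathcal{U})$ over the strictly semistable locus---is exactly the content of the theorem of Edidin--Satriano \cite[Theorem~1.1, Theorem~1.7]{edidin-satriano}, which the paper simply cites. Your proposed $p_*p^*=\mathrm{id}$ argument is morally what underlies that result, but for genuinely Artin stacks (as opposed to Deligne--Mumford) with positive-dimensional stabilizers this is a nontrivial theorem, not something one can deduce from Remark~\ref{rem1} or Theorem~\ref{zmt}; in particular proper pushforward along a good moduli space map requires care to even define. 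So your instinct is correct, but the honest way to fill the gap is to cite Edidin--Satriano rather than to attempt an ad hoc pushforward argument.
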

\begin{proof}
The good moduli space of both $\mathscr{U}(r,d,g), \mathcal{U}(r,d,g)$ is $U(r,d,g)$, and the coarse moduli space of $\mathcal{U}^s(r,d,g)$ is $U^s(r,d,g)$ by \cite[Theorem 11.4, Theorem 13.6]{alper-good} and \cite[\S C.2]{GW-stacks}.\\
For any value of $d$, the isomorphism 
$$A^*(U^s(2,d,2))\simeq A^*(\mathcal{U}^s(2,d,2))$$ follows from \cite[Proposition 6.1]{vistoli-stacks} and \cite[Remark 8.3.4]{olsson} since $\mathcal{U}^s(2,d,2)$ is a Deligne-Mumford stack.\\
For $d$ even, the inclusion $$A^*(U(2,d,2))\hr A^*(\mathcal{U}(2,d,2))$$ is established in \cite[Theorem 1.1, Theorem 1.7]{edidin-satriano}.
% The image of the inclusion has been established as $A_{tst}^*(\mathcal{U}(2,d,2)/U(2,d,2))$.\\
% \textcolor{red}{ELABORATE MORE OR LEAVE OUT?}
\end{proof}
\subsection{Tautological Classes}\label{tautological-intro}
For the case $r=1$ or in codimension $1$, tautological classes on the stacks $\mathscr{U}{(r,d,g)} , \mathcal{U}{(r,d,g)}$ have been defined in many papers including
 \cite{fringuelli}, \cite{larson-24}, and \cite{melo}. We present some results and definitions from \cite{fringuelli} and \cite{larson-24} below, along with some original statements, and include proofs wherever necessary.\\
 Let $\mathcal{E}_d $ be the Poincar\'e bundle for the family 
 $\C_2\times_{\mathcal{M}_2}\mathscr{U}(2,d,2)\xrightarrow{\pi_\mathscr{U}}\mathscr{U}(2,d,2)$ where $\C_2\ri \mathcal{M}_2$ is the universal curve.\\
 We generalize the $\kappa$-classes defined in \cite{larson-24} for $\mathscr{U}(1,d,g)|_{\mathcal{H}_g}$ and the classical $\kappa$-classes on $\mathcal{M}_g$ as defined in \cite{mumford} in the definition below.
 \begin{defi*}
	The \textit{twisted-kappa classes} generate the tautological ring and are defined as follows:
  $$\kappa_{a,b,c}:= \pi_{\mathscr{U}*}(K_{\pi_\mathscr{U}}^{a+1}c_1(\mathcal{E}_d)^b c_2(\mathcal{E}_d)^c)\in A^{a+b+2c}(\mathscr{U}(2,d,2)) \text{ for } a\geq -1, b,c\geq 0.$$
 \end{defi*}
\noindent Furthermore, the classes
$$\Lambda (m,n,l):= d_{\pi_\mathscr{U}}(K_{\pi_\mathscr{U}}^{\otimes m}\otimes {\det\e_d}^{\otimes n}\otimes \e_d^{\otimes l})\in Pic(\mathscr{U}(2,d,2))$$
 as defined in \cite{fringuelli}, can easily be expressed in terms of the $\kappa$ classes by a straightforward application of the Grothendieck-Riemann-Roch Theorem.\\
 We have the following description of the Picard groups of $\mathscr{U}(2,d,2)$(and $\mathscr{U}(r,d,g)$ in general for $r\geq 1, g\geq 2$) in terms of the $\Lambda$-classes in \cite[Theorem A, A.1, A.2, B]{fringuelli} and \cite[Theorem A, B]{melo}. 
\begin{theorem}\label{fringuelli-main}
	The Picard groups of $\mathscr{U}(2,d,2)$ are generated by $\Lambda(1,0,0),\Lambda(1,1,0),\Lambda(0,1,0),\Lambda(0,0,1)$ and the Picard groups of $\mathcal{U}(2,d,2)$ are generated by $\Lambda(1,0,0),\Xi,\T $ with the unique relation 
	$$\Lambda(1,0,0)^{10} = \mo$$ 
	where $$\Xi =\Lambda(0,1,0)^{{(d+1)}/{n_{2,d-1}}} \otimes \Lambda(1,1,0)^{-{(d-1)}/{n_{2,d-1}}} , \T= \Lambda(0,0,1)^{2n_{2,d-1}/{n_{2,d}}}\otimes \Lambda(0,1,0)^\alpha\otimes \Lambda(1,1,0)^\beta$$ 
	and $\alpha,\beta$ are an integral solution to $\alpha(d-1)+\beta(d+1) = -\dfrac{n_{2,d-1}(d-2)}{n_{2,d}} , n_{2,m}=gcd(m,2)\forall m\in\Z$.
\end{theorem}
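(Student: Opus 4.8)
Since the statement is quoted from \cite{fringuelli} and \cite{melo}, the proof I would present reorganizes the arguments there. The plan is to work with the forgetful morphism $u\colon\mathscr{U}(2,d,2)\to\mathcal{M}_2$ and the low-degree exact sequence coming from the Leray spectral sequence for $\mathbb{G}_m$ along $u$. The fibers of $u$ are the moduli stacks of semistable rank $2$, degree $d$ bundles on a fixed smooth genus $2$ curve: they are smooth, connected, and satisfy $H^0(\mathcal{O})=\mathbb{C}$ (the good moduli space of each fiber is a projective connected variety), so $R^0u_*\mathbb{G}_m=\mathbb{G}_m$ and one obtains the exact sequence
$$0\to\operatorname{Pic}(\mathcal{M}_2)\xrightarrow{\,u^*\,}\operatorname{Pic}(\mathscr{U}(2,d,2))\to\operatorname{Pic}\!\big(\mathscr{U}(2,d,2)/\mathcal{M}_2\big)\to\operatorname{Br}(\mathcal{M}_2).$$
Here $\operatorname{Pic}(\mathcal{M}_2)\cong\mathbb{Z}/10$, generated by the Hodge class $\lambda$, by Vistoli's computation \cite{vis-m2}; and Grothendieck--Riemann--Roch applied to $R\pi_{\mathscr{U}*}K_{\pi_\mathscr{U}}$ (with $\pi_*\omega=\mathbb{E}$, $R^1\pi_*\omega=\mathcal{O}$) identifies $\Lambda(1,0,0)=d_{\pi_\mathscr{U}}(K_{\pi_\mathscr{U}})$ with $u^*\lambda$, so the relation $\Lambda(1,0,0)^{10}=\mathcal{O}$ is automatic.

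The substantive ingredient is the relative Picard group $\operatorname{Pic}(\mathscr{U}(2,d,2)/\mathcal{M}_2)=H^0(\mathcal{M}_2,R^1u_*\mathbb{G}_m)$. I would analyze it via the determinant morphism $\det\colon\mathrm{Bun}^{ss}_{2,d}(C)\to\mathrm{Pic}^d(C)$ on a fixed curve $C$: the Picard group of a fixed-determinant fiber is $\mathbb{Z}$, generated by the determinant-of-cohomology (theta) line bundle — the Dr\'ezet--Narasimhan computation, with the stacky refinements of Kumar--Narasimhan and Beauville--Laszlo--Sorger — while $\operatorname{Pic}(\mathrm{Pic}^d(C))$ contributes the $B\mathbb{G}_m$-weight, the theta class of the Jacobian, and a priori translation and dual-abelian-variety classes. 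Passing from a single curve to the universal curve over $\mathcal{M}_2$, a Franchetta-type vanishing — this is exactly \cite[Theorem A, B]{melo}, and it is consistent with Theorem~\ref{C} of the present paper — kills all of the latter except the Jacobian theta. One concludes that $\operatorname{Pic}(\mathscr{U}(2,d,2)/\mathcal{M}_2)$ is free of rank $3$, with generators realized by the tautological classes $\Lambda(0,0,1)$ (the $B\mathbb{G}_m$-weight), $\Lambda(0,1,0)$ or $\Lambda(1,1,0)$ (Jacobian theta), and a suitable combination of these ($SL_2$-theta); the denominators $n_{2,d},n_{2,d-1}$ appear because the $SL_2$- and Jacobian-theta generators differ from the determinant-of-cohomology classes by precisely these divisibilities, again by Dr\'ezet--Narasimhan. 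Finally, since the Poincar\'e bundle $\mathcal{E}_d$ is globally defined on $\mathscr{U}(2,d,2)$, all four classes $\Lambda(1,0,0),\Lambda(1,1,0),\Lambda(0,1,0),\Lambda(0,0,1)$ lift to $\mathscr{U}(2,d,2)$, so the map $\operatorname{Pic}(\mathscr{U}(2,d,2))\to\operatorname{Pic}(\mathscr{U}(2,d,2)/\mathcal{M}_2)$ is surjective; combined with $u^*\operatorname{Pic}(\mathcal{M}_2)=\langle\Lambda(1,0,0)\rangle$ this gives the first assertion.

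For $\mathcal{U}(2,d,2)$ I would use that a line bundle on $\mathscr{U}(2,d,2)$ descends precisely when it is invariant under the twists $\mathcal{E}_d\mapsto\mathcal{E}_d\otimes\pi_\mathscr{U}^*M$; computing with GRR how $\Lambda(m,n,l)$ transforms under such a twist shows that imposing invariance for all line bundles $M$ on the base cuts the rank-$3$ free part down to a rank-$2$ sublattice, whose saturation inside $\operatorname{Pic}(\mathscr{U}(2,d,2))$ is spanned by the classes $\Xi$ and $\T$ of the stated form. Together with $\Lambda(1,0,0)$ these generate $\operatorname{Pic}(\mathcal{U}(2,d,2))$, and $\Lambda(1,0,0)^{10}=\mathcal{O}$ is the only relation: $\Xi$ and $\T$ span a free rank-$2$ group — detected by restricting to a single fiber, where the $SL_2$- and Jacobian-theta classes are independent and of infinite order — while $\lambda$ supplies the unique torsion factor. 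The step I expect to be the main obstacle is the precise determination of this lattice: establishing the Franchetta-type vanishing in families and tracking the Dr\'ezet--Narasimhan divisibilities carefully enough to pin down the exponents $(d\pm1)/n_{2,d-1}$ and $2n_{2,d-1}/n_{2,d}$, as opposed to the comparatively formal Leray and descent bookkeeping.
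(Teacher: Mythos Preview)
The paper does not supply its own proof of this theorem: it is stated purely as background, attributed to \cite[Theorem A, A.1, A.2, B]{fringuelli} and \cite[Theorem A, B]{melo}, and no argument is given. Your outline is a faithful sketch of the strategy those references use --- Leray along $\mathscr{U}(2,d,2)\to\mathcal{M}_2$, Vistoli's $\operatorname{Pic}(\mathcal{M}_2)=\mathbb{Z}/10$, Dr\'ezet--Narasimhan on the fixed-determinant fibers, a Franchetta-type argument for the relative Jacobian part, and then the descent/weight computation for the rigidification $\mathcal{U}(2,d,2)$ --- so there is nothing to compare against in this paper beyond confirming the citation.
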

\noindent The Chow rings of $ \mathscr{J}_{d,g}:= \mathscr{U}(1,d,g)|_{\mathcal{H}_g}, J_{d,g};= \mathcal{U}(1,d,g)|_{\mathcal{H}_g}$, as stated below, were calculated in \cite[Theorem 1.1]{larson-24}. 
\begin{theorem}
	\label{larson-main}
The Chow ring of $\mathscr{J}_{d,g}$ is given by 
$$A^*(\mathscr{J}_{d,g}) = \dfrac{\Q[\kappa_{0,1,0},\kappa_{-1,2,0}]}{<(d\kappa_{0,1,0}-(g-1)\kappa_{-1,2,0})^{g+1}>} \quad \forall g\geq 2,d\in \Z.$$
Furthermore, the Chow ring of its rigidification $J_{g}^d$ is a subring of $A^*(\mathscr{J}_{d,g})$ given by 
$$A^*(J_{g}^d)=\dfrac{\Q[u]}{<u^{g+1}>}\hr A^*(\mathscr{J}_{d,g}); u\mapsto d\kappa_{0,1,0} - (g-1)\kappa_{-1,2,0}.$$
\end{theorem}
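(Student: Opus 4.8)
The plan is to reduce the whole computation to the Chow ring of the relative Jacobian over a configuration space of points on $\p^1$, exactly in the spirit of the cover $M_*\to M_2$ constructed in \S\ref{cover}, and then to read off the presentation from a symmetric-product calculation; the genus $2$ instance of this plan is what the body of the present paper carries out, so I describe the general-$g$ version.

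\emph{Reduction to the hyperelliptic cover.} Up to a $\mu_2$-gerbe, which is invisible to rational Chow groups, $\mathcal{H}_g$ is the quotient stack $[B/S_{2g+2}]$ with $B:=\mathcal{M}_{0,2g+2}$; generalizing the construction of $\mathscr{C}_*$, one obtains over $B$ the universal hyperelliptic curve $\widetilde{\mathcal{C}}\to B$ as a double cover of $\p^1\times B$ branched along the $2g+2$ tautological sections, together with its Weierstrass sections $\s_1,\dots,\s_{2g+2}$. Since $A^*(\mathcal{M}_{0,n})=\Q$ we get $A^*(B)=\Q$, and hence $A^*(\mathcal{H}_g)=\Q$. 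Writing $\widetilde J^d\to B$ for the pullback of the relative Jacobian $J_g^d$, Remark \ref{rem1} gives $A^*(J_g^d)=A^*(\widetilde J^d)^{S_{2g+2}}$, and likewise for the $\mathbb{G}_m$-gerbe $\mathscr{J}_{d,g}$ over it, so it suffices to compute $A^*(\widetilde J^d)$ equivariantly. Twisting by $\mo(\s_1)$ gives isomorphisms $\widetilde J^d\simeq\widetilde J^{d+1}$ over $B$, so one may fix a single convenient $d$ (for instance $d$ large, or $d=g-1$), recording how the divisor classes transform under these twists so that the final answer can be stated degree-equivariantly. Finally, because $K_{\widetilde{\mathcal{C}}/B}\simeq\mo\big(2(g-1)\s_1\big)$, the section $(g-1)\s_1$ is a relative theta characteristic, so $\widetilde J^d$ carries a symmetric relative theta divisor $\T_d$.

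\emph{Core computation.} The Abel--Jacobi map $a_d:\Sym^d_B\widetilde{\mathcal{C}}\to\widetilde J^d$ is, for $d$ large, the projective bundle $\p\big((\mathrm{pr}_2)_*\mathcal{P}_d\big)$ of a Poincar\'e bundle $\mathcal{P}_d$, and for small $d$ a blow-up of $\widetilde J^d$ along a Brill--Noether locus --- in genus $2$, $\Sym^2_B\widetilde{\mathcal{C}}$ is the blow-up of $\widetilde J^2$ along the relative canonical section and $\widetilde{\mathcal{C}}\hookrightarrow\widetilde J^1$ is the relative theta divisor, which is precisely the input to Theorem \ref{C}. On the other hand $\Sym^d_B\widetilde{\mathcal{C}}$ can be analysed directly through the double cover $\widetilde{\mathcal{C}}\to\p^1\times B$: the relative symmetric powers of $\p^1\times B$ are projective bundles over $B$, hence have Chow rings $\Q[h]/(h^{N+1})$, and $\Sym^d_B\widetilde{\mathcal{C}}$ is built from these by a finite construction governed by the ramification sections $\s_i$. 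Comparing the two descriptions --- applying Theorem \ref{blowup-chow} in the blow-up case, or splitting off the base of the projective bundle in the large-degree case --- yields $A^*(\widetilde J^d)=\Q[\T_d]/(\T_d^{g+1})$. Taking $S_{2g+2}$-invariants, and using that the relevant symmetric combination of the $\s_i$ is already $S_{2g+2}$-fixed, gives $A^*(J_g^d)=\Q[u]/(u^{g+1})$; passing from the rigidification to the $\mathbb{G}_m$-gerbe $\mathscr{J}_{d,g}$ adds exactly one free polynomial generator, producing the two-generator presentation of $A^*(\mathscr{J}_{d,g})$. Rewriting in terms of $\ka_{0,1,0},\ka_{-1,2,0}$ and applying Grothendieck--Riemann--Roch to $\e_d$ (as in \S\ref{tautological-intro}) identifies the two natural generators and shows that the theta class is the degree-twist-invariant combination $u=d\,\ka_{0,1,0}-(g-1)\,\ka_{-1,2,0}$, giving the stated presentations.

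\emph{Main obstacle.} The heart of the argument is showing that $A^*(\widetilde J^d)$ contains no classes beyond the powers of $\T_d$ and no relations in degrees $<g+1$. This is exactly where hyperellipticity is indispensable: a general universal Jacobian has a far larger rational Chow ring, and it is only the presentation of $\widetilde{\mathcal{C}}$ as a double cover of the Chow-trivial base $\p^1\times B$ that collapses the symmetric-product computation. Carrying out this collapse uniformly in $d$, and bookkeeping the $S_{2g+2}$-action so that everything descends cleanly, is the technical core; the genus $2$ case is worked out explicitly in the present paper.
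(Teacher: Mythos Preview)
This theorem is not proved in the paper; it is quoted from \cite[Theorem~1.1]{larson-24} as background material, so there is no proof here to compare against. What the paper \emph{does} prove is the $g=2$ special case, in Theorem~\ref{t-s6}, by a route quite different from the one you sketch: instead of symmetric products and Abel--Jacobi maps, the paper constructs explicit plane-curve models of pointed genus-$2$ curves (Lemmas~\ref{c1} and~\ref{chow-2}) to show that certain open strata of $\C_*$ and $J_*(2)$ have trivial rational Chow ring, then applies excision, the $S_6$-action, and Beauville's Fourier decomposition on the abelian scheme $J_*(2)\to M_*$ to conclude that $A^*(J_*(d))\simeq\Q[\T_d]/(\T_d^3)$.

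Your proposed route via $\Sym^d_B\widetilde{\mathcal{C}}$ is a plausible alternative strategy, closer in spirit to how one often computes $A^*(J(C))$ for a fixed curve, but as written it is a plan rather than a proof. The step you label the ``core computation'' --- that $\Sym^d_B\widetilde{\mathcal{C}}$ ``is built from [projective bundles over $B$] by a finite construction governed by the ramification sections $\s_i$,'' and that comparing with the blow-up/projective-bundle description of the Abel--Jacobi map yields $A^*(\widetilde J^d)=\Q[\T_d]/(\T_d^{g+1})$ --- is precisely the content of the theorem, and it is asserted rather than argued. You correctly flag ruling out exotic classes as the main obstacle, but neither the double-cover structure nor the Abel--Jacobi comparison supplies that vanishing on its own; in the $g=2$ case the paper needs the concrete affine parametrizations of Lemmas~\ref{c1}--\ref{chow-2} to carry out exactly this step, and your sketch offers no substitute for them in higher genus.
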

\noindent 
 The following result about $\nu_{r,d}^*$ shall prove useful in \S\ref{tautological}.
\begin{theorem}\label{pullback-nu}
	The pullback map $\nu_{r,d}^*: A^*(\mathcal{U}(r,d,g))\ri A^*(\mathscr{U}(r,d,g))$ is injective. That is, $\nu_{r,d}^*$ induces an isomorphism onto its image, given by $A^*(\mathscr{U}(r,d,g))^{\mathcal{B}\mathbb{G}_m}$.
\end{theorem}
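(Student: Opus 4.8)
The plan is to realize $\nu_{r,d}$ as a $\mathbb{G}_m$-gerbe, rigidify it once more so as to compare it with a product with $\mathcal{B}\mathbb{G}_m$, and then use that a gerbe banded by a finite group does not affect rational Chow. First I would pin down the geometry of $\nu_{r,d}$. Comparing the two moduli functors, $\mathscr{U}(r,d,g)$ carries the action of the group stack $\mathcal{B}\mathbb{G}_m$ that twists a Poincar\'e bundle by a line bundle from the base, $(L,\mathcal{E}_S)\mapsto\mathcal{E}_S\otimes\pi_S^*L$, and $\mathcal{U}(r,d,g)$ is precisely the rigidification of $\mathscr{U}(r,d,g)$ along the induced central subgroup $\mathbb{G}_m\hr\operatorname{Aut}$ of fibrewise scalars of $\mathcal{E}_S$. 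Since that $\mathbb{G}_m$ is central in the automorphism group of \emph{every} object — including over the strictly semistable locus, where the automorphism groups are strictly larger — $\nu_{r,d}\colon\mathscr{U}(r,d,g)\ri\mathcal{U}(r,d,g)$ is a $\mathbb{G}_m$-gerbe over all of $\mathcal{U}(r,d,g)$. Hence $\nu_{r,d}^*$ visibly factors through the subring $A^*(\mathscr{U}(r,d,g))^{\mathcal{B}\mathbb{G}_m}$ of weight-zero classes, and it remains to show $\nu_{r,d}^*$ is injective with image all of this subring.

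Next I would exhibit a line bundle of nonzero gerbe-weight on $\mathscr{U}(r,d,g)$. The scalar $\mathbb{G}_m$ acts on $\mathcal{E}_d$ with weight $1$, hence on $\det\mathcal{E}_d$ with weight $r$; since taking determinant of cohomology multiplies weights by the fibrewise Euler characteristic, the $\Lambda$-classes $\Lambda(0,1,0)=d_{\pi_{\mathscr{U}}}(\det\mathcal{E}_d)$ and $\Lambda(1,1,0)=d_{\pi_{\mathscr{U}}}(K_{\pi_{\mathscr{U}}}\otimes\det\mathcal{E}_d)$ of Theorem \ref{fringuelli-main} have weights $r(d+1-g)$ and $r(d+g-1)$. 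For $g\ge2$ these cannot vanish simultaneously, so some $\Lambda$-class $M$ has weight $n\ne0$, and replacing $M$ by its dual we may take $n\ge1$. The classifying morphism of $M$ together with $\nu_{r,d}$ gives $\Psi\colon\mathscr{U}(r,d,g)\ri\mathcal{U}(r,d,g)\times\mathcal{B}\mathbb{G}_m$. On automorphism group schemes $\Psi$ is the canonical surjection onto the part coming from $\mathcal{U}(r,d,g)$ and sends the gerbe-$\mathbb{G}_m$ onto the $\mathcal{B}\mathbb{G}_m$-factor via $z\mapsto z^{n}$; thus its relative inertia is $\mu_n$, i.e.\ $\Psi$ is a $\mu_n$-gerbe. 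Since twisting $\mathcal{E}_d$ by $\pi^*L$ twists $M$ by $L^{\otimes n}$, the $\mathcal{B}\mathbb{G}_m$-action on $\mathscr{U}(r,d,g)$ moreover corresponds under $\Psi$ to the translation action on the second factor precomposed with $[n]\colon\mathcal{B}\mathbb{G}_m\ri\mathcal{B}\mathbb{G}_m$, which has the same orbits as plain translation.

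The proof then concludes by invoking that a $\mu_n$-gerbe is invisible to rational Chow groups, so $\Psi^*\colon A^*(\mathcal{U}(r,d,g)\times\mathcal{B}\mathbb{G}_m)\ri A^*(\mathscr{U}(r,d,g))$ is an isomorphism. Under the standard identification $A^*(\mathcal{U}(r,d,g)\times\mathcal{B}\mathbb{G}_m)=A^*(\mathcal{U}(r,d,g))[t]$, with $t$ the hyperplane class of $\mathcal{B}\mathbb{G}_m$, the subring $A^*(\mathcal{U}(r,d,g))$ is exactly the part pulled back from the $\mathcal{B}\mathbb{G}_m$-quotient, and $\Psi^*$ restricts on it to $\nu_{r,d}^*$; this proves injectivity of $\nu_{r,d}^*$ and identifies its image with $A^*(\mathscr{U}(r,d,g))^{\mathcal{B}\mathbb{G}_m}$. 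Injectivity alone can also be seen more directly: the complement of the zero section in the total space of $M$ is again a $\mu_n$-gerbe over $\mathcal{U}(r,d,g)$, and restriction to it followed by the inverse of that gerbe's Chow isomorphism provides a one-sided inverse of $\nu_{r,d}^*$.

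I expect the main obstacle to be the last step, namely controlling rational Chow under the $\mu_n$-gerbe $\Psi$ whose base $\mathcal{U}(r,d,g)\times\mathcal{B}\mathbb{G}_m$ is an Artin (not Deligne--Mumford) stack, so that the ``rational Chow of a DM stack equals that of its coarse space'' results used elsewhere do not apply verbatim. I would handle this \'etale-locally on $\mathcal{U}(r,d,g)$ — where $\Psi$ becomes a product with $B\mu_n$, using $A^*(B\mu_n)_{\Q}=\Q$ and the projective-bundle formula for the $\mathcal{B}\mathbb{G}_m$-factor — and then patch via excision, or else push the whole picture down to the good moduli space $U(r,d,g)$, over which the residual gerbe is an honest $\mu_n$-gerbe of schemes. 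A secondary point requiring care is the verification that $\nu_{r,d}$ is a gerbe over all of $\mathcal{U}(r,d,g)$ rather than just over its stable locus, which is exactly where centrality of the scalar $\mathbb{G}_m$ among all automorphisms enters.
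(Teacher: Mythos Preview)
Your approach is correct but genuinely different from the paper's. The paper gives an elementary argument based solely on the existence of (non-canonical) lifts of test morphisms: any $t\colon S\to\mathcal{U}(r,d,g)$ from a scheme lifts to $\mathscr{U}(r,d,g)$ by choosing a representative family, so a class in $\ker\nu_{r,d}^*$ pulls back to zero along every such $t$ and hence vanishes; conversely, a $\mathcal{B}\mathbb{G}_m$-invariant class $W$ on $\mathscr{U}$ has well-defined pullbacks along all such $t$ (independent of the chosen lift, precisely by invariance), and this compatible system of pullbacks is the datum of a class on $\mathcal{U}$. No gerbe trivialization, no auxiliary line bundle, no comparison with a product is invoked. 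Your route via a weight-$n$ $\Lambda$-class and the resulting $\mu_n$-gerbe $\Psi\colon\mathscr{U}\to\mathcal{U}\times\mathcal{B}\mathbb{G}_m$ is more structural: it identifies $A^*(\mathscr{U})$ with $A^*(\mathcal{U})[t]$ outright, from which both injectivity and the description of the image as the degree-zero part drop out. The trade-off is that you must justify the Chow isomorphism for a $\mu_n$-gerbe over the Artin stack $\mathcal{U}\times\mathcal{B}\mathbb{G}_m$, which (as you correctly flag) is not covered by the Deligne--Mumford results the paper cites; the paper's argument sidesteps this issue entirely by never comparing $\mathscr{U}$ globally to a product. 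Your proof also yields slightly more than the statement asks, namely the full $A^*(\mathcal{U})[t]$-structure on $A^*(\mathscr{U})$.
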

\begin{proof}
% 	For any scheme $S$, let $t_{\e_S}:S\ri \mathscr{U}(r,d,g)$ be the morphism associated to the family 
% 	$$\begin{tikzcd}
% 	\e_S\arrow[d]\\
% 	\C_S\arrow[d,"\pi_S"]\\
% 	S
% \end{tikzcd}$$
	We start by showig that $\nu_{r,d}^*$ is injective. 
	Assuming the contrary, let $Z\in ker(\nu_{r,d}^*)\setminus 0$ be a non-trivial cycle in its kernel. 
	Thus, $\exists t:S\ri \mathcal{U}(r,d,g)$ for a scheme $S$ such that $t^*Z\neq 0 $. \\
The morphism $t$ is given by a family 
$$\begin{tikzcd}
	\e_S\arrow[d]\\
	\C_S\arrow[d,"\pi_S"]\\
	S
\end{tikzcd}$$
defined upto twisting by an element in $\pi_S^*Pic(S)$.\\
For any choice of such a family, we can define a lift $t_{\e_S}:S\ri\mathscr{U}(r,d,g)$ of $t$ via $\nu_{r,d}$.\\
Consequently, we have the desired contradiction by $$0 = t_{\e_S}^*\nu_{r,d}^*Z = t^*Z\neq 0.$$
It remains to show that
$$A^*(\mathscr{U}(r,d,g))^{\mathcal{B}\mathbb{G}_m} = Im(\nu_{r,d}^*).$$
Containment of $Im(\nu_{r,d}^*)$ follows as an immediate consequence of the isomorphism  $\mathscr{U}(r,d,g)/\mathcal{B}\mathbb{G}_m \simeq \mathscr{U}(r,d,g)\hollowslash \mathbb{G}_m  = \mathcal{U}(r,d,g)$ by \cite[\S C.2]{GW-stacks}.\\
To show the converse, let $W\in A^*(\mathscr{U}(r,d,g))^{\mathcal{B}\mathbb{G}_m}$ be a cycle. We show that $W\in Im(\nu_{r,d}^*)$.\\
In order to do that, it is enough to define cycles $f^*W\in A^*(S)$ as pullbacks over any morphism $f:S\ri \mathcal{U}(r,d,g)$ such that these pullbacks satisfy the necessary functorial properties.\\
As established above, any such morphism $f:S\ri \mathcal{U}(r,d,g)$ has a lift $f_{\e_S}:S\ri \mathscr{U}(r,d,g)$. We define $f^*W := f_{\e_S}^* W\in A^*(S)$.\\
We show that this definition of $f^*W$ is 
\begin{enumerate}
	\item Independent of the choice of lift, $f_{\e_S}$,
	\item Satisfies the necessary functorial properties.
\end{enumerate}
We have $(1)\implies (2)$ since $W\in A^*(\mathscr{U}(r,d,g))$ and thus, the pullbacks $f_{\e_S}^*W$ satisfy the necessary functorial properties.\\
In order to show independence of chosen lifts, we use the $\mathcal{B}\mathbb{G}_m$-action on $\mathscr{U}(r,d,g)$.
The orbit of an object $(\e_S\ri\C_S\xrightarrow{\pi_S} S)$ is given by $\{(\e_S\otimes\pi_S^*\lb_S\ri\C_S\xrightarrow{\pi_S} S)|\lb_S\in Pic(S)\}$.
Since $W\in A^*(\mathscr{U}(r,d,g))^{\mathcal{B}\mathbb{G}_m}$, we have 
$$f_{\e_S}^*W = f_{\e_S\otimes \pi_S^*\lb_S}^* W\quad \forall \lb_S\in Pic(S).$$
Now, $f_{\e_S}:S\ri \mathscr{U}(r,d,g)$ is a lift of $f:S\ri \mathcal{U}(r,d,g)$ if and only if we have the same for $f_{\e_S\otimes \pi_S^*\lb_S}\forall \lb_S\in Pic(S)$, establishing $(1)$. 
\end{proof}
	\section{Even degree}\label{even-degree}\noindent
%  Let $J_2^d$ be the (rigidified) universal Jacobian stack of degree $d$ line bundles on genus $2$ curves.\\
	% We apply \cite[Theorem 2]{nar-ram} and use it to understand the geometry of $U{(2,2d,2)}$.\\
% 	We begin with a short summary of notations in \cite{nar-ram}. Let $X$ be a complete, non-singular and integral algebraic curve of genus $2$, let $S$ be the moduli space of S-equivalence classes of semistable bundles of rank $2$ with trivial determinant over $X$, and let $J^d$ be the Jacobian of degree $d$ line bundles on $X$.
% 	 Let $\Theta_X$ be the divisor on $J^1$ defined as the image of the Abel-Jacobi embedding of $X$ into $J^1$. For a semistable bundle $W$ of rank $2$ and trivial determinant, the set $C_W = \{ \xi | H^0(X, \xi\otimes W)\neq 0\}$ is the support of a unique divisor $D_W$ in $J^1$ linearly equivalent to $2\Theta_X$. The divisor $D_W$ depends only on the S-equivalence class of $W$.
% 	This defines a map $D: S\rightarrow |2 {\Theta_X}|$.\\
%  The twisting map $J^d\times SU_X(2)\rightarrow U_X(2,2d)$ can be identified with quotienting by the diagonal action of the group $\Gamma_X\hr J^0$ of $2$-torsion points in $J^0$, where the action of $\Gamma_X$ on $|2\Theta_X|$ is defined by $j.C_{W} = C_{W\otimes j}\forall j\in \Gamma_X$.\\
 We consider the moduli space of semistable rank $2$ vector bundles with determinant of relative degree $2d$ for the family $\pi$ and call it $\tilde{U}(2,2d,2)$. 
 Let $|2\T_*|_\pi := \p_{M_*}(\pi_*L_{\Theta_*}^2)$ be the relative linear system. 
	We have a unique component of the relative Picard scheme of $\pi$, for every relative degree $d\in \mathbb{Z}$, rigified along the section $\s_1$ in the sense of \cite[Definition 2.8]{kleiman}.
	 We shall refer to this component as $J_*(d)$ from hereon. 
We represent the $2$-torsion points in the abelian scheme $J_*(0)\ri M_*$ by $\Gamma_*$.We have the following diagram
%   Using the finite induced map $q:\tilde{U}(2,2d,2)\ri U(2,2d,2)$, we show that $q^*$ is an isomorphism to obtain $A^*(U(2,2d,2))$.
	% A fundamental problem with this line of reasoning however, is the universal version of $\Gamma_X$, named $\Gamma$ ($2$-torsion points in $J_{2}^0$). The $\Gamma$-action on the universal Jacobian $J_{2}^d$ is not a finite group action, but an \'etale equivalence relation. Hence, Theorem \ref{finite} cannot be applied in this case.
	$$
	\begin{tikzcd}\label{diag_0}
		&\mid 2\Theta_*\mid_\pi\times_{M_*} J_*(d)\arrow[r, "/\Gamma"]\arrow[d]\arrow[rd]& \tilde{U}(2,2d,2)\arrow[r, "q"]&U(2,2d,2)&\\
		&\mid 2\Theta_*\mid_{\pi}\arrow[rd] &J_*(d)\arrow[d]&\\
		&(\Theta_* := )\quad\mathscr{C}_*\hookrightarrow J_*(1)\arrow[r,"\pi"]     &M_*\arrow[r,"/S_6"]\arrow[l, "\sigma_1\,\cdots\,\sigma_6 ",bend left]\arrow[l, , bend left=60]&M_2.
	\end{tikzcd}$$
	The induced morphism $\Gamma_*\ri M_*$, is apriori, a degree $16$ cover.
	The sections $\{\s_i\}_{1\leq i\leq 6}$ define a trivialization of this cover since $2$-torsion points of a genus $2$ curve are given by the pairwise differences of its Weierstrass points. \\
	Consequently, $\Gamma_*\simeq \mu_2^{\oplus 4}\times M_*$, where we identify $\mu_2^{\oplus 4}$ with its associated group variety.
	The \'etale equivalence relation defined using twisting by the $2$-torsion points $\Gamma_*\ri M_*$ is therefore equivalent to the finite-group action of $\mu_2^{\oplus 4}$ on $| 2\Theta_*|_\pi\times_{M_*} J_*(2d)$ and is uniquely defined by the isomorphism $\Gamma_*\simeq \mu_2^{\oplus 4}\times M_*$.\\
	It is worth noting that there is no canonical choice for the isomorphism $\Gamma_*\simeq \mu_2^{\oplus 4}\times M_*$. We shall fix one such isomorphism and refer to the corresponding group action as $\Gamma$ from hereon. 
	\begin{rem}
A point worth noting is that the splitting of $\Gamma_*$ as $\mu_2^{\oplus 4}\times M_*$ is not naturally defined. It is obtained using the fact that $\Gamma_*\hookrightarrow J_*(0)$ is a degree $16$ cover of $M_*$,
 with $16$ mutually disjoint sections given by $\{\mo_{\C_*}(\sigma_i-\sigma_j)\}_{1\leq j< i\leq 6}$ and the identity section of $J_*(0)
 \ri M_*$. 
	\end{rem}
	% The action of $\Gamma$ on the product $|2\Theta_*|_\pi\times_{M_*}J_*(2d)$ is a diagonal action(over $M_*$). Where the action of $\Gamma_*$ on $J_*(2d)$ is restriction of the action of $J_*(0)$ and action of $\Gamma_*$ on $|2\Theta_*|_\pi$ is defined using Theorem \ref{isom}.
	\noindent For the following remark and thereafter, we denote the universal rigidified Picard stack of degree $d$ over smooth genus $g$ curves by $J^d_g$. 
	\begin{rem}\label{j*-s6}
The $S_6$-action on $\C_*$, equivariance of $\pi$ and universal property of $J_*(d)$ induce an $S_6$-action on $J_*(d)$ for all $d\in \mathbb{Z}$.
Furthermore, the map $J_*(1)\ri J_{2}^1$ is invariant under this action.\\
It is worth noting that the induced map $J_*(d)/S_6\ri J^d_{2}$ is finite and generically a double cover due to the involution on $\C_*$. \\
	The natural embedding $\C_*\hr J_*(d)$ for $d$ odd is invariant under this action and so is the image of the map $\C_*\ri J_*(2)$ given by the divisor of double points. 
	In fact, they are pullbacks of cycles in $J_{2}^d$ under the map $J_*(d)\ri J_{2}^d$.\\
	Furthermore, the $S_6$-action on $J_*(1)$ further induces an action on $L_{\T_*}^2$ and consequently on $|2\T_*|_\pi$.
	\end{rem}	
	\noindent
% These remarks shall prove useful in the computation of $A^*(J_{d,2}), A^*(U(2,d,2))$ from $A^*(J_*(d)), A^*(\tilde{U}(2,3,2))$ respectively in later sections.
\subsection{The Universal Extension Space}\label{univ-ext-even}
We begin by extending the classical \cite[Theorem 3]{nar-ram} to the family $\pi$ and rephrasing the theorem in terms of the universal extension space as defined in \cite{shubham}.\\
In order to do that, we need to restate some definitions from \cite{nar-ram} in terms of the family $\pi$ using the diagram 
$$
\begin{tikzcd}
	J_*(1)&\C_*\times_{M_*}J_*(1)\arrow[r,"\tau"]\arrow[d,"p_C"]\arrow[l,"\pi_J"']&J_*(0)\times_{M_*}J_*(1)\arrow[d,"\mu"]\arrow[r,"\s"]&J_*(0)\times_{M_*}J_*(1)\\
&\C_*\arrow[r,"\T_*",hook]&J_*(1),&
\end{tikzcd}$$
where $\tau := (\pi_1-\pi_2, \pi_2)$ is defined using the canonical embedding $\T_*:\C_*\hr J_*(1)$, 
$\pi_J$ is the projection of $\C_*\times_{M_*}J_*(1)$ onto $J_*(1)$, $\s: J_*(0)\times_{M_*}J_*(1)\ri J_*(0)\times_{M_*}J_*(1)$ is the pullback of $[-1]:J_*(0)\ri J_*(0)$ to $J_*(1)$, 
and $\mu$ is the twisting morphism.\\
 Let $T:= Im(\tau)$ and $L_T$ denote the line bundle associated to $T$ on $J_*(0)\times_{M_*}J_*(1)$, and let $\xi_1$ be the Poincar\'e bundle on $\C_J:= \C_*\times_{M_*}J_*(1)$ rigidified along the section $\pi^*\s_1:J_*(1)\ri\C_J$.
\begin{lemma}\label{c_1}
	There is a line bundle $L_J$ on $J_*(1)$ such that $\tau^*\s^*L_T = \xi_1^{\otimes 2}\otimes \pi_J^*L_J$ with $c_1(L_J) = 4\T_*$.
\end{lemma}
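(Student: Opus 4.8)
The plan is to identify $\tau^*\sigma^*L_T$ as a line bundle on $\C_J = \C_*\times_{M_*}J_*(1)$ by computing its restriction to the two natural ``slices''—the fibers $\C_*\times\{[\mathcal{L}]\}$ over points of $J_*(1)$, and the slice $\{\sigma_1\}\times J_*(1)$—and then invoking the seesaw/see-saw principle to pin it down up to a pullback from $J_*(1)$. First I would unwind the definitions: $\tau = (\pi_1-\pi_2,\pi_2)$ followed by $\sigma$ (which is $[-1]$ on the $J_*(0)$-factor, relative to $J_*(1)$) sends a point $(x,[\mathcal{L}])\in\C_*\times_{M_*}J_*(1)$ to the point of $J_*(0)\times_{M_*}J_*(1)$ represented by $(\mathcal{O}(\mathcal{L}-x)$ interpreted in degree $0$, up to the rigidification; more precisely, the composite records the class of $x$ relative to $\mathcal{L}$ with a sign. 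The divisor $T = \operatorname{Im}(\tau)$ on $J_*(0)\times_{M_*}J_*(1)$ is, fiberwise over $M_*$, a translate of the theta divisor, so $\sigma^*L_T$ is again (fiberwise) a theta-type line bundle on the abelian scheme, and $\tau^*$ of it restricts on each curve fiber $\C_*\times\{[\mathcal{L}]\}$ to the line bundle attached to the degree-$d$ divisor $\tau^{-1}(\sigma^{-1}T)\cap(\C_*\times\{[\mathcal{L}]\})$.

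The key computation is that this restriction is $\xi_1^{\otimes 2}|_{\C_*\times\{[\mathcal{L}]\}}$, i.e. has degree $4$ on each genus-$2$ fiber. This should follow from the classical content of \cite[Theorem 3]{nar-ram} (or the underlying computation of the pullback of the theta divisor under a difference map on a genus $2$ curve): for a genus $2$ curve $C$ with a fixed degree $1$ line bundle, the map $C\to \operatorname{Jac}^0(C)$, $x\mapsto \mathcal{L}(-x)$, pulls the theta divisor back to a divisor of degree $2$, and the extra factor of $2$ comes from the squaring/$[-1]$-twist in the definition of $\sigma^*L_T$ (the theorem of the cube, so that $\sigma^*\Theta + \Theta \equiv 2\cdot(\text{translate of }\Theta)$ type relation produces the factor $2$, hence total degree $4$). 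So on fibers, $\tau^*\sigma^*L_T$ and $\xi_1^{\otimes 2}$ agree. Restricting instead to the section slice $\{\sigma_1\}\times J_*(1)\cong J_*(1)$: since $\xi_1$ is rigidified along $\pi^*\sigma_1$, the restriction of $\xi_1^{\otimes 2}$ there is trivial, and the restriction of $\tau^*\sigma^*L_T$ is $\sigma_1^*$ of a theta-type divisor on $J_*(0)\times_{M_*}J_*(1)$, which is a well-defined line bundle on $J_*(1)$ that I will \emph{call} $L_J$; this forces the seesaw comparison to give $\tau^*\sigma^*L_T = \xi_1^{\otimes 2}\otimes\pi_J^*L_J$ (the difference, being trivial on all curve fibers and—after the correction—on one section, is a pullback from $J_*(1)$; absorbing it into $L_J$).

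It then remains to show $c_1(L_J) = 4\Theta_*$ in $A^1(J_*(1))$. I would get this by a second seesaw/degree argument, now on $J_*(1)$ itself, or more efficiently by pushing the equality $\tau^*\sigma^*L_T = \xi_1^{\otimes 2}\otimes\pi_J^*L_J$ down along $\pi_J$ after intersecting with $K_{\pi_J}$, i.e. by applying $\pi_{J*}(K_{\pi_J}\cdot c_1(-))$ to both sides: the left side computes (via Lemma-type pushforward formulas and $\deg K_{\pi_J}|_{\text{fiber}} = 2$) a multiple of $\Theta_*$ determined by the fiberwise degree-$4$ computation, the term $\pi_{J*}(K_{\pi_J}\cdot \pi_J^*c_1(L_J)) = 2\,c_1(L_J)$ by projection formula, and the $\xi_1^{\otimes 2}$ term contributes a known correction (involving the rigidification and $\kappa$-type classes) that one checks cancels appropriately, leaving $c_1(L_J) = 4\Theta_*$. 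Alternatively, and perhaps cleaner, restrict the identity to a single curve fiber is not enough for $c_1$ on $J_*(1)$, so I would instead restrict to a slice $\C_*\hookrightarrow J_*(1)$ (the theta divisor itself, or the canonical image) and use that $\Theta_*\cdot\Theta_* $ and the known intersection numbers on the genus-$2$ Jacobian pin the coefficient; the coefficient $4$ matches the ``$2\Theta$ appears squared'' heuristic consistent with the factor-$2$ twist.

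The main obstacle I anticipate is the second half: getting the precise coefficient $c_1(L_J)=4\Theta_*$ rather than just ``$L_J$ is some multiple of $\Theta_*$.'' Fiberwise degree computations only control the restriction to curve fibers, which says nothing about the $J_*(1)$-direction; so one genuinely needs either (i) a careful theorem-of-the-cube bookkeeping on $J_*(0)\times_{M_*}J_*(1)$ to express $\sigma^*L_T$ explicitly in terms of the universal theta divisor and then pull back, tracking every normalization, or (ii) an intersection-number verification on the genus-$2$ Jacobian that nails the constant. I would lean on (i), using that $T$ is a fiberwise translate of $\Theta$ and that $\sigma$ acts as $[-1]$ on the $J_*(0)$ factor, so $\sigma^*L_T$ differs from $L_T$ (hence from the universal theta) by a controlled amount via the cube relation $[-1]^*\Theta \equiv \Theta$ up to $2$-torsion translates—this is exactly where the factor $4 = 2\cdot 2$ (one $2$ from the degree of the difference map, one from the cube/squaring) should transparently appear.
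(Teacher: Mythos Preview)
Your existence argument is essentially the paper's: both observe that $\tau^*\sigma^*L_T$ and $\xi_1^{\otimes 2}$ agree on the curve fibers over $J_*(1)$ (the paper cites \cite[Lemma~6.4]{nar-ram} for this), so the difference is $\pi_J^*L_J$ for some $L_J\in\operatorname{Pic}(J_*(1))$; and both identify $L_J$ as the restriction of $\tau^*\sigma^*L_T$ to the section $\sigma_1\times_{M_*}J_*(1)$, using that $\xi_1$ is rigidified there.

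Where you diverge is in computing $c_1(L_J)$. You propose three indirect routes (pushing forward against $K_{\pi_J}$, restricting to the $\Theta_*$-slice and matching intersection numbers, or cube-relation bookkeeping), and your heuristic for the constant $4$ is a product of two $2$'s from the difference map and a squaring. The paper's computation is much shorter and more transparent: it simply observes that the composite $\sigma\circ\tau$, when restricted to $\{\sigma_1\}\times J_*(1)$, pulls $T$ back to $[2]^*\Theta_*$ on $J_*(1)$. The coefficient $4$ then drops out immediately from the standard fact that $[2]^*$ acts by $2^2=4$ on the class of a symmetric divisor on an abelian scheme. Your approaches would eventually get there, but they obscure this one-line identification; in particular the $K_{\pi_J}$-pushforward route is circular unless you already control the $J_*(1)$-direction of $\tau^*\sigma^*L_T$, which is exactly what you are trying to determine. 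The missed step is just to unwind $\sigma\circ\tau|_{\{\sigma_1\}\times J_*(1)}$ explicitly and recognize the resulting map as multiplication by $2$ (up to a translate) on the abelian scheme.
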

\begin{proof}
	Existence of $L_J$ follows from the observation that $\tau^*\s^*L_T, \xi_1^{\otimes 2}$ have identical restrictions on fibers over $J_*(1)$ by \cite[Lemma 6.4]{nar-ram}.
	 To show $c_1(L_J) = 4\T_*$, we calculate $c_1(\tau^*\s^*L_T|_{\s_1\times_{M_*}J_*(1)})\in A^1(J_*(1))$.
	We have $$\tau^*\s^*L_T|_{\s_1\times_{M_*}J_*(1)} = [2]^*\T_*\implies c_1(\tau^*\s^*L_T|_{\s_1\times_{M_*}J_*(1)}) = 4\T_*\in A^1(J_*(1))$$ finishing the proof.
\end{proof}
	\begin{theorem}\label{isom}\normalfont
Let $\tilde{U}(2,\mathcal{O},2)$ be the moduli space of semistable rank $2$ bundles with trivial relative determinant for the family $\pi$, and let
$\mathcal{E} := R^1\pi_{J*}\xi_1^{-\otimes 2}$ be the universal extension bundle. We put $\p(\mathcal{E}) := Proj_{J_*(1)}(\Sym \mathcal{E}^\vee)$ to be its projectivization, and put $JU_*:=J_*(1)\times_{M_*}\tilde{U}(2,\mathcal{O},2)$ to get the diagram:\begin{equation}\label{diag_1}
		\begin{tikzcd}
			& JU_*\arrow[rd, "p_U", near start]\arrow[dd,"p_J"' , bend right = 60]& & \\
			\xi\arrow[d]&\p(\mathcal{E})\arrow[d]\arrow[u, hook]\arrow[r, "s"]\arrow[rr,"f" ,bend left =30]&\tilde{U}(2,\mathcal{O},2)\arrow[d, "\lambda"]\arrow[r, "D"] &\mid 2\Theta_*\mid_\pi \arrow[ld, "\lambda_\pi"]\\
			\C_J\arrow[r,"\pi_J"]&J_*(1)\arrow[r,"\pi"]&M_*&
		\end{tikzcd}\end{equation}
		where the morphisms $s,f,D$ are such that their restrictions to fibers over $M_*$ coincide with {\cite[Theorem 2]{nar-ram}}. Moreover, $D$ is an isomorphism.
	\end{theorem}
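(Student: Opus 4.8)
The plan is to reduce the statement to the classical result of \cite{nar-ram} fiberwise over $M_*$, and then upgrade the fiberwise isomorphisms to global morphisms of $M_*$-schemes by a combination of descent/base-change arguments and an application of Zariski's Main Theorem (Theorem \ref{zmt}). First I would recall the classical picture: over a fixed genus $2$ curve $C$ with a fixed degree $1$ line bundle, \cite[Theorem 2]{nar-ram} identifies the projectivized extension space $\mathbb{P}(\mathrm{Ext}^1(N^{-1},N)) = \mathbb{P}(H^1(C, N^{-2}))$ (where $N$ has degree $1$) with the moduli space of semistable rank $2$ bundles of trivial determinant, and further with the linear system $|2\Theta|$ on $\mathrm{Pic}^0(C)$. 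My $\mathcal{E} = R^1\pi_{J*}\xi_1^{-\otimes 2}$ is exactly the relative version of $H^1(C,N^{-2})$; by cohomology and base change (using that $\xi_1^{-\otimes 2}$ has relative degree $-2$, so $h^0 = 0$ and $h^1 = 2$ on every fiber) $\mathcal{E}$ is locally free of rank $2$ on $J_*(1)$, and its fiber over a point of $J_*(1)$ lying over $C\in M_*$ is canonically $H^1(C,N^{-2})$. Hence $\mathbb{P}(\mathcal{E})$ has the correct fibers over $M_*$.

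Next I would construct the three morphisms $s$, $f$, $D$. The cleanest route is to build $s$ (equivalently $f = \lambda_\pi \circ s$ up to the identification $D$) by exhibiting a family of semistable rank $2$ bundles on $\C_J \times_{J_*(1)} \mathbb{P}(\mathcal{E})$ parametrized by $\mathbb{P}(\mathcal{E})$: the tautological extension
\[
0 \to \xi_1 \boxtimes \mathcal{O} \to \mathcal{F} \to \xi_1^{-1}\boxtimes \mathcal{O}_{\mathbb{P}(\mathcal{E})}(1) \to 0
\]
classified by the tautological section of $\mathcal{E}^\vee \otimes \mathcal{O}_{\mathbb{P}(\mathcal{E})}(1) = \mathrm{Hom}$, pushed forward appropriately; its determinant is $\xi_1 \otimes \xi_1^{-1} \otimes \pi_J^*(\text{line bundle})$, which after a twist by a line bundle pulled back from $\mathbb{P}(\mathcal{E})$ can be normalized to have trivial relative determinant. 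By the universal property of $\tilde U(2,\mathcal{O},2)$ this family defines the morphism $s$. For $D$, I would use Lemma \ref{c_1}: the line bundle $\tau^*\sigma^*L_T = \xi_1^{\otimes 2}\otimes \pi_J^*L_J$ on $\C_J$ has relative degree $2$ over $J_*(1)$ and gives, after pushforward, the rank $2$ bundle whose projectivization is $|2\Theta_*|_\pi$; matching this with $\mathbb{P}(\mathcal{E})$ via the theta-duality of \cite[\S 6]{nar-ram} produces the candidate map $D$, and the compatibility with $\lambda$, $\lambda_\pi$ is built in by construction.

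Finally I would verify that $D$ (and the relevant parts of the diagram) is an isomorphism using Theorem \ref{zmt}. The target $|2\Theta_*|_\pi = \mathbb{P}_{M_*}(\pi_*L_{\Theta_*}^2)$ is a projective bundle over the smooth variety $M_*$, hence normal; the source $\tilde U(2,\mathcal{O},2)$ is connected (it fibers over the connected $M_*$ with connected fibers, each fiber being a $\mathbb{P}^3$ by Narasimhan–Ramanan); and $D$ is a bijection on closed points because on each fiber over $M_*$ it specializes to the classical isomorphism $\tilde U(2,\mathcal{O},2)_C \xrightarrow{\sim} |2\Theta_C|$ of \cite[Theorem 2]{nar-ram}. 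These three conditions give that $D$ is an isomorphism. The same ZMT argument, applied fiberwise against \cite[Theorem 2]{nar-ram}, shows $f$ is an isomorphism onto $\tilde U(2,\mathcal{O},2)$ away from the locus where the classical map fails to be an isomorphism — and here I expect the main obstacle: in the classical Narasimhan–Ramanan picture the map from $\mathbb{P}(\mathrm{Ext}^1)$ to the moduli space is a morphism but $s$ itself is only a birational morphism (it is an isomorphism for stable bundles arising as non-split extensions, and contracts/identifies loci corresponding to decomposable or split bundles), so $s$ is \emph{not} claimed to be an isomorphism — only $D$ is. Thus the delicate point is to state precisely what $s$ and $f$ are (morphisms with prescribed fiberwise behavior, not isomorphisms) and to check that the Poincaré-type bundle and the normalization of determinants glue correctly over all of $M_*$, using that $\C_*\to M_*$ genuinely exists as a family (from \S\ref{cover}) so that all the constructions of \cite{nar-ram}, which are a priori pointwise, can be carried out in the relative setting over $M_*$ — essentially a descent check that each classical construction is natural enough to globalize. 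Once that naturality is in place, the fiberwise results of \cite{nar-ram} plus Theorem \ref{zmt} close the argument.
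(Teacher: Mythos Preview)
Your overall strategy matches the paper's: construct $s$ via a universal extension (the paper invokes \cite[Remark 1]{shubham} for exactly the Poincar\'e bundle $\mathcal{P}$ on $\C_*\times_{M_*}\p(\mathcal{E})$, then twists by $\xi_1^{-1}$), construct $f$ via the exact sequence of \cite[Proposition 6.1]{nar-ram} identifying $\mathcal{F}^\vee\simeq L_{\T_*}^2\otimes L_J^\vee\otimes\mathcal{E}$ (this is where Lemma \ref{c_1} enters), and conclude with ZMT. A small slip: $\mathcal{E}=R^1\pi_{J*}\xi_1^{-\otimes 2}$ has rank $3$, not $2$ (Riemann--Roch on a genus $2$ curve gives $h^1=3$ for a degree $-2$ line bundle), so $\p(\mathcal{E})$ is a $\p^2$-bundle over $J_*(1)$ and sits inside $JU_*$ as a \emph{divisor}.

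The genuine gap is your construction of $D$. You say you would ``match $\p(\mathcal{E})$ with $|2\Theta_*|_\pi$ via theta-duality'' and build $D$ from a pushforward, but $\tilde{U}(2,\mathcal{O},2)$ is only a good moduli space, not fine: there is no universal bundle on it from which to read off a theta map. The paper resolves this by a see-saw argument that you do not anticipate. Concretely: the embedding $\p(\mathcal{E})\hookrightarrow JU_*$ (built from $s$ together with the structure map to $J_*(1)$) realizes $\p(\mathcal{E})$ as a Cartier divisor; one checks, using the fiberwise Narasimhan--Ramanan description, that the line bundle $\mathscr{L}=\mathcal{O}_{JU_*}(\p(\mathcal{E}))\otimes p_J^*L_{\Theta_*}^{-2}$ is trivial on each fiber of $p_U$, hence $\mathscr{L}\simeq p_U^*L$ for some $L\in\mathrm{Pic}(\tilde U(2,\mathcal{O},2))$. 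Pushing forward the tautological inclusion $\mathcal{O}\hookrightarrow\mathcal{O}_{JU_*}(\p(\mathcal{E}))$ along $p_U$ yields an injection $L^\vee\hookrightarrow \lambda^*(\pi_*L_{\Theta_*}^2)$, and \emph{this} injection is what defines $D:\tilde U(2,\mathcal{O},2)\to |2\Theta_*|_\pi$. Only after $D$ is produced in this way does the ZMT step apply. Your sketch does not produce a map out of $\tilde U(2,\mathcal{O},2)$ at all; filling this in is the substantive content of the proof.
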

	\begin{proof}\normalfont
		We start with constructing each of these morphisms. We have the Poincar\'e bundle $\mathcal{P}$ on $\C_*\times_{M_*}\p(\mathcal{E})$ given by \cite[Remark 1]{shubham} applied to 
		$$\xi_1^{\otimes 2}\rightarrow\C_*\times_{M_*}J_*(1).$$ 
		The bundle $\mathcal{P}\otimes (1\times p_J)^*\xi_1^{-1}$ defines the map $s: \p(\mathcal{E})\rightarrow \tilde{U}(2,\mathcal{O},2)$. 
		Put $\mathcal{F}:= \pi_{J*}(\tau^*L_T\otimes \tau^*\s^*L_T)\otimes L_{\T_*}^{-2}$. We use the following exact sequence in \cite[Proposition 6.1]{nar-ram} to define $f, D$:
		$$0\rightarrow \mathcal{F}^\vee\rightarrow\pi^* \pi_* L_{\Theta_*}^2\rightarrow L_{\Theta_*}^2\rightarrow 0.$$
 Using Lemma \ref{c_1}, we have 
		$$\mathcal{F}^\vee\simeq L_{\T_*}^2\otimes R^1\pi_{J*}(\tau^*\s^*L_T)^\vee\simeq L_{\T_*}^2\otimes L_J^\vee\otimes \mathcal{E}.$$
		The injection defines an embedding $\p(\mathcal{E})\simeq \p(\mathcal{F}^\vee)\hookrightarrow \p_{J_*(1)}(\pi^* \pi_* L_{\Theta_*}^2) = |2\Theta_*|_\pi\times_{M_*}J_*(1)$, which in turn defines $f$.\\
		The image of the induced morphism $\p(\mathcal{E})\hookrightarrow JU_*$ is a Cartier divisor, let $\mathscr{L} := \mathcal{O}_{JU_*}(\p(\mathcal{E}))=\mathcal{I}_{\p(\mathcal{E})/JU_*}^\vee$ be the line bundle determined by this divisor.
		From the proof of \cite[Theorem 2]{nar-ram}, the restrictions of $\mathscr{L}\otimes p_J^*L_{\Theta_*}^{-2}$ to the fibers of $p_U$ are trivial. 
		So, $\mathscr{L}\otimes p_J^*L_{\Theta_*}^{-2}\simeq p_U^*L$ for some $L\in Pic(\tilde{U}(2,\mathcal{O},2))$. 
		We will show that there is an injection $L^\vee\hookrightarrow\lambda^*(\pi_*L_{\Theta_*}^2)$ and use this injection to define $D$.\\
		We have the inclusion $\mathcal{O}_{JU_*}\hookrightarrow\mathscr{L}$ induced by the inclusion $\mathcal{I}_{P(\mathcal{E})/JU_*}\subset \mathcal{O}$. 
		Pushing forward this injection via $p_U$, we get $\mathcal{O}\hookrightarrow p_{U*}\mathcal{L}\simeq L\otimes p_{U*}p_J^*L_{\Theta_*}^{2}$. 
		Twisting this by $L^\vee$, we get the desired injection $L^\vee\hookrightarrow p_{U*}p_J^*L_{\Theta_*}^{2}\simeq \lambda^*(\pi_*L_{\Theta_*}^2) $.\\
		By \cite[Theorem 2]{nar-ram}, we know that $D$ is a bijection. Therefore, $D$ must be an isomorphism by Theorem \ref{zmt}.
	\end{proof}
	\begin{rem}
	A version of the above theorem can be found in \cite{hitchin}, in arbitrary characteristic, without proof. 
	We've added a proof here for completeness and in order to define the explicit construction of the morphisms for the family $\pi$.
	\end{rem}
\noindent Using $A^*(M_*)\simeq \Q$, we have $A^*(|2\Theta_*|_\pi)\simeq \Q[\zeta]/(\zeta^4)$ where $\zeta$ is the relative hyperplane divisor in $|2\Theta_*|_\pi$.\\
Results on the Picard group of $U(2,\mo,2)$ in \cite{fringuelli} show that $\zeta$ can be identified with the class of the generalized $\Theta$-divisor on fibers over $M_*$. 
Since $Pic(M_*)=0$ , $[\T_{gen}]=\zeta$ in $A^1(\tilde{U}(2,\mo,2))$ where $\T_{gen}\hr \tilde{U}(2,\mo,2)$ is the pullback of the universal generalized $\T$-divisor, $\T_{2, gen}$, via the map $\tilde{U}(2,\mo,2)\xrightarrow{\otimes \mo(\s_1)}\tilde{U}(2,2,2)$.\\
We provide another proof below using \cite[Theorem 2]{nar-ram}.
\begin{theorem}\label{t-gen}
	Let $\T_{gen}^K$ be the universal generalized $\T$-divisor restrcited to $\tilde{U}(2,\mathcal{K}_\pi,2)\hr \tilde{U}(2,2,2)$. 
	The image of $\T_{gen}^K$ is a hyperplane bundle under the morphism $D\otimes\mo(-\s_1): \tilde{U}(2,\mathcal{K}_\pi,2)\xrightarrow{\otimes \mo(-\s_1)} \tilde{U}(2,\mo,2)\ri |2\T_*|_\pi$.
\end{theorem}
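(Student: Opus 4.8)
The strategy is to reduce the assertion to a fibrewise statement over $M_*$ and then invoke the classical Narasimhan--Ramanan description in \cite[Theorem 2]{nar-ram}, using $A^*(M_*)\simeq\Q$ (equivalently $Pic(M_*)=0$) to kill any ambiguity coming from the base. First I would record that on each genus $2$ fibre of $\pi$ a Weierstrass point satisfies $2\s_1\sim \mathcal K_\pi$, so $\mo_{\C_*}(2\s_1)\simeq\mathcal K_\pi$ (the defect lies in $\pi^*Pic(M_*)=0$); hence tensoring by $\mo(-\s_1)$ is a well-defined isomorphism $\tilde U(2,\mathcal K_\pi,2)\xrightarrow{\sim}\tilde U(2,\mo,2)$ with inverse $\otimes\,\mo(\s_1)$, and composing with the isomorphism $D$ of Theorem \ref{isom} yields an isomorphism $\Psi:=D\circ(\otimes\mo(-\s_1)):\tilde U(2,\mathcal K_\pi,2)\xrightarrow{\sim}|2\T_*|_\pi$ over $M_*$. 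Since $\mathcal K_\pi$ has relative degree $2=2(g-1)$, the universal generalized theta divisor restricts on $\tilde U(2,\mathcal K_\pi,2)$ to $\T_{gen}^K=\{E:h^0(E)\neq 0\}$, and this is the locus I must transport through $\Psi$.

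For the fibrewise computation, fix $[C]\in M_*$ and write $F:=E\otimes\mo_C(-\s_1)$ for $E\in SU_C(2,\mathcal K_C)$, so that $F\in SU_C(2,\mo)$ and $h^0(E)\neq 0\iff h^0(F\otimes\mo_C(\s_1))\neq 0$. By Theorem \ref{isom} the restriction of $D$ to this fibre is the Narasimhan--Ramanan isomorphism $D_C:SU_C(2,\mo)\xrightarrow{\sim}|2\Theta_C|$, $F\mapsto D_F$, where $D_F=\{L\in Pic^1(C):h^0(F\otimes L)\neq 0\}$. Since $\Psi$ is a morphism over $M_*$, its fibre is $D_C\circ(\otimes\mo_C(-\s_1))$, and therefore $\Psi$ carries $\T_{gen}^K$ over $[C]$ onto $\{D_F: \mo_C(\s_1)\in D_F\}=\{D\in|2\Theta_C|:\mo_C(\s_1)\in D\}$, using that $D_C$ is bijective. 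The latter is the projectivized zero locus of the evaluation functional at $\mo_C(\s_1)\in Pic^1(C)$ on $H^0(Pic^1(C),\mo(2\Theta_C))$, which is non-zero because $|2\Theta_C|$ is base-point-free; hence $\Psi(\T_{gen}^K)$ meets each $\p^3$-fibre of $|2\T_*|_\pi\to M_*$ in a hyperplane.

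To conclude, I would use that $|2\T_*|_\pi$ is a $\p^3$-bundle over $M_*$ with $A^*(M_*)\simeq\Q$ and $Pic(M_*)=0$, so $Pic(|2\T_*|_\pi)=\Z\cdot\mo(1)$ and a line bundle on it is pinned down by its restriction to one fibre. The preceding paragraph shows $\mo\!\left(\Psi(\T_{gen}^K)\right)$ restricts to $\mo_{\p^3}(1)$, hence equals the relative hyperplane bundle; dually $[\T_{gen}^K]=\Psi^*\zeta$ in $A^1$. This is exactly the statement that the image of $\T_{gen}^K$ under $D\otimes\mo(-\s_1)$ is a hyperplane bundle.

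\textbf{Main obstacle.} The delicate part is bookkeeping rather than geometry: one must verify that the fibrewise identifications are genuine restrictions of the global morphisms — that $\otimes\mo(-\s_1)$ is defined at the level of moduli (needing $\mo_{\C_*}(2\s_1)\simeq\mathcal K_\pi$, where the Weierstrass/$2$-torsion description and $Pic(M_*)=0$ enter), and that $D|_{\mathrm{fibre}}$ is literally $D_C$ (granted by Theorem \ref{isom}). A secondary subtlety is justifying that ``divisors in $|2\Theta_C|$ through a fixed point'' forms a hyperplane compatibly with the relative $\mo(1)$; this can be bypassed by instead citing that $D_C^{*}\mo_{\p^3}(1)$ is the ample generator of $Pic(SU_C(2,\mo))$, a standard consequence of \cite[Theorem 2]{nar-ram}, together with the fact that twisting by a fixed line bundle is an isomorphism of moduli spaces identifying the respective determinant (theta) line bundles, so that $\Psi^{*}\mo(1)$ restricts fibrewise to $\mo(\Theta^K_{gen,C})$.
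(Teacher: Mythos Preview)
Your proposal is correct and follows essentially the same approach as the paper: both reduce to a fibrewise computation, identify $\Psi(\T_{gen}^K)$ over a fixed curve $C$ with the locus of divisors in $|2\Theta_C|$ passing through the point $\mo_C(\s_1)\in Pic^1(C)$ (equivalently, $(\s_1)_p\in C_W$ in the paper's notation), observe this is a hyperplane, and then invoke $Pic(M_*)=0$ to pin down the class globally. Your write-up is in fact slightly more careful about the bookkeeping (e.g.\ justifying that the evaluation functional is nonzero via base-point-freeness of $|2\Theta_C|$, and checking that $\otimes\,\mo(-\s_1)$ is well-defined at the moduli level), but the argument is the same.
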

\begin{proof}
	We have $[E]\in \T_{gen}^K$ if and only if $h^0(E)>0$ for $[E]\in U(2,\mathcal{K}_\pi,2)$. Let $\T_{gen}^K\otimes \mo(-\s_1)$ be the divisor in $\tilde{U}(2,\mo,2)$ obtained by twisting bundles in the locus $\T_{gen}^K$ by $\mo(-\s_1)$. 
	It is easy to see that $\T_{gen} = \T_{gen}^K\otimes \mo(-\s_1)$. We show that $\T_{gen}^K\otimes \mo(-\s_1)$ maps to a hyperplane on every fibre over $M_*$ under $D$.\\
	Let $p\in M_*$ be a point and $[W]\in U(2,\mo,2)_p$. For $[W]\in \T_{gen}^K\otimes \mo(-\s_1)$, we must have 
	$h^0(\C_p, W\otimes\mo(\s_1)_p)>0$.
	That is, $$[W]\in \T_{gen}^K\otimes \mo(-\s_1)\iff (\s_1)_p\in C_W.$$
	The locus of divisors in $|2\T_{\C_p}|$ that contain the point $(\s_1)_p$ is a hyperplane. Therefore, $D_p$ maps $\T_{gen,p}^K\otimes \mo(-\s_1)_p$ onto a hyperplane in $|2\T_{\C_p}|$.\\
	Using $Pic(M_*)=0$, we can conclude that $[\T_{gen}\otimes \mo(\s_1)] = \zeta$ in $A^1(\tilde{U}(2,\mo,2))$.
\end{proof}

\subsection{Chow rings of $J_*(d)$}\label{uni-jac-chow}
 For arbitrary integers $d,d'$, twisting by $(d'-d)\sigma_1$ induces the isomorphisms $\tau_{d,d'}: J_*(d)\xrightarrow{\simeq} J_*(d')$. \\
We calculate $A^*(J_*(2))$ and using $\tau_{d,2}$, we would then have $A^*(J_*(d))$ for any $d\in\mathbb{Z}$. Furthermore, we can define an Abelian scheme structure on $J_*(2)$ over $M_*$ using the map on $J_*(0)$ and $\tau_{2,0}$. 
% $$\mu \left(
% 	\begin{tikzcd}
% 		L\arrow[d]\\ \mathcal{C}\arrow[d]\\S\arrow[u, "\sigma_1\,\cdots\,\sigma_6"', bend right = 60]
% 	\end{tikzcd},
% 	\begin{tikzcd}
% 		L'\arrow[d]\\ \mathcal{C}\arrow[d]\\S\arrow[u, "\sigma_1\,\cdots\,\sigma_6"', bend right = 60]
% 	\end{tikzcd}  \right)= \begin{tikzcd}
% 		L\otimes L'\otimes K_{\mathcal{C}/S}^\vee\arrow[d]\\ \mathcal{C}\arrow[d]\\S\arrow[u, "\sigma_1\,\cdots \,\sigma_6"', bend right = 60].
% 	\end{tikzcd}
% 	$$
	We shall provide an explicit stratification to get generators for every codimension using the following lemma from \cite[Lemma 1.6]{verra}:
	\begin{lemma}\label{4.2}
		Let $C$ be a curve of genus $g$. Let $n_1,\cdots, n_g\in \Z$ be non-zero integers such that $n_1+\cdots+n_g = d$, then the map $a_{n_1,\cdots,n_g}: C^g\rightarrow Pic^d(C)$ given by $(x_1,\cdots, x_g)\mapsto n_1x_1+\cdots+n_gx_g$ is surjective.
	\end{lemma}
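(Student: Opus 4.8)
The plan is to reduce the claim to a statement about the Abel--Jacobi map and then settle it by a single tangent-space computation, exploiting that the nonzero integers $n_i$ merely rescale differentials without altering their images.

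First I would fix a base point $p_0\in C$ and let $\alpha\colon C\to J:=\operatorname{Pic}^0(C)$, $x\mapsto \mathcal{O}_C(x-p_0)$, denote the Abel--Jacobi map. Since $\sum_i n_i=d$, the morphism $a_{n_1,\dots,n_g}$ is the composition of $f\colon C^g\to J$, $(x_1,\dots,x_g)\mapsto \sum_i n_i\alpha(x_i)$, with the translation isomorphism $J\xrightarrow{\sim}\operatorname{Pic}^d(C)$, $L\mapsto L\otimes\mathcal{O}_C(dp_0)$; hence it suffices to show $f$ is surjective. Because $C^g$ is proper and $J$ is irreducible, $\operatorname{im}f$ is closed and irreducible, so it is enough to show $f$ is dominant, and for that it is enough to exhibit one point at which the differential of $f$ is surjective (for smooth $C^g$ and $J$, surjectivity of $df$ at a point is equivalent to smoothness of $f$ there, whence the image contains an open set).

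Next I would invoke the ordinary summation map $s\colon C^g\to\operatorname{Pic}^g(C)$, $(x_1,\dots,x_g)\mapsto x_1+\dots+x_g$, which is surjective by Riemann--Roch: a line bundle of degree $g$ has $h^0\ge 1$, hence is effective. Being a dominant morphism of smooth varieties over $\mathbb{C}$, $s$ is smooth over a dense open subset, so there is a point $\underline{x}=(x_1,\dots,x_g)$ at which $ds_{\underline{x}}\colon\bigoplus_i T_{x_i}C\to TJ$ is surjective. The crux is that at the very same point $df_{\underline{x}}$ has the same image: identifying all tangent spaces of $J$ with $T_0J$, one has $df_{\underline{x}}(v_1,\dots,v_g)=\sum_i n_i\,d\alpha_{x_i}(v_i)$, and since $n_i\ne 0$ the subspace $n_i\cdot d\alpha_{x_i}(T_{x_i}C)$ coincides with $d\alpha_{x_i}(T_{x_i}C)$, so $\operatorname{im}df_{\underline{x}}=\sum_i d\alpha_{x_i}(T_{x_i}C)=\operatorname{im}ds_{\underline{x}}=TJ$. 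Thus $f$ is smooth, in particular dominant, at $\underline{x}$, and the reductions above conclude the proof.

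I do not expect a real obstacle here: the only steps needing any thought are the factorization through $\alpha$ (where $\sum n_i=d$ enters) and the elementary remark that rescaling a subspace by a nonzero integer changes nothing. The characteristic-zero hypothesis in force throughout the paper is precisely what makes the generic-smoothness step clean; in positive characteristic one would instead argue directly that the $g$-fold sumset of $\alpha(C)$ already exhausts $J$ and transport this conclusion through the isogenies $[n_i]$.
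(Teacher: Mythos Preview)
Your argument is correct. The reduction to $\operatorname{Pic}^0$ via a base point, the appeal to surjectivity of the ordinary summation map $s$ by Riemann--Roch, generic smoothness of $s$ in characteristic zero, and the observation that $\operatorname{im}(df_{\underline{x}})=\sum_i n_i\cdot d\alpha_{x_i}(T_{x_i}C)=\sum_i d\alpha_{x_i}(T_{x_i}C)=\operatorname{im}(ds_{\underline{x}})$ since $n_i\neq 0$ are all sound, and together they yield dominance and hence surjectivity.

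As for comparison with the paper: the paper does not supply its own proof of this lemma at all --- it simply quotes the statement from \cite[Lemma~1.6]{verra} and uses it. So your proposal is not an alternative to an argument in the paper but rather a self-contained proof of a result the paper takes as input. Your tangent-space argument is a perfectly standard and efficient way to establish the lemma; the only thing worth noting is that, as you yourself flag, the generic-smoothness step uses characteristic zero, which is consistent with the standing hypotheses of the paper.
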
\noindent
	We need to determine $A^*(\C_*)$ to obtain generators for the different strata in our stratification.
	\subsubsection{$\mathbf{A^*(\mathbf{\mathscr{C}}_*)}$}\label{C*}
	Let  $\mathcal{W}_*:= \coprod_{1\leq j\leq 6}\sigma_j\hookrightarrow\C_*$ be the locus of Weierstrass points. \\
	We have $2[\sigma_i]_t = 2[\sigma_j]_t\forall 1\leq i,j \leq 6,\forall t\in M_*$. That is, the restriction of $2([\sigma_i]-[\sigma_j])$ to any fiber over $M_*$ is trivial and, therefore, $2([\sigma_i]-[\sigma_j]) = \pi^*L_{ij}$ for some $L_{ij}\in Pic(M_*)$.  
	Since $M_*\subset\mathbb{A}^3$, we have that 
	$$[\s_i]=[\s_j]\in A^1(\C_*) \forall 1\leq i,j\leq 6.$$ 
	Additionally, there are no non-trivial cycles inside any of the $\sigma_i$.\\
	Using the stratification $\C_* = (\C_*\setminus \mathcal{W}_*)\coprod_i\sigma_i$, we get the excision sequence 
	$$\Q.\mathcal{K}\rightarrow A^*(\C_*)\rightarrow A^*(\C_*\setminus\mathcal{W}_*)\rightarrow 0$$  
	where $\mathcal{K} := [K_{\C_*/M_*}] = \dfrac{1}{3}\Sigma_{1\leq i\leq 6} [\sigma_i]= 2[\sigma_i]\forall 1\leq i\leq 6$.
	\begin{lemma}\label{c1}\normalfont
		The Chow ring of $\C_*$ is generated by pushforward of cycles in $\mathcal{W}_*$. That is,
		$$A^*(\C_*\setminus \mathcal{W}_*) = \Q.$$
	\end{lemma}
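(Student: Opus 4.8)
The plan is to show that $\mathscr{C}_* \setminus \mathcal{W}_*$ has trivial rational Chow ring by exhibiting it as a space whose Chow ring is forced to be $\Q$ for dimension/fibration reasons. First I would recall the concrete model: $\mathscr{C}_*$ is the double cover $\phi\colon \mathscr{C}_* \to \p^1 \times M_*$ branched along $D_\s = \sum_{i=1}^6 \sigma_i$, and $M_* = \mathcal{M}_{0,6}$ sits as an open subset of $\mathbb{A}^3$ cut out by removing the hyperplanes $p_i = 0,1$ and the diagonals $p_i = p_j$. Removing $\mathcal{W}_* = \coprod_j \sigma_j$ from $\mathscr{C}_*$ corresponds, over the branch locus, to removing the ramification; away from the $\sigma_i$ the map $\phi$ is an honest étale double cover, so $\mathscr{C}_* \setminus \mathcal{W}_*$ maps by $\phi$ to $(\p^1 \setminus \{0,1,\infty,p_1,p_2,p_3\}) \times M_*$ fibered over $M_*$ — here I am using that the sections $\sigma_1,\sigma_2,\sigma_3$ are the constant sections $0,1,\infty$ and $\sigma_{i+3}$ is the section $p_i$, so their images in $\p^1 \times M_*$ are exactly the loci $\{0,1,\infty\} \times M_*$ and the graphs of the coordinate functions.

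Next I would compute the Chow ring of the base and of the fibers and assemble. We have $A^*(M_*) = A^*(\mathcal{M}_{0,6}) = \Q$: indeed $M_*$ is an open subscheme of $\mathbb{A}^3$, and by the excision sequence (Theorem \ref{excision}) applied to the complementary closed subset (a union of affine hyperplanes and diagonals in $\mathbb{A}^3$) together with $A^*(\mathbb{A}^3) = \Q$, one gets $A^*(M_*) = \Q$; one can also cite that $A^*(\mathcal{M}_{0,n})=\Q$ directly. Likewise, each fiber of $\mathscr{C}_* \setminus \mathcal{W}_* \to M_*$ is $\p^1$ with six points removed, whose Chow ring is $\Q$ (again excision: $\p^1 \setminus\{\text{6 points}\}$ is obtained from $\p^1$ by removing points, and $A^*(\p^1) = \Q[h]/(h^2)$ with the point class $h$ killed in the open complement). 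The point is then to leverage these two facts: the morphism $p_C\colon \mathscr{C}_* \setminus \mathcal{W}_* \to M_*$ is a smooth morphism with fibers an open in $\p^1$, and I would argue via the excision/stratification already set up in the paper that $A^*(\mathscr{C}_* \setminus \mathcal{W}_*)$ is generated over $A^*(M_*) = \Q$ by restrictions of the hyperplane class $\phi^*(h \boxtimes 1)$, which itself becomes trivial after removing $\mathcal{W}_*$ since $[\sigma_i]$ pulls back the point class and $[\sigma_i]$ is supported on $\mathcal{W}_*$. Concretely, from the excision sequence $\Q\cdot \mathcal{K} \to A^*(\mathscr{C}_*) \to A^*(\mathscr{C}_* \setminus \mathcal{W}_*) \to 0$ displayed just above the statement, every positive-codimension class of $A^*(\mathscr{C}_* \setminus \mathcal{W}_*)$ lifts to a class of $A^*(\mathscr{C}_*)$ supported in the image of $\sum_i [\sigma_i]$; but one checks $A^*(\mathscr{C}_*)$ in positive codimension is spanned by $\mathcal{K} = 2[\sigma_i]$ (pulled back from the $\p^1$-factor, since $\phi$ is finite flat of degree $2$ and $A^*(\p^1 \times M_*) = \Q[h]/(h^2)$, with $\phi^* h$ proportional to $[\sigma_i]$), which maps to zero in $A^*(\mathscr{C}_* \setminus \mathcal{W}_*)$.

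So the structure of the argument I would write out is: (i) $A^*(M_*) = \Q$ by excision from $\mathbb{A}^3$; (ii) $\phi$ finite flat of degree $2$ plus $A^*(\p^1 \times M_*) = \Q[h]/(h^2)$ gives, via Remark \ref{rem1} (or rather its refinement allowing surjective maps for smooth quasi-projective varieties), that $A^*(\mathscr{C}_*)$ is generated in positive degree by $\phi^* h = $ a multiple of $[\sigma_i] = \tfrac12 \mathcal{K}$; (iii) feeding this into the excision sequence $\Q\cdot\mathcal{K} \to A^*(\mathscr{C}_*) \to A^*(\mathscr{C}_* \setminus \mathcal{W}_*) \to 0$ shows the right-hand group is $\Q$ in degree $0$ and vanishes in positive degrees, i.e. $A^*(\mathscr{C}_* \setminus \mathcal{W}_*) = \Q$.

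The main obstacle is step (ii): showing that $A^*(\mathscr{C}_*)$ has no positive-codimension classes other than multiples of $\mathcal{K}$. Pullback along the degree-$2$ finite flat map $\phi$ is injective with $\tfrac12 \phi_*$ a one-sided inverse, but that only bounds $A^*(\mathscr{C}_*)$ from below in some sense, not from above — a priori $\mathscr{C}_*$ could carry classes not pulled back from $\p^1\times M_*$. I would handle this by instead running the excision argument purely on $\mathscr{C}_* \setminus \mathcal{W}_*$: it is a smooth affine-over-$M_*$ variety, an étale double cover of $U \times M_*$ where $U = \p^1 \setminus\{0,1,\infty,p_1,p_2,p_3\}$ is an affine curve. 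Since $A^*(M_*)=\Q$ and $A^1$ of a smooth affine curve times $\Q$ is detected by divisor classes, and the relevant divisor classes (the $\sigma_i$) have been removed, one concludes $A^{\geq 1}(\mathscr{C}_*\setminus\mathcal{W}_*)=0$; I expect the cleanest route is to observe that $\mathscr{C}_*\setminus\mathcal{W}_*$ is itself an open subvariety of a curve-bundle over $M_*$ with $A^*=\Q$, namely of $\mathscr{C}_*$, and to simply cite the displayed excision sequence together with the fact that $\mathcal{K}$ generates $A^1(\mathscr{C}_*)$ — the latter following from $A^1$ of the $\p^1$-bundle $\mathscr{C}_* \to$ (a $\p^1$-bundle picture) being generated by a relative hyperplane class and a class from $M_*$, the latter being zero. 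I would keep the write-up short, citing Lemma \ref{4.2}-style surjectivity only if needed and otherwise relying on the excision sequence already in place.
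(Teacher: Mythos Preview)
Your proposal has a genuine gap in the crucial step, and you correctly identify it yourself: you need to know that $A^{\geq 1}(\mathscr{C}_*)$ is spanned by $\mathcal{K}$, but neither of your proposed routes establishes this. The finite flat map $\phi\colon \mathscr{C}_* \to \p^1\times M_*$ gives you an \emph{injection} $A^*(\p^1\times M_*)\hookrightarrow A^*(\mathscr{C}_*)$ via Remark~\ref{rem1}, not a surjection; an \'etale double cover of a space with trivial Chow ring need not itself have trivial Chow ring (only the $\mu_2$-invariants are controlled). Your fallback ``$\p^1$-bundle picture'' is simply not valid: $\mathscr{C}_*\to M_*$ is a genus-$2$ fibration, not a $\p^1$-bundle, and the $\underline{\mathrm{Proj}}$ construction in the paper is the double cover construction (Proj of a $\Z/2$-graded algebra), not the projectivization of a rank-$2$ bundle. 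So your sketch does not close, and the excision sequence you want to feed into is precisely the statement you are trying to prove.

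There is, however, a short salvage in the spirit of your approach. In the affine chart away from $\sigma_3=\infty$, the family is $y^2=x(x-1)(x-p_1)(x-p_2)(x-p_3)$; removing the remaining Weierstrass sections forces $y\neq 0$, hence $x\notin\{0,1,p_1,p_2,p_3\}$, and one can solve for $p_1$ in terms of $(x,y,p_2,p_3)$. This exhibits $\mathscr{C}_*\setminus\mathcal{W}_*$ as an open subscheme of $\mathbb{A}^4$, giving $A^*=\Q$ immediately. The paper instead takes a different and more elaborate route: it realizes a genus-$2$ curve with a marked non-Weierstrass point via the linear system $|4x|$ as a nodal plane quartic, rigidifies the embedding, and parametrizes by an open $\tilde U\subset\mathbb{A}^5$ with a free $\K^*$-action whose quotient is $\mathscr{C}_*\setminus\mathcal{W}_*$. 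That argument is heavier here, but it is the template reused verbatim in the subsequent Lemmas~\ref{chow-2} and~\ref{chow-3}, where no simple affine-chart trick is available.
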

	\begin{proof}
		Let $(C,x)$ be a pointed curve of genus $2$ such that $x$ is not a Weierstrass point of $C$. The map $\phi_x$ given by 
		$$C\xrightarrow[|4x|]{}\p^2$$ 
		maps $C$ into the plane as a degree $4$ curve with a single point of singularity that is either nodal or cuspidal, and a $4$-flex at the image of $x$. Let $\tilde{y}$ be the conjugate of $y$ under the hyperelliptic involution for any $ y\in C$.\\
		To rigidify this map, let's fix the node to be $N = [1:0:0]$, the image of the point $x$ to be $ M =  [0:1:0]$, and the inflection line to be $l:x_1 = 0$ where the coordinates are given by $[x_1:x_2:x_3]$. 
		\\
		The equation of the image $\phi_x(C)\hookrightarrow \p^2$ then has the form $\lambda_1 x_1^2x_2^2+\lambda_2 x_1^2x_2x_3+\la_3x_3^2x_1^2 +\la_4x_1x_2^3+\la_5 x_1x_2^2x_3+\la_6 x_1x_2x_3^2+\la_7 x_1x_3^3+\la_8 x_3^4$ for some $\la_i\in \mathbb{C}$.\\
		The projective transformations fixing $N, \phi_x(x), l$ are given by the group $ \left\{ \left(  \begin{matrix}
			a&0&0\\ 0&b&c\\0&0&d
		\end{matrix} \right) \right\} \subset GL_3(\K)$ as described in \cite[\S 2]{casnati}. \\
		Since $N$ is the node, for any $y\in C$, the points $\{\phi_x(y), \phi_x(\tilde{y}), N\}\subset \mathbb{\p}^2$ are collinear.
	Intersection with the line $x_3=0$ is given by the ideal $(x_3, x_2^2x_1(\la_1x_1+\la_4x_2))$. 
		Since $x$ is not a Weierstrass point, we have  $\la_4\neq 0$.
		Furthermore, $\la_8\neq 0$ since the image of $\phi_x$ is integral.\\
		 The transformation $x_1\mapsto \la_4x_1, x_2\mapsto x_2, x_3\mapsto \la_4x_3+\dfrac{1}{3}\la_5x_2$ is an element of this group.
		 An application of this kills the coefficient of $x_1x_2^2x_3$.
		Additionally, we can normalize $\la_4, \la_8$ to $1$.\\
		The equation of $Im(\phi_x)$ then has the form $E_{C,x}:\A_1 x_1^2x_2^2+\A_2 x_1^2x_2x_3+\A_3x_3^2x_1^2 + x_1x_2^3+\A_4 x_1x_2x_3^2+\A_5 x_1x_3^3+ x_3^4$. \\
		To find images of Weierstrass points on $E_{C,x}$, we intersect lines passing through $N$. Equations of these lines are given by $\{x_2-cx_3 = 0 \mid c\in \K\}$ since the line $x_3 = 0$ does not correspond to Weierstrass points.\\
		Intersection with the line $x_2-cx_3=0$ is given by the ideal $(x_2-cx_3, x_3^2((c^2\A_1+c\A_2+\A_3)x_1^2+x_3^2+x_1x_3(\A_5+c\A_4+c^3)))$. To get a Weierstrass point, we must have \begin{equation}\label{weier}(\A_5+c\A_4+c^3)^2 = 4(c^2\A_1+c\A_2+\A_3)\end{equation} 
		The space of ordered roots to this polynomial is the set of points in $\mathbb{A}^6$, with mutually distinct co-ordinates adding up to $0$.\\
		Let $f:\mathbb{A}^5\rightarrow\mathbb{A}^6$ be the morphism given by $(\B_i)\mapsto (\B_i, -\Sigma_i \B_i)$ and $U\subset \mathbb{A}^6$ be the set of points with mutually distinct co-ordinates. \\
		Let $\tilde{U} = f^{-1}(U)$. We have the morphism $f: \tilde{U}\rightarrow U$ and two points $p,q\in \tilde{U}$ determine projectively related $E_{C,x}$ if and only if $\exists \la\in \K^*$ such that $p = \la. q$.
		The induced $\K^*$-action on the coefficients $(\A_i)_{1\leq i\leq 5}\in \mathbb{A}^5$ is given by $\la.(\A_1,\A_2,\A_3,\A_4,\A_5)= (\la^4\A_1, \la^5\A_2, \la^6.\A_3, \la^{2}.\A_4, \la^3.\A_5)$.\\
		Shrinking $\tilde{U}$ if necessary, we have that the coordinates of any point in $\tilde{U}$ are roots of some suitable polynomial as in (\ref{weier}) which determine the coefficients of some $E_{C,x}$. Let $E_U$ be the associated family of plane curves given by the $\{\A_i\}$.\\
		Let $\tau_M:= M\times \tilde{U}, \tau_N:= N\times \tilde{U}$ be the sections corresponding to the marked point and node respectively. 
		We consider the desingularization of $E_U$ at the singularity $\tau_N$ to obtain $bl:\C_U\ri E_U$, let $\tilde{\p}_U$ be the space obtained from the corresponding blowups of $\p^2\times \tilde{U}$.
		We get the following diagram:
		\begin{equation}\label{sections}
		\begin{tikzcd}
			\tilde{\p}_U\hookleftarrow \C_{U}\arrow[r, "bl"]\arrow[rd, "\tilde{\pi}"']&E_{U}\arrow[d]\arrow[r, hook]& \p^2\times \tilde{U}\\
			& \tilde{U}\arrow[u, "N\times \tilde{U}(=:\tau_N)"', bend right = 60]\arrow[u, "\tau_M", bend left = 40]\arrow[ul, "\overline{\tau_1}{,}\cdots{,}\overline{\tau_6}", bend left = 30]\arrow[ul, bend left = 70]\arrow[rd]\arrow[d]&\\
			M_*& \C_*\arrow[l,"\pi"]& \tilde{U}//\K^*\arrow[l, "\tilde{\tau}",dashed].
		\end{tikzcd}
	\end{equation}
		The map $\tilde{\pi}$ defines a family of smooth genus $2$ curves. For any $1\leq i\leq 6$, we have a family of lines parameterized over $\tilde{U}$ given by $\tau_i (p) = Z(x_2-f(p)_ix_3)\forall p\in \mathbb{A}^5$. \\
		The section $\tau_M$ lifts to $\C_U$ since $\C_U\setminus bl^{-1}(\tau_N) \simeq E_{U}\setminus \tau_N$.\\ 
		For every $\tau_i$, the strict transform $\tilde{\tau_i}\hookrightarrow \tilde{\p}_U$ intersects $\C_U$ on a locus that maps bijectively onto $\tilde{U}$ via $\tilde{\pi}$. 
		Hence, the locus $(\tilde{\tau_i}\cap \tilde{\p}_U)_{red}$ is isomorphic to $\tilde{U}$ by Theorem \ref{zmt}. Let the sections induced by these isomorphisms be $\overline{\tau_i}$.
The sections $\{\overline{\tau}_i\}$ provide markings for the Weierstrass points in the family $\tilde{\pi}$.
		\\By the universality of $M_*$, we have the induced morphism $t:\tilde{U}\ri M_*$. Additionally, $t$ must lift to a morphism of families
		$ t_U: \C_U\rightarrow \C_*$ such that the square below is Cartesian. 
		  \begin{equation*}
			\begin{tikzcd}\label{sec}
				\tilde{U}\arrow[r,"\tau_M"] &\C_U\arrow[d,"\tilde{\pi}"]\arrow[dr, phantom, "\square"]\arrow[r, "t_U"]&\C_*\arrow[d,"\pi"]\\
				&\tilde{U}\arrow[r, "t"]&M_*
		\end{tikzcd} \end{equation*}
		The map $t_U\circ\tau_M$ is preserved under the $\K^*$-action and thus factors via $\tilde{\tau}:\tilde{U}//\K^*\rightarrow \C_*$. The map $\tilde{\tau}$ is a bijection between $\tilde{U}//\K^*$ and $\C_*\setminus \mathcal{W}_*$. Hence, we have $\tilde{U}//\K^*\simeq \C_*\setminus \mathcal{W}_*$. Additionally,
		$\tilde{U}//\K^*\subset \mathbb{A}^4$ is open since $\tilde{U}\subset \mathbb{A}^5$ consists of points with distinct coordinates, establishing the claimed statement.
	\end{proof}\noindent
	Hence, we have that $A^i(\C_*) = 0\forall i>1$ and $A^1(\C_*)=\Q.\mathcal{K}$. 	Restriction of the divisor associated to $\mathcal{K}$, to a fiber of $\pi: \C_*\rightarrow M_*$ yields $\mathcal{K}\neq 0$ in $A^1(\C_*)$. Using $\mathcal{K}  = 2\sigma_j\forall j$, we have $\dim A^1(\C_*) = 1$ and the following corollary.
	\begin{cor}
 The excision sequence for $\C_*$ yields
	$$A^*(\C_*) = \Q[\mathcal{K}]/(\mathcal{K}^2) = \Q[\sigma_j]/(\sigma_j^2)\forall 1\leq j\leq 6.$$		
	\end{cor}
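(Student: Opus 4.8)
The plan is to feed the vanishing $A^*(\C_* \setminus \mathcal{W}_*) \simeq \Q$ from Lemma \ref{c1} into the excision sequence of Theorem \ref{excision}, reading off the additive structure first and the (trivial) ring structure afterwards.

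First I would apply Theorem \ref{excision} to the closed embedding $\mathcal{W}_* = \coprod_{j=1}^{6}\sigma_j \hookrightarrow \C_*$ with open complement $\C_* \setminus \mathcal{W}_*$. Each $\sigma_j$ is isomorphic to $M_*$, so $A_*(\mathcal{W}_*) = \bigoplus_{j=1}^{6} A_*(\sigma_j)$ is a copy of $\Q^{\oplus 6}$ concentrated in the top dimension, generated by the fundamental classes $[\sigma_j]$; there are no nonzero cycles of positive codimension supported on $\mathcal{W}_*$. Since Lemma \ref{c1} gives $A^i(\C_* \setminus \mathcal{W}_*) = 0$ for all $i \geq 1$, exactness forces $A^i(\C_*)$ to be generated by push-forwards from $\mathcal{W}_*$ in every positive codimension. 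As those push-forwards live only in $A^1(\C_*)$, this yields $A^i(\C_*) = 0$ for $i \geq 2$ and $A^1(\C_*) = \sum_{j=1}^{6}\Q\cdot[\sigma_j]$, while $A^0(\C_*) = \Q$ by irreducibility of $\C_*$.

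Next I would collapse $\sum_j \Q\cdot[\sigma_j]$ to a single line, using the two facts recorded just above the statement: the identity $[\sigma_i] = [\sigma_j]$ in $A^1(\C_*)$ for all $i,j$ (from $2([\sigma_i]-[\sigma_j]) = \pi^*L_{ij}$ together with $\mathrm{Pic}(M_*) = 0$), so that $A^1(\C_*) = \Q\cdot\mathcal{K}$ with $\mathcal{K} = 2\sigma_j$; and the non-vanishing of $\mathcal{K}$, obtained by restricting to a fibre of $\pi$, where $\mathcal{K}|_{\C_p} = 2(\sigma_j)_p$ has positive degree on the genus-$2$ curve $\C_p$ and is therefore nonzero in $A^1(\C_p) = \mathrm{Pic}(\C_p)$. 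Hence $\dim_\Q A^1(\C_*) = 1$.

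Finally, for the multiplicative structure there is nothing left to compute: since $A^2(\C_*) = 0$, every product of two divisor classes vanishes, in particular $\mathcal{K}^2 = 0$ and $\sigma_j^2 = 0$. Therefore $A^*(\C_*) \simeq \Q[\mathcal{K}]/(\mathcal{K}^2)$, and the substitution $\mathcal{K} = 2\sigma_j$ is a ring isomorphism onto $\Q[\sigma_j]/(\sigma_j^2)$ for each $j$. I do not expect a genuine obstacle, as all the substantive content lies in Lemma \ref{c1}; the only two points needing a word of care are the non-vanishing of $\mathcal{K}$ via the fibre restriction and the observation that $A^2(\C_*) = 0$ makes the ring structure automatic.
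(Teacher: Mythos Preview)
Your proposal is correct and follows essentially the same argument as the paper: the excision sequence for $\mathcal{W}_*\hookrightarrow\C_*$ is set up in the paragraph preceding Lemma \ref{c1}, and the paragraph following that lemma draws precisely the conclusions you draw (vanishing of $A^{\geq 2}$, one-dimensionality of $A^1$ via $[\sigma_i]=[\sigma_j]$, and nonvanishing of $\mathcal{K}$ by fibre restriction). The only cosmetic point is that $A^1(\C_p)$ here means $\mathrm{Pic}(\C_p)\otimes\Q$ rather than $\mathrm{Pic}(\C_p)$ itself, but your degree argument goes through unchanged.
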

	\subsubsection{$\mathbf{A^*(J_*(2))}$}\label{j-2}
	We have the morphism $\C_*\times_{M_*}\C_*\xrightarrow{\phi} J_*(2)$, given by the sequence of maps $\C_*\times_{M_*}\C_*\hr J_*(1)\times_{M_*}J_*(1)\ri J_*(2)$ where the last map is the twisting morphism. 
	Using Lemma \ref{4.2}, we have that this map is surjective and proper over $M_*$. We also have the map $i_{[2]}:\C_*\rightarrow J_*(2)$ 
	given by the sequence of morphisms $\C_*\hr J_*(1)\xrightarrow{\Delta}J_*(1)\times_{M_*}J_*(1)\ri J_*(2)$.\\
	We make a similar argument as presented in Lemma \ref{c1} to prove the following lemma.
	\begin{lemma}{\label{chow-2}}
		The Chow ring of $J_*(2)$ is generated by cycles in $Z_\C, \phi(\mathcal{W}_*\times_{M_*}\C_*)$ where $Z_\C := Im(i_{[2]})$. That is,
	$$A^*(J_*(2)\setminus (Z_\C\cup\phi(\mathcal{W}_*\times_{M_*}\C_*)))\simeq \Q.$$
	\end{lemma}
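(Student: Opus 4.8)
The plan is to mimic the argument of Lemma \ref{c1}, replacing the universal plane-quartic model of a pointed genus $2$ curve by a universal model of a genus $2$ curve together with an effective divisor of degree $2$ (a point of $J_*(2)$ realized via $\C_*\times_{M_*}\C_*$), and then produce an explicit rational parametrization showing the complement in question is an open subset of an affine space, hence has trivial Chow ring. First I would set up the stratification: by Lemma \ref{4.2} the map $\phi\colon \C_*\times_{M_*}\C_*\to J_*(2)$ is surjective and proper over $M_*$, so every point of $J_*(2)$ is $\mo_{C}(x+y)$ for some $x,y\in C$. The locus $Z_\C$ corresponds to divisors of the form $p+\tilde p$ (the hyperelliptic conjugate pair, i.e. $x+y\in |K_C|$), while $\phi(\mathcal{W}_*\times_{M_*}\C_*)$ corresponds to divisors $x+y$ with at least one of $x,y$ a Weierstrass point. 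On the open complement $V:=J_*(2)\setminus(Z_\C\cup\phi(\mathcal{W}_*\times_{M_*}\C_*))$, a point is an $\mo_C(x+y)$ with $h^0=1$, $x,y$ non-Weierstrass, and $x+y\notin|K_C|$; such a line bundle has a unique effective representative, so $V$ lifts to a locally closed subscheme of $\C_*\times_{M_*}\C_*$ that avoids the diagonal, the Weierstrass sections, and the "conjugate" locus.

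Next I would run the explicit-model construction. Given $(C,x)$ with $x$ non-Weierstrass, the proof of Lemma \ref{c1} already exhibits $C$ as the normalization of a plane quartic $E_{C,x}\subset\p^2$ with a chosen node $N=[1{:}0{:}0]$, a $4$-flex $M=[0{:}1{:}0]$ at $x$, flex line $x_1=0$, in the normal form
$$E_{C,x}\colon \A_1 x_1^2x_2^2+\A_2 x_1^2x_2x_3+\A_3 x_3^2x_1^2+x_1x_2^3+\A_4 x_1x_2x_3^2+\A_5 x_1x_3^3+x_3^4,$$
and identifies the parameter space $\tilde U//\K^*\subset\mathbb{A}^4$ with $\C_*\setminus\mathcal{W}_*$. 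I would now add the datum of a second point $y\in C$, $y\neq x$, $y$ non-Weierstrass, $y+x\notin|K_C|$. Pulling back to the plane model, $y$ is a point of $E_{C,x}$ lying on none of the special lines through $N$ and not equal to the conjugate of $x$; as long as $y$ is not on the flex line $x_3=0$, its two affine coordinates on the chart $x_1=1$ give two further parameters, cut out by the single quartic equation, so a point of $V$ is recorded by the tuple $(\A_1,\dots,\A_5;$ two coordinates of $y)$ modulo the same $\K^*$-action (acting on the $\A_i$ with the weights $(4,5,6,2,3)$ already recorded, and on the coordinates of $y$ with the appropriate weights induced from the coordinate scaling $x_1\mapsto\la^{?}x_1$ etc.). After this, exactly as in Lemma \ref{c1}, the GIT quotient by $\K^*$ of a suitable open subset of an affine space is again an open subset of an affine space $\mathbb{A}^N$, and I would invoke Theorem \ref{zmt} to identify this quotient with $V$ (the map is a bijection on closed points by the uniqueness of the effective representative and of the plane model, $V$ is normal as an open subset of the smooth $J_*(2)$, and the source is connected). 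Since $A^*$ of an open subset of $\mathbb{A}^N$ is $\Q$, the conclusion $A^*(V)\simeq\Q$ follows.

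Alternatively — and this may be the cleaner route — one can avoid reconstructing a plane model altogether: let $V'\subset\C_*\times_{M_*}\C_*$ be the preimage of $V$, i.e. the open locus where the two points are distinct, non-Weierstrass, and not hyperelliptic-conjugate; $\phi\colon V'\to V$ is then an isomorphism (unique effective representative), and using the $S_6$-equivariant description $\C_*=\underline{\mathrm{Proj}}_{\p^1\times M_*}(\mo\oplus\mo(-3))$ one has $\C_*\setminus\mathcal{W}_*\cong \tilde U//\K^*\subset\mathbb{A}^4$ from Lemma \ref{c1}; then $(\C_*\setminus\mathcal{W}_*)\times_{M_*}(\C_*\setminus\mathcal{W}_*)$ is a fiber product of two such affine-type opens over $M_*\subset\mathbb{A}^3$, hence is itself an open in an affine space, and $V'$ is obtained by further removing the diagonal and the conjugate divisor, again leaving an open subset of an affine space. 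Either way the Chow ring is $\Q$.

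The main obstacle I anticipate is the bookkeeping of which loci must be excised and verifying that the complement $V'$ (or the GIT quotient) really is \emph{open in an affine space} rather than merely rational: one must check that removing the diagonal, the six Weierstrass sections, and the conjugate locus from the affine-type fiber product leaves something still affine-open (removing closed subsets from an open in $\mathbb{A}^N$ is automatic, so the real point is that the fiber product $(\C_*\setminus\mathcal{W}_*)\times_{M_*}(\C_*\setminus\mathcal{W}_*)$ embeds as an open in affine space, which needs the explicit $\mathbb{A}^4\to M_*\subset\mathbb{A}^3$ description to be genuinely a restriction of an affine morphism). A secondary subtlety, if one follows the plane-model route, is handling the two points of $E_{C,x}$ that might lie on the flex line $x_3=0$; but that locus is closed and can be absorbed into the excision, or circumvented by choosing a different chart. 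Once these are dealt with, the excision sequence for the stratification $J_*(2)=\big(J_*(2)\setminus(Z_\C\cup\phi(\mathcal{W}_*\times_{M_*}\C_*))\big)\sqcup(Z_\C\cup\phi(\mathcal{W}_*\times_{M_*}\C_*))$ will yield the generators for $A^*(J_*(2))$ in the subsequent argument.
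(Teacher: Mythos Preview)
Your overall strategy matches the paper's: parametrize an open dense subset of $\C_*\times_{M_*}\C_*$ by something with trivial Chow ring, then descend to $J_*(2)$ via the finite surjective $\phi$ and Remark~\ref{rem1}. There is, however, one genuine error and one substantive difference in execution.

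The error: your claim that $\phi\colon V'\to V$ is an isomorphism is false. The map $\phi$ sends both $(x,y)$ and $(y,x)$ to $\mo_C(x+y)$, so $\phi|_{V'}$ is a degree~$2$ cover, not an isomorphism --- the ``unique effective representative'' is an unordered pair. This is easily repaired by invoking Remark~\ref{rem1} in place of Theorem~\ref{zmt}, and indeed that is what the paper does at the final step.

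The difference in execution: the paper does \emph{not} reuse the $|4x|$ model and append coordinates of $y$, nor does it take a fiber product over $M_*$. Instead it embeds $C$ via the linear system $|3x+y|$ as a nodal plane quartic, rigidified so that the node is $[1{:}0{:}0]$, $x\mapsto[0{:}1{:}0]$, and $y\mapsto[0{:}0{:}1]$. Both marked points are thus absorbed into the normalization of the model, and the normal form has six free coefficients $\A_1,\dots,\A_6$ with no residual quartic relation. The ordered Weierstrass roots live in an open $U'\subset\mathbb{A}^6$, and $U'//\K^*\subset\mathbb{A}^5$ maps finite-surjectively onto $\C_*\times_{M_*}\C_*\setminus(\Delta\cup\tilde\Delta\cup(\mathcal{W}_*\times_{M_*}\C_*))$. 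This sidesteps both obstacles you flagged: there is no hypersurface relation to solve and no fiber product over $M_*$ to control. Your first route could in principle be pushed through by solving the quartic for $\A_1$ on a suitable open, but the $|3x+y|$ model is cleaner; your second route runs squarely into the difficulty you anticipated, and the paper does not attempt it.
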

	\begin{proof}
	Let $(C,x,y)$ be a curve with two distinct marked points such that $x$ is not a Weierstrass point of $C$ and $y\neq \tilde{x}$.
	Using similar arguments as in Lemma \ref{c1}, the map given by 
	$$C\xrightarrow[\phi_{x,y}]{|3x+y|}\p^2,$$ embeds $C$ into the plane as a degree $4$ curve with a single point of singularity that is either nodal or cuspidal. We rigidify this map by setting the node to be $N, \phi_{x,y}(x)=M$ and $\phi_{x,y}(y)= P :=[0:0:1]$ (the inflection line at $M$ still has the same equation as in Lemma \ref{c1}).\\
	The equation of $\phi_{x,y}(C)\hookrightarrow \p^2$ then has the form $\lambda_1 x_1^2x_2^2+\lambda_2 x_1^2x_2x_3+\la_3x_3^2x_1^2 +\la_4x_1x_2^3+\la_5 x_1x_2^2x_3+\la_6 x_1x_2x_3^2+\la_7 x_1x_3^3+\la_8 x_2x_3^3$ for some $\{\la_i|1\leq i\leq 8\}\subset \mathbb{F}$.\\
	The projective transformations fixing $N, \phi_{x,y}(x), \phi_{x,y}(y)$ are given by the group $ \left\{ \left(  \begin{matrix}
		a&0&0\\ 0&b&0\\0&0&c
	\end{matrix} \right) \right\} \subset GL_3$ as described in \cite[\S 2]{casnati}. \\
	Since $N$ is the singular point, we have  $\{\phi_{x,y}(a), \phi_{x,y}(\tilde{a}), N\}$ are collinear for any $ a\in C$. Intersection with the line $x_3=0$ is the scheme given by the ideal $(x_3, x_2^2x_1(\la_1x_1+\la_4x_2))$. 
	Therefore, $\la_4\neq 0$ since $x$ is not a Weierstrass point, and $\la_8\neq 0$ since $Im(\phi_{x,y})$ is irreducible.\\
	We normalize $\la_4,\la_8 = 1$.  The equation of $Im(\phi_x)$ then has the form $E_{C,x}:\A_1 x_1^2x_2^2+\A_2 x_1^2x_2x_3+\A_3x_3^2x_1^2 + x_1x_2^3+\A_4 x_1x_2x_3^2+\A_5 x_1x_2^2x_3+\A_6 x_1x_3^3+ x_2x_3^3$.\\
	The intersection with the line $Z(x_2-cx_3)$ is given by the ideal $(x_2-cx_3, x_3^2((c^2\A_1+c\A_2+\A_3)x_1^2+cx_3^2+x_1x_3(c^2\A_5+c\A_4+c^3+\A_6)))$.
	 To get a Weierstrass point, we must have \begin{equation}\label{weier2}(c^2\A_5+c\A_4+c^3+\A_6)^2 = 4c(c^2\A_1+c\A_2+\A_3).\end{equation} 
	Let $U_0$ be the (ordered)space of roots to all such polynomials in $\mathbb{A}^6$, with mutually distinct co-ordinates and $U'=\{(p_i)_{1\leq i\leq 6}|(p_i^2)_{1\leq i\leq 6}\in U_0\}$.\\
	Two points $p,q\in U_0$ determine projectively related $E_{C,x, y}$ if and only if $\exists \la\in \K^*$ such that $p = \la. q$. The corresponding $\K^*$-action on $(\A_i)$ is given by 
	$$\la.(\A_1,\A_2,\A_3,\A_4,\A_5, \A_6)= (\la^{3}\A_1, \la^{4}\A_2, \la^{5}.\A_3, \la^{2}.\A_4, \la.\A_5,\la^3.\A_6 ).$$
	For a point $p=(p_i)_{1\leq i\leq 6}\in {U}'$, we have that $\{p_i^2\}$ are roots of some $E_{C,x,y}$ as described in (\ref{weier2}). 
	We can recover the coefficients $\{\A_i\}$ from $p$ by setting $\A_6$ to be $\Pi_{1\leq i\leq 6} p_i$.\\
	Following the notation established in (\ref{sections}) along with the section $\tau_P:= P\times U'$, and desingularizing $E_{U'}$ along $\tau_N$, we get a family $\C_{U'}$ of smooth genus $2$ curves. 
	Let $\p_{U'}$ be the space obtained from the corresponding blowups of $\p^2\times U'$.
	We have the diagram:
	$$\begin{tikzcd}
		\p_{U'}\hookleftarrow \C_{U'}\arrow[r, "bl"]\arrow[rd, "\tilde{\pi}"']&E_{U'}\arrow[d]\arrow[r, hook]& \p^2\times U'\\
		& {U}'\arrow[u, "N\times {U}'(=:\tau_N)"', bend right = 60]\arrow[u, "\tau_M{,}\tau_P", bend left = 40]\arrow[ul, "\overline{\tau_1}{,}\cdots{,}\overline{\tau_6}", bend left = 30]\arrow[ul, bend left = 70]\arrow[rd]\arrow[d,"\tau"]&\\
		M_*& \C_*\times_{M_*}\C_*\arrow[l,"\pi"]& {U'}//\K^*\arrow[l, "\tilde{\tau}",dashed].
	\end{tikzcd}$$ 
Furthermore, we have lines parametrized over ${U}'$ given by $\tau_i (p) = Z(x_2-p_i^2x_3)$. \\
	The sections $\tau_M,\tau_P$ lift to $\C_{U'}$ since $\C_{U'}\setminus bl^{-1}(\tau_N) \simeq E_{U'}\setminus \tau_N$.\\ 
	For every $\tau_i$, the strict transform $\tilde{\tau_i}\hookrightarrow \p_{U'}$ intersects $\C_{U'}$ so that the intersection maps bijectively onto ${U}'$ via $\tilde{\pi}$. Hence, the locus $(\tilde{\tau_i}\cap \p_{U'})_{red}$ is isomorphic to ${U}'$ by Theorem \ref{zmt}. Let the sections induced by these isomorphisms be $\overline{\tau_i}$.
	\\ Hence, we have a morphism ${U}'\xrightarrow{t} M_*$. The sections $\tau_M, \tau_P$  then define the maps $\sigma_M,\sigma_P:{U}'\xrightarrow{} \C_*$. These maps define $\tau: U'\rightarrow\C_*\times_{M_*}\C_*$ which is invariant under the $\K^*$-action and thus factors via $\tilde{\tau}:{U}'//\K^*\rightarrow \C_*\times_{M_*}\C_*$.\\ 
	Let $\tilde{\Delta}$ be the image of an embedding of $\C_*$ into $\C_*\times_{M_*}\C_*$ with projection onto first factor given by the identity map and projection onto second factor given by the hyperelliptic involution.\\ 
	The map $\tilde{\tau}$ is finite, surjective onto $\C_*\times_{M_*}\C_*\setminus (\Delta\cup \tilde{\Delta}\cup(\mathcal{W}_*\times_{M_*}\C_* ))$.\\
	Additionally, ${U}'//\K^*\subset \mathbb{A}^5$ since ${U}'\subset \mathbb{A}^6$ consists of points with distinct coordinates. Therefore, $$A^*(\C_*\times_{M_*}\C_*\setminus (\Delta\cup \tilde{\Delta}\cup(\mathcal{W}_*\times_{M_*}\C_* )))\simeq \Q$$\label{c_2} 
	\noindent by Remark \ref{rem1}. Consequently, we have
	\begin{multline*}\phi^{-1}(Z_\C\cup \phi(\mathcal{W}_*\times_{M_*}\C_*)) = \Delta\cup \tilde{\Delta}\cup (\mathcal{W}_*\times_{M_*}\C_*)\cup (\C_*\times_{M_*}\mathcal{W}_*)\implies  \phi^{-1}(J_*(2)\setminus (Z_\C\cup\phi(\mathcal{W}_*\times_{M_*}\C_*)))\\
		 \subset \C_*\times_{M_*}\C_*\setminus (\Delta\cup \tilde{\Delta}\cup(\mathcal{W}_*\times_{M_*}\C_* ))\implies A^*(\phi^{-1}(J_*(2)\setminus (Z_\C\cup\phi(\mathcal{W}_*\times_{M_*}\C_*))))=\Q.\end{multline*}
	The restriction of $\phi$ to $\phi^{-1}(J_*(2)\setminus (Z_\C\cup\phi(\mathcal{W}_*\times_{M_*}\C_*)))\rightarrow J_*(2)\setminus (Z_\C\cup\phi(\mathcal{W}_*\times_{M_*}\C_*))$ is a finite, surjective map between smooth varieties. The lemma now follows from Remark \ref{rem1}.
\end{proof}
% \noindent The following theorem puts together the results from Lemma \ref{c1} and Lemma \ref{chow-2} to obtain the Chow ring of $J_*(d)$.
	\begin{theorem}\label{t-s6}
The Chow ring of $J_*(d)$ is given by 
$$A^*(J_*(d))\simeq \Q[\T_d]/(\T_d^3)$$ for all $d\in \mathbb{Z}$. Consequently, the Chow rings of $J_{2}^d$ are given by 
$$A^*(J_{2}^1)\simeq \Q[\T]/(\T^3), A^*(J_{2}^2)\simeq \Q[Z]/(Z^3)$$\label{chow-j-2} where $\Theta$ is the theta-divisor in $J_{2}^1$ and $Z\hr J_{2}^2$ is the image of $Z_\C$ under the map $J_*(2)\ri J_{2}^2$.
	\end{theorem}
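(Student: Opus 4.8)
The plan is to reduce to the case $d=2$ via the isomorphisms $\tau_{d,2}:J_*(d)\xrightarrow{\simeq}J_*(2)$, compute $A^*(J_*(2))$, and then deduce $A^*(J_2^d)$ from the finite cover $J_*(d)\to J_2^d$. The starting point is Lemma~\ref{chow-2}: combined with the excision sequence (Theorem~\ref{excision}) it shows that in positive degrees $A^*(J_*(2))$ is spanned by pushforwards of cycles supported on $Z_\C$ and on the six components $W_1,\dots,W_6$ of $\phi(\mathcal{W}_*\times_{M_*}\C_*)$, where $W_j$ is the image of the relative Abel--Jacobi map $\iota_j:\C_*\to J_*(2)$, $x\mapsto\mo(\sigma_j+x)$. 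Since $Z_\C$ is the image of $\C_*$ under the proper map $i_{[2]}$ (so $A_*(Z_\C)$ is a quotient of $A_*(\C_*)$) and each $W_j\cong\C_*$, while $A^*(\C_*)=\Q[\mathcal{K}]/(\mathcal{K}^2)$ by Lemma~\ref{c1} and its corollary, these pushforwards span $A^*(J_*(2))$ by: the fundamental class in degree zero, the divisors $[Z_\C],[W_1],\dots,[W_6]$ in degree one, and the classes $(i_{[2]})_*\mathcal{K},(\iota_1)_*\mathcal{K},\dots,(\iota_6)_*\mathcal{K}$ in degree two. In particular $A^{\geq 3}(J_*(2))=0$, and it remains to collapse these generating sets to one class in each of degrees one and two.

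To do that I would work on the abelian scheme $J_*(0)$ via $\tau_{2,0}$, under which each $W_j$ becomes a translate of the symmetric principal polarization $\vartheta:=\{\mo(x-\sigma_1)\}\subset J_*(0)$ by the $2$-torsion section $\sigma_j-\sigma_1$, while $Z_\C$ becomes $[2](\vartheta)$. A torsion translate of a symmetric theta divisor differs from it by a torsion element of the relative $Pic^0$, hence by zero in rational Chow (using also $Pic(M_*)=0$), so all the $[W_j]$ coincide; call the common class $\T_2$, which is nonzero since its restriction to a fibre is a principal polarization. Next, $[2]|_\vartheta$ is birational onto its image and $[2]^*\mo(\vartheta)\cong\mo(\vartheta)^{\otimes 4}$ by symmetry and the theorem of the cube (again $Pic(M_*)=0$ absorbs any base twist), so the projection formula gives $16[\vartheta]=[2]_*[2]^*[\vartheta]=[2]_*(4[\vartheta])$ and hence $[Z_\C]=[2]_*[\vartheta]=4\,\T_2$; thus $A^1(J_*(2))=\Q\,\T_2$. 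In degree two, the normal bundle of $\iota_j:\C_*\cong W_j\hookrightarrow J_*(2)$ has first Chern class $\mathcal{K}$, because $T_{J_*(2)/M_*}$ is pulled back from $M_*$ (being the tangent bundle of an abelian scheme) and therefore has trivial rational Chern classes, whereas $c_1(T_{\C_*/M_*})=-\mathcal{K}$; the self-intersection formula then gives $(\iota_j)_*\mathcal{K}=\T_2^2$. Pairing $[Z_\C]=4\,\T_2$ with $\T_2$, the projection formula yields $(i_{[2]})_*(i_{[2]}^*\T_2)=4\,\T_2^2$; writing $i_{[2]}^*\T_2=c\,\mathcal{K}$ with $c\in\Q$, the constant $c$ is nonzero because the fibrewise intersection number of the distinct irreducible effective divisors $Z_\C$ and $W_j$ on an abelian surface is positive, so $(i_{[2]})_*\mathcal{K}$ is again a multiple of $\T_2^2$. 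As $\T_2^2$ restricts on a fibre to $2[\mathrm{pt}]\neq 0$, we conclude $A^2(J_*(2))=\Q\,\T_2^2$ and $A^*(J_*(2))=\Q[\T_2]/(\T_2^3)$; pulling back along $\tau_{d,2}$ gives $A^*(J_*(d))=\Q[\T_d]/(\T_d^3)$ for every $d$.

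For the ``consequently'' statement, the morphism $J_*(d)\to J_2^d$ is finite and surjective, being the composite of the finite quotient $J_*(d)\to J_*(d)/S_6$ with the finite map $J_*(d)/S_6\to J_2^d$ of Remark~\ref{j*-s6}; by Remark~\ref{rem1} its pullback $A^*(J_2^d)\hookrightarrow A^*(J_*(d))=\Q[\T_d]/(\T_d^3)$ is injective. By Remark~\ref{j*-s6} the theta divisor of $J_2^1$ pulls back to the Abel--Jacobi class on $J_*(1)$ and $Z\hookrightarrow J_2^2$ pulls back to $[Z_\C]$; in both cases the pullback is a nonzero multiple of the generator $\T_d$, so the image of the pullback contains $\T_d$, hence $\T_d^2\neq 0$, hence all of $\Q[\T_d]/(\T_d^3)$. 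Therefore the pullback is an isomorphism, giving $A^*(J_2^1)\cong\Q[\Theta]/(\Theta^3)$ and $A^*(J_2^2)\cong\Q[Z]/(Z^3)$.

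The step I expect to be the main obstacle is the pair of proportionality reductions in the second paragraph: collapsing the six translates $[W_j]$ to a single line in $A^1$ and the seven codimension-two pushforwards to a single line in $A^2$. These carry the geometric content and rely on the abelian-scheme input (the theorem of the cube, the behaviour of a symmetric theta divisor under $[2]$ and under translation by torsion, the triviality of $Pic(M_*)$ and of the rational Chern classes of bundles on $M_*$), and they require care with the fact that $Z_\C$ is singular along the locus where the six Weierstrass sections are glued. The excision input and the descent to $J_2^d$ are, by contrast, essentially formal.
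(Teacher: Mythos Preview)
Your proof is correct and follows the same overall architecture as the paper: use Lemma~\ref{chow-2} with excision to get a finite generating set, collapse it to the single class $\T_2$ and its square, check nonvanishing on a fibre, and descend to $J_2^d$ via Remark~\ref{rem1}. The difference lies in the tools you use for the collapsing step. To identify the classes $[W_1],\dots,[W_6]$ and to get $[Z_\C]=4\T_2$, you invoke the theorem of the square (torsion translation) and the theorem of the cube for $[2]^*$ on a symmetric theta divisor; the paper instead appeals to the Beauville--Deninger--Murre decomposition and the Fourier transform to force $A^1(J_*(2))=A^1_0(J_*(2))$, from which the same identities drop out. Your route is more elementary in that it avoids the Fourier machinery entirely. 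In codimension two, on the other hand, the paper's argument is shorter than yours: rather than your normal-bundle/self-intersection computation and projection-formula manipulation, it simply observes that every $\phi_j$ and $i_{[2]}$ sends each Weierstrass section to the identity section $e$ (since $2\sigma_j\sim K_C$), so every pushforward of $\mathcal K=2[\sigma_j]$ is a multiple of $[e]$, and $A^2$ is at most one-dimensional immediately. Both approaches are valid; yours trades a small amount of extra computation in degree two for not importing the Beauville decomposition in degree one.
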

\begin{proof}
The deleted locus from $J_*(2)$ in Lemma \ref{chow-2} is $$Z_\C\cup \phi(\mathcal{W}_*\times_{M_*}\C_*) = i_{[2]}(\C_*)\cup_i \phi(\sigma_i\times_{M_*}\C_*)  = i_{[2]}(\C_*)\cup_j \phi_j(\C)$$ where $\phi_j := \C_*\simeq \sigma_j\times_{M_*}\C_*\xrightarrow{\phi} J_*(2) \forall 1\leq j\leq 6$. 
The maps $i_{[2]}, \phi_j$ are finite and $i_{[2]} = [2]\circ \phi_j\forall 1\leq j\leq 6$ where $[2]:J_*(2)\ri J_*(2)$ is given by $[(L,C,s_1,\cdots,s_6)]\mapsto [(L^{\otimes 2}\otimes K_C^\vee,C,s_1,\cdots,s_6)]\forall [(L,C,s_1,\cdots,s_6)]\in J_*(2)$.\\
	We have $\phi_{j*}(A_*(\C_*)) = \Q.[\phi_j(\C_*)]\oplus \Q.e\quad\forall 1\leq j\leq 6$, where $e$ is the identity section of the Abelian scheme $J_*(2)/M_*$ since $\phi_{j*}(\mathcal{K}) = \phi_{j*}(2\sigma_j) = 2\phi_{j*}(\sigma_j) = 2e\forall j$ and $i_{[2]*}(\mathcal{K}) = [2]_*(e)= e$.
	\\Using \cite[Theorem 2.19]{deninger}, $A^1(J_*(2)) = \oplus_{-1\leq s\leq 2}A^1_s(J_*(2))$. Let $\phi_j(\C_*) = \Sigma_{-1\leq s\leq 2} a_{j,s}\forall j$.  We have 
	\begin{gather*}[2]_*(\phi_j(\C_*)) = i_{[2]}(\C_*)\forall j\implies \oplus_{-1\leq s\leq 2} 2^{2+s}a_{j,s} = \oplus_{-1\leq s\leq 2} 2^{2+s}a_{k,s}\forall 1\leq j,k\leq 6\implies \\
		2^{2+s}a_{j,s}=2^{2+s}a_{k,s}\forall -1\leq s\leq 2\implies a_{j,s}=a_{k,s}\forall s, j,k\implies [\phi_j(\C_*)] = [\phi_k(\C_*)]\in A^1(J_*(2))\forall 1\leq j,k\leq 6.\end{gather*}
	Using the excision sequence on $J_*(2)  = (J_*(2)\setminus Z_\C\cup \phi(\mathcal{W}_*\times_{M_*}\C_*))\coprod (Z_\C\cup \phi(\mathcal{W}_*\times_{M_*}\C_*))$, we have
	 $$\dim A^i(J_*(2))=0\forall i>2, \dim A^2(J_*(2)) = \dim A^2_{0}(J_*(2))\leq 1, \dim A^1(J_*(2))\leq 2.$$ 
	We show 
	$[\phi_1(\C_*)]\in A^1_{0}(J_*(2))$. This would further imply
	$$i_{[2]*}(\C_*) = [2]_* ( [\phi_1(\C_*)] ) = 4[\phi_1(\C_*)].$$ 
	Consequently, we shall have $\dim A^1(J_*(2))\leq 1$.\\
	We have that $J_*(2)$ is a principally polarized Abelian scheme via the $\Theta_*$ divisor on $J_*(1)$. 
	Hence, the Fourier transform $\mathcal{F}:A_s^p(J_*(2))\simeq A^{2-p+s}_s(J_*(2))$ is defined for all $p,s$, and induces the isomorphisms  
	\begin{gather}\label{beauville-decomp}A^1_{-1}(J_*(2)) \simeq A^0_{-1}(J_*(2)) = 0, A^1_{1}(J_*(2)) \simeq A^2_1(J_*(2)) = 0 ,\\
		 A^1_2(J_*(2)) \simeq A^3_2(J_*(2)) = 0\implies A^1(J_*(2)) = A^1_0(J_*(2))\label{beauville-component}\end{gather} 
		 as claimed and $[Z_\C] = [2]_*([\phi_1(\C_*)])= 4[\phi_1(\C_*)]$.
	We have $$[\phi_1(\C_*)] =  \tau_{2,1}^*\Theta_* \in A^1(J_*(2)).$$
	 Restricting to a fiber of $J_*(1)$ over $M_*$, shows that $\Theta_*, \Theta_*^2\neq 0$ in $A^*(J_*(1))$ giving us the claimed form for $A^*(J_*(d))$ for $d\in\Z$.\\ 
	The claimed form for $A^*(J_{2}^1)$ follows from the fact that the divisor $\Theta_*\hookrightarrow J_*(1)$ is the pullback of the universal $\T$-divisor. Similarly, 
	\begin{equation*}A^*(J_*(2))= \Q[Z_\C]/(Z_\C^3)\implies A^*(J_{2}^2)\simeq \Q[Z]/(Z^3)\end{equation*} 
	since the cycle $Z_\C(= 4\T_2)$ is the pullback of $Z\in A^1(J_{2}^2)$.
\end{proof}
\noindent We thus have $A^*(J_{2}^d)$ for all values of $d$. These generators and relations are identical to the ones obtained in \cite{larson-24}.
	\subsection{\textbf{Final Steps}}\label{final-steps}
	Returning to the original problem, we use that $|2\Theta_*|_\pi\times_{M_*}J_*(d)$ has a finite map to $U(2,2d,2)$.
	More conceretely, we have a sequence of maps
	$$|2\T_*|_\pi\times_{M_*}J_*(d)\xrightarrow{q_\Gamma}\tilde{U}(2,2d,2)\xrightarrow[]{q}U(2,2d,2)$$
	where $q$ is the map induced by the universality of $U(2,2d,2)$. A straightforward application of the projective bundle formula gives us the following lemma.
% The space $\tilde{U}(2,2d,2)$ obtained after quotienting $|2\T_*|_\pi\times_{M_*}J_*(d)$ by $\Gamma$ is finite over $U(2,2d,2)$ via $q$.
	\begin{lemma}\label{prod}
		The product $|2\Theta_*|_\pi\times_{M_*}J_*(d)$ is a projective-bundle over $J_*(d)$. Therefore, 
		\begin{equation*}A^*(|2\Theta_*|_\pi\times_{M_*}J_*(d))= \Q[A, B]/(A^4,B^3)\end{equation*}
		where $ \pi_\T:|2\Theta_*|_\pi\times_{M_*}J_*(d)\ri |2\T_*|_\pi, \pi_d: |2\Theta_*|_\pi\times_{M_*}J_*(d)\ri J_*(d)$ are the projection maps and $A:= \pi_\T^*\zeta, B:= \pi_d^*\Theta_d, \Theta_d=\tau_{d,1}^*\Theta_*$.
	\end{lemma}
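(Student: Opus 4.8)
The plan is to exhibit $|2\Theta_*|_\pi\times_{M_*}J_*(d)$ as a projective bundle over $J_*(d)$ obtained by base change from the projective bundle $|2\Theta_*|_\pi=\p_{M_*}(\pi_*L_{\Theta_*}^2)\to M_*$, and then to read off its Chow ring from the projective bundle formula together with Theorem~\ref{t-s6} and the vanishing of $A^{\geq 1}(M_*)$.

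First I would write $g\colon J_*(d)\to M_*$ for the structure map and use that projectivization commutes with base change: there is a canonical isomorphism $|2\Theta_*|_\pi\times_{M_*}J_*(d)\simeq \p_{J_*(d)}\!\left(g^*\pi_*L_{\Theta_*}^2\right)$ of $J_*(d)$-schemes, under which the tautological line bundle $\mathcal{O}(1)$ is identified with $\pi_\T^*\mathcal{O}_{|2\Theta_*|_\pi}(1)$; in particular $A=\pi_\T^*\zeta$ is the relative hyperplane class of this projectivization, and $B=\pi_d^*\Theta_d$ is the pullback of a class from the base. Since $A^*(|2\Theta_*|_\pi)\simeq\Q[\zeta]/(\zeta^4)$ and $A^*(M_*)\simeq\Q$, the sheaf $\pi_*L_{\Theta_*}^2$ is locally free of rank $4$ (this also follows directly from the fact that twice a principal polarization on an abelian surface has $h^0=4$), so $|2\Theta_*|_\pi\times_{M_*}J_*(d)\to J_*(d)$ is a $\p^3$-bundle.

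Next I would apply the projective bundle formula: $A^*\!\left(|2\Theta_*|_\pi\times_{M_*}J_*(d)\right)$ is free as an $A^*(J_*(d))$-module on $1,A,A^2,A^3$, with $A^4$ equal to an expression in the Chern classes $g^*c_i(\pi_*L_{\Theta_*}^2)$ for $1\le i\le 4$. Since $M_*\subset\mathbb{A}^3$ is open, $A^i(M_*)=0$ for $i\geq 1$, so every $c_i(\pi_*L_{\Theta_*}^2)$ with $i\ge 1$ vanishes and the defining relation collapses to $A^4=0$. Substituting $A^*(J_*(d))\simeq\Q[\Theta_d]/(\Theta_d^3)$ from Theorem~\ref{t-s6} and setting $B=\pi_d^*\Theta_d$ then gives $A^*\!\left(|2\Theta_*|_\pi\times_{M_*}J_*(d)\right)\simeq\Q[A,B]/(A^4,B^3)$, with monomial basis $\{A^iB^j:0\le i\le 3,\ 0\le j\le 2\}$.

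The argument presents no genuine obstacle; the only points requiring a little care are the compatibility of $\p(-)$ and its $\mathcal{O}(1)$ with base change (so that $\pi_\T^*\zeta$ really is the relative hyperplane class for the bundle over $J_*(d)$) and the vanishing of the higher Chern classes of $\pi_*L_{\Theta_*}^2$, which is immediate from $A^{\geq 1}(M_*)=0$.
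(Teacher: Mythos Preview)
Your proposal is correct and follows essentially the same approach as the paper, which simply states that the lemma is ``a straightforward application of the projective bundle formula.'' Your write-up supplies the details that the paper omits (base change of $\p(-)$ and $\mathcal{O}(1)$, vanishing of $c_i(\pi_*L_{\Theta_*}^2)$ via $A^{\geq 1}(M_*)=0$, and the input $A^*(J_*(d))\simeq\Q[\Theta_d]/(\Theta_d^3)$), but there is no difference in strategy.
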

	\begin{theorem}\label{u-even} 
		Let $\nu\in A^1(\tilde{U}(2,2d,2))$ be the divisor which pulls back to $\pi_\T^*\zeta$ via $q_\Gamma^*$ and $\tilde{Z}_d := det_d^*\tau_{2d,2}^*Z$. We have 
	$$A^*({U}(2,2d,2))\simeq \Q[\nu, \tilde{Z}_{d}]/(\nu^4,\tilde{Z}_{d}^3)$$
	for all $d\in\mathbb{Z}$. Furthermore, $\nu+\dfrac{1}{8}\tilde{Z}_d$ can be identified with the generalized $\T$-divisor if $d$ is odd.
	\end{theorem}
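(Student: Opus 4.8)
The strategy is to transport the problem down to the projective bundle of Lemma~\ref{prod} via the finite surjective morphisms
$$|2\Theta_*|_\pi\times_{M_*}J_*(d)\xrightarrow{\ q_\Gamma\ }\tilde U(2,2d,2)\xrightarrow{\ q\ }U(2,2d,2)$$
and to squeeze $A^*(U(2,2d,2))$ between two copies of $\Q[\nu,\tilde Z_d]/(\nu^4,\tilde Z_d^3)$. Here $q_\Gamma$ is the \'etale degree-$16$ quotient by the translation action of $\Gamma\cong J_*(d)[2]$, acting on $J_*(d)$ by $2$-torsion translations and on $|2\Theta_*|_\pi$ by the induced fibrewise linear (Heisenberg) action, and $q$ is the finite map with $\tilde U(2,2d,2)/S_6\cong U(2,2d,2)$ coming from $M_*\to M_2$. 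By Remark~\ref{rem1} the maps $q^*$ and $q_\Gamma^*$ are injective, $A^*(U(2,2d,2))=A^*(\tilde U(2,2d,2))^{S_6}$, and by Lemma~\ref{prod} we have $A^*(\tilde U(2,2d,2))\hookrightarrow A^*(|2\Theta_*|_\pi\times_{M_*}J_*(d))=\Q[A,B]/(A^4,B^3)$.

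Next I would compute the images of the proposed generators. By definition $q_\Gamma^*\nu=A$. For $\tilde Z_d$, the moduli description of $q_\Gamma$ (via the morphism $s$ of Theorem~\ref{isom}) shows that $det_d\circ q_\Gamma$ factors as the projection $\pi_d\colon|2\Theta_*|_\pi\times_{M_*}J_*(d)\to J_*(d)$ followed by the squaring map $L\mapsto L^{\otimes 2}$ into $J_*(2d)$; together with $\tau_{2d,2}^*Z=4\Theta_{2d}$ and the relation $[2]^*\Theta=4\Theta$ on a Jacobian fibre, this gives $q_\Gamma^*\tilde Z_d=16B$. Hence $\nu\mapsto A$, $\tilde Z_d\mapsto 16B$ defines a ring isomorphism $\Q[\nu,\tilde Z_d]/(\nu^4,\tilde Z_d^3)\xrightarrow{\sim}\Q[A,B]/(A^4,B^3)$.

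To run the squeeze I would first check $\nu,\tilde Z_d$ are $S_6$-invariant, so that they lie in $A^*(U(2,2d,2))$: $S_6$ acts on $|2\Theta_*|_\pi$ fibrewise linearly, hence trivially on $A$; and it acts on the line $A^1(J_*(d))=\Q\,\Theta_d$ through a character which must be trivial because the injection $A^1(J_2^d)\hookrightarrow A^1(J_*(d))$ (Remark~\ref{rem1} for $J_*(d)\to J_2^d$, cf.\ Remark~\ref{j*-s6}) supplies a nonzero $S_6$-invariant class, so $S_6$ acts trivially on $B$ as well; since $q_\Gamma^*$ is $S_6$-equivariant and injective with $q_\Gamma^*\nu=A$, $q_\Gamma^*\tilde Z_d=16B$, the classes $\nu,\tilde Z_d$ are $S_6$-invariant. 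The relations $\nu^4=\tilde Z_d^3=0$ follow by injectivity of $q_\Gamma^*$, giving a surjection $\Q[\nu,\tilde Z_d]/(\nu^4,\tilde Z_d^3)\twoheadrightarrow A^*(U(2,2d,2))$ which, composed with the injections $q^*$ and $q_\Gamma^*$, realises the isomorphism of the previous paragraph; hence in the chain
$$\Q[\nu,\tilde Z_d]/(\nu^4,\tilde Z_d^3)\twoheadrightarrow A^*(U(2,2d,2))\hookrightarrow A^*(\tilde U(2,2d,2))\hookrightarrow\Q[A,B]/(A^4,B^3)$$
all arrows are forced to be isomorphisms (every ring has $\Q$-dimension $12$), which proves the presentation for all $d\in\Z$.

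For the final assertion, for $d$ odd I would reduce by twisting by $\mathcal O((1-d)\sigma_1)$ to the generalized $\Theta$-divisor on $\tilde U(2,\mathcal K_\pi,2)$ and apply the fibrewise Brill--Noether computation of Theorem~\ref{t-gen}, using that for a fixed generic stable rank-$2$ degree-$0$ bundle $W'$ the locus $\{L\in\operatorname{Jac}^1:h^0(W'\otimes L)>0\}$ has class $2\Theta$; this gives $q_\Gamma^*[\Theta_{gen}]=A+2B=q_\Gamma^*(\nu+\tfrac18\tilde Z_d)$, whence $[\Theta_{gen}]=\nu+\tfrac18\tilde Z_d$ by injectivity of $q_\Gamma^*$. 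I expect the main obstacle to be the triviality of these two finite group actions on $\Q[A,B]/(A^4,B^3)$ --- especially that $2$-torsion translations act trivially on $A^1(J_*(d))$, which rests on the one-dimensionality of $A^1(J_*(d))$ from Theorem~\ref{t-s6} plus the existence of an invariant nonzero class --- and the bookkeeping identifying $det_d\circ q_\Gamma$ with the squaring map so as to pin down the constant $16$ (equivalently the $\tfrac18$ in the last claim); the remainder is formal manipulation with Remark~\ref{rem1} and Lemma~\ref{prod}.
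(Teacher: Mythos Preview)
Your approach is correct and close in spirit to the paper's, but you replace one key step by a more self-contained argument. Both proofs first establish that $A$ and $B$ are $\Gamma$-invariant so that $q_\Gamma^*$ is an isomorphism; the paper does this via a positivity argument for $\zeta$ and a Beauville-decomposition/$[2]_*$ argument for $\Theta_d$, while you invoke the one-dimensionality of $A^1(J_*(d))$ together with the existence of an invariant nonzero class. The real divergence is how you show $\nu,\tilde Z_d$ lie in $A^*(U(2,2d,2))$: the paper \emph{imports} $\dim A^1(U(2,2d,2))=2$ from Fringuelli's Picard-group computation (Theorem~\ref{fringuelli-main}) to force $q^*$ to be an isomorphism in codimension~$1$, whereas you check $S_6$-invariance of $A,B$ directly and run a dimension squeeze along the chain $\Q[\nu,\tilde Z_d]/(\nu^4,\tilde Z_d^3)\to A^*(U)\hookrightarrow A^*(\tilde U)\hookrightarrow\Q[A,B]/(A^4,B^3)$. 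Your route avoids the external input entirely and in fact \emph{recovers} the Picard rank statement; the paper acknowledges just after the proof that such an alternate route exists (via the tautological computations of \S\ref{tautological}). For the $\Theta_{gen}$ identification the two arguments coincide: both restrict to a fibre and compute that the theta-locus of a fixed generic $W\in SU_C(2)$ on $J^1(C)$ has class $2\Theta_C$.

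One point you should make explicit: the very existence of $\nu\in A^1(\tilde U(2,2d,2))$ (i.e.\ that $A=\pi_\T^*\zeta$ lies in the image of $q_\Gamma^*$) requires $\Gamma$-invariance of $A$, which you mention only obliquely at the end. This is immediate --- the Heisenberg action on $|2\Theta_*|_\pi$ is fibrewise projective-linear, so $g^*\zeta-\zeta$ is pulled back from $M_*$, and $A^1(M_*)=0$ --- but since the theorem statement is asserting existence of $\nu$, it belongs in the body of the argument, not the closing caveat. Your phrase ``$S_6$ acts on $|2\Theta_*|_\pi$ fibrewise linearly'' is also slightly imprecise (the action moves fibres over $M_*$); what you need, and what holds, is that $g^*\zeta=\zeta$ because $A^1(M_*)=0$.
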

 \begin{proof}
	Applying the projective bundle formula, we use $A^*(J_*(d))$ to determine $A^*(U(2,2d,2))$.
	We consider the action of $\Gamma$ on $|2\Theta_*|_\pi\times_{M_*}J_*(d) $.\\ 
	Let $\alpha\in \Gamma$ be an arbitrary element, we show that $\alpha.\zeta = \zeta$.\\
	We have $A^*(\tilde{U}(2,\mathcal{O},2))=\Q[\zeta]/(\zeta^4)$ and therefore, $\alpha.\zeta = l.\zeta$ for some $l\in\Q$. Furthermore
	$$\alpha^{2} = id \implies l^{2} = 1\implies l =\pm 1.$$
	It is enough to show that $l\neq -1$. Suppose $l=-1$, then 
	$$\alpha.\zeta = -\zeta\implies \alpha.\eta =-\eta$$ where $\eta$ is the restriction of $\zeta$ to a fiber over $p = [(C,(p_i))]\in M_*$. Positivity of $\eta$ gives the desired contradiction and we have that $l=1$ .
	Hence, $\zeta$ is invariant under the $\Gamma$-action.\\
	 Let $ t_\lb(W)$ be the (relative)translation of a divisor $W\ri J_*(d)$ by $\lb\in Pic(\C_*)$ for the abelian scheme $J_*(d)\ri M_*$. Since
	$$[2]_*(\T_d) = [2]_*(t_{\mo(\s_i-\s_j)}(\T_d))\quad\forall 1\leq i,j\leq 6, d\in \mathbb{Z},$$ 
invariance of $\Theta_d$ under the $\Gamma$-action is an immediate consequence of (\ref{beauville-component}).\\
	The image of the $\Gamma$-quotient of the pullback of $\T_d\hr J_*(d)$ to $|2\T_*|_\pi\times_{M_*}J_*(d)$ is $\det_d^{-1}(\tilde{Z}_d)\hr\tilde{U}(2,2d,2)$ as explained in the diagram below
	$$\begin{tikzcd}
		\mid 2\T_*\mid_\pi \times_{M_*} J_*(d)\arrow[d,"\pi_d"]\arrow[r,"q_\Gamma"] & \tilde{U}(2,2d,2)\arrow[d, "det_d"]\\
		J_*(d)&J_*(2d).
	\end{tikzcd}$$
	That is, \begin{equation}\label{t-Z}
		\pi_d^*\T_d = \dfrac{1}{16}q_\Gamma^*\tilde{Z}_d\end{equation}
		 and $q_\Gamma^*$ is an isomorphism by Lemma \ref{prod}.
	In order to calculate $A^*(U(2,2d,2))$, we show that $\nu, \tilde{Z_d}$ are in the image of $q^*$.\\
We have $\dim A^1(U(2,2d,2)) = 2$ by \cite[Theorem A.2]{fringuelli}. 
Therefore, $q^*: A^1(U(2,2d,2))\hr A^1(\tilde{U}(2,2d,2))$ is an isomorphism since $q$ is finite.\\
Lastly, we show that, for $d$ odd,
$$q_\Gamma^*(\nu+\dfrac{1}{8}\tilde{Z}_d)-q_\Gamma^*\circ q^*(\T_{gen}) = 0.$$
This is equivalent to showing that 
$$q_\Gamma^*\circ q^*(\T_{gen}) = \pi_\T^*\zeta+2\pi_d^*\T_d. $$
Applying Theorem \ref{t-gen} and Lemma \ref{prod}, $q_\Gamma^*\circ q^*(\T_{gen}), \pi_\T^*\zeta$ have identical restrictions over $\tau_{d,0}(e)\hr J_*(d)$ via $\pi_d$.
Hence, $$q_\Gamma^*\circ q^*(\T_{gen}) = \pi_\T^*\zeta+a\pi_d^*\T_d $$ 
for some $a\in \Q$. Restricting to a fiber over $p = [C,(s_i)_{1\leq i\leq 6}]\in M_*$, let $q_{C}: U_C(2,\mo)\times J^1(C)\ri U_C(2,2)$ be the restriction of the quotient map $q_\Gamma$ over $p$ and $h\in A^1(U_C(2,\mo))$ be the hyperplane divisor.
We show that 
$$q_{C}^*\T_{gen, C} = \pi_U^*h+2\pi_{JC}^*\T_C$$
 where $\pi_U,\pi_{JC}$ are projections onto $U_C(2,\mo), J^1(C)$, respectively, and $\T_{gen, C}$ is the restriction of $\T_{gen}$ over $p$. In order to do this, we show that 
\begin{equation}\label{cohom}\pi_{JC*}(\pi_U^*h^3.q_{C}^*\T_{gen, C}) = 2\T_C \in H^2(J^1(C), \Q)\end{equation}
via the cycle-class map $A^1(J^1(C))\ri H^2(J^1(C), \Q)$.\\
Let $p\in C\setminus \{s_i\}_{1\leq i\leq 6}$ be a general point of $C$. Let $[E]\in SU_C(2)$ be the bundle defined by a non-trivial extension 
$$0\ri \mo(p-s_1)\ri E\ri \mo(s_1-p)\ri 0.$$
We have $\pi_U^*h^3 = [E]\times J^1(C)$. The transverse intersection of $[E]\times J^1(C), q_C^*\T_{gen, C}$ is given by the locus $\{(E,L)| h^0(E\otimes L)>0\}_{L\in J^1(C)}$. Twisting by $L$, we get 
\begin{gather*}
0\ri L(p-s_1)\ri E\otimes L\ri L(s_1-p)\ri 0.\end{gather*}
\text{Therefore, }$h^0(E\otimes L)>0\iff h^0(L(p-s_1))>0 \text{ or } h^0(L(s_1-p))>0$.
The cycle $\pi_{JC*}(\pi_U^*h^3.q_{C}^*\T_{gen, C})$, is hence the union of the loci $\T_C(p-s_1), \T_C(s_1-p)$ which are translations of $\T_C$ by the divisors $\mo(p-s_1), \mo(s_1-p)$ respectively.\\
The claimed equation (\ref{cohom}) follows consequently, showing that $a=2$. 
 \end{proof}
	\noindent We shall provide explicit expressions of these generators in terms of tautological classes in \S\ref{tautological}. 
	This provides an alternate method to establish $\dim A^1(U(2,2d,2)) = 2$ and the fact that $q^*$ is an isomorphism.
\subsection{The stable locus}\label{stable-locus}
We determine $A^*(U^s(2,2d,2))$ using excision and Theorem \ref{u-even} where $U^s(2,2d,2)\subset U(2,2d,2)$ is the moduli space of stable bundles of rank $2$, and degree $2d$ over genus $2$ curves.
Let $i^{ss}:U^{sss}(2,2d,2)\hookrightarrow U(2,2d,2)$ be the locus of strictly semi-stable bundles in $U(2,2d,2)$. The excision sequence is given by 
$$A^{*-1}(U^{sss}(2,2d,2))\xrightarrow[]{i_{*}^{ss}} A^*(U(2,2d,2))\ri A^*(U^s(2,2d,2))\ri 0.$$
We shall show that the image of $i_*^{ss}:A^{*-1}(U^{sss}(2,2d,2))\ri A^*(U(2,2d,2))$ is given by the ideal $(\nu)$ which would prove the following theorem.
\begin{theorem}\label{u-stable}
	Following the notation in Theorem \ref{u-even}, we have
	$$A^*(U^s(2,2d,2))\simeq \Q[\tilde{Z}_d]/(\tilde{Z}_d^3)$$
	for all $d\in\mathbb{Z}$.
\end{theorem}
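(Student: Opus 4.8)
\noindent The plan is to compute the image of $i_*^{ss}$ inside $A^*(U(2,2d,2))\simeq\Q[\nu,\tilde Z_d]/(\nu^4,\tilde Z_d^3)$ and show it equals the ideal $(\nu)$. Since $U(2,2d,2)$ is normal and, for genus $2$, smooth, the pullback $i^{ss*}\colon A^*(U(2,2d,2))\to A^*(U^{sss}(2,2d,2))$ is defined and the projection formula $i_*^{ss}(\alpha\cdot i^{ss*}\beta)=i_*^{ss}(\alpha)\cdot\beta$ holds; hence $\mathrm{Im}(i_*^{ss})$ is an ideal containing $[U^{sss}(2,2d,2)]$, and $\mathrm{Im}(i_*^{ss})=[U^{sss}(2,2d,2)]\cdot A^*(U(2,2d,2))$ as soon as $i^{ss*}$ is surjective. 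So it suffices to prove (i) $[U^{sss}(2,2d,2)]=4\nu$ in $A^1(U(2,2d,2))$, and (ii) $i^{ss*}$ is surjective. Granting these, $\mathrm{Im}(i_*^{ss})=4\nu\cdot A^*(U(2,2d,2))=(\nu)$, and exactness of the excision sequence together with $A^*(U(2,2d,2))/(\nu)\simeq\Q[\tilde Z_d]/(\tilde Z_d^3)$ gives the theorem.

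For (i) — which is exactly Corollary A.1 — I would pull the class back along the finite cover $q_\Gamma\colon |2\T_*|_\pi\times_{M_*}J_*(d)\to\tilde U(2,2d,2)$ of \S\ref{final-steps}. The preimage $q_\Gamma^{-1}(U^{sss})$ is the locus where the $SU$-component of the underlying bundle is strictly semistable; fibrewise over $M_*$ this is the classical Narasimhan--Ramanan picture, $SU_C(2,\mo)\simeq\p^3=|2\T_C|$ with strictly semistable boundary the Kummer quartic surface. Thus $q_\Gamma^{-1}(U^{sss})=K\times_{M_*}J_*(d)$ for a relative quartic $K\subset|2\T_*|_\pi$, and since $A^1(M_*)=0$ its class is $4\pi_\T^*\zeta$. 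The $\Gamma$-action is free at the generic point of this divisor (a generic $L_1\oplus L_2$ with $L_1\not\simeq L_2$ is not fixed by twisting by a nontrivial $2$-torsion line bundle, because $L_1\otimes t\simeq L_2$ and $L_1\otimes L_2$ fixed would force $L_1\simeq L_2$), so $q_\Gamma^*[U^{sss}]=[q_\Gamma^{-1}(U^{sss})]=4\pi_\T^*\zeta=q_\Gamma^*(4\nu)$; injectivity of $q_\Gamma^*$ (Lemma \ref{prod}) yields $[U^{sss}]=4\nu$.

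For (ii), the $S$-equivalence description identifies $U^{sss}(2,2d,2)$ with $\Sym^2_{M_*}J_*(d)$ (a bijection of varieties, hence an isomorphism of their normalizations by Theorem \ref{zmt}; for rational Chow groups this distinction is immaterial). I would then compute $A^*(\Sym^2_{M_*}J_*(d))=A^*(J_*(d)\times_{M_*}J_*(d))^{S_2}$ by the method of \S\ref{uni-jac-chow}: dominate $J_*(d)\times_{M_*}J_*(d)$ by a fibre power of $\C_*$ via Lemma \ref{4.2}, feed in $A^*(\C_*)=\Q[\mathcal K]/(\mathcal K^2)$ and the diagonal analysis from the proof of Lemma \ref{chow-2}, and apply the Beauville/Deninger--Murre decomposition together with the Fourier transform to kill the pieces in $A^*_s$ for $s\neq 0$, exactly as in the proof of Theorem \ref{t-s6}. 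The expected outcome is that $A^*(\Sym^2_{M_*}J_*(d))$ is generated by the symmetric theta class $\T_d^{(1)}+\T_d^{(2)}$ and the determinant class $(\mathrm{sum})^*\T_{2d}$, both of which are restrictions of $\nu$ and $\tilde Z_d$; this gives surjectivity of $i^{ss*}$, and combining with (i) and the projection formula finishes the proof via excision.

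The main obstacle is step (ii): controlling $A^*(J_*(d)\times_{M_*}J_*(d))$, in particular ruling out that the Poincar\'e (correlation) class contributes an extra generator in $A^1$ beyond $\T_d^{(1)},\T_d^{(2)}$, and that the sixteen exceptional ``node'' classes of the relative Kummer (images of the $2$-torsion sections) survive in higher codimension. This is precisely where the triviality of $A^*(M_*)$ and the vanishing of the nonzero Beauville components do the real work; step (i) is comparatively soft once the explicit cover $q_\Gamma$ and the genus-$2$ geometry of Narasimhan--Ramanan are in place.
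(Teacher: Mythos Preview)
Your step (i), identifying $[U^{sss}]=4\nu$ via the fibrewise Kummer quartic under the cover $q_\Gamma$, is correct and is exactly what the paper does.

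For step (ii) you work much harder than necessary, and you yourself flag the obstacle. The paper never computes $A^*(U^{sss})$ and never checks surjectivity of $i^{ss*}$. Once $(\nu)\subset\ker\bigl(A^*(U(2,2d,2))\to A^*(U^s(2,2d,2))\bigr)$ is known from (i), excision already gives a surjection $\Q[\tilde Z_d]/(\tilde Z_d^3)\twoheadrightarrow A^*(U^s(2,2d,2))$, so the only remaining task is to rule out further collapse, i.e.\ to show that $\tilde Z_d^2\neq 0$ on the stable locus. The paper does this by pulling back along the finite cover to $\tilde U^s(2,\mathcal O,2)\times_{M_*}J_*(d)$ (where $\tilde Z_d$ becomes a multiple of $\pi_d^*\Theta_d$), restricting to a single fibre $U_C(2,\mathcal O)\times J^1(C)$ over a point of $M_*$, and invoking the cycle-class map together with the K\"unneth decomposition of $H^4$. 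None of the issues you worry about --- the Poincar\'e correlation class in $A^1(J_*(d)\times_{M_*}J_*(d))$, the sixteen node classes of the relative Kummer --- ever arise, because $A^*(U^{sss})$ is simply not touched.

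Your route could be made to work but is genuinely harder: $\Sym^2_{M_*}J_*(d)$ is singular along the diagonal (and on the fixed-determinant side the Kummer has its sixteen nodes), so the Gysin map $i^{ss*}$ and the projection formula you invoke need care; and the Beauville/Fourier analysis of $A^*(J_*(d)\times_{M_*}J_*(d))$ is a substantial computation in its own right. The paper's shortcut trades all of this for a single nonvanishing check on a fibre.
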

\begin{proof}
	We use that $q_\Gamma^*,q^*$ are isomorphisms as established in Theorem \ref{u-even} to get
$$q_\Gamma^* \circ q^*i_{*}^{ss}(A^*({U}^{sss}(2,2d,2))) = i_{J*}^{ss}A^*(\tilde{U}^{sss}(2,\mo,2)\times_{M_*}J_*(d))$$
where $i_J^{ss}$ is the inclusion of $\tilde{U}^{sss}(2,\mo,2)\times_{M_*}J_*(d)\hr \tilde{U}(2,\mo,2)\times_{M_*}J_*(d)$.\\
The inclusion $i_J^{ss}$ is the pullback of the inclusion $i_U^{ss}:\tilde{U}^{sss}(2,\mo,2)\hr \tilde{U}(2,\mo,2)$ via projection.\\
Using \cite[Theorem 4]{nar-ram}, we know that $(i_U^{ss})_p$ maps onto a quartic surface in $|2\T_{\C_p}|$ over every point $p\in M_*$.
Hence, $[\tilde{U}^{ss}(2,\mo,2)] = 4\zeta$ in $A^1(\tilde{U}(2,\mo,2))$ and $\nu \in Im(i_*^{ss})$.\\
Therefore, it is enough to show that $\pi_d^*\T_d^2\neq 0$ in $A^*(\tilde{U}^s(2,\mo,2)\times_{M_*}J_*(d))$ for all $d\in \mathbb{Z}$.\\
Using the isomorphisms $\{\tau_{d,1}\}_{d\in \mathbb{Z}}$, it is enough to show this inequality just for $d=1$.\\
We prove by restricting to a fiber over $M_*$.\\
Fixing a $p_0 = [C, (s_i)_{1\leq i\leq 6}]\in M_*$, it is enough to show that $\pi_J^*\T_C^2\neq 0$ in $A^2(U_C(2,\mo)\times J^1(C))$ which follows from the Künneth formula applied to $H^4(U_C(2,\mo)\times J^1(C))$ via the cycle-class map.  
\end{proof}

	\section{Odd degree}\label{odd-degree}\noindent
	We calculate $A^*(U(2,l,2))$ for $l$ odd in this section.
	 Suitable twisting by the relative canonical divisor and taking dual if necessary shows that $U(2,l,2)\simeq U(2,3,2)$ for all odd integers $l$.
	  It is therefore enough to calculate $A^*(U(2,3,2))$.
	\subsection{Bertram's construction}\label{bertram-construct} 
	Let $C$ be a curve of genus $2$ and $L$ be a line bundle on $C$ of odd degree. Following \cite{bertram}, we use the following notation:
	\begin{defi*}
	Let $U_C(2,L)$ be the moduli space of stable bundles of rank $2$ and determinant $L$, and let $\p_L:= \p(Ext^1(L,\mathcal{O})^\vee)$ be the projectivized space of extension classes.
	\end{defi*}\noindent
	We have a rational map $\phi_L: \p_L\DashedArrow U_C(2,L)$ given by the Poincar\'e bundle on $C\times\p_L$, defined on the subset of extensions $[0\rightarrow \mathcal{O}\rightarrow E\rightarrow L\rightarrow 0]$ where $E$ is stable. 
	\begin{defi*}
	Let $B_L := \{E| h^0(C, E)\geq  2\}\hookrightarrow U_C(2,L)$ be the Brill-Noether locus in $U_C(2,L)$ of bundles with at least $2$ independent sections.
		We put $E_L := \Phi_L^{-1}(B_L)$ and $Q_L$ to be the image of $E_L$ under $\s$.
	\end{defi*}\noindent
Let $\mathcal{W}_{2,3}^k\hr U_C(2,3)$ be the Brill-Noether locus of stable bundles with rank $2$ and degree $3$ having atleast $k+1$ independent sections.
Using \cite[Theorem 4.3]{brill}, we have the following:
	\begin{itemize}
		\item $\mathcal{W}_{2,3}^2$ is empty 
		\item $\mathcal{W}_{2,3}^1$ is irreducible, smooth and has dimension $3$.
	\end{itemize}
	Therefore, $h^0(C,E)=2\forall [E]\in B_L$.
	\begin{theorem}[\cite{bertram}, Theorem 1 and Lemma 4.1]\label{main-bertram}
		The blowup of $\p_L$ along $C$, for $deg(L)=3$, is such that the following diagram commutes:
		$$
		\begin{tikzcd}
			\tilde{\p}_L\arrow[d,"\s"]\arrow[rd,"\Phi_L"]&\\
			\p_L\arrow[r,"\phi_L", dashed]& U_C(2,L)
		\end{tikzcd}
		$$ 
		where the embedding of $C$ into $\p_L$ is given by $C\xrightarrow{|K_C+L|}\p_L$. We have the following:
		\begin{enumerate}
			\item If $x\in C$, then there is a natural isomorphism $\sigma^{-1}(x)\simeq {\p}_{L(-2x)}$, and, when restricted to $\sigma^{-1}(x)$, 
			$\Phi_L$ coincides with the map $$\Phi_{L(-2x)}\otimes \mathcal{O}(x):{\p}_{L(-2x)}\rightarrow U_C(2,L)$$
			\item The map $\Phi_L$ resolves the rational map $\phi_L$ and is defined via a Poincar\'e bundle on $C\times\tilde{\p}_L$. 
% The map $\Phi_L$ is birational. To understand the morphism $\Phi_L$, we study its fibers.
	% In order to do this, we state an easy yet crucial observation below to describe the fibers of $\Phi_L$ in Corollary \ref{el-bl}. 
% 	\begin{def-lemma}\label{fibers-ext}
% For a section $\theta$ of $[E]\in U_C(2,L)$, $E$ is an extension belonging to one of the following types:
% 	\begin{enumerate}\label{ext-types}
% 		\item $0\ri \mo \xrightarrow{\theta} E\ri L\ri 0$ if and only if $\theta$ defines a maximal subbundle $\mo$ of $E$.
% 		\item $0\ri \mo(p) \xrightarrow{\theta} E\ri L(-p)\ri 0$ for some $p\in C$ if and only if $\theta$ lifts to a maximal subbundle $\mo(p)\hookrightarrow E$.
% 	\end{enumerate}
% \end{def-lemma}
\item The exceptional locus of $\Phi_L$ is $\Phi_L^{-1}(B_L)$
	 and we have 
	$$\Phi_L^{-1}([E])\simeq \p^1\quad\forall [E]\in B_L.$$
		\end{enumerate}
	\end{theorem}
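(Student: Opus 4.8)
This statement is quoted from \cite[Theorem 1, Lemma 4.1]{bertram}, so the plan is to recall the shape of Bertram's argument and flag the one genuinely delicate construction. First I would unwind $\p_L$: by Serre duality $\mathrm{Ext}^1(L,\mo)\cong H^0(C,K_C\otimes L)^\vee$, and since $\deg(K_CL)=5=2g+1$ this line bundle is very ample, so $\p_L\cong\p^3$ and $|K_C+L|$ embeds $C$ as a smooth quintic curve. A point $e\in\p_L$ gives an extension $0\to\mo\to E_e\to L\to 0$, and $E_e$ fails to be stable exactly when it carries a sub-line bundle $M$ with $\deg M\geq 2$; such an $M$ must map nontrivially to $L$, hence $M=L(-x)$ for some $x\in C$, and the extension class then lies in the hyperplane $H^0(K_CL(-x))\subset H^0(K_CL)$. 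Reading this off identifies the indeterminacy locus of $\phi_L$ (equivalently, the non-stable locus) with the image of $C$, so $\phi_L$ is a morphism on $\p_L\setminus C$. A Riemann--Roch count ($\chi(E_e)=1$, so $h^0(E_e)\geq 1$ with equality generically) shows the general stable $E$ with $\det E\cong L$ is a unique such extension, so $\phi_L$ is birational onto its image.

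Next I would resolve $\phi_L$ on the blowup $\s\colon\tilde{\p}_L\to\p_L$ of $C$. Let $\mathcal{E}$ be the tautological extension sheaf on $C\times\p_L$ (whose restriction to $C\times\{e\}$ recovers $E_e$ up to a twist by a line bundle from $\p_L$). Its pullback to $C\times\tilde{\p}_L$ is not fibrewise stable along the exceptional divisor, and the crux of the whole theorem --- this is \cite[Lemma 4.1]{bertram} --- is to show that a single elementary transformation along a canonically determined divisor supported over the exceptional locus produces a sheaf $\tilde{\mathcal{E}}$ that \emph{is} fibrewise stable everywhere. Granting this, $\tilde{\mathcal{E}}$ defines a morphism $\Phi_L\colon\tilde{\p}_L\to U_C(2,L)$ agreeing with $\phi_L$ off the exceptional divisor and restricting to $\phi_L\circ\s$ there; this is item (2), and I expect the construction of $\tilde{\mathcal{E}}$ and the verification of fibrewise stability to be the main obstacle.

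For item (1), I would compute the exceptional fibre $\s^{-1}(x)=\p(N_{C/\p_L,x})$, a $\p^1$ since $N_{C/\p_L}$ has rank $2$. Using the conormal sequence of the complete embedding $C\hookrightarrow\p(H^0(K_CL))$ one identifies this $\p^1$ with $\p_{L(-2x)}$ (note $h^0(K_CL(-2x))=2$); the identification is made canonical by the observation that a normal direction at $[x]$ not tangent to $C$ is the same as a first-order smoothing of the unstable bundle $E_x$, i.e. an extension class of $L(-2x)$ by $\mo$. Tracing $\tilde{\mathcal{E}}$ through this identification should show that $\tilde{\mathcal{E}}|_{C\times\s^{-1}(x)}$ is, after twisting by $\mo_C(x)$, the tautological extension bundle of $L(-2x)$; since twisting a rank-two bundle by $\mo(x)$ multiplies the determinant by $\mo(2x)$ and $L(-2x)(2x)\cong L$, this is exactly $\Phi_L|_{\s^{-1}(x)}=\Phi_{L(-2x)}\otimes\mo(x)$. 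As a bonus this closes the apparent recursion: every nonzero extension of the \emph{degree-one} bundle $L(-2x)$ by $\mo$ is automatically stable (any destabilizing sub-line bundle would force a splitting), so $\tilde{\mathcal{E}}$ really is fibrewise stable over the exceptional divisor.

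Finally, for item (3): $\Phi_L$ is now a birational morphism of the smooth projective threefold $\tilde{\p}_L$ onto $U_C(2,L)$, which is normal (it is $\p^3$ by Narasimhan--Ramanan), so by Zariski's Main Theorem (Theorem \ref{zmt} in the form used here) $\Phi_L$ is an isomorphism over the locus where its fibres are finite. A fibre over $[E]$ is positive-dimensional precisely when the extension $0\to\mo\to E\to L\to 0$ is non-unique, i.e. when $h^0(E)\geq 2$, which is the definition of $B_L$; by the Brill--Noether input already recorded ($\mathcal{W}_{2,3}^2=\varnothing$, and $\mathcal{W}_{2,3}^1$ irreducible, smooth, of dimension $3$) every $[E]\in B_L$ has $h^0(E)=2$, and the map sending a section to its associated extension class identifies $\Phi_L^{-1}([E])$ with $\p(H^0(E))\cong\p^1$. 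Hence the exceptional locus of $\Phi_L$ is exactly $\Phi_L^{-1}(B_L)$, a $\p^1$-fibration over $B_L$, with $\Phi_L^{-1}([E])\cong\p^1$ for every $[E]\in B_L$, as claimed.
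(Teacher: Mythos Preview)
The paper does not supply its own proof of this theorem; it is quoted directly from \cite{bertram} and the subsequent lemmas (Lemmas~\ref{surject}--\ref{sig-iso}) refine the fibre structure of $\Phi_L$ over $B_L$ rather than reprove the theorem. Your outline is a faithful sketch of Bertram's argument: identify the unstable locus with the secant embedding of $C$, resolve $\phi_L$ via an elementary transformation of the tautological extension sheaf along the exceptional divisor (correctly flagged as the delicate step from \cite[Lemma~4.1]{bertram}), read off the recursive description over $\sigma^{-1}(x)$, and use normality of the target plus Zariski's Main Theorem for the exceptional locus.

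There is one genuine error. You write that $U_C(2,L)$ ``is $\p^3$ by Narasimhan--Ramanan''; this is the \emph{even} degree (trivial determinant) case. For $g=2$ and $\deg L$ odd, $U_C(2,L)$ is a smooth intersection of two quadrics in $\p^5$ (Newstead). This does not break your argument, since what you actually need is only that $U_C(2,L)$ is normal---which follows because every semistable bundle is stable in odd degree, so the moduli space is smooth. A smaller gap: your identification $\Phi_L^{-1}([E])\cong\p(H^0(E))$ treats only the extension classes coming from $\p_L\setminus C$ and does not a priori account for possible contributions from the exceptional divisor; in Bertram's setup (and in the paper's Lemma~\ref{surject}) one checks that those exceptional points correspond to sections of $E$ vanishing at a point, hence already sit inside $\p(H^0(E))$, so the identification is correct---but this step deserves a sentence.
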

	\begin{lemma}\label{surject}
 The restriction of $\Phi_L$ to $E_L\cap \sigma^{-1}(C)$ maps surjectively onto $B_L$.
	\end{lemma}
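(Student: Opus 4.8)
The plan is to exhibit, for each $[E]\in B_L$, a point of $E_L\cap \sigma^{-1}(C)$ mapping to it; since $E_L=\Phi_L^{-1}(B_L)$ by definition, $\Phi_L$ already sends $E_L\cap\sigma^{-1}(C)$ into $B_L$, so surjectivity is the only point to check. First I would unwind what such a point means using Theorem \ref{main-bertram}(1): the exceptional locus decomposes as $\sigma^{-1}(C)=\bigcup_{x\in C}\sigma^{-1}(x)$ with $\sigma^{-1}(x)\simeq \p_{L(-2x)}$, and on $\sigma^{-1}(x)$ the morphism $\Phi_L$ equals $\Phi_{L(-2x)}\otimes\mathcal{O}(x)$, where a point of $\p_{L(-2x)}$ is a non-zero extension class $[0\to\mathcal{O}\to F\to L(-2x)\to 0]$ sent by $\Phi_{L(-2x)}$ to $[F]$. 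So the statement reduces to: every $[E]\in B_L$ is of the form $F\otimes\mathcal{O}(x)$ for some $x\in C$ and some non-split extension $0\to\mathcal{O}\to F\to L(-2x)\to 0$. Twisting by $\mathcal{O}(-x)$, this is equivalent to asking that $E(-x)$ carry a nowhere-vanishing section for a suitable $x$, i.e. that $E$ carry a global section whose zero divisor is a single reduced point.

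Next I would produce such a section. Recall that $h^0(C,E)=2$ for $[E]\in B_L$ (since $\mathcal{W}_{2,3}^2$ is empty). The evaluation map $H^0(E)\otimes\mathcal{O}_C\to E$ cannot be surjective: a sheaf surjection $\mathcal{O}_C^{\oplus 2}\twoheadrightarrow E$ onto the rank-$2$ locally free sheaf $E$ would have torsion-free kernel of rank $0$, hence be an isomorphism, forcing $\deg E=0$ and contradicting $\deg E=3$. Thus at some $p\in C$ the fibre map $\mathbb{C}^2\to E_p$ has a non-trivial kernel, which produces a non-zero section $s\in H^0(E)$ vanishing at $p$. Writing $Z(s)$ for the zero divisor of $s$, the induced sub-line-bundle $\mathcal{O}_C(Z(s))\subseteq E$ has degree $<3/2$ by stability of $E$, while $\deg Z(s)\geq 1$ since $s(p)=0$; hence $Z(s)=x$ is a single reduced point of $C$.

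Finally I would close the loop. The section $s$ becomes a nowhere-vanishing section of $E(-x)$, giving $0\to\mathcal{O}\xrightarrow{s}E(-x)\to Q\to 0$ with $Q$ a line bundle; comparing determinants yields $Q\cong L(-2x)$. The extension is non-split because $E(-x)$ is stable — being a line-bundle twist of the stable bundle $E$ — whereas $\mathcal{O}\oplus L(-2x)$ is not semistable. Hence it defines a point $\xi\in\p_{L(-2x)}\simeq\sigma^{-1}(x)\subseteq\sigma^{-1}(C)$, and Theorem \ref{main-bertram}(1) gives $\Phi_L(\xi)=[E(-x)\otimes\mathcal{O}(x)]=[E]$. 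Since $\Phi_L(\xi)=[E]\in B_L$, the point $\xi$ lies in $\Phi_L^{-1}(B_L)=E_L$, hence in $E_L\cap\sigma^{-1}(C)$, and maps to $[E]$. As $[E]\in B_L$ was arbitrary, this proves that $\Phi_L|_{E_L\cap\sigma^{-1}(C)}$ is surjective onto $B_L$.

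The only substantive step is the existence of a section of $E$ with a one-point zero divisor; the rest is bookkeeping against Theorem \ref{main-bertram}. I expect the mild care to be concentrated exactly there — ruling out that $E$ is globally generated (which would force $E\cong\mathcal{O}_C^{\oplus 2}$) and then invoking stability to see that the zero divisor has degree $1$ rather than $2$. No genuine obstacle is anticipated, as the construction is rigid once that section is in hand.
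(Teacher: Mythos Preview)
Your argument is correct. Both proofs produce, for a given $[E]\in B_L$, a degree-$1$ sub-line-bundle $\mathcal{O}(x)\hookrightarrow E$, which is exactly what is needed to place $[E]$ in the image of $\sigma^{-1}(x)$ via Theorem~\ref{main-bertram}(1). The difference lies in how this sub-line-bundle is found. The paper starts from a preimage $\tilde\delta\in E_L$ of $[E]$ (implicitly using surjectivity of $\Phi_L$ on $\tilde{\p}_L$); when $\tilde\delta\notin\sigma^{-1}(C)$ it corresponds to an extension $0\to\mathcal{O}\to E\to L\to 0$, and the second section of $E$ projects to a nonzero $\alpha\in H^0(L)$ vanishing at some $p$, after which a short cohomological lifting argument (using $H^1(\mathcal{O}(-p))\simeq H^1(\mathcal{O})$) upgrades the factorization $\mathcal{O}\to\mathcal{O}(p)\to L$ to an inclusion $\mathcal{O}(p)\hookrightarrow E$. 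Your route bypasses any reference to a preimage: you argue directly from $h^0(E)=2$ that $E$ cannot be globally generated (else $\mathcal{O}^{\oplus 2}\simeq E$, contradicting $\deg E=3$), so some section vanishes, and stability pins the zero divisor to a single point. Your approach is more self-contained and avoids the implicit appeal to surjectivity of $\Phi_L$; the paper's approach has the minor advantage of making the passage from $\p_L$ to $\sigma^{-1}(C)$ explicit, which feeds naturally into the next lemmas on the structure of $E_L$.
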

	\begin{proof}
		Let  $\tilde{\delta}\in E_L\setminus \s^{-1}(C)$ be such that $\Phi_L(\tilde{\delta}) = [E]\in B_L$, and let $\delta:= \s(\tilde{\delta})$. 
		We show that $\exists p\in C$ and $\gamma \in \s^{-1}(p)$ such that $\Phi_L(\gamma) = [E]$.
		By definition, $E$ is an extension 
		$$0\ri \mathcal{O}\xrightarrow{\theta} E\xrightarrow{\la} L\ri 0 $$
		 such that $h^0(C,E)\geq 2$. 
		Hence, $\exists\alpha\in H^0(C,L)\setminus 0$ which lifts via $\lambda$. The section $\alpha $ of $L$ must factor via an inclusion $\mo\ri \mo (p)\xrightarrow{i}L$
		for some $p\in C$. With slight abuse of notation, we denote the lift by $\alpha$.
		Hence, we have the following diagram $$\begin{tikzcd}
			& & & &\mathcal{O}\arrow[ld,"\alpha" ,dashed]\arrow[r,  "", hook]&\mathcal{O}(p)\arrow[ld, "", ]\arrow[ld, "i"']	\\
			\delta : & 0\arrow[r] &\mathcal{O}\arrow[r,"\theta"]&E\arrow[r]&L\arrow[r]&0.
		\end{tikzcd}
		$$  We must have $\delta \in ker(H^1(C, L^\vee)\ri H^1(C, \mo))$ since $i|_\mo$ lifts to $ E$. 
		But $H^1(C, L^\vee)\ri H^1(\mo)$ factors as $H^1(C, L^\vee)\ri H^1(C, \mo(-p))\simeq H^1(C, \mo)$ implying a lift $\mo(p)\ri E$.\\
		Since $E$ is stable, the inclusion $\mo(p)\hookrightarrow E$ defines a maximal subbundle. Hence, $[E]\in \Phi_L(\s^{-1}(p))$ as claimed.
\end{proof}
	\begin{lemma}\label{conjugate}
		Let $x,y\in \sigma^{-1}(C)$ be distinct points such that $\Phi_L(x) = \Phi_L(y)$. Then $h^0(C, L(-p-q))>0$ where $p = \sigma(x),q = \sigma(y)$.
		 Conversely, let $p,q\in C$ be distinct points such that $h^0(C, L(-p-q))>0$ then $\exists ! (x, y)\in \sigma^{-1}(p)\times\sigma^{-1}(q)$ such that $\Phi_L(x) = \Phi_L(y)$.
	\end{lemma}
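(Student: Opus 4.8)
The plan is to convert the statement into one about extensions of line bundles on $C$, using the description of $\Phi_L$ on the exceptional locus provided by Theorem~\ref{main-bertram}. For $p\in C$, parts~(1)--(2) of Theorem~\ref{main-bertram} identify $\sigma^{-1}(p)$ with the space of extension classes $0\to\mathcal{O}\to F\to L(-2p)\to 0$, and $\Phi_L$ sends such a class to $F\otimes\mathcal{O}(p)$; the subsheaf $\mathcal{O}(p)=\mathcal{O}\otimes\mathcal{O}(p)\hookrightarrow F\otimes\mathcal{O}(p)$ is then saturated with quotient $L(-p)$, and since $\Phi_L$ takes values in $U_C(2,L)$, all of whose points are stable ($\deg L$ being odd), it is a \emph{maximal} sub-line bundle, of degree $1<\tfrac32$. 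Conversely, a saturated sub-line bundle $\mathcal{O}(p)\hookrightarrow E$ of a stable $E$ with $\det E=L$ recovers, up to scalars, a point of $\sigma^{-1}(p)$ over $[E]$; any nonzero map $\mathcal{O}(p)\to E$ is automatically such a saturated sub-line bundle (its image is $\cong\mathcal{O}(p)$, of maximal degree), so the points of $\sigma^{-1}(p)$ over $[E]$ are in bijection with $\p H^0(E(-p))$.

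For the forward implication, suppose $\Phi_L(x)=\Phi_L(y)=[E]$ with $p=\sigma(x)$, $q=\sigma(y)$. By the above, $E$ carries saturated maximal sub-line bundles $\mathcal{O}(p)$ and $\mathcal{O}(q)$. If $p\neq q$, the composite $\mathcal{O}(q)\hookrightarrow E\twoheadrightarrow E/\mathcal{O}(p)=L(-p)$ is nonzero: otherwise $\mathcal{O}(q)\subseteq\mathcal{O}(p)$ as subsheaves of $E$, and equal degree together with saturatedness forces $\mathcal{O}(q)\cong\mathcal{O}(p)$, impossible for $p\neq q$ on a genus $2$ curve; a nonzero map $\mathcal{O}(q)\to L(-p)$ is a nonzero section of $L(-p-q)$. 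If $p=q$, then $x\neq y$ correspond to non-proportional extension classes with isomorphic bundle $F$ (of degree $1$); transporting the inclusion $\mathcal{O}\hookrightarrow F$ across this isomorphism and comparing it with the other inclusion $\mathcal{O}\hookrightarrow F$ yields two sub-line bundles with distinct images (equal images would make the classes proportional), so $\mathcal{O}^{\oplus 2}\to F$ is injective with torsion cokernel of length $\deg F=1$, whence $\det F=\mathcal{O}(s)$ for a point $s$; since $\det F=L(-2p)=L(-p-q)$, this gives $h^0(L(-p-q))>0$.

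For the converse, let $p\neq q$ with $h^0(L(-p-q))>0$; as $\deg L(-p-q)=1$ there is a unique $r\in C$ with $L(-p-q)\cong\mathcal{O}(r)$, so $L\cong\mathcal{O}(p+q+r)$. I will produce the unique $[E_0]\in B_L$ carrying both $\mathcal{O}(p)$ and $\mathcal{O}(q)$ as sub-line bundles. Extensions $0\to\mathcal{O}(p)\to E\to L(-p)=\mathcal{O}(q+r)\to 0$ are parametrized by $Ext^1(\mathcal{O}(q+r),\mathcal{O}(p))$, which is $2$-dimensional; such an $E$ admits $\mathcal{O}(q)\hookrightarrow E$ precisely when the canonical inclusion $\mathcal{O}(q)\hookrightarrow\mathcal{O}(q+r)$ lifts to $E$, i.e.\ when the extension class lies in the kernel of the pullback $Ext^1(\mathcal{O}(q+r),\mathcal{O}(p))\to Ext^1(\mathcal{O}(q),\mathcal{O}(p))$. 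Using the sequence $0\to\mathcal{O}(p-q-r)\to\mathcal{O}(p-q)\to\mathbb{C}_r\to 0$ together with $h^0(\mathcal{O}(p-q))=0$ (as $p\neq q$), this pullback is a surjection of a $2$-dimensional space onto a $1$-dimensional one, so its kernel is a single line and the corresponding non-split extension $E_0$ is unique up to isomorphism; a slope check (a sub-line bundle of degree $\geq2$ would split the sequence) shows $E_0$ is stable, and the sections of $\mathcal{O}(p)$ and $\mathcal{O}(q)$ give two independent sections of $E_0$, so $[E_0]\in B_L$. Finally $h^0(E_0(-p))=1$: from $0\to\mathcal{O}\to E_0(-p)\to L(-2p)\to 0$ this is immediate if $L(-2p)$ is non-effective, and if $L(-2p)=\mathcal{O}(s)$ it holds because the connecting map $H^0(\mathcal{O}(s))\to H^1(\mathcal{O})$ is the image of the nonzero extension class under the injection $H^1(\mathcal{O}(-s))\hookrightarrow H^1(\mathcal{O})$, hence nonzero. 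Therefore there is a unique $x\in\sigma^{-1}(p)$ over $[E_0]$, and symmetrically a unique $y\in\sigma^{-1}(q)$; and any $(x',y')\in\sigma^{-1}(p)\times\sigma^{-1}(q)$ with $\Phi_L(x')=\Phi_L(y')$ produces a bundle $E'$ carrying $\mathcal{O}(p),\mathcal{O}(q)$ as sub-line bundles, whence $E'\cong E_0$ and $x'=x$, $y'=y$.

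The step I expect to be the real obstacle is the bookkeeping in the converse: arranging that the pullback $Ext^1(\mathcal{O}(q+r),\mathcal{O}(p))\to Ext^1(\mathcal{O}(q),\mathcal{O}(p))$ has exactly one-dimensional kernel (this is where genus $2$ and $\deg L=3$ are used), together with the two uniqueness computations ($h^0(E_0(-p))=1$ and the stability/uniqueness of $E_0$), all of which must be verified uniformly, including the degenerate configurations in which $r\in\{p,q\}$ or $L(-2p)$ is effective. The $p=q$ branch of the forward direction, handled by the elementary-modification argument above, is the other point needing a little care, but it is short.
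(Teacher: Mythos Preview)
Your argument is correct and follows essentially the same route as the paper: both translate the statement into one about extensions via the identification $\sigma^{-1}(p)\simeq\p_{L(-2p)}$, and both construct the unique $x$ in the converse as the kernel of the pullback $Ext^1(L(-p),\mathcal{O}(p))\to Ext^1(\mathcal{O}(q),\mathcal{O}(p))$. The differences are cosmetic. In the forward direction you split into $p\neq q$ and $p=q$, but the paper's argument (``$s\circ i\neq 0$ since $x\neq y$'') already covers both cases uniformly: if $p=q$ and the composite vanished, the second inclusion $\mathcal{O}(p)\hookrightarrow E$ would factor through the first, forcing the extension classes to be proportional. In the converse you work out the dimension count and the uniqueness $h^0(E_0(-p))=1$ explicitly, whereas the paper outsources uniqueness to \cite[Lemma 3.1]{nar-ram} and \cite[Corollary 4.2]{bertram}; your self-contained version is a bit longer but avoids those citations.
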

	\begin{proof}
		For distinct points $x,y\in \sigma^{-1}(C)$ such that $\Phi_L(x)=\Phi_L(y)$, we have the following diagram $$\begin{tikzcd}
			& & \mathcal{O}(q)\arrow[d, "", hook]\arrow[d, "i"']& &	\\
			0\arrow[r] &\mathcal{O}(p)\arrow[r]&E\arrow[r, "s"]&L(-p)\arrow[r]&0.
		\end{tikzcd}
		$$ The composition $s\circ i$ is non-zero since $x\neq y$. Therefore, $\mo(q)$ is a subbundle of $L(-p)$ and we have $h^0(C, L(-p-q))>0$. \\ 
		Suppose $p,q\in C$ are distinct points such that $h^0(C,L(-p-q))>0$. Let $x := ker(H^1(C, L^\vee(2p))\ri H^1(C, \mo(p-q)))$ where  $H^1(C, L^\vee(2p))\ri H^1(C, \mo(p-q))$ is given by twisting $L^\vee(2p)$ with $L(-p-q)$.
		 Based on this definition of $x$, we have the following diagram $$\begin{tikzcd}
			& & & &\mathcal{O}(q)\arrow[d, " ", hook]\arrow[d, "i"']\arrow[ld, ,dashed]&	\\
			 & 0\arrow[r] &\mathcal{O}(p)\arrow[r]&E\arrow[r]&L(-p)\arrow[r]&0.
		\end{tikzcd}$$ 
		The inclusion $i$ lifts to a morphism $\mo(q)\ri E$. Using a similar argument as in proof of Lemma \ref{surject}, we have $[E]\in \Phi_L(\sigma^{-1}(q))$.
		 Let $y\in \sigma^{-1}(q)$ be such that $\Phi_L(y) = [E]$. 
		 Uniqueness of $x$, and $y$ follow from \cite[Lemma 3.1]{nar-ram}, and \cite[Corollary 4.2]{bertram}.
	\end{proof}
	\begin{rem}\label{lin-sys}
		Let $p,q,r\in C$ be distinct points such that $L\simeq \mo(p+q+r)$. The bundles $E_{p,q}$ defined using extension classes $x,y$ for $p,q,$ and $E_{p,r}$ defined using extension classes $x',y'$ for $p,r$ as in Lemma \ref{conjugate} satisfy $E_{p,q}\simeq E_{p,r}$. That is, $x=x'$ and $\Phi_L(x) = \Phi_L(y)=\Phi_L(y')$. \\
		% The common bundle is given by the kernels of the surjections 
		% \begin{align*}H^1(C, L^\vee(2p))\twoheadrightarrow H^1(C, \mo(p-q))\simeq H^1(C, \mo(p)),\\
		% 	H^1(C,L^\vee(2p))\twoheadrightarrow H^1(C, \mo(p-r))\simeq H^1(C, \mo(p))\end{align*}
		% where the maps are defined via twisting by $L(-p)$, with the section $\alpha$ such that $Z(\alpha) = q+r$.\\
		Furthermore, $\Phi_L(\tilde{L}_{p,q}) = \Phi_L(x)$ where $\tilde{L}_{p,q}$ is the proper transform of the line $L_{p,q}$ associated to the blowup $\s$.
	\end{rem}
	% \begin{lemma}\label{contains} $Q_L$ contains $C$.\end{lemma}
	% \begin{proof}
	% 	It is enough to show that $\exists \delta\in \sigma^{-1}(p)$ such that $\Phi_L(\delta)\in B_L$.\\
	% 	Consider the extension class $\delta := ker(H^1(L^\vee (2p))\rightarrow H^1(p))$ given by a non-zero section of $L(-p)$. $\delta$ corresponds to an extension $$\begin{tikzcd}
	% 		& & & &\mathcal{O}\arrow[d, " ", hook]\arrow[d, "i"']\arrow[ld, ,dashed]&	\\
	% 		\delta : & 0\arrow[r] &\mathcal{O}(p)\arrow[r]&E\arrow[r]&L(-p)\arrow[r]&0
	% 	\end{tikzcd}
	% 	$$ for which $i$ lifts to $\mathcal{O}\hookrightarrow E$ giving us that $h^0(C,E)\geq 2$. Since $E$ is an extension of $\mathcal{O}(p)$ by $L(-p)$, we have that $E$ is stable. Therefore, $\Phi_L(\delta) = [E]\in B_L$.\end{proof}
	\begin{lemma}\label{sig-iso}
		The restriction of $\s$ to $E_L\cap \sigma^{-1}(C)$ maps it isomorphically onto $C$ if and only if $|L|$ is base-point free.
		 If $L$ has a basepoint $p\in C$, then $\sigma^{-1}(p)$ maps isomorphically onto $B_L$.
	\end{lemma}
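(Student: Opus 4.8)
The plan is to study $\s|_{E_L\cap\s^{-1}(C)}$ fibrewise over $C$. Fixing $x\in C$, Theorem \ref{main-bertram}(1) identifies $\s^{-1}(x)$ with $\p_{L(-2x)}$ and the restriction of $\Phi_L$ there with $\Phi_{L(-2x)}\otimes\mo(x)$; since $\deg L(-2x)=1$ is odd, every non-split extension $0\to\mo\to E'\to L(-2x)\to 0$ is stable (a sub-line bundle of degree $\ge 1$ would split it), so this restriction is an everywhere-defined morphism $\p^1\to U_C(2,L)$. Hence $E_L\cap\s^{-1}(x)$ is the set of $[e]$ for which the twisted extension $E_e$, sitting in $0\to\mo(x)\to E_e\to L(-x)\to 0$, satisfies $h^0(C,E_e)\ge 2$. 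The long exact sequence gives $h^0(E_e)=1+\dim\ker\delta_e$, where $\delta_e\colon H^0(L(-x))\to H^1(\mo(x))$ is the connecting map, i.e. cup product with the class $e\in Ext^1(L(-x),\mo(x))=H^1(L^\vee(2x))$; and $e\mapsto\delta_e$ is linear in $e$. So $E_L\cap\s^{-1}(x)=\{[e]:\delta_e\text{ is not injective}\}$.

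Next I would run the Riemann--Roch bookkeeping. One has $h^1(\mo(x))=h^0(K_C(-x))=1$, since $K_C(-x)\cong\mo(\tilde x)$ for $\tilde x$ the hyperelliptic conjugate of $x$; and $h^0(L(-x))=1$ unless $L(-x)\cong K_C$, in which case it is $2$. Moreover $x$ is a base point of $|L|$ exactly when $h^0(L(-x))=h^0(L)$, that is exactly when $L\cong K_C(x)$. If $x$ is not a base point, $H^0(L(-x))$ is one-dimensional, so $e\mapsto\delta_e$ is a linear map from the $2$-dimensional $H^1(L^\vee(2x))$ to the $1$-dimensional $\Hom(H^0(L(-x)),H^1(\mo(x)))$; it is nonzero because, by Serre duality, its underlying pairing is dual to the multiplication $H^0(K_C(-x))\otimes H^0(L(-x))\to H^0(K_C\otimes L(-2x))$, which is nonzero (a product of two nonzero sections is nonzero). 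Hence its kernel is a line, whose image in $\s^{-1}(x)\cong\p^1$ is a single reduced point, so $E_L\cap\s^{-1}(x)$ is one reduced point. If $x=p$ is a base point, $\delta_e$ sends the $2$-dimensional $H^0(K_C)$ into the $1$-dimensional $H^1(\mo(x))$, so $\ker\delta_e\ne 0$ for every $e$; thus $E_L\cap\s^{-1}(p)=\s^{-1}(p)\cong\p^1$, and in particular $\s^{-1}(p)\subset E_L$.

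Assembling: if $|L|$ is base-point free, every fibre of $\s|_{E_L\cap\s^{-1}(C)}$ over $C$ is a single reduced point, so the restriction is a bijective, proper morphism onto $C$ with reduced fibres; consequently every component of $E_L\cap\s^{-1}(C)$ dominates $C$ and bijectivity leaves exactly one, so $E_L\cap\s^{-1}(C)$ is connected and reduced and maps finitely and birationally onto the normal curve $C$. By Theorem \ref{zmt} this is an isomorphism. Conversely, if $|L|$ has a base point $p$ the fibre over $p$ is $\p^1$, so the map is not injective and not an isomorphism, which gives the equivalence.

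Finally I would treat the last assertion, for $p$ the base point of $|L|$. The map $\Phi_L\colon\s^{-1}(p)\to B_L$ is surjective: for each $q\ne p$ one has $h^0(L(-p-q))=h^0(K_C(-q))=1>0$, so Lemma \ref{conjugate} matches the unique point of $E_L$ over $q$ with a point of $\s^{-1}(p)$ of the same $\Phi_L$-image, whence $\Phi_L(\s^{-1}(p))\supseteq\Phi_L(E_L\cap\s^{-1}(C))=B_L$ by Lemma \ref{surject}. It is injective: if $\Phi_L(e)=[E]$ with $e\in\s^{-1}(p)$, the associated inclusion $\mo(p)\hr E$ gives $h^0(E(-p))\ge 1$, while $h^0(E(-p))\ge 2$ would force $E$ to be a direct sum of line bundles, contradicting stability; so $h^0(E(-p))=1$, the sub-line bundle $\mo(p)\subset E$ is unique, and $E$ recovers $e$. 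Thus $\Phi_L|_{\s^{-1}(p)}$ is a bijective proper morphism from $\p^1$ onto $B_L$, which is a smooth curve (the fixed-determinant slice of the smooth Brill--Noether locus $\mathcal{W}_{2,3}^1$, equivalently the smooth centre contracted by the blow-down $\Phi_L$ of Theorem \ref{main-bertram}), so Theorem \ref{zmt} yields the isomorphism. The routine but delicate part is the fibrewise cohomology of the second paragraph (in particular keeping track of reducedness); the step I expect to be the real obstacle is this last one --- securing the normality of $B_L$ and the injectivity of $\s^{-1}(p)\to B_L$ through the stability estimate $h^0(E(-p))=1$.
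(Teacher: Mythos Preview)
Your proof is correct and follows essentially the same route as the paper: a fibrewise analysis of $E_L\cap\s^{-1}(x)$ via the condition $h^0(E_e)\ge 2$, then Theorem~\ref{zmt}. Your phrasing through the connecting map $\delta_e$ and Serre duality (identifying the pairing with multiplication of sections) is just the dual formulation of the paper's lifting criterion $\delta\in\ker\big(H^1(L^\vee(2p))\to H^1(\mo(p))\big)$. You are a bit more careful in two places the paper leaves implicit: you argue reducedness and connectedness of $E_L\cap\s^{-1}(C)$ from the reduced one-point fibres, and for the basepoint case you supply an explicit injectivity argument via $h^0(E(-p))=1$ (the paper only spells out surjectivity and tacitly uses that a stable degree-one bundle has at most one $\mo$-subbundle). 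You also correctly flag that both arguments ultimately need $B_L$ normal, which is not proved internally at this point and has to be imported from the cited Brill--Noether smoothness or from Bertram's results.
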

	\begin{proof}
		Let's assume that $L$ is basepoint free. It is enough to show that we have a bijection of points between $E_L\cap \sigma^{-1}(C)$ and $C$ via $\sigma$ by Theorem \ref{zmt}. \\%Using Lemma \ref{contains}, we know that the map is surjective. 
	We show that $\sigma^{-1}(p)\cap E_L$ consists of a unique point if and only if $p$ is not a base-point of $|L|$.
		Points in $\sigma^{-1}(p)$ are extension classes $0\rightarrow \mathcal{O}(p)\rightarrow E\rightarrow L(-p)\rightarrow 0$. For $\Phi_L(E)\in B_L$, we must have the following diagram $$\begin{tikzcd}
			& & &\mathcal{O}\arrow[d, "", hook]\arrow[d, "i"']\arrow[ld, ,dashed]&	\\
			0\arrow[r] &\mathcal{O}(p)\arrow[r]&E\arrow[r]&L(-p)\arrow[r]&0.
		\end{tikzcd}
		$$The map $i$ lifts as shown if and only if $\delta(E)\in ker(H^1(C, L^\vee(2p))\ri H^1(C, \mo(p)))$ where $H^1(C, L^\vee(2p))\ri H^1(C, \mo(p))$ is given by a non-zero section of $L(-p)$.\\
		In the case when $p$ is not a base-point of $|L|$, $L(-p)$ has a unique nonzero section up to scaling and $dim(ker(H^1(C, L^\vee(2p))\ri H^1(C, \mo(p)))) = 1$. This defines the unique extension class $\delta\in \sigma^{-1}(p)$ such that $\delta\in E_L$.\\
		On the other hand, suppose $|L|$ has a base-point $p\in C$. For any $0\neq \tau\in H^0(C, L(-p))$ with $Z(\tau) = q_\tau+\tilde{q}_\tau$ for some $q_\tau, \tilde{q}_\tau\in C$, we can define the extension class $\delta_\tau\in ker(H^1(C, L^\vee(2p))\ri H^1(C, \mo(p-q_\tau)))$ such that the associated bundle $E_\tau$ satisfies the following diagram $$\begin{tikzcd}
			&	& & &\mathcal{O}\arrow[r, "", hook]\arrow[ld, ,dashed]&\mo(q_\tau)\arrow[ld,"\tau"]	\\
			\delta_\tau :&	0\arrow[r] &\mathcal{O}(p)\arrow[r]&E_\tau\arrow[r]&L(-p)\arrow[r]&0.
		\end{tikzcd}
		$$ 
		For $\tau,\tau'\in H^0(C,L(-p))\setminus 0$ such that $\tau\not\in <\tau'>$, the defined extension classes $\delta_\tau,\delta_{\tau'}\in H^1(C,L^\vee (2p))$ are distinct.
		Thus, for every non-zero section $\tau$ of $L(-p)$, we have a unique extension class $\delta_\tau := ker(H^1(C, L^\vee(2p))\ri H^1(C, \mo(p)))$ defining a bundle $E_\tau\in B_L$ via $\Phi_L$.\\
		 Hence, we have $\sigma^{-1}(p)\hookrightarrow E_L$ if $p$ is a basepoint of $L$.
		On the other hand, let $E\in B_L$. By Lemma \ref{surject}, $q\in \s(\Phi_L^{-1}([E]))$ for some $q\in C$.
		If $q\neq p$, then $h^0(C, L(-p-q))>0$ and since $q$ is not a basepoint of $L$, $\Phi_L(E_L\cap \sigma^{-1}(q)) = [E]$. By Lemma \ref{conjugate}, we have that $[E]\in \Phi_L(\sigma^{-1}(p))$. Therefore, $\Phi_L$ induces a bijection between $\sigma^{-1}(p)$ and $B_L$. Using Theorem \ref{zmt}, we have the claimed isomorphism.
	\end{proof}
	
	\begin{lemma}\label{rc}
		The Brill-Noether locus, $B_L$, is a smooth rational curve.
	\end{lemma}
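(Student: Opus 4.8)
The plan is to identify $B_L$ with the $\mathbb{P}^1$ underlying the pencil $|L|$. First note that $\deg L=3$ and $g(C)=2$ force $h^0(C,L)=2$ by Riemann--Roch, so $|L|$ is a $g^1_3$; moreover $h^0(C,E)=2$ for every $[E]\in B_L$, as $\mathcal{W}^2_{2,3}=\emptyset$. I would split according to whether $|L|$ is base-point free. If $|L|$ has a base point $p\in C$, then by Lemma \ref{sig-iso} the map $\Phi_L$ restricts to an isomorphism $\sigma^{-1}(p)\xrightarrow{\ \sim\ }B_L$, while by Theorem \ref{main-bertram}(1) one has $\sigma^{-1}(p)\simeq\mathbb{P}_{L(-2p)}=\mathbb{P}(\operatorname{Ext}^1(L(-2p),\mathcal{O})^\vee)$. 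Since $\deg L(-2p)=1$, we get $h^0(L^\vee(2p))=0$, hence $\dim\operatorname{Ext}^1(L(-2p),\mathcal{O})=h^1(L^\vee(2p))=2$ by Riemann--Roch, so $\sigma^{-1}(p)\simeq\mathbb{P}^1$ and this case is done.

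Assume now $|L|$ is base-point free, giving a degree $3$ morphism $f\colon C\to\mathbb{P}^1$ onto the pencil. The key construction is a morphism $g\colon B_L\to\mathbb{P}^1$ in the opposite direction: using a universal bundle $\mathcal{E}$ on $C\times U_C(2,L)$ (which exists since $\gcd(2,3)=1$) and cohomology-and-base-change (legitimate because $h^0$ is constantly $2$ on $B_L$), the sheaf $p_{2*}(\mathcal{E}|_{C\times B_L})$ is locally free of rank $2$, and $\wedge^2$ of the evaluation yields an injection of line bundles $\wedge^2 p_{2*}(\mathcal{E}|_{C\times B_L})\hookrightarrow H^0(C,L)\otimes\mathcal{M}$, where $\det\mathcal{E}|_{C\times B_L}=L\boxtimes\mathcal{M}$; this injection is fibrewise nonzero because two independent global sections of a stable rank-$2$ bundle on $C$ cannot be everywhere proportional (otherwise a sub-line bundle of degree $\le1$ would have $h^0\ge2$). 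This defines $g$, with fibrewise description $g([E])=\mathrm{div}(s_1\wedge s_2)\in|L|$ for a basis $s_1,s_2$ of $H^0(E)$. Combining Lemmas \ref{sig-iso} and \ref{surject}, the composite $\psi:=\Phi_L\circ\sigma^{-1}\colon C\to B_L$ is a surjective morphism, and a direct check using the extension $0\to\mathcal{O}(p)\to E\to L(-p)\to 0$ defining $\psi(p)$ (the section coming from $\mathcal{O}\hookrightarrow\mathcal{O}(p)\hookrightarrow E$ vanishes at $p$, so $s_1\wedge s_2$ does) shows $g\circ\psi=f$. Hence $g$ is surjective, and being proper with finite fibres (as $\psi$ is surjective and $f^{-1}(t)$ is finite) it is finite.

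It remains to show $g$ is birational; then $g$ is a finite birational morphism onto the normal variety $\mathbb{P}^1$, hence an isomorphism by Theorem \ref{zmt} (equivalently, $g$ is finite flat of degree $1$, and $B_L\simeq\mathbb{P}^1$ is in particular smooth rational). For a general $t\in\mathbb{P}^1$ the fibre $f^{-1}(t)=\{p,q,r\}$ consists of three distinct points, none a base point, with $\mathcal{O}(p+q+r)\simeq L$, so that $h^0(L(-p-q))=h^0(\mathcal{O}(r))=1>0$, etc. Then $g^{-1}(t)=\psi(f^{-1}(t))=\{\psi(p),\psi(q),\psi(r)\}$; by Lemma \ref{conjugate} the unique point of $\sigma^{-1}(p)\cap E_L$ is the extension class it produces for the pair $(p,q)$, whence $\psi(p)=E_{p,q}$, and Remark \ref{lin-sys} gives $E_{p,q}\simeq E_{p,r}\simeq E_{q,r}$. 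Thus $g^{-1}(t)$ is a single point, so $g$ has degree $1$. Since $B_L$ is a reduced irreducible curve (e.g.\ as the scheme-theoretic image of the smooth variety $E_L$ under $\Phi_L$, so in particular reduced), the conclusion follows.

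The main obstacle, as I see it, is the birationality of $g$: one must match correctly the three values $\psi(p),\psi(q),\psi(r)$ over a general member $p+q+r$ of $|L|$ with the extension classes of Lemma \ref{conjugate} and Remark \ref{lin-sys}, in order to see that they coincide. A secondary (but purely formal) point is the bookkeeping needed to realise $g$ as an honest morphism via the universal bundle and base change, rather than merely as a set-theoretic map $B_L\to\mathbb{P}^1$. Everything else is a formal consequence of the blow-up/blow-down picture of Theorem \ref{main-bertram} together with the Riemann--Roch computations on $C$.
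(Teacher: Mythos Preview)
Your proof is correct and reaches the same conclusion as the paper, but by a genuinely different route in the base-point-free case. The paper argues \emph{upward}: it passes to the normalization $\tilde{B}_L$, factors $C\to\tilde{B}_L\to B_L$, uses Riemann--Hurwitz on the degree-$3$ map $C\to\tilde{B}_L$ (together with the observation that the fibres agree with those of $|L|$, which has at least four branch points) to force $g(\tilde{B}_L)=0$, and then checks via Lemma~\ref{conjugate} that distinct points of $\tilde{B}_L$ stay distinct in $B_L$, so the normalization map is an isomorphism. You instead argue \emph{downward}: you manufacture a morphism $g\colon B_L\to\mathbb{P}^1=|L|$ out of the universal bundle and the wedge of two sections, verify $g\circ\psi=f$, and conclude that $g$ is finite birational onto a normal target, hence an isomorphism.

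Your approach buys an explicit identification $B_L\simeq |L|$ compatible with $\psi$ and $f$, which is exactly what the paper later needs (Remark~\ref{rulings}, Theorem~\ref{Q_V}); the paper recovers this identification only after the fact. The paper's approach is slightly more elementary in that it avoids the universal bundle and the base-change bookkeeping you flag as a secondary point. Your stated ``main obstacle'' --- matching $\psi(p),\psi(q),\psi(r)$ --- is not really an obstacle: once you observe that the unique point of $\sigma^{-1}(p)\cap E_L$ produced by Lemma~\ref{sig-iso} coincides with the extension class $x$ in Lemma~\ref{conjugate} (both are the one-dimensional kernel of the same map on $H^1$, since the section of $L(-p)$ factors through $\mathcal{O}(p-q)$), Remark~\ref{lin-sys} immediately gives $\psi(p)=\psi(q)=\psi(r)$. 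One small point to tighten: to invoke Theorem~\ref{zmt} or the finite-flat-degree-$1$ argument at the end, you should first note that $B_L$ is reduced and irreducible (it is, being the image of the irreducible reduced $E_L$), so that ``birational'' and ``connected'' make sense.
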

	\begin{proof}
		If $L$ has a basepoint $p\in C$, then $B_L\simeq \sigma^{-1}(p)\simeq \p^1$ by Lemma \ref{sig-iso}.\\
		If $L$ is basepoint free, we have $C\simeq E_L\cap \s^{-1}(C)$ by Lemma \ref{sig-iso}. 
		Thus, $C\simeq E_L\cap \sigma^{-1}(C)\xrightarrow{\Phi_L} B_L$ is surjective and non-constant by Lemma \ref{surject}, and Lemma \ref{conjugate}.
		 Hence, $B_L$ is irreducible and has dimension $1$. We identify the restriction of $\Phi_L$ to $E_L\cap \sigma^{-1}(C)$ with the induced map $C\ri B_L$.\\
		Let $\tilde{B}_L$ be the normalization of $B_L$. The restricted morphism, $\Phi_L|_C$, then factors via $C\ri \tilde{B}_L\ri B_L$. 
		For $p,q,r\in C$ distinct such that $L\sim \mo(p+q+r)$, we have $\Phi_L(p) = \Phi_L(q)=\Phi_L(r)$ by Remark \ref{lin-sys}.\\
		 Since $\tilde{B}_L\ri B_L$ is birational, $C\ri \tilde{B}_L$ is a degree $3$ map between smooth curves and $\tilde{B}_L$ has genus at most $1$ by the Riemann-Hurwitz Theorem.\\
		% We will show that $g(\tilde{B}_L)= 0$. Suppose not, then $g(\tilde{B}_L) = 1$. The ramification divisor for the induced morphism $\tilde{\Phi}_L: C\ri \tilde{B}_L$ has degree $2$. That is, there are at most $2$ points in $\tilde{B}_L$ with less than $3$ points in their fiber.
		 The map $\Phi_L|_C$ has identical fibers to $C\xrightarrow{|L|}\p^1$ given by the linear system of $L$ by Lemma \ref{conjugate}, which has at least $4$ branch points. That is, there are at least $4$ points in $B_L$ whose fibers have less than $3$ points.
		 Therefore, $g(\tilde{B}_L)=0$ and $\tilde{B}_L\simeq \p^1$. 
		A general fiber of $\tilde{\Phi}_L$ coincides with a general fiber of $\Phi_L|_C$ by Lemma \ref{conjugate}. 
		Thus, $\tilde{\Phi}_L$ is given by the linear system $|L|$.\\
This shows that distinct points $a, b\in \tilde{B}_L$ must map to distinct points in $B_L$ by Lemma \ref{conjugate}. Therefore, $\tilde{B}_L\simeq B_L$ and $B_L$ is a smooth rational curve.
	\end{proof}
	\begin{rem}\label{cone-ruling}
		If $L$ has a basepoint $p$ then $Q_L\setminus p \simeq E_L\setminus \sigma^{-1}(p)$ via $\s$ by Lemma \ref{sig-iso}. 
		For the induced morphism $\phi_L: C\setminus p \hr Q_L\setminus p \xrightarrow{\Phi_L} B_L$, we have $\phi_L(q) = \phi_L(r)$ if and only if $K_C\sim q+r$ by Lemma \ref{conjugate}. Hence, $\phi_L = \alpha|_{C\setminus p}$ and the extension of $\phi_L$ to $C$ is therefore given by $\alpha$ where $\alpha$ is the $g^1_2$-morphism of $C$.
	\end{rem}
	\begin{theorem}\label{quadric}
		If $L$ has a basepoint $p\in C$, then $Q_L$ is a quadric cone in $\p_L$ with vertex at $p$. On the other hand if $L$ is basepoint free, $E_L$ maps isomorphically onto $Q_L$ via $\sigma$ and $Q_L$ is a smooth quadric surface in $\p_L$.
	\end{theorem}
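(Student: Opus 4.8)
The plan is to show, in both cases, that the rulings of the surface $Q_L$ are (the images under $\sigma$ of) the proper transforms of the trisecant lines of $C\subset\p_L$ spanned by the divisors of the pencil $|L|$, and then to read off the geometry of $Q_L$ from the way these lines move. First I would record the numerology: since $\deg L=3$ and $g=2$, Riemann--Roch gives $h^1(L^{-1})=4$ and $h^0(K_C+L)=4$, so $\p_L\cong\p^3$ and the embedding $C\hookrightarrow\p_L$ by $|K_C+L|$ is a nondegenerate curve of degree $5$ (very ample since $\deg(K_C+L)=5\geq 2g+1$). By Theorem \ref{main-bertram}(3) and Lemma \ref{rc}, $\Phi_L$ is a birational morphism of smooth projective threefolds whose exceptional divisor $E_L$ is contracted onto the smooth rational curve $B_L$ with $\p^1$-fibres; hence $E_L$ is a $\p^1$-bundle over $B_L\cong\p^1$, i.e.\ a Hirzebruch surface $\mathbb F_n$.

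Next I would identify the rulings. For $[E]\in B_L$ the evaluation $H^0(E)\otimes\mo_C\to E$ is a map of rank-two bundles whose determinant vanishes on a divisor $D\in|L|$, and (for general $[E]$) a section of $E$ fails to be a subbundle precisely at the points of $\mathrm{Supp}(D)$; by Theorem \ref{main-bertram}(1) a section saturating to $\mo(x)\subset E$ with $x\in\mathrm{Supp}(D)$ corresponds to a point of $\Phi_L^{-1}([E])$ lying over $x\in C\subset\p_L$, so $\sigma(\Phi_L^{-1}([E]))$ passes through $\mathrm{Supp}(D)$. Because $D\sim L$ we have $h^0(K_C+L-D)=h^0(K_C)=2=h^0(K_C+L)-2$, so the length-three scheme $D$ imposes only two conditions on $|K_C+L|$; thus $\mathrm{Supp}(D)$ spans a line $\ell_D$, and the same estimate applied to $D+x$ (for a point $x$) gives $h^0(K_C+L-D-x)=h^0(K_C-x)=1$, so $\ell_D\cap C=D$ exactly and $\ell_D$ is a genuine trisecant. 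Combining this with Remark \ref{lin-sys}, the proper transform $\tilde\ell_D$ is contracted by $\Phi_L$ to $[E]$, and since $\tilde\ell_D$ and $\Phi_L^{-1}([E])$ are both $\p^1$'s, $\Phi_L^{-1}([E])=\tilde\ell_D$. Consequently $Q_L=\sigma(E_L)=\bigcup_{D\in|L|}\ell_D$, the union of the trisecant lines of the $g^1_3$ given by $|L|$ (a closed set, being the image of the proper incidence variety over $|L|$).

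For the basepoint-free case I would then compute on $\mathbb F_n$. By Lemma \ref{sig-iso} the curve $C':=E_L\cap\sigma^{-1}(C)$ maps isomorphically onto $C$, and it is a trisection of the ruling (meeting each $\tilde\ell_D$ in the length-three scheme $D$). Writing $M:=\sigma^{*}\mo_{\p_L}(1)|_{E_L}$ we have $M\cdot f=1$ (since $\sigma$ restricts to an isomorphism $\tilde\ell_D\to\ell_D$) and $M\cdot C'=\deg(K_C+L)=5$; with $C_0$ the minimal section and $f$ a fibre, adjunction for $C'=3C_0+bf$ forces $n$ even and $b=(3n+4)/2$, then $M=C_0+(\tfrac n2+1)f$ and $M^{2}=2$. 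Since $M^{2}=\deg(\sigma|_{E_L})\cdot\deg Q_L$ and $Q_L$ cannot be a plane ($C$ is nondegenerate), $\sigma|_{E_L}$ is birational and $\deg Q_L=2$; as $Q_L$ is irreducible and nondegenerate it is a smooth quadric or a quadric cone, and the cone is excluded because its rulings would all pass through the vertex, forcing a non-integral degree for the image of $C$ under projection from that point. Finally, by Lemma \ref{sig-iso} no fibre $\sigma^{-1}(x)\cong\p^1$ of $\sigma$ lies in $E_L$, so $\sigma|_{E_L}$ contracts no curve and is a finite birational morphism onto the normal surface $Q_L$, hence an isomorphism by Theorem \ref{zmt}.

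For the basepoint case, $p$ lies in $\mathrm{Supp}(D)$ for every $D\in|L|$, so every line $\ell_D$ passes through the point $p\in C\subset\p_L$ and $Q_L=\bigcup_{D}\ell_D$ is a cone with vertex $p$. By Remark \ref{cone-ruling} the rulings of $Q_L$ are parametrized by the $g^1_2$ of $C$ via the hyperelliptic map, so projection $\pi_p$ from $p$ identifies hyperelliptic conjugates on $C$; equivalently $\pi_p|_C$ is given by $|K_C+L-p|=|2K_C|$, which factors as $C\xrightarrow{|K_C|}\p^1\xrightarrow{v_2}\p^2$, so the base of the cone $\pi_p(Q_L)=\pi_p(C)$ is a smooth conic and $Q_L$ is a quadric cone with vertex at $p$. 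The step I expect to be the crux is the identification $\Phi_L^{-1}([E])=\tilde\ell_D$: it is here that the linear-series bound $h^0(K_C+L-D)=h^0(K_C)$, Theorem \ref{main-bertram}(1) and Remark \ref{lin-sys} must be combined carefully, and keeping track of the exact length-three structure of $\ell_D\cap C$ is precisely what makes the intersection-number bookkeeping on $\mathbb F_n$ come out to $M^{2}=2$; by contrast, excluding the degenerate quadrics and upgrading $\sigma|_{E_L}$ to an isomorphism are routine.
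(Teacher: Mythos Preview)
Your argument is correct but takes a genuinely different route from the paper. The paper's proof is much shorter: it invokes \cite[Definition--Claim 4.6]{bertram} directly to get that $Q_L$ is a quadric hypersurface, then in the basepoint-free case applies Lemma~\ref{sig-iso} \emph{first} to obtain $E_L\simeq Q_L$ via $\sigma$, and concludes smoothness simply because a Hirzebruch surface is smooth. In the basepoint case it uses Lemma~\ref{conjugate} to observe that $p,q,\bar q$ are collinear for every $q\in C$, so $Q_L$ is the cone $C_p(C)$ with vertex $p$.

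What you do differently: you reconstruct the quadric degree from scratch via the trisecant description $Q_L=\bigcup_{D\in|L|}\ell_D$ and the intersection computation $M^2=2$ on $E_L\cong\mathbb F_n$, then exclude the cone by a projection-degree parity argument, and only afterwards upgrade $\sigma|_{E_L}$ to an isomorphism. This buys you independence from Bertram's Definition--Claim~4.6 and makes the geometry of the rulings explicit (which is useful later, e.g.\ for Remark~\ref{rulings}), at the cost of extra bookkeeping. Note, incidentally, that you could streamline your basepoint-free case: since any irreducible quadric in $\p^3$ (smooth or a cone) is normal, you may apply Theorem~\ref{zmt} immediately after showing $\sigma|_{E_L}$ is finite birational, obtain $E_L\simeq Q_L$, and deduce smoothness of $Q_L$ from smoothness of $E_L$---this bypasses the projection argument entirely and is essentially the paper's logic.
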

	\begin{proof}
		By \cite[Definition-Claim  4.6]{bertram}, $Q_L$ is a quadric hypersurface in $\p_L$.
		In the case when $L$ is basepoint free, $E_L$ maps isomorphically onto $Q_L$ via $\sigma$ by Lemma \ref{sig-iso}. 
		Since $E_L\ri B_L$ is a $\p^1$-fibration by Theorem \ref{main-bertram}, $E_L$ is a Hirzebruch surface by Lemma \ref{rc}. 
	Thus, it must be a smooth quadric surface.\\
		On the other hand, let $p\in C$ be a basepoint of $L$ and $q\in C\setminus p$. 
		% Let $\delta\in Q_L\setminus C$ be such that $\Phi_L(\delta)=\Phi_L(q)$ where we identify $\tilde{\p}_L\setminus C$ with $\p_L\setminus C$ and $E_L\setminus \sigma^{-1}(p)$ with $Q_L\setminus p$.\\
		 % We have a similar diagram as above 
		% $$\begin{tikzcd}
		% 	& & & &\mathcal{O}(q)\arrow[ld, ,dashed]\arrow[d,"i"]&	\\
		% 	\delta : & 0\arrow[r] &\mathcal{O}\arrow[r]&E\arrow[r]&L\arrow[r]&0
		% \end{tikzcd}$$
		%  Let $l_q\hookrightarrow\p_L$ be the line given by the subspace $ker(H^1(C, L^\vee)\ri H^1(C, \mo(-q)))\subset H^1(C,L^\vee)$. We have $\Phi_L^{-1}(\Phi_L(q)) = \overline{\s^{-1}(l_q\setminus C)}\quad\forall q\in C$. 
		We have $ p,q,\overline{q}$ are collinear in $\p_L$ for any $q\in C$  where $\overline{q}$ is the hyperelliptic conjugate of $q$ by Lemma \ref{conjugate},.
		Therefore, $Q_L$ consists of lines joining triples of points $\{(p,q,\overline{q})|q\in C\}$. Consequently, $Q_L = C_p(C)$ where $C_p(C)$ is the cone obtained by lines joining points in $C$ with $p$.
		% Let $Q_C$ be the irreducible degree $2$ hypersurface containing $C$. To see that $Q_L$ is a quadric cone with vertex $p$, we show that $Q_L= Q_C$.
		% For points $x,y\in C$ such that $x+y\sim K_C$, we have that $x,y,p$ are collinear. The line $l_x:= <x,y,p>$ joining $x,y,p$ intersects $Q_C$ in atleast three points but $Q_C$ has degree $2$. 
		% Therefore, $l_x\hookrightarrow Q_C\forall x\in C$ and consequently, we have $Q_C = C_p(C)=Q_C$ as claimed.\\ 
	\end{proof}
	\begin{rem}\label{rulings}
	For any divisor $D$ on $C$, put $\p^D := \p H^0(C,D)^\vee$. For $L$ basepoint free, the Segre embedding $s_L: \p^K\times \p^L\hr \p_L$ is given by the isomorphism $H^0(C,K_C)\otimes H^0(C,L)\xrightarrow{\sim} H^0(C,K_C\otimes L)$.
	The maps $C\xrightarrow{|K_C|}\p^K, C\xrightarrow{|L|}\p^L$ induce the morphism $C\hr \p^K\times\p^L\xrightarrow{s}\p_L$ which coincides with the embedding of $C$ into $\p_L$. Hence, $Q_L = Im(s)$ is the unique quadric containing $C$ in $\p_L$.
		Furthermore, the induced map $C\ri  B_L$ is given by the linear system $|L|$ and therefore, $\Phi_L|_{E_L}:E_L\simeq Q_L\ri B_L$ can be identified with the projection onto $\p^L$.\\
		%  In other words, if we let $\Phi_{L,x}$ be the restriction of $\Phi_L$ to $s(x\times \p^L)$ for $x\in \p^K$. We have that $\Phi_{L,x}:x\times \p^L\xrightarrow{\sim} \p^L$ is an isomorphism $\forall x\in \p^K$.\\
		If $L$ has a basepoint $p$, the map $E_L\ri B_L$ is still a $\p^1$-fibration over a smooth rational curve. Therefore, $E_L$ is a Hirzebruch surface. 
		The map $E_L\ri Q_L$ is a birational map which contracts $\sigma^{-1}(p)\simeq \p^1$. Hence, $E_L = bl_{C}Q_L\hr \tilde{\p}_L$ is the strict transform of $Q_L$. We further have the morphism $E_L\ri bl_{p}Q_L$ by the universal property of blowups, which is a bijection on closed points. 
	Therefore, $E_L\simeq bl_p Q_L$
	\end{rem}
	\begin{rem}\label{blowup-blowdown}
	Irrespective of the basepoint freeness of $L$, we have the induced morphism $bl_C(\p_L)\ri bl_{B_L}(U_C(2,L))$ which induces a bijection on closed points. Therefore, we have $$bl_C(\p_L)\simeq bl_{B_L}(U_C(2,L)).$$
	\end{rem}
	\subsection{Generalizing to $U(2,3,2)$}\label{gen-U}
% We construct the universal analogues of spaces and morphisms in Theorem \ref{main-bertram}.\\ 
With slight abuse of notation for this subsection, let $
		\lb\ri\C\xrightarrow{\pi}S$ be a family of smooth genus $2$ curves with a line bundle $\lb$ of relative degree $3$.\\
The analogue of the extension space is $\p_\lb:= \underline{Proj}_S \Sym^{\bullet} (R^1\pi_* \lb^\vee)^\vee$ with $\pi_\lb: \p_\lb\ri S$ the induced morphism.\\
 By Serre duality for the family $\pi$, we have $(R^1\pi_*\lb^\vee)^\vee \simeq \pi_*(K_\pi\otimes \lb)$.\\
  The surjection $\pi^*\pi_* (K_\pi\otimes \lb)\twoheadrightarrow (K_\pi\otimes \lb) $ defines an embedding $\C\hr \p_\lb$. We shall denote the image of this embedding by $B^0(\lb)$.\\
	Let $\s_\pi:\tilde{\p}_\lb\ri\p_\lb$ be the blowup of $\p_\lb$ along $B^0(\lb)$ and $\B: \C\times_S B^0(\lb)\ri S$ be the composed morphism. We define a Poincar\'e bundle on $\C\times_S \tilde{\p}_\lb$ that restricts to the Poincar\'e bundle used in Theorem \ref{main-bertram} over each fiber of $\B$.\\
	% We extend the arguments presented in \cite{bertram}. \\
	Let $\mathscr{E}_\lb\ri \C\times_S \p_\lb$ be the Poincar\'e bundle given by $\lb, \pi$ using \cite[Remark 1]{shubham}.
	\begin{lemma}
		We have the following unique lift $f_\lb$ over $\C\times_S B^0(\lb)$  $$\begin{tikzcd}
			0\arrow[r]& \pi_{B^0(\lb)}^*(\mo (1))\arrow[r]\arrow[d, " \otimes\Delta"', hook]& \mathscr{E}_\lb|_{\C\times_S B^0(\lb)} \arrow[r]\arrow[ld, "f_\lb", dashed]& \pi_\C^*\lb \arrow[r]& 0\\
			&\mathscr{L}_\lb^0& & &
		\end{tikzcd}
		$$ where we identify $\mo_{\p_\lb}(1)$ with its restriction to $B^0(\lb)$, $\mathscr{L}_\lb^0 := \pi_{B^0(\lb)}^*(\mo (1))\otimes \mo (\Delta) $ and $\Delta \hr \C\times_S B^0(\lb)$ is the diagonal.
	\end{lemma}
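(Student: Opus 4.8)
The plan is to read off the first row of the lemma's diagram as a restriction of the universal extension defining $\mathscr{E}_\lb$, and then to obtain $f_\lb$ (with its uniqueness) from an isomorphism of line bundles on $B^0(\lb)$ produced by cohomology and base change, the input being an elementary fibrewise computation that is exactly the local picture behind Bertram's blowup. First I would recall the construction of $\mathscr{E}_\lb$ from \cite[Remark 1]{shubham}: it is the suitably normalized universal extension $0\to\pi_{\p_\lb}^*\mo_{\p_\lb}(1)\to\mathscr{E}_\lb\to\pi_\C^*\lb\to 0$ on $\C\times_S\p_\lb$, whose restriction to $\C\times_S\{[e]\}$ is the extension $0\to\mo\to E_e\to\lb\to 0$ of class $e$. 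Since the embedding $B^0(\lb)\hr\p_\lb$ is the one attached to the surjection $\pi^*\pi_*(K_\pi\otimes\lb)\twoheadrightarrow K_\pi\otimes\lb$, dualizing identifies $\mo_{\p_\lb}(-1)|_{B^0(\lb)}$ with $(K_\pi\otimes\lb)^\vee$, so $\mo_{\p_\lb}(1)|_{B^0(\lb)}\simeq K_\pi\otimes\lb=\mo(1)$ in the lemma's notation, and restricting the universal sequence to $\C\times_S B^0(\lb)$ yields the displayed short exact sequence.

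Now I would build $f_\lb$ together with its uniqueness in one stroke. Applying $\pi_{B^0(\lb)*}$ to precomposition with $\pi_{B^0(\lb)}^*\mo(1)\hr\mathscr{E}_\lb|_{\C\times_S B^0(\lb)}$ gives a map of coherent sheaves on $B^0(\lb)$
\[
\pi_{B^0(\lb)*}\mathscr{H}om\big(\mathscr{E}_\lb|_{\C\times_S B^0(\lb)},\,\mathscr{L}_\lb^0\big)\;\longrightarrow\;\pi_{B^0(\lb)*}\mathscr{H}om\big(\pi_{B^0(\lb)}^*\mo(1),\,\mathscr{L}_\lb^0\big).
\]
The target is $\pi_{B^0(\lb)*}\mo(\Delta)$, a line bundle (each fibre is $H^0(\C_s,\mo_{\C_s}(x))$, one-dimensional in genus $2$), trivialized by the tautological section $s_\Delta$ of $\mo(\Delta)$ — which is exactly the map labeled $\otimes\Delta$. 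For the source, over a point $x$ of a fibre $\C_s$ the fibre is $\Hom(E_{e_x},\mo_{\C_s}(x))$; from $0\to\mo\to E_{e_x}\to\lb_s\to 0$ and $\Hom(\lb_s,\mo_{\C_s}(x))=H^0(\lb_s^\vee(x))=0$ (degree $-2$) this has dimension at most $1$, with equality iff the connecting map $\Hom(\mo,\mo_{\C_s}(x))\to\operatorname{Ext}^1(\lb_s,\mo_{\C_s}(x))$, i.e. pushforward of $e_x$, vanishes. Granting this vanishing, all fibres of the source are one-dimensional, so it is a line bundle and the displayed map is fibrewise injective (a hom $E_{e_x}\to\mo_{\C_s}(x)$ killing $\mo$ would factor through $\lb_s$, hence be zero); thus it is an isomorphism of line bundles. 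I then define $f_\lb$ to be the unique global section of the source mapping to $s_\Delta$; by construction $f_\lb|_{\pi_{B^0(\lb)}^*\mo(1)}$ is the inclusion $\otimes\Delta$, and any two such lifts agree since they have equal image under an isomorphism.

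It remains to justify the fibrewise vanishing, equivalently $e_x\in\ker\!\big(\operatorname{Ext}^1(\lb_s,\mo)\to\operatorname{Ext}^1(\lb_s,\mo_{\C_s}(x))\big)$, equivalently that $E_{e_x}$ contains $\lb_s(-x)$ as a sub-line bundle. This is where the geometry enters: by Theorem \ref{main-bertram} the embedding $B^0(\lb)_s\simeq\C_s\hr\p_{\lb_s}$ is given by $|K_{\C_s}+\lb_s|$, so under relative Serre duality the point over $x$ is the line in $\operatorname{Ext}^1(\lb_s,\mo)=H^0(K_{\C_s}\otimes\lb_s)^\vee$ that annihilates $H^0(K_{\C_s}\otimes\lb_s(-x))$, and this annihilator is $\ker\!\big(H^1(\lb_s^\vee)\to H^1(\lb_s^\vee(x))\big)$, where the map $H^1(\lb_s^\vee)\to H^1(\lb_s^\vee(x))$ is simultaneously $\operatorname{Ext}^1(\lb_s,\mo)\to\operatorname{Ext}^1(\lb_s,\mo(x))$ induced by $\mo\hr\mo(x)$ and $\operatorname{Ext}^1(\lb_s,\mo)\to\operatorname{Ext}^1(\lb_s(-x),\mo)$ induced by $\lb_s(-x)\hr\lb_s$. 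I expect this reconciliation of the $|K_\pi\otimes\lb|$-embedding with the extension-class description — together with checking the base-change statements hold uniformly because the relevant $h^0$'s are constant along $\pi$ — to be the only real work; the rest is formal bookkeeping with the $\mathscr{H}om$-sheaves and with the normal bundle $\mo(\Delta)|_\Delta\simeq K_\pi^\vee$ of the diagonal.
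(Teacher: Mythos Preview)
Your argument is correct and reaches the same conclusion, but the architecture differs from the paper's. The paper proceeds by a local-to-global construction: over each affine open $U\subset S$ it shows the extension class $\partial$ restricts to the kernel of $H^1(CB_U,\mathcal{F})\to H^1(CB_U,\mathcal{G})$ (with $\mathcal{F}=\pi_\C^*\lb^\vee\otimes\pi_{B^0(\lb)}^*\mo(1)$ and $\mathcal{G}=\mathcal{F}(\Delta)$), invoking cohomology and base change together with the fibrewise statement from \cite[Definition-Claim 3.4]{bertram}; this produces local lifts $f_{\lb,U}$, and uniqueness (via $\Hom(\pi_\C^*\lb,\mathscr{L}_\lb^0)=0$) is then used to glue. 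Your route is more global: you package the problem as showing that the precomposition map between the two pushforward $\mathscr{H}om$-sheaves is an isomorphism of line bundles on $B^0(\lb)$, and extract $f_\lb$ as the preimage of the tautological section $s_\Delta$. The fibrewise core is identical in both proofs (the extension class at $x$ lies in $\ker(\operatorname{Ext}^1(\lb_s,\mo)\to\operatorname{Ext}^1(\lb_s,\mo(x)))$), though you supply the Serre-duality verification directly rather than citing Bertram. Your approach buys you existence and uniqueness simultaneously, with no gluing step; the paper's approach makes the obstruction-theoretic mechanism (vanishing of the pushed-forward class) more explicit and closer to how Bertram's elementary-modification construction is phrased in \cite{bertram}. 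Either is fine; your write-up should just make precise that Grauert's theorem applies because $h^0(\C_s,E_{e_x}^\vee(x))$ is constantly $1$, which you have essentially argued.
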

	\begin{proof}
		We show the existence of this lift $f_\lb$ and then prove uniqueness. Let 
		$$\mathcal{F}:= \pi_\C^*\lb^\vee\otimes \pi_{B^0(\lb)}^*(\mo(1)), \mathcal{G} :=  \pi_\C^*\lb^\vee\otimes \pi_{B^0(\lb)}^*(\mo(1))\otimes \mo(\Delta) $$
		 and let $\partial\in H^1(\C\times_S B^0(\lb), \mathcal{F}) $ be the extension class of $ \mathscr{E}_\lb|_{\C\times_S B^0(\lb)}$.\\ 
		Let $U$ be an arbitrary affine open in $S$, and let $CB_U := \B^{-1}(U)$. We show that $\partial|_{CB_U}\in ker(H^1(CB_U, \mathcal{F}|_{CB_U})\ri H^1(CB_U, \mathcal{G}|_{CB_U}))$.\\
		Since $U$ is affine, $H^1(CB_U, \mathcal{F}|_{CB_U})\simeq \Gamma(U,R^1\B_{U*}(\mathcal{F}|_{CB_U})), H^1(CB_U, \mathcal{G}|_{CB_U})\simeq \Gamma(U,R^1\B_{U*}(\mathcal{G}|_{CB_U}))$. By Cohomology and Base Change, we have 
		$$H^1(CB_U, \mathcal{F}|_{CB_U})\otimes k(s)\simeq H^1(C_s\times B^0(L_s), \mathcal{F}_s), H^1(CB_U, \mathcal{G}|_{CB_U})\otimes k(s)\simeq H^1(C_s\times B^0(L_s), \mathcal{G}_s)\forall s\in S$$
		 As explained in \cite[Definition-Claim 3.4]{bertram}, 
		\begin{gather*}
\partial_s \in ker(H^1(C_s\times B^0(L_s), \mathcal{F}_s)\ri H^1(C_s\times B^0(L_s), \mathcal{G}_s)) \forall s \in U  \implies\\
\partial\otimes k(s)\in ker(H^1(CB_U, \mathcal{F}|_{CB_U})\otimes k(s)
\ri H^1(CB_U, \mathcal{G}|_{CB_U})\otimes k(s))\forall s\in U\implies\\
 \partial\in ker(H^1(CB_U, \mathcal{F}|_{CB_U})\ri H^1(CB_U, \mathcal{G}|_{CB_U})).
\end{gather*}
		Therefore, we have local lifts $f_{\lb, U}:\mathscr{E}_\lb|_{CB_U}\ri \mathscr{L}_\lb^0|_{CB_U}$ for the morphism $\pi_{B^0(\lb|_{\C_U})}^*(\mo (1)) \xrightarrow{\otimes\Delta_U} \mathscr{L}_\lb^0|_{CB_U}$ over every open affine $U\subset S$. Next we show that these lifts $f_{\lb, U}$ are unique.\\
		Suppose there are lifts $f_{\lb, U}^1, f_{\lb, U}^2: \mathscr{E}_\lb|_{CB_U}\ri \mathscr{L}_\lb^0|_{CB_U}$. Then $f_{\lb, U}^1- f_{\lb, U}^2$ lifts to a morphism $\pi_{\C_U}^*\lb|_{\C_U}\ri \mathscr{L}_\lb^0|_{CB_U}$.\\ 
		We show that $\Hom (\pi_{\C_U}^*\lb|_{\C_U}, \mathscr{L}_\lb^0|_{CB_U} ) = 0$ to conclude that these local lifts $f_{\lb, U}$ are unique over every open affine $U$. It is enough to show that $H^0(CB_U, \pi_{\C_U}^*\lb^\vee|_{\C_U}\otimes \mathscr{L}_\lb^0|_{CB_U} ) = 0$ .\\
		Let $t\in H^0(CB_U, \pi_{\C_U}^*\lb^\vee|_{\C_U}\otimes \mathscr{L}_\lb^0|_{CB_U})$ be a section. For a point $b\in B^0(\lb|_{\C_U})$ with $\pi_\lb(b) = s\in U$, $t|_{\C_s\times b}\in H^0(\C_s, \lb|_{\C_s}^\vee) = 0$. Therefore, 
		$t|_{(CB_U)_b} = 0\forall b\in B^0(\lb|_{\C_U})$. In other words, $t = 0$ and
		consequently, we have $H^0(CB_U, \pi_{\C_U}^*\lb^\vee|_{\C_U}\otimes \mathscr{L}_\lb^0|_{CB_U} ) = 0$ proving the uniqueness of the lift $f_{\lb,U}$ for all open affines $U\subset S$.\\
		By the uniqueness of the lifts, $f_{\lb,U}|_{CB_V} = f_{\lb,V}$ for arbitrary affine opens $V\subset U\subset S$. Hence, the lifts $\{f_{\lb,U}\}_U$ patch together to define a lift $\mathscr{E}_\lb|_{\C\times_S B^0(\lb)}\xrightarrow{f_\lb} \mathscr{L}_\lb^0$ as claimed. 
		The uniqueness of $f_\lb$ follows from its construction.
	\end{proof}
	\begin{theorem}\label{poincare-bertram}
There is a bundle $\mathscr{E}^1$ on $\C\times_S\tilde{\p}_\lb$ which restricts to the Poincar\'e bundle on $\C_s\times\tilde{\p}_{\lb_s}$, as defined in Theorem \ref{main-bertram}, for all $s\in S$. That is, the bundle $\mathscr{E}^1$ defines a map $\tilde{\p}_\lb\xrightarrow{\Phi_\lb} {U}(2,3,2)$. 
	\end{theorem}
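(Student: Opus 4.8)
The plan is to realise $\mathscr{E}^1$ as an elementary modification of the pulled-back universal extension bundle $\mathscr{E}_\lb$ along the exceptional divisor of $\s_\pi$, cut out using the lift $f_\lb$ of the previous lemma, and then to verify that this modification commutes with restriction to the fibres over $S$, so that it recovers the Poincar\'e bundle of Theorem \ref{main-bertram} on every fibre of $\B$.

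First I would record the geometry of the blowup. Let $\tilde{E}\hr\tilde{\p}_\lb$ be the exceptional divisor of $\s_\pi$ and $g:\tilde{E}=\p(N_{B^0(\lb)/\p_\lb})\ri B^0(\lb)\simeq\C$ its projective bundle structure morphism. Since $\p_\lb$ is a projective bundle over $S$ and $B^0(\lb)\simeq\C$ is smooth over $S$, the centre is regularly embedded and $\tilde{\p}_\lb$, $\tilde{E}$ are smooth over $S$; in particular $\C\times_S\tilde{E}$ is an effective Cartier divisor in $\C\times_S\tilde{\p}_\lb$ which is smooth, hence flat, over $S$. Set $\mathscr{E}':=(\mathrm{id}_\C\times\s_\pi)^*\mathscr{E}_\lb$ on $\C\times_S\tilde{\p}_\lb$; because $\s_\pi|_{\tilde{E}}$ factors through $g$, the restriction $\mathscr{E}'|_{\C\times_S\tilde{E}}$ is canonically $(\mathrm{id}_\C\times g)^*\big(\mathscr{E}_\lb|_{\C\times_S B^0(\lb)}\big)$. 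Next I would note that $f_\lb$ is surjective onto the line bundle $\mathscr{L}_\lb^0$: it restricts on $\pi_{B^0(\lb)}^*\mo(1)$ to the canonical inclusion into $\mathscr{L}_\lb^0=\pi_{B^0(\lb)}^*\mo(1)(\Delta)$, the universal extension is fibrewise non-split, so the induced map $\pi_\C^*\lb\ri\mathscr{L}_\lb^0|_\Delta$ is fibrewise nonzero, and since $\pi_\C^*\lb|_\Delta\simeq\lb\simeq\mathscr{L}_\lb^0|_\Delta$ on the integral curve $\Delta_s\simeq\C_s$ it is an isomorphism there, so $f_\lb$ is surjective by flatness. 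I can therefore define
\[
\mathscr{E}^1:=\ker\!\Big(\mathscr{E}'\twoheadrightarrow \mathscr{E}'|_{\C\times_S\tilde{E}}=(\mathrm{id}_\C\times g)^*\big(\mathscr{E}_\lb|_{\C\times_S B^0(\lb)}\big)\xrightarrow{(\mathrm{id}_\C\times g)^*f_\lb}(\mathrm{id}_\C\times g)^*\mathscr{L}_\lb^0\Big),
\]
an elementary modification of the rank $2$ bundle $\mathscr{E}'$ along the Cartier divisor $\C\times_S\tilde{E}$ by a line bundle supported there; working \'etale-locally, where $\C\times_S\tilde{E}=\{t=0\}$ and the quotient kills one coordinate, shows $\mathscr{E}^1$ is locally free of rank $2$ and of relative degree $3$.

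It then remains to identify the fibres and read off the morphism. From the defining sequence $0\ri\mathscr{E}^1\ri\mathscr{E}'\ri\iota_*\big((\mathrm{id}_\C\times g)^*\mathscr{L}_\lb^0\big)\ri 0$, with $\iota:\C\times_S\tilde{E}\hr\C\times_S\tilde{\p}_\lb$, both $\mathscr{E}'$ and $\iota_*\big((\mathrm{id}_\C\times g)^*\mathscr{L}_\lb^0\big)$ are flat over $S$ (the latter is the pushforward of a line bundle from $\C\times_S\tilde{E}$, which is smooth over $S$), so $\mathrm{Tor}_1^{\mo_S}\!\big(\iota_*(-),k(s)\big)=0$ and the sequence stays exact after restriction to $\C_s\times\tilde{\p}_{\lb_s}$. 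Hence $\mathscr{E}^1|_{\C_s\times\tilde{\p}_{\lb_s}}$ is the kernel of the fibrewise elementary modification determined by $(\mathrm{id}_{\C_s}\times g_s)^*f_{\lb_s}$; since by the previous lemma $f_{\lb_s}$ coincides with Bertram's lift, this is precisely the Poincar\'e bundle on $\C_s\times\tilde{\p}_{\lb_s}$ of \cite[Definition-Claim 3.4]{bertram} entering Theorem \ref{main-bertram}, so $\mathscr{E}^1|_{\C_s\times\{\tilde e\}}=\Phi_{\lb_s}(\tilde e)$ is a stable rank $2$ degree $3$ bundle on $\C_s$ for every $\tilde e\in\tilde{\p}_{\lb_s}$. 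Consequently $\big(\mathscr{E}^1\ri\C\times_S\tilde{\p}_\lb\ri\tilde{\p}_\lb\big)$ is a family of fibrewise stable rank $2$ bundles of relative degree $3$ over a family of smooth genus $2$ curves, i.e. an object of $\mathscr{U}(2,3,2)$ over $\tilde{\p}_\lb$; composing the classifying morphism $\tilde{\p}_\lb\ri\mathscr{U}(2,3,2)$ with the good moduli space map produces $\Phi_\lb:\tilde{\p}_\lb\ri U(2,3,2)$, which restricts to Bertram's $\Phi_{\lb_s}$ over each $s\in S$. I expect the main obstacle to be the interface with Bertram's original construction, namely checking that the fibrewise kernel above is literally the Poincar\'e bundle of \cite[Definition-Claim 3.4]{bertram} (and, in passing, the surjectivity of $f_\lb$ onto $\mathscr{L}_\lb^0$); everything else is the standard formalism of elementary modifications together with cohomology-and-base-change.
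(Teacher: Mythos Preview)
Your proposal is correct and follows essentially the same approach as the paper: both define $\mathscr{E}^1$ as the elementary modification of $(\mathrm{id}_\C\times\s_\pi)^*\mathscr{E}_\lb$ along the exceptional divisor via the lift $f_\lb$, and then appeal to fibrewise agreement with Bertram's construction. The paper's proof is in fact much terser than yours---it simply writes down the kernel and asserts the restriction property---so your additional care with surjectivity of $f_\lb$, local freeness, and the flatness/Tor argument for base change only makes the argument more complete.
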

	\begin{proof}
	Let $\mathscr{E}^0:= (1, \sigma_\pi)^*\mathscr{E}_\lb$ be the pullback of $\mathscr{E}_\lb$ over $1\times\s_\pi: \C\times_S \tilde{\p}_\lb\ri \C\times_S \p_\lb$ and let $E_\s$ be the exceptional divisor for the blowup $\sigma_\pi$. The kernel of the composition 
	$$\mathscr{E}^1:= ker(\mathscr{E}^0\ri \mathscr{E}^0|_{\C\times_S E_\s}\xrightarrowdbl{f_\lb}(1,\sigma_\pi)^*\mathscr{L}_\lb^0)$$ 
	restricts to the Poincar\'e bundle used to construct the morphism $\Phi_{\lb_s}$ for $\lb_s\ri \C_s$ in \cite[Theorem 1]{bertram}, for all $s\in S$. 
	\end{proof}\noindent
	Let $B^2_{2,3}$ be the universal Brill-Noether locus of bundles $\{[E\ri C]| rk E = 2, \deg E = 3, g(C)=2, h^0(C,E)= 2\}$, put $E_\lb := \Phi_\lb^{-1}(B^2_{2,3})$, and put $Q_\lb := \sigma_\pi (E_\lb)$. \\
	We have $\C\hr Q_\lb$ by Lemma \ref{sig-iso} and $Q_\lb\hr \p_\lb$ is a family of degree $2$ hypersurfaces parametrized by $S$ by Theorem \ref{quadric}.\\
	We replace the family $\begin{tikzcd}
		\lb\arrow[d]&\\ \C\arrow[r,"\pi"]&S
	\end{tikzcd}$ with the family $\begin{tikzcd}
		\xi\arrow[d]&\\ \C_J\arrow[r,"\pi_J"]&J_*(3)
	\end{tikzcd}$ 
	where $\C_J := \C_*\times_{M_*}J_*(3)$, and $\xi$ is the Poincar\'e bundle for the relative Jacobian $J_*(3)$ rigidified along the section $\s_1$.\\
	Following the notation in Theorem \ref{larson-main}, let $C_J\ri J_2^3, C_\J\ri \J_{3,2}$ be the universal curves over $J_2^3, \J_{3,2}$ respectively and $\p_\xi,\tilde{\p}_\xi$ be the analogues of $\p_\lb,\tilde{\p}_\lb$ for the family $\pi_J$.
	\begin{defi*}
		The Poincar\'e bundle $\lb_\J$ on $C_\J$ defines a stack $\p_\J$ over $\J_{3,2}$, which is defined using the functoriality of $\p_\lb$. We denote the $\mathbb{G}_m$-rigidification of this stack by $\p_J:= \p_\J\hollowslash\mathbb{G}_m$. There is a natural map $\p_J\ri J_2^3$.
	\end{defi*}
	\noindent More conceretely, for any scheme $S$, we have $\p_J(S) = \{(\C_S,\lb_S, \mathcal{F})\}$ such that $\lb_S$ is a line bundle on a family of smooth genus $2$ curves $\C_S\xrightarrow{\pi_S}S$; and $\mathcal{F}\subset \pi_{S*}(K_{\pi_S}\otimes\lb_S)$ is a subsheaf such that the quotient $\pi_{S*}(K_{\pi_S}\otimes\lb_S)/\mathcal{F}$  is a line bundle on $S$.\\
	Morphisms between objects $(\C_T,\lb_T, \mathcal{F}_T)\ri(\C_S,\lb_S, \mathcal{F}_S)$ over a map $f:T\ri S$ are given by a map $g:\C_T\ri \C_S$ such that the following square is Cartesian
	$$\begin{tikzcd}
		\C_T\arrow[r,"g"]\arrow[d,"\pi_T"]&\C_S\arrow[d,"\pi_S"]\arrow[ld,"\square",phantom]\\
		T\arrow[r,"f"]&S,
	\end{tikzcd}$$
and there exists $L_T\in Pic(T)$ such that $\lb_S \simeq g^*\lb_T\otimes\pi_T^*L_T$ and $\mathcal{F}_T = f^*\mathcal{F}_S\otimes L_T$.
\begin{rem}\label{d-m-pj}
The rigidified stack $\p_J$ is a Deligne-Mumford stack of finite type over $J_2^3$. 
\end{rem}
\noindent By the universal property of $J_2^3$, we have maps $q_\p:\p_\xi\ri \p_J, q_J: J_*(3)\ri J_2^3, \text{ and } q_C:\C_J\ri C_J$ such that every quadrilateral in the following diagram is Cartesian
	$$
	\begin{tikzcd}
		&\p_\xi\arrow[r,"q_\p"]\arrow[ldd]&\p_J\arrow[ldd]\\
		\C_J\arrow{r}[near start]{q_C}\arrow[rd,"\square",phantom]\arrow[d,"\pi_J"']\arrow[ur, hook]&C_J\arrow[d]\arrow[ur, hook]&\\
		J_*(3)\arrow[r,"q_J"]&J_2^3.&
	\end{tikzcd}
	$$ 
	Let $\tilde{U}(2,3,2)$ be the moduli space of stable bundles of rank $2$ and relative degree $3$ for the family $\C_*\xrightarrow{\pi} M_*$ and $B_J\hr \mathcal{U}(2,3,2)$ be the universal Brill-Noether locus of bundles in $\mathcal{U}(2,3,2)$ parametrizing bundles with $2$ independent sections.\\
It follows that $B_{2,3}^2$ is the coarse moduli space of $B_J$.
	The morphism $\Phi_\xi:\tilde{\p}_\xi\ri U(2,3,2)$, defined by the Poincar\'e bundle on $\C_J\times_{J_*(3)}\tilde{\p}_\xi$ lifts to $\tilde{U}(2,3,2)$ via the natural map $q:\tilde{U}(2,3,2)\ri U(2,3,2)$. Additionally, put $B_*:= B^2_{2,3}\times_{U(2,3,2)}\tilde{U}(2,3,2)$, and let $q_B: B_*\ri B_J$ be the induced map:
% \begin{rem}\label{u*-s6}
% The $S_6$-actions on $\C_*,J_*(3)$ as defined in Remarks \ref{c-s6}, \ref{j*-s6} induce a diagonal $S_6$-action on $\C_J$. 
% \end{rem}
% \noindent We shall treat these $S_6$-actions on $\C_J, \tilde{U}(2,3,2)$ as their natural actions from hereon.
	\begin{equation}\label{universal}
		\begin{tikzcd}
			&& & & B_*\arrow[d, "i_B", hook]\arrow[r]&B_{2,3}^2\arrow[d,hook]&\\
			&&E_\xi\arrow[d]\arrow[r,"i_\Phi"' , hook]\arrow[rru,"\Phi_E" , bend left = 15] & \tilde{\p}_\xi\arrow[dd, "\sigma_\pi"]\arrow[r, "\Phi_\xi"]& \tilde{U}(2,3,2)\arrow[r, "q"]\arrow[ddd,"\det_\xi"]& U(2,3,2) \\
			&\xi\arrow[d]	&Q_\xi\arrow[dr, "i_\phi", hook]& &  \\
			&\C_J\arrow[ru, "", hook]\arrow[rr, "j", hook]\arrow[d, "\pi_J"]& &\p_\xi\arrow[lld, "\pi_\xi"] & &\\
			\C_*\arrow[r,"\T_*", hook]&J_*(3)\arrow[u,"\pi^*\sigma_i",bend left= 50]\arrow[rrr]& & &J_{2}^3 &\T_3.\arrow[l,""',hook']
	\end{tikzcd}\end{equation}
We calculate the image of $q^*$, denoted by $S^*(\tilde{U}(2,3,2))$ from hereon to obtain $A^*(U(2,3,2))$. We shall stratify $\tilde{U}(2,3,2)$ and use excision arguments to find generators of desired Chow groups. The excision of $\tilde{U}(2,3,2)$ is a two step process:
\begin{flalign*}\label{excision-U}\tag{\textdagger}
		\bullet\quad & \tilde{U}(2,3,2) = (\tilde{U}(2,3,2)\setminus B_*)\coprod B_*&\\
	\bullet\quad & B_*:= B_V\coprod B_\T \text{ where }V:= J_*(3)\setminus \T_* \text{ and } B_V:= V\times_{J_*(3)}B_*, B_\T:= \T_*\times_{J_*(3)}B_* 
\end{flalign*}
\begin{rem}\label{blowdown}
% Another crucial observation that would allow us to apply Theorem \ref{blowup-chow} on $\Phi_\xi$ is the universal version of Remark \ref{blowup-blowdown}.\\
The induced morphism $\tilde{\p}_\xi\ri bl_{B_*}(\tilde{U}(2,3,2))$ defined by the universal property of blowups is a bijection on closed points since it restricts to an isomorphism for every fiber over $J_*(3)$ by Remark \ref{blowup-blowdown}. By Theorem \ref{zmt}, we have the isomorphism $\tilde{\p}_\xi\simeq bl_{B_*}(\tilde{U}(2,3,2))$.
\end{rem}
\begin{lemma}\label{pj-pxi}
	The induced pullback maps $q_\p^*:A^*(\p_J)\ri A^*(\p_\xi), q_B^*: A^*(B_J)\ri A^*(B_*)$ are isomorphisms.
\end{lemma}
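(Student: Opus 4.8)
The strategy is to reduce both statements to a single fact: the pullback $q_J^*\colon A^*(J_2^3)\ri A^*(J_*(3))$ is an isomorphism. This is contained in Theorem \ref{t-s6}: $A^*(J_*(3))=\Q[\T_3]/(\T_3^3)$ and $\T_3$ is pulled back along $q_J$ from the generator of $A^*(J_2^3)=\Q[\T]/(\T^3)$ --- for degree $1$ because $\T_*\hr J_*(1)$ is the pullback of the universal $\Theta$-divisor, and for degree $3$ by transporting along the canonical identification $J_2^1\simeq J_2^3$, which is compatible with $\tau_{1,3}$ since $2\s_1\sim K_{\C_*/M_*}$. Granting this, $q_\p^*$ is immediate: the square $\p_\xi=\p_J\times_{J_2^3}J_*(3)$ is Cartesian, and $\p_\xi\ri J_*(3)$ is a genuine $\p^3$-bundle over the scheme $J_*(3)$, while $\p_J\ri J_2^3$ is a $\p^3$-bundle rationally (being obtained by $\mathbb{G}_m$-rigidification from the genuine projective bundle $\p_\J\ri\J_{3,2}$), so the projective bundle formula applies to both. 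Taking the canonical relative degree-one classes $h:=-\tfrac{1}{4}c_1(\omega_{\p_J/J_2^3})$ and $h':=-\tfrac{1}{4}c_1(\omega_{\p_\xi/J_*(3)})$, functoriality of the relative dualizing sheaf under the smooth base change $q_\p$ gives $q_\p^*h=h'$, so $q_\p^*$ carries the $A^*(J_2^3)$-basis $\{1,h,h^2,h^3\}$ of $A^*(\p_J)$ to the $A^*(J_*(3))$-basis $\{1,h',h'^2,h'^3\}$ of $A^*(\p_\xi)$ and acts by $q_J^*$ on coefficients; hence it is an isomorphism.

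For $q_B^*$ we first replace the stack $B_J$ by its coarse space. As $d=3$ is odd, every semistable bundle of rank $2$ and degree $3$ is stable, so $B_J\hr\mathcal{U}(2,3,2)$ is a closed substack of a Deligne--Mumford stack with coarse moduli space $B^2_{2,3}$; exactly as in the proof of Theorem \ref{artin-chow}, \cite[Proposition 6.1]{vistoli-stacks} and \cite[Remark 8.3.4]{olsson} give $A^*(B_J)\simeq A^*(B^2_{2,3})$. Composing $q_B$ with the coarse-space morphism $B_J\ri B^2_{2,3}$ recovers the first projection of $B_*=B^2_{2,3}\times_{U(2,3,2)}\tilde{U}(2,3,2)$; since $\tilde{U}(2,3,2)=U(2,3,2)\times_{M_2}M_*$ (the family $\C_*\ri M_*$ being pulled back from the universal curve over $M_2$ along the \'etale cover $M_*\ri M_2$, and coarse moduli commuting with \'etale base change), this projection is the base change along $B^2_{2,3}\ri M_2$ of the \'etale $S_6$-cover $M_*\ri M_2$. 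Therefore $B^2_{2,3}=B_*/S_6$ and, by Remark \ref{rem1} (pullback identifies the rational Chow ring of the quotient with the $S_6$-invariants), $A^*(B^2_{2,3})=A^*(B_*)^{S_6}$. It remains to show $S_6$ acts trivially on $A^*(B_*)$.

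By Lemma \ref{rc}, for each $(C,L)\in J_*(3)$ the fibre of $B_*\ri J_*(3)$ is the Brill--Noether locus $B_L$, a smooth rational curve; thus $B_*\ri J_*(3)$ is a smooth proper morphism all of whose fibres are isomorphic to $\p^1$, i.e. a Severi--Brauer scheme of relative dimension one, and the projective bundle formula gives $A^*(B_*)=A^*(J_*(3))\oplus A^*(J_*(3))\cdot\eta$ with $\eta:=-\tfrac{1}{2}c_1(\omega_{B_*/J_*(3)})$. The $S_6$-action fixes $A^*(J_*(3))$ pointwise: by Remark \ref{j*-s6} the morphism $J_*(1)\ri J_2^1$ is $S_6$-invariant, so $\T_3$, and hence all of $A^*(J_*(3))=\Q[\T_3]/(\T_3^3)$, is $S_6$-invariant. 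It also fixes $\eta$, since $\omega_{B_*/J_*(3)}$ is canonically defined and therefore $S_6$-equivariant. Consequently $S_6$ acts trivially on all of $A^*(B_*)$, which completes the proof that $q_B^*$ is an isomorphism.

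The formal steps above are routine; the two places needing care are the stack-theoretic bookkeeping behind $A^*(B_J)\simeq A^*(B^2_{2,3})$ and $B_*\simeq B^2_{2,3}\times_{M_2}M_*$, and --- what I expect to be the genuine obstacle --- verifying that $B_*\ri J_*(3)$ is smooth with every fibre a $\p^1$, not merely generically so. Fibrewise smoothness of the Brill--Noether locus is \cite[Theorem 4.3]{brill}; globalising it over $J_*(3)$ rests on smoothness of $\tilde{U}(2,3,2)$ and of $\det_\xi$ together with the vanishing of the relevant Brill--Noether obstruction along $B_*$.
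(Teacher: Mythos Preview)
Your approach is essentially correct and genuinely different from the paper's. For $q_\p^*$, the paper does not invoke a projective bundle formula for $\p_J\to J_2^3$ directly; instead it trivialises the $2$-torsion Brauer class by passing to an auxiliary $\mathbb{G}_m$-gerbe $\J ac_{3,2}$ with a section, so that $\p_{\J ac}\simeq\p_J\times B\mathbb{G}_m$, and then compares $\dim A^1(\p_J)$ with $\dim A^1(\p_\xi)$. Your use of $h=-\tfrac14 c_1(\omega_{\p_J/J_2^3})$ is more elementary and sidesteps this detour; note however that asserting $\{1,h,h^2,h^3\}$ is an $A^*(J_2^3)$-basis of $A^*(\p_J)$ is exactly what needs proving, since $\p_J$ is only a Severi--Brauer fibration. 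The gap is easily closed by a sandwich argument: $q_\p^*$ is injective (the composite $\p_\xi\to\p_J\to\p_{3,2}$ is finite onto the coarse space, and $A^*(\p_J)\simeq A^*(\p_{3,2})$), and the subring generated by $h$ and $A^*(J_2^3)$ already surjects onto $A^*(\p_\xi)$.

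For $q_B^*$, the paper instead stratifies $B_*$ by $\T_*\hookrightarrow J_*(3)$ and uses that each piece $B_V$, $B_\T$ is a genuine $\p^1$-bundle (identified later with $\p_*^\xi$ over $V$ and with $Y$ over $\T_*$), reducing to surjectivity of $q_B^*$ in codimension $1$ on each stratum. Your $S_6$-invariance argument is cleaner---$\eta=-\tfrac12 c_1(\omega_{B_*/J_*(3)})$ is canonically $S_6$-equivariant, and $A^*(J_*(3))$ is pointwise fixed---but it needs $B_*\to J_*(3)$ to be a smooth $\p^1$-fibration \emph{globally}, not stratum-by-stratum. The paper's explicit identifications $B_V\simeq\p_*^\xi$ and $B_\T\simeq Y$ (Theorems~\ref{Q_V}, \ref{classes-E_V}, \ref{diagrams-pY}, Corollary~\ref{phi-s}) do establish this, so your flagged obstacle is handled by those later results; the paper's own proof of the Lemma is in fact forward-referencing the same structure via Remark~\ref{rulings}. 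One small point: $M_*\to M_2$ need not be \'etale (curves with extra automorphisms give non-free $S_6$-orbits), but Remark~\ref{rem1} still gives $A^*(B_{2,3}^2)=A^*(B_*)^{S_6}$ from finiteness and surjectivity alone.
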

\begin{proof}
Let $\p_{3,2}$ be the coarse moduli space of $\p_J$. The pullback maps $A^*(B_{2,3}^2)\ri A^*(B_J), A^*(\p_{3,2})\ri A^*(\p_J)$ are isomorphisms by Remark \ref{d-m-pj}. 
The morphisms $B_*\ri B_{2,3}^2, \p_\xi\ri \p_{3,2}$ are finite. Consequently, $q_\p^*,q_B^*$ are injective by Remark \ref{rem1}. 
Applying excision on $B_*$, it is enough to show that $q_\p^*, q_B|_{J_2^3\setminus\T_3}^*,q_B|_{\T_3}^*$ are surjective.\\
By Remark \ref{rulings}, $\p_\xi,B_*|_V,B_*|_{\T_*}$ are projective bundles over $J_*(3), V, \T_*$ respectively. 
It is enough to show that the respective relative hyperplane divisors are in the images of the corresponding pullbacks in codimension $1$ by the projective bundle formula.
We prove the claim for $q_\p^*$. Other pullbacks shall follow similarly.\\
Applying \cite[Proposition 6.6]{melo} and \cite[\S 6.4]{larson-24}, let $\J ac_{3,2}\ri J_2^3$ be the $\mathbb{G}_m$-gerbe given by twice the cohomology class of the $\mathbb{G}_m$-gerbe $\J_{3,2}\ri J_2^3$. Consequently, the induced map $\J_{3,2}\ri \J ac_{3,2}$ is a $\mu_2$-gerbe.
We have a trivialization of $\J ac_{3,2}$ over $J_2^3$ due to the $2$-torsion of the Brauer class of $\J_{3,2}$ over $J_2^3$. That is, we have a section $\s_\J:J_2^3\ri \J ac_{3,2}$.\\
Let $\p_{\J ac}$ be the pullback of $\p_J$ over $\J ac_{3,2}$. The section $\s_\J$ pulls back to a section $\s_{\p J}:\p_J\ri \p_{\J ac}$ of $t_\J:\p_{\J ac}\ri \p_J$ giving us the following diagram where all squares are Cartesian
$$
\begin{tikzcd}
	\p_\xi\arrow[d]\arrow[r]&\p_\J\arrow[ld,"\square",phantom]\arrow[d]\arrow[r,]&\p_{\J ac}\arrow[ld,"\square",phantom]\arrow[r,"t_\J"']\arrow[d]&\p_J\arrow[d]\arrow[l,bend right = 30,"\s_{\p J}"']\arrow[ld,"\square",phantom]\\
	J_*(3)\arrow[r]&\J_{3,2}\arrow[r]&\J ac_{3,2}\arrow[r]&J_2^3\arrow[l,bend left=30, "\s_\J"]\\
	&&J_2^3.\times B\mathbb{G}_m\arrow[u,sloped,"\simeq"] &
\end{tikzcd}
$$
The section $\s_{\p J}$ defines a trivialization of the $\mathbb{G}_m$-gerbe $\p_{\J ac}/\p_J$. 
The induced map $\p_\J\ri \p_{\J ac}$ is a $\mu_2$-gerbe. 
Therefore, the pullback map $q_\p^*$ factors as:
$$A^*(\p_J)\xleftrightarrow[t_\J^*]{\s_{\p J}^*}A^*(\p_{\J ac})\simeq A^*(\p_\J)\ri A^*(\p_\xi)$$
We show that the pullback map $q_\p^*$ is an isomorphism in codimension $1$ by showing that $\dim A^1(\p_\xi) = \dim A^1(\p_J)$.
 By the projective bundle formula, we have 
$$\dim A^1(\p_{\J ac}) = \dim A^1(\p_\J) = \dim A^1(\J_{3,2})+1 = 3, \dim A^1(\p_\xi) = \dim A^1(J_*(3))+1 = 2$$
Using $\p_{\J ac}\simeq \p_J\times B\mathbb{G}_m$ and $A^*(B\mathbb{G}_m) = \Q[c_1]$, we get 
$$ A^1(\p_{\J ac})\simeq A^1(\p_J)\oplus A^1(B\mathbb{G}_m)\implies \dim A^1(\p_{J}) = \dim A^1(\p_{\J ac})-\dim A^1(B\mathbb{G}_m) = 3-1=2$$
Thus, $q_\p^*$ is an isomorphism in codimension $1$ as claimed.
\end{proof}
\subsection{\textbf{Chow rings of }${\C_J,\p_\xi \text{ and }\tilde{\p}_\xi}$}\label{chow-blowup-ss} 
We start with $A^*(C_J)$, and then use Grothendieck-Riemann-Roch Theorem, Theorem \ref{blowup-chow} to calculate $A^*(\p_\xi)$ and $A^*(\tilde{\p}_\xi)$. Let $\mathcal{C}_2$ be the universal curve over $\mathcal{M}_2$.
\begin{lemma}\label{cxc}
 Let $\tilde{\psi}_i:=\pi_i^*\mathcal{K} = 2\pi_i^*\sigma_1$ for $i=1,2$ where $\pi_i:\C_*\times_{M_*}\C_*\ri \C_*$ is the projection onto the $i$-th factor; and let $\Delta$ be the diagonal. We have 
 $$A^*(\C_*\times_{M_*}\C_*) = \Q[\tilde{\psi}_1,\tilde{\psi}_2, \Delta]/(\tilde{\psi}_1^2,\tilde{\psi}_2^2, 2\Delta\tilde{\psi}_1-\tilde{\psi}_1\tilde{\psi}_2, \Delta(\tilde{\psi}_1-\tilde{\psi}_2), 2\Delta^2+\tilde{\psi}_1\tilde{\psi}_2).$$
\end{lemma}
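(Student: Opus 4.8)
The strategy is to compute $A^*(\C_*\times_{M_*}\C_*)$ by relating it to the universal symmetric square and using the already-established fact $A^*(\C_*)=\Q[\mathcal{K}]/(\mathcal{K}^2)$ together with $A^*(M_*)\simeq\Q$. First I would observe that $\C_*\times_{M_*}\C_*\ri \C_*$ (projection $\pi_1$) is a family of genus-$2$ curves, and apply the projective-bundle-type / Leray–Hirsch argument: since $A^*(\C_*)$ is generated by $\tilde\psi_1$ with $\tilde\psi_1^2=0$, and the diagonal class $\Delta$ restricts on each fiber to the class of a point, the classes $1,\tilde\psi_1,\tilde\psi_2,\Delta$ (and their products) should generate $A^*(\C_*\times_{M_*}\C_*)$ as a $\Q$-module. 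Concretely I would invoke that $\pi_{1*}$ makes $A^*(\C_*\times_{M_*}\C_*)$ a free $A^*(\C_*)$-module once we know the fiberwise Chow groups, which here are spanned by $1$ and the point class — so the total Chow ring is spanned over $\Q$ by $\{1,\tilde\psi_1,\tilde\psi_2,\tilde\psi_1\tilde\psi_2,\Delta,\Delta\tilde\psi_1\}$ (note $\Delta\tilde\psi_1=\Delta\tilde\psi_2$ will drop out as one relation), a $6$-dimensional space, matching the Poincaré polynomial of a genus-$2$ surface bundle over a point.

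**Deriving the relations.** The relations $\tilde\psi_1^2=\tilde\psi_2^2=0$ are immediate pullbacks of $\mathcal{K}^2=0$ from $A^*(\C_*)$. For the relations involving $\Delta$, I would use the standard diagonal identities: the self-intersection formula gives $\Delta^2 = \Delta\cdot(-c_1(N_{\Delta}))$ where $N_\Delta\simeq K_{\pi_1}|_\Delta$ is the relative cotangent bundle restricted to the diagonal, hence $\Delta^2 = -\Delta\cdot\pi_1^*\mathcal{K} = -\Delta\tilde\psi_1$ as classes; but one must be careful with the sign convention and the factor of $2$ coming from $\mathcal{K}=2\sigma_1$ and $K_{\pi}=2\sigma_i$ fiberwise having degree $2$. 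This is where the relation $2\Delta^2+\tilde\psi_1\tilde\psi_2$ should come from: pushing forward $\Delta\cdot\Delta$ via $\pi_1$ gives (up to the genus-$2$ Euler-characteristic contribution) $-2$ times the point class, and $\pi_{1*}(\tilde\psi_1\tilde\psi_2)$ gives $\tilde\psi_1\cdot\pi_{1*}(\pi_2^*\mathcal{K})$; since $\pi_{1*}\pi_2^*\mathcal{K}=\deg(\mathcal{K})\cdot 1 = 2$ we get $2\tilde\psi_1$... I would track these constants carefully via the projection formula and the fact that $\mathcal{K}$ restricted to a fiber has degree $2$. The relation $\Delta(\tilde\psi_1-\tilde\psi_2)=0$ follows because $\pi_1^*\mathcal{K}$ and $\pi_2^*\mathcal{K}$ both restrict to $\mathcal{K}|_\Delta$ under $\Delta\hookrightarrow\C_*\times_{M_*}\C_*$ (the diagonal identifies the two projections), and $2\Delta\tilde\psi_1=\tilde\psi_1\tilde\psi_2$ follows from $\Delta\cdot\pi_2^*\mathcal{K} = \Delta\cdot\pi_2^*(2\sigma_1)$ compared against $\pi_1^*(2\sigma_1)\cdot\pi_2^*(2\sigma_1)=\tilde\psi_1\tilde\psi_2$, using $\Delta\cdot\pi_2^*\sigma_1 = $ (a point-class term) — essentially because $\sigma_1\times_{M_*}\sigma_1$ lies on $\Delta$.

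**Main obstacle.** The hard part will be pinning down the exact rational constants in the relations — the factors of $2$ that distinguish $2\Delta\tilde\psi_1-\tilde\psi_1\tilde\psi_2$, $2\Delta^2+\tilde\psi_1\tilde\psi_2$ from naive guesses. This requires being careful about (i) the self-intersection formula sign ($\Delta^2 = -c_1(K_{\pi_1})\cdot\Delta$ with the subspace/relative-dualizing-sheaf convention in force), (ii) the genus-$2$ contribution when computing $\pi_{1*}\Delta^2$ (which morally records $2-2g = -2$), and (iii) the normalization $\mathcal{K}=2\sigma_i$ so that $\deg_{\text{fiber}}\mathcal{K}=2$. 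I would verify consistency by checking that the quotient ring $\Q[\tilde\psi_1,\tilde\psi_2,\Delta]/(\text{these relations})$ has the correct dimension $6$ with one-dimensional top graded piece $A^2$, and that restricting all relations to a fiber recovers the classical $A^*(C\times C)$ for a genus-$2$ curve $C$ (where $A^1$ has rank $3$ spanned by the two rulings and the diagonal modulo the Néron–Severi of a fixed genus-$2$ curve with $\rho$ contributions killed rationally, and $A^2=\Q$). Once the fiberwise picture checks out and $A^*(M_*)=\Q$ absorbs everything else, the excision/Leray–Hirsch bookkeeping closes the argument.
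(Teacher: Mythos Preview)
Your derivation of the relations is on the right track: $\Delta(\tilde\psi_1-\tilde\psi_2)=0$ follows from the diagonal identifying the two projections, $2\Delta\tilde\psi_1=\tilde\psi_1\tilde\psi_2$ from the transverse intersection $\Delta\cdot\pi_i^*\sigma_1=[\sigma_1\times_{M_*}\sigma_1]$, and $\Delta^2=-\tfrac12\tilde\psi_1\tilde\psi_2$ from the self-intersection formula. These are exactly the computations the paper carries out (the paper gets $\Delta^2$ via $\tilde\Delta\cdot\Delta=i_{\Delta*}(\mathcal W_*)=6[\sigma_1\times_{M_*}\sigma_1]$ combined with $\Delta+\tilde\Delta=\tilde\psi_1+\tilde\psi_2$, but your route via $K_{\pi_1}|_\Delta$ would also work).

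The genuine gap is in your spanning argument. There is no Leray--Hirsch theorem for families of curves, and your premise that the fiberwise Chow groups of a genus-$2$ curve $C$ are spanned by $1$ and the point class is false: $A_0(C)_{\mathbb Q}$ contains $J(C)\otimes\mathbb Q$, which is infinite-dimensional (this is Mumford's theorem on $CH_0$). So you cannot conclude that $A^*(\C_*\times_{M_*}\C_*)$ is a free $A^*(\C_*)$-module on two generators, and nothing in your outline replaces this. (Incidentally, the correct total dimension is $5$, not $6$: the paper's basis is $\{1,\tilde\psi_1,\tilde\psi_2,\Delta,\tilde\psi_1\tilde\psi_2\}$.)

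The paper supplies the missing spanning statement by an entirely different route: it stratifies $\C_*\times_{M_*}\C_*$ by the closed loci $\Delta$, the conjugate diagonal $\tilde\Delta$, and $\mathcal W_*\times_{M_*}\C_*$, and invokes the result (established inside the proof of Lemma~\ref{chow-2} via an explicit parametrization of twice-pointed genus-$2$ curves as nodal plane quartics) that the open complement has trivial rational Chow ring. Excision then forces $A^*(\C_*\times_{M_*}\C_*)$ to be generated by the pushforwards from these three strata, which are exactly the classes $\Delta$, $\tilde\Delta=\tilde\psi_1+\tilde\psi_2-\Delta$, and $\tilde\psi_1$. Without an input of this kind, you have no control over the size of $A^1$, and your argument does not close.
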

\begin{proof}
	Let $Z_W:= \W_*\times_{M_*}\C_*\hr \C_*\times_{M_*}\C_*$ be the locus of Weierstrass points in $\C_*\times_{M_*}\C_*$ with respect to the projection onto the second factor and let $\tilde{\Delta}$ be the image of the diagonal under the hyperelliptic involution with respect to the projection onto the first factor.\\
	We use the stratification $\C_*\times_{M_*}\C_* = (\C_*\times_{M_*}\C_*\setminus (\Delta\cup\tilde{\Delta}\cup Z_W))\cup\Delta\cup\tilde{\Delta}\cup Z_W$  along with 
	$$A^*(\C_*\times_{M_*}\C_*\setminus (\Delta\cup\tilde{\Delta}\cup \W_*\times_{M_*}\C_*))=\Q$$ 
	as shown in Lemma \ref{chow-2}.
	We have the following pushforwards of total Chow groups.
	\begin{itemize}
		\item $i_{W*}(A_*(Z_W)) = \Q.\tilde{\psi}_1\oplus \Q.\tilde{\psi}_1\tilde{\psi}_2$ for the embedding $i_W:Z_W\hr \C_*\times_{M_*}\C_*$.
		\item $i_{\Delta *}(A_*(\C_*)) = \Q.[\Delta]\oplus \Q.\tilde{\psi}_1\tilde{\psi}_2 $ for the embedding $i_\Delta:\Delta\hr \C_*\times_{M_*}\C_*$.
		\item $i_{\tilde{\Delta} *}(A_*(\C_*)) = \Q.[\tilde{\Delta}]\oplus \Q.\tilde{\psi}_1\tilde{\psi}_2 $ for the embedding $i_{\tilde{\Delta}}:\tilde{\Delta}\hr \C_*\times_{M_*}\C_*$
	\end{itemize}
	Thus, $A^2(\C_*\times_{M_*}\C_*)=<\tilde{\psi}_1\tilde{\psi}_2>$. Since restriction to every fiber over $\pi_2$ is trivial, we have $\Delta+\tilde{\Delta} - \tilde{\psi}_1\in \pi_2^*(A^1(\C_*))$. Thus, 
	$$\exists a\in \Q :\Delta+\tilde{\Delta} = \tilde{\psi}_1+a\tilde{\psi}_2.$$
	The $\mu_2$-action of transposing coordinates in $\C_*\times_{M_*}\C_*$ leaves $\Delta,\tilde{\Delta}$ invariant and transposes $(\tilde{\psi}_1,\tilde{\psi}_2)$. It follows that $a=1$. \\
	Additionally, $\tilde{\Delta}.\tilde{\psi}_i = \Delta.\tilde{\psi}_i=\dfrac{1}{2}\tilde{\psi}_1\tilde{\psi}_2$ since $\tilde{\Delta}.\pi_i^*\sigma_1 = \Delta.\pi_i^*\sigma_1 = [\sigma_1\times_{M_*}\sigma_1]\hr \C_*\times_{M_*}\C_*$ for $i=1,2$ and $$\tilde{\Delta}.\Delta = i_{\Delta *}(\W_*)=6[\sigma_1\times_{M_*}\sigma_1] = \dfrac{6}{4
	}\tilde{\psi}_1\tilde{\psi}_2\implies \Delta^2=-\dfrac{1}{2}\tilde{\psi}_1\tilde{\psi}_2.$$
	  Restricting to fibers over $M_*$, we get 
	  $$\Delta,\tilde{\psi}_1\tilde{\psi}_2\neq 0 \in A^*(\C_*\times_{M_*}\C_*).$$ 
	 Let $U_\Delta := \C_*\times_{M_*}\C_*\setminus \Delta$. We have the finite map $\beta:U_\Delta\ri M_{2,2}$ and $\psi_1,\psi_2\in A^1(M_{2,2})$ so that $\beta^*\psi_i=\tilde{\psi}_i|_{U_\Delta}$ for $i=1,2$.
	The excision sequence for $\Delta\hr \C_*\times_{M_*}\C_*$ gives 
	$$\Q.\Delta\ri A^*(\C_*\times_{M_*}\C_*)\ri A^*(U_\Delta)\ri 0,$$ 
	which implies that $\Delta,\tilde{\psi}_1,\tilde{\psi}_2$ are linearly independent in $A^1(\C_*\times_{M_*}\C_*)$ since $\beta^*$ is injective.\\
	 Therefore, $\{[\C_*\times_{M_*}\C_*], \tilde{\psi}_1,\tilde{\psi}_2, \Delta, \tilde{\psi}_1\tilde{\psi}_2\}$ form a linear basis of $A^*(\C_*\times_{M_*}\C_*)$. 
\end{proof}
\begin{cor}\label{chow-d}
	Let $\psi_{i\Delta}, \psi_{i\tilde{\Delta}}$ be restrictions of $\tilde{\psi}_i$ to $\C_*\times_{M_*}\C_*\setminus\Delta,\C_*\times_{M_*}\C_*\setminus\tilde{\Delta}$ respectively for $i=1,2$. The excision sequence on $\Delta\hr \C_*\times_{M_*}\C_*$ yields 
	$$A^*(\C_*\times_{M_*}\C_*\setminus\Delta)=\Q[\psi_{1\Delta},\psi_{2\Delta}]/(\psi_{1\Delta},\psi_{2\Delta})^2, A^*(\C_*\times_{M_*}\C_*\setminus\tilde{\Delta})=\Q[\psi_{1\tilde{\Delta}},\psi_{2\tilde{\Delta}}]/(\psi_{1\tilde{\Delta}},\psi_{2\tilde{\Delta}})^2.$$ 
\end{cor}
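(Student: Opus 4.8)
The plan is to deduce both isomorphisms formally from Lemma \ref{cxc} together with the excision sequence of Theorem \ref{excision}. For the first assertion, apply excision to the closed immersion $i_\Delta\colon\Delta\hr\C_*\times_{M_*}\C_*$ with open complement $\C_*\times_{M_*}\C_*\setminus\Delta$, which gives the right-exact sequence
\[
A_*(\Delta)\xrightarrow{i_{\Delta*}}A^*(\C_*\times_{M_*}\C_*)\xrightarrow{i_U^*} A^*(\C_*\times_{M_*}\C_*\setminus\Delta)\to 0 .
\]
Since $i_U^*$ is a surjective ring homomorphism and, by the projection formula, the image of $i_{\Delta*}$ is an ideal, $A^*(\C_*\times_{M_*}\C_*\setminus\Delta)$ is the quotient ring of $A^*(\C_*\times_{M_*}\C_*)$ by that ideal. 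The proof of Lemma \ref{cxc} identifies $i_{\Delta*}\big(A_*(\C_*)\big)=\Q\cdot[\Delta]\oplus\Q\cdot\tilde{\psi}_1\tilde{\psi}_2$, so the complement's Chow ring is $A^*(\C_*\times_{M_*}\C_*)/(\Delta,\tilde{\psi}_1\tilde{\psi}_2)$.

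Next I would substitute the presentation of $A^*(\C_*\times_{M_*}\C_*)$ from Lemma \ref{cxc}. Setting $\Delta=0$, the three mixed relations $2\Delta\tilde{\psi}_1-\tilde{\psi}_1\tilde{\psi}_2$, $\Delta(\tilde{\psi}_1-\tilde{\psi}_2)$, $2\Delta^2+\tilde{\psi}_1\tilde{\psi}_2$ all collapse to $\tilde{\psi}_1\tilde{\psi}_2=0$, which is already imposed, leaving exactly $\tilde{\psi}_1^2=\tilde{\psi}_2^2=\tilde{\psi}_1\tilde{\psi}_2=0$; since $\psi_{i\Delta}$ is by definition the restriction of $\tilde{\psi}_i$, this is precisely $\Q[\psi_{1\Delta},\psi_{2\Delta}]/(\psi_{1\Delta},\psi_{2\Delta})^2$. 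A dimension count confirms that nothing degenerates: $A^*(\C_*\times_{M_*}\C_*)$ is $5$-dimensional with basis $\{1,\tilde{\psi}_1,\tilde{\psi}_2,\Delta,\tilde{\psi}_1\tilde{\psi}_2\}$, the ideal $\Q\cdot[\Delta]\oplus\Q\cdot\tilde{\psi}_1\tilde{\psi}_2$ is $2$-dimensional (its generators lie in distinct codimensions), and the quotient is $3$-dimensional, matching $\dim_\Q\Q[\psi_{1\Delta},\psi_{2\Delta}]/(\psi_{1\Delta},\psi_{2\Delta})^2=3$; in particular $\psi_{1\Delta}$ and $\psi_{2\Delta}$ remain linearly independent.

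For the second assertion I would run the same argument with $i_{\tilde{\Delta}}\colon\tilde{\Delta}\hr\C_*\times_{M_*}\C_*$. The proof of Lemma \ref{cxc} gives $i_{\tilde{\Delta}*}\big(A_*(\C_*)\big)=\Q\cdot[\tilde{\Delta}]\oplus\Q\cdot\tilde{\psi}_1\tilde{\psi}_2$, and the relation $\Delta+\tilde{\Delta}=\tilde{\psi}_1+\tilde{\psi}_2$ from that lemma shows that modulo $\tilde{\Delta}$ and $\tilde{\psi}_1\tilde{\psi}_2$ the class $\Delta$ becomes $\psi_{1\tilde{\Delta}}+\psi_{2\tilde{\Delta}}$, so no new generator appears and the identical bookkeeping yields $\Q[\psi_{1\tilde{\Delta}},\psi_{2\tilde{\Delta}}]/(\psi_{1\tilde{\Delta}},\psi_{2\tilde{\Delta}})^2$. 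I do not expect a genuine obstacle here, as everything is a formal consequence of Lemma \ref{cxc}; the only point needing a little care is verifying that quotienting the relations of $A^*(\C_*\times_{M_*}\C_*)$ produces exactly the square of the augmentation-type ideal and introduces no extra relation among $1,\psi_{1\Delta},\psi_{2\Delta}$ (resp.\ $1,\psi_{1\tilde{\Delta}},\psi_{2\tilde{\Delta}}$), which the dimension count above settles.
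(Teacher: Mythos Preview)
Your argument is correct and is exactly the intended one: the paper states the corollary without proof because it is the immediate consequence of the excision sequence applied to Lemma \ref{cxc}, and you have simply written out those details. The only minor remark is that the image of $i_{\Delta*}$ is automatically the ideal generated by $[\Delta]$ (since $A^*(\Delta)\simeq\Q[\mathcal{K}]/(\mathcal{K}^2)$ and $i_{\Delta*}$ followed by the projection formula identifies it with $[\Delta]\cdot A^*(\C_*\times_{M_*}\C_*)$), so one does not even need to appeal separately to the explicit basis; your dimension count is a clean way to confirm it.
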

\begin{defi*}
We shall refer to $\tilde{\Delta}\hr \C_*\times_{M_*}\C_*$, and its image in $\mathcal{C}_2\times_{\mathcal{M}_2}\mathcal{C}_2$, as the \textit{conjugate diagonal} from hereon.
\end{defi*} 
\begin{lemma}\label{chow-3}
For any $i,j\in \{1,2,3\}$, let $\Delta_{ij}\hr \times_{\mathcal{M}_2}^3\mathcal{C}_2$ be the $i,j$-diagonal and $\tilde{\Delta}_{ij}\hr \times_{M_*}^3\mathcal{C}_2$ be the pullback of the $i,j$-conjugate diagonal under the projection $\pi_{ij}:\times^3_{M_*}\mathcal{C}_2\ri \mathcal{C}_2\times_{\mathcal{M}_2}\mathcal{C}_2$ onto the $i,j$ factors. We have $$A^*(\times^3_{\mathcal{M}_2}\mathcal{C}_2\setminus (\cup_{1\leq i,j\leq 3} \tilde{\Delta}_{ij}\cup_{1\leq i<j\leq 3}\Delta_{ij})) = \Q.$$
\end{lemma}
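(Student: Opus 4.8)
The plan is to imitate the proofs of Lemmas~\ref{c1} and~\ref{chow-2}: I will realise the open locus in question --- whose closed points are tuples $(C; p_1,p_2,p_3)$ consisting of a smooth genus-$2$ curve together with three pairwise distinct points no two of which are hyperelliptic conjugates --- as an open subscheme of a GIT quotient of an open subvariety of affine space, and then deduce $A^*=\Q$ from Remark~\ref{rem1}. Rational Chow rings are unchanged on passing from a Deligne--Mumford stack with generically trivial stabiliser to its coarse space, and by the $\mu_2$-gerbe and $S_6$-cover description of \S\ref{cover} it is enough to prove the statement after pulling back along $M_*\to\mathcal{M}_2$ (preimages of diagonals are diagonals and preimages of conjugate diagonals are conjugate diagonals). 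So I will work with the quasi-projective variety
$$U_3:=\big(\times^3_{M_*}\mathscr{C}_*\big)\setminus\Big(\bigcup_{1\le i<j\le 3}(\Delta_{ij}\cup\tilde{\Delta}_{ij})\Big),$$
whose closed points are such tuples equipped with the ordered Weierstrass marking inherited from $M_*$.

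To rigidify, attach to $(C;p_1,p_2,p_3)$ the degree-$4$ divisor class $L:=K_C+p_2+p_3$. The facts to verify are: $|L|$ is base-point free with $h^0(L)=3$; $L\not\sim 2K_C$ on $U_3$, which is exactly where non-conjugacy of $p_2,p_3$ is used; hence the map $C\to\p^2$ defined by $|L|$ is birational onto an integral plane quartic with a single node, and --- using again that $p_2,p_3$ are non-conjugate, so $|L-K_C|=\{p_2+p_3\}$ --- that node is the common image of $p_2$ and $p_3$. As in Lemma~\ref{c1} the node records the $g^1_2$, its two branches recover the ordered pair $(p_2,p_3)$, and the image of $p_1$ --- distinct from the node since $p_1\ne p_2,p_3$ --- is an extra marked point. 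Fixing the node at $[1\!:\!0\!:\!0]$, one branch direction at the node, the image of $p_1$ at $[0\!:\!1\!:\!0]$ and the tangent line to the quartic there cuts $PGL_3$ down to the solvable subgroup of \cite[\S2]{casnati}, and normalising the remaining coefficients of the quartic form as in the proofs of Lemmas~\ref{c1} and~\ref{chow-2} identifies $U_3$ with an open subvariety of the quotient of an open subvariety of an affine space by a torus acting with suitable weights. Such a quotient is again an open subvariety of an affine space, hence has trivial rational Chow ring; so $A^*(U_3)=\Q$, and the reductions above yield $A^*\big(\times^3_{\mathcal{M}_2}\mathcal{C}_2\setminus(\cup\tilde{\Delta}_{ij}\cup\Delta_{ij})\big)=\Q$.

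The hard part will be the bookkeeping of the rigidification: checking that, after fixing the node, a branch direction, the image of $p_1$ and a tangent line, the residual solvable group acts on the coefficients of the quartic with weights permitting a free normalisation --- so that the quotient is genuinely an open subset of affine space --- and that the resulting correspondence between $(C;p_1,p_2,p_3)$ and the normalised marked quartic is a bijection onto an open subscheme; the hypothesis that all three pairs among $p_1,p_2,p_3$ are non-conjugate is precisely what makes this marked data everywhere defined and algebraic in families. A secondary point is to confirm that the loci where the quartic degenerates (acquires a worse singularity, or where the image of $p_1$ meets the node, or where $|L|$ misbehaves) are already excluded by removing the diagonals and conjugate diagonals; any residual bad locus should be peeled off and handled by a further excision, as in Lemma~\ref{chow-2}, reducing to the generic stratum treated above.
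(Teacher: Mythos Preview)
Your approach is correct and follows the same overall strategy as the paper --- realise the open locus via a rigidified family of nodal plane quartics, land in (a torus quotient of) an open in affine space, and conclude by Remark~\ref{rem1}. The difference is the choice of linear system. You use $|K_C+p_2+p_3|$, so that $p_2,p_3$ are glued at the node and only $p_1$ sits at a separate smooth point; you then rigidify by the node, a branch direction there, the image of $p_1$, and the tangent line at that image. The paper instead uses an embedding in which all three $p_i$ land at distinct smooth points of the quartic away from the node; fixing the node at $[1{:}0{:}0]$ and the images of $p_1,p_2,p_3$ at $[0{:}1{:}0],[0{:}0{:}1],[0{:}1{:}1]$ already cuts the ambiguity down so that, after the two obvious coefficient normalisations, one lands directly in an open of $\mathbb{A}^6$ with no residual torus to quotient.

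What each buys: the paper's choice treats the three points more symmetrically and avoids the extra bookkeeping of labelling the two branches at the node and checking that this labelling varies algebraically in families; it also sidesteps your ``hard part'' of verifying the torus weights. Your choice has the advantage that the node is intrinsically attached to the data $(p_2,p_3)$ rather than to an auxiliary pair of points on $C$, so the inverse construction (recovering $(C;p_1,p_2,p_3)$ from the marked quartic) is more transparent. Either route works; your acknowledged residual checks (weights, branch labelling, excising loci where the tangent at $p_1$ meets the node) are genuine but routine.
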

\begin{proof}
Using \cite[\S 2.4]{casnati}, a genus $2$ curve with $3$ distinct marked points $(C,p_1,p_2,p_3)$, with none of the $p_i$'s Weierstrass or conjugates under the hyperelliptic involution can be embedded as a degree $4$ plane curve with a single nodal/cuspidal singularity.\\
We rigidify the embedding further by setting the images of the node $N\mapsto [1:0:0] , p_1\mapsto [0:1:0], p_2\mapsto [0:0:1], p_3\mapsto [0:1:1]$. 
The equation of the image is then an element of the space $V := \{\la_1x_1^2x_2^2+\la_2x_1^2x_2x_3+\la_3x_1^2x_3^2+x_1x_2^3+\la_4x_1x_2^2x_3+\la_5x_1x_2x_3^2+\la_6x_1x_3^3+ x_2(x_2-x_3)x_3^2\}$.\\
% We find images of Weierstrass points under this map using similar arguments as presented in Lemma \ref{c1}, Lemma \ref{chow-2}.\\
% Weierstrass points correspond to lines $x_2-cx_3=0$ for $c\in \K$. Therefore, $c$ must be a root of the polynomial 
% \begin{equation}\label{c-3}(c^3+c^2\la_4+c\la_5+\la_6)^2 = 4c(c-1)(c^2\la_1+c\la_2+\la_3)\tag{$E_{(C,p_1,p_2,p_3)}$}\end{equation} 
% As demonstrated in Lemma \ref{chow-2}, let $U_0\subset \mathbb{A}^6$ be the space of (ordered)roots of polynomials \eqref{c-3} for some quadruple $(C,p_1,p_2,p_3)$ and $U'$ be the co-ordinate wise square root space for $U_0$. We can recover co-efficients of \eqref{c-3} from $w=(w_i)_{1\leq i\leq 6}\in U'$ by setting $\la_6$ to be $\Pi_{1\leq i\leq 6}w_i$ and the remaining by plugging $w_i^2$ as roots of \eqref{c-3}.\\
 With the blowup construction as described in Lemma \ref{chow-2}, we get a map $U'\xrightarrow{\tau}\times_{\mathcal{M}_2}^3\mathcal{C}_2$. Shrinking $U'$ if necessary, we can ensure that $Im(\tau) = \times^3_{\mathcal{M}_2}\mathcal{C}_2\setminus (\cup_{1\leq i,j\leq 3} \tilde{\Delta}_{ij}\cup_{1\leq i<j\leq 3}\Delta_{ij})$. Therefore, $\tau$ is a finite, surjective map from $U'\subset\mathbb{A}^6$ onto $Im(\tau)$. The lemma now follows from Remark \ref{rem1}.
\end{proof}
\begin{defi*}
		Let $\mathcal{K}_1\hr J_*(3)$ be the cycle given by composing the embeddings $\sigma_1\hr\C_*\xrightarrow{\T_*}J_*(3)$. 
\end{defi*}
% \begin{rem}\label{cxc-rem}
% 	The generators of $A^*(\C_*\times_{M_*}\C_*)$ are in the image of the pullback map associated to $\C_*\times_{M_*}\C_*\ri \mathcal{C}_2\times_{\mathcal{M}_2}\mathcal{C}_2$. The pullback map $A^*(\mathcal{C}_2\times_{\mathcal{M}_2}\mathcal{C}_2)\ri A^*(\C_*\times_{M_*}\C_*)$ is therefore an isomorphism.
% \end{rem}
\begin{theorem}\label{chow-curve}
Let $S^*(\C_J)$ be the image of the pullback $q_C^*:A^*(C_J)\hr A^*(\C_J)$. We have 
$$S^*(\C_J)\simeq \Q[K_{\pi_J}, \pi_J^*\Theta_*,\xi]/(\pi_J^*\Theta_*^3, K_{\pi_J}^2,\xi^2 +K_{\pi_J}\pi_J^*\T_*,\xi K_{\pi_J}, (\xi-\dfrac{3}{2}K_{\pi_J})\pi_J^*\T_*^2).$$
\end{theorem}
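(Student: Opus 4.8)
The content of the theorem is really a computation of the whole ring $A^*(\C_J)$ together with the observation that $q_C^*$ is surjective, so $S^*(\C_J)=A^*(\C_J)$. First, $q_C$ is finite: it is the composite of the finite maps $\C_*\times_{M_*}J_*(3)\ri C_2\times_{M_2}J_*(3)\ri C_2\times_{M_2}J_2^3$, using that $\C_*\ri C_2$ is the $S_6$-cover of \S\ref{cover} and $J_*(3)\ri J_2^3$ is finite by Remark \ref{j*-s6}. So by Remark \ref{rem1} the map $q_C^*$ is injective with image the invariant subring $A^*(\C_J)^G$, where $G$ is the Galois group of $q_C$ (generated by the $S_6$-action of Remark \ref{j*-s6} and the involution $(x,L)\mapsto(\iota(x),\iota^*L)$, $\iota$ the hyperelliptic involution). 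I would check $G$ acts trivially on $A^1(\C_J)$: the relative canonical $K_{\pi_J}=p_C^*\mathcal K$ is $G$-fixed since $\mathcal K=2[\sigma_i]$ is; $\pi_J^*\Theta_*$ is $G$-fixed because $\Theta_*\subset J_*(1)$ is the intrinsic divisor $\{\mo(p)\}$ and its translates agree in $A^1$ (as $\mathrm{Pic}(M_*)=0$ and $A^1_1(J_*(3))=0$, the latter from the proof of Theorem \ref{t-s6}); and $\xi$ is $G$-fixed because the defining identity $\xi_{(x,L)}=L_x\otimes L_{\sigma_1}^{-1}$ for the Poincar\'e bundle rigidified along $\sigma_1$ is visibly preserved (using $\iota(\sigma_1)=\sigma_1$ and that $S_6$-twisting changes $\xi$ only by a pullback from $J_*(3)$ that restricts trivially to the $\sigma_1$-section). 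Since (below) these three classes generate $A^*(\C_J)$, this gives $S^*(\C_J)=A^*(\C_J)$.

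\textbf{Computing $A^*(\C_J)$.} I would exploit the finite surjective map $p:\C_*\times_{M_*}\C_*\times_{M_*}\C_*\ri\C_J$, $(x;a,b)\mapsto\big(x,\mo_{\C_*}(a+b+\sigma_1)\big)$ --- surjective by Lemma \ref{4.2} applied fibrewise with $x$ fixed, and finite of degree $2$ (the fibre over $(x,L)$ is the ordered linear system of $L(-\sigma_1)$). Then $p^*$ is injective, so I must (i) compute $A^*(\C_*\times_{M_*}\C_*\times_{M_*}\C_*)$ and (ii) identify $\mathrm{Im}(p^*)$. For (i) I would run excision: the open complement of the diagonals $\Delta_{ij}$, conjugate diagonals $\tilde\Delta_{ij}$ and Weierstrass loci has trivial rational Chow ring by Lemma \ref{chow-3} (and its method), and the closed strata --- the $\Delta_{ij}$ and $\tilde\Delta_{ij}$, each $\cong\C_*\times_{M_*}\C_*$ with Chow ring given by Lemma \ref{cxc}; the Weierstrass loci; the small diagonal $\cong\C_*$; and all their mutual intersections --- contribute pushforwards pinning down a presentation of $A^*(\C_*\times_{M_*}\C_*\times_{M_*}\C_*)$, all remaining relations (e.g.\ among the small-diagonal class and the products $\tilde\psi_i\Delta_{jk}$) being forced by restriction to fibres over $M_*$, where the Chow ring of the triple self-product of a fixed genus-$2$ curve is classical. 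For (ii) I would identify $\mathrm{Im}(p^*)$ with the invariants of the deck involution (swap $a\leftrightarrow b$ and apply $\iota$ on the fibres), obtaining $A^*(\C_J)$ with Hilbert function $(1,3,3,1)$.

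\textbf{Translating and reading off the relations.} One computes $p^*K_{\pi_J}=\tilde\psi_1$, $p^*\pi_J^*\Theta_*=\tfrac32(\tilde\psi_2+\tilde\psi_3)-\Delta_{23}$ (the pullback of $\Theta_2$ along $(a,b)\mapsto\mo(a+b)$, exactly as in the proof of Theorem \ref{t-s6}), and $p^*\xi=\Delta_{12}+\Delta_{13}+\tfrac12\tilde\psi_1-\tfrac12\tilde\psi_2-\tfrac12\tilde\psi_3$. From these, and the presentation of $A^*(\C_*\times_{M_*}\C_*\times_{M_*}\C_*)$, the five relations $\pi_J^*\Theta_*^3=0$, $K_{\pi_J}^2=0$, $\xi K_{\pi_J}=0$, $\xi^2+K_{\pi_J}\pi_J^*\Theta_*=0$, $(\xi-\tfrac32K_{\pi_J})\pi_J^*\Theta_*^2=0$ all follow by direct substitution; the first three also have immediate intrinsic proofs ($\mathcal K^2=0$ in $A^*(\C_*)$; $\Theta_3^3=0$ in $A^*(J_*(3))$ by Theorem \ref{t-s6}; $K_{\pi_J}=2[\,\pi_J^*\sigma_1\text{-section}\,]$ and $\xi$ restricts trivially to the $\sigma_1$-section by rigidification), and the last two can alternatively be checked by pushing forward along $\pi_J$ (using $\pi_{J*}K_{\pi_J}=2$, $\pi_{J*}\xi=3$, $\pi_{J*}\xi^2=2c_1(\pi_{J*}\xi)=-2\Theta_3$ from Grothendieck--Riemann--Roch and $\det\pi_{J*}\xi=\mo(-\Theta_3)$, together with injectivity of $\pi_{J*}$ on the one-dimensional group $A^3(\C_J)$ for the last relation). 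Finally one checks no further relations occur --- equivalently that $A^*(\C_J)$ has dimensions $1,3,3,1$ --- which completes the proof.

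\textbf{Main obstacle.} The heart of the argument, and the hardest part, is step (i): carrying out the excision on $\C_*\times_{M_*}\C_*\times_{M_*}\C_*$, enumerating all of the diagonal, conjugate-diagonal and Weierstrass strata and their mutual intersections, computing the relevant pushforwards and intersection products via Lemma \ref{cxc}, and verifying that every relation has been accounted for --- so that $A^*(\C_J)$, hence $S^*(\C_J)$, comes out exactly as stated.
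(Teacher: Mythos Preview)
Your proposal has a genuine gap, and it is precisely the one the paper flags in the Remark following the theorem.

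First, a smaller point: your map $p:(x;a,b)\mapsto\big(x,\mo_{\C_*}(a+b+\sigma_1)\big)$ is \emph{not} finite. Over a point $(x,L)$ with $L(-\sigma_1)\cong K_{\C_p}$, i.e.\ over the locus $\sigma_J=\pi_J^{-1}(\mathcal K_1)$, the fibre is $\{(a,\iota(a)):a\in\C_p\}\cong\C_p$, which is $1$-dimensional. So $p$ is only generically finite of degree $2$, and you must first excise $\sigma_J$ and handle its contribution separately --- which is exactly the excision the paper performs.

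The more serious issue is your step (i). Lemma \ref{chow-3} is stated and proved for the triple product $\times^3_{\mathcal M_2}\mathcal C_2$ over the moduli stack, \emph{not} for $\times^3_{M_*}\C_*$ over the ordered-Weierstrass cover. The finite cover $\times^3_{M_*}\C_*\to\times^3_{\mathcal M_2}\mathcal C_2$ goes the wrong way for Remark \ref{rem1}: trivial Chow downstairs does not force trivial Chow upstairs. The paper's Remark after the theorem says explicitly that the argument of Lemma \ref{chow-3} does not extend to the $M_*$-level open, so your excision computation of $A^*(\times^3_{M_*}\C_*)$ is unjustified as written, and with it the identification $S^*(\C_J)=A^*(\C_J)$. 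Your $G$-invariance argument for $\xi$ is also incomplete: under $g\in S_6$ one only gets $g^*\xi-\xi\in\pi_J^*A^1(J_*(3))=\Q\cdot\pi_J^*\Theta_*$, and you have not shown this difference vanishes.

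The paper avoids all of this by never computing $A^*(\C_J)$. It works directly with $S^*(\C_J)\cong A^*(C_J)$: it first checks $\xi,K_{\pi_J},\pi_J^*\Theta_*\in\mathrm{Im}(q_C^*)$ (using \cite[Proposition~6.6]{melo} for $\xi$), then uses Lemma \ref{chow-3} at the $\mathcal M_2$-level together with the map $\Phi_\C$ (your $p$, restricted to the complement of $\tilde\Delta_{23}$) to produce linear generators of $S^*(\C_J\setminus\sigma_J)$ by pushing forward the various diagonal strata. The relation $\xi^2=-K_{\pi_J}\pi_J^*\Theta_*$ is obtained not by substitution but by a separate argument exploiting the hyperelliptic involution $h$ on $\C_J$: one shows $h^*(\xi^2)=\xi^2$, which forces $\xi^2\in\Q\cdot K_{\pi_J}\pi_J^*\Theta_*$, and then pins down the coefficient by restricting to $\C_*\times_{M_*}\Theta_*$.
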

\begin{proof}
	We start by showing that $\xi,K_{\pi_J},\pi_J^*\T_*$ are contained in $S^*(\C_J)$. The divisors $K_{\pi_J},\pi_J^*\T_*$ are contained in the image of $q_C^*$ since $q_J^*$ is an isomorphism by Theorem \ref{t-s6}. \\
	Following the notation in \cite[\S 6]{melo}, the map $\C_J\ri C_J$ lifts to $\C_J\ri \mathcal{J}ac_{3,2,1}$ (relating our notation to \cite{melo}, we have $C_J = \mathcal{J}_{3,2,1}$) and $\xi^{\otimes 2}\in q_C^*(Pic(C_J))$ by \cite[Proposition 6.6]{melo}. 
	Thus, $ \xi\in S^1(\C_J)$.
	\\We have sections $J_*(3)\xrightarrow{\pi^*\sigma_1}\C_J, \C_*\xrightarrow{\sigma_J}\C_J$ given by pullbacks of the section $\sigma_1: M_*\ri \C_*$. Therefore $\pi_J^*\T_*^i K_{\pi_J}^j=\pi_J^*\T_*^i\pi_\C^*K_{\pi}^j\neq 0\forall 0\leq i\leq 2, 0\leq j\leq 1$ and are linearly independent in $A^*(\C_J)$.\\
	The rigidification of $\xi$ along $\pi^*\sigma_1$ gives 
	$$\xi.\pi^*\sigma_1 = 0\implies\xi. K_{\pi_J} = 0.$$
The Poincar\'e property of $\xi$ implies that the restrictions of $(\xi-K_{\pi_J}-\pi^*\sigma_1)$ are trivial on fibers over $\mathcal{K}_1\simeq M_*$. Since $\T_*^2 = 2[\pi^*\sigma_1]$, we have
$$(\xi-K_{\pi_J}-\pi^*\sigma_1).\mathcal{K}_1 = 0\implies (\xi-\dfrac{3}{2}K_{\pi_J})\pi_J^*\T_*^2=0 .$$ 
		We define the map $\Sigma:\C_*\times_{M_*}\C_*\setminus\tilde{\Delta}\ri \C_J\setminus\sigma_J$ as $\pi_1\times (\phi\otimes\sigma_1)$, where $\phi:\C_*\times_{M_*}\C_*\ri J_*(3)$ was defined in \S\ref{j-2} and $\sigma_J:= \pi_J^{-1}(\mathcal{K}_1)$. 
		The Poincar\'e property of $\xi$ shows that $[\Sigma(\C_*\times_{M_*}\C_*\setminus\tilde{\Delta})], \xi-\pi^*\sigma_1\in A^1(\C_J\setminus \sigma_J)$ restrict to identical divisors on fibers over $J_*(3)\setminus \mathcal{K}_1$. Thus, 
		$$\exists a\in \Q: [\Sigma(\C_*\times_{M_*}\C_*\setminus\tilde{\Delta})]- \xi+\pi^*\sigma_1=a\pi_J^*\T_*\implies[\Sigma(\C_*\times_{M_*}\C_*\setminus\tilde{\Delta})]\pi^*\sigma_1 =a\pi^*\sigma_1\pi_J^*\T_*.$$ 
		Transversality of the loci $\pi^*\s_1, \Sigma(\C_*\times_{M_*}\C_*\setminus\tilde{\Delta})$ in $\C_J\setminus\s_J$ shows that 
		$$[\Sigma(\C_*\times_{M_*}\C_*\setminus\tilde{\Delta})]\pi^*\sigma_1 =\pi^*\sigma_1\pi_J^*\T_*\implies a=1.$$ 
		 Therefore, \begin{equation}\label{chern}[\Sigma(\C_*\times_{M_*}\C_*\setminus\tilde{\Delta})] = \xi-\dfrac{1}{2}K_{\pi_J} +\pi_J^*\T_*.\end{equation} 
 We calculate $S^*(\C_J\setminus \sigma_J)$ and then use the excision sequence
 $$A^{*-2}(\sigma_J)\xrightarrow{i_{J*}} S^*(\C_J)\ri S^*(\C_J\setminus \sigma_J)\ri 0.$$
	The image of the pushforward $i_{J*}A^*(\sigma_J)$, is spanned by $\pi_J^*\T_*^2, \pi_J^*\T_*^2K_{\pi_J}$. \\
	We define the morphism $\Phi_\C: \times^3_{M_*}\C_*\ri \C_J = \C_*\times_{M_*}J_*(3)$ as $\pi_1\times ((\phi\circ\pi_{23})\otimes \s_1)$, where $\pi_{23}:\times^3_{M_*}\C_*\ri \C_*\times_{M_*}\C_*$ is the projection map onto the second and third factors. \\
	The restriction $\phi_\C:\times^3_{M_*}\C_*\setminus \tilde{\Delta}_{23}\ri \C_J\setminus\sigma_J$ is then a finite, flat map between smooth varieties.	Hence, $S^*(\C_J\setminus \sigma_J) $ is linearly generated by $[\C_J\setminus \sigma_J], \phi_{\C *}(A_*(\Delta_{12}\setminus \tilde{\Delta}_{23})), \phi_{\C *}(A_*(\Delta_{23}\setminus \tilde{\Delta}_{23})), \phi_{\C *}(A_*(\tilde{\Delta}_{12}\setminus \tilde{\Delta}_{23})), \phi_{\C *}(A_*(\tilde{\Delta}_{22}\setminus \tilde{\Delta}_{23})),\text{ and }\phi_{\C *}(A_*(\tilde{\Delta}_{11}\setminus \tilde{\Delta}_{23}))$ by Lemma \ref{chow-3} and Lemma \ref{cxc}.\\
	Using (\ref{chern}) and Corollary \ref{chow-d}, we have the following.
	\begin{itemize}
	\item \underline{$\phi_{\C *}(A^*(\Delta_{12}\setminus \tilde{\Delta}_{23}))$}: Is the image of the map $\phi_\C\circ\Delta_{12}:\C_*\times_{M_*}\C_*\setminus \tilde{\Delta}\ri \C_J\setminus\sigma_J$.\\
	 Therefore, $\phi_{\C *}(A^*(\Delta_{12}\setminus \tilde{\Delta}_{23}))$ is spanned by $[\phi_\C(\Delta_{12}\setminus \tilde{\Delta}_{23})]=\xi-\pi^*\sigma_1+\pi_J^*\T_*=\xi-\dfrac{1}{2}K_{\pi_J}+\pi_J^*\T_*, [\phi_{\C *}\circ\Delta_{12*}(\tilde{\psi}_1)] = K_{\pi_J}\pi_J^*\T_*$ and $[\phi_{\C *}\circ\Delta_{12*}(\tilde{\psi}_2)] = 2(\xi-K_{\pi_J}+\pi_J^*\T_*)\pi_J^*\T_*$. 
	\item \underline{$\phi_{\C *} (A^*(\Delta_{23}\setminus \tilde{\Delta}_{23}))$}: Is the image of the map $\phi_{\C}\circ\Delta_{23}:\C_*\times_{M_*}\C_*\setminus \C_*\times_{M_*}\W_*\ri \C_J\setminus\sigma_J$.\\
	 Therefore, $\phi_{\C *}(A^*(\Delta_{23}\setminus \tilde{\Delta}_{23}))$ is spanned by $[\phi_\C(\Delta_{23}\setminus\tilde{\Delta}_{23})]=4\pi_J^*\T_*, \phi_{\C *}\circ\Delta_{23*}(\Delta) = (2\xi-K_{\pi_J}+2\pi_J^*\T_*)\pi_J^*\T_*$ and $\phi_{\C *}\circ\Delta_{23*}(\tilde{\psi}_1) = 4K_{\pi_J}\pi_J^*\T_*$.    
	\item \underline{$\phi_{\C *}(A^*(\tilde{\Delta}_{12}\setminus \tilde{\Delta}_{23})) $}: Is the image of the map $\phi_\C\circ\tilde{\Delta}_{12}:\C_*\times_{M_*}\C_*\setminus \tilde{\Delta}\ri \C_J\setminus\sigma_J$. \\
	Therefore, $\phi_{\C *}(A^*(\tilde{\Delta}_{12}\setminus \tilde{\Delta}_{23}))$ is spanned by $[\phi_\C(\tilde{\Delta}_{12}\setminus \tilde{\Delta}_{23})] = 2K_{\pi_J}+\pi^*\sigma_1-\xi -\pi_J^*\T_*= \dfrac{5}{2}K_{\pi_J}-\xi-\pi_J^*\T_*, [ \phi_{\C *}\circ\Delta_{23*}(\tilde{\psi}_1)] = K_{\pi_J}\pi_J^*\T_*$ and $ [ \phi_{\C *}\circ\Delta_{23*}(\tilde{\psi}_2)] = 2(2K_{\pi_J}-\xi-\pi_J^*\T_*)\pi_J^*\T_*$.    
	\item \underline{$\phi_{\C *}(A^*(\tilde{\Delta}_{22}\setminus \tilde{\Delta}_{23}) )$}: 
	  Is the image of the map $\phi_\C\circ\tilde{\Delta}_{22}:\coprod_{1\leq i\leq 6}\C_*\times_{M_*}\C_*\setminus \C_*\times_{M_*}\sigma_i\ri \C_J\setminus\sigma_J$.
	   Let $\prescript{}{i}{\tilde{\psi}_j}:= \tilde{\psi}_j\in A^1(\C_*\times_{M_*}\C_*\setminus \C_*\times_{M_*}\sigma_i)$ be the $\tilde{\psi}_j$ class as defined in Lemma \ref{cxc} for $j=1,2$, in the $i$-th element of this disjoint union.
	   \\ Therefore, $\phi_{\C *}(A^*(\tilde{\Delta}_{22}\setminus \tilde{\Delta}_{23}))$ is spanned by $\phi_\C(\tilde{\Delta}_{22}\setminus \tilde{\Delta}_{23})=6\pi_J^*\T_*$ and $\phi_{\C *}\circ \tilde{\Delta}_{22*}(\prescript{}{i}{\tilde{\psi}_1})= K_{\pi_J}\pi_J^*\T_*, \phi_{\C *}\circ \tilde{\Delta}_{22*}(\Delta)=(\xi-K_{\pi_J}+\pi_J^*\T_*)\pi_J^*\T_* \forall 1\leq i\leq 6$. 
\item \underline{$\phi_{\C *}(A^*(\tilde{\Delta}_{11}\setminus \tilde{\Delta}_{23})) $}: Is the image of the map $\phi_\C|_{\pi^*\sigma_1}:\C_*\times_{M_*}\C_*\setminus\tilde{\Delta}\ri \C_J$. \\
Therefore, $\phi_{\C *}(A^*(\tilde{\Delta}_{11}\setminus \tilde{\Delta}_{23}))$ is spanned by $\pi^*\sigma_1 = \dfrac{1}{2}K_{\pi_J}, \phi_{\C *}(\sigma_1\times_{M_*}\sigma_1\times_{M_*}(\C_*\setminus\sigma_1) = \pi^*\sigma_1\pi_J^*\T_* = \dfrac{1}{2}K_{\pi_J}\pi_J^*\T_* $.
\end{itemize}
Therefore, $S^1(\C_J)$ is spanned by $\{\xi,K_{\pi_J},\pi_J^*\T_*\}$, $S^2(\C_J)$ is spanned by $\{\pi_J^*\T_*^2, \xi\pi_J^*\T_*, K_{\pi_J}\pi_J^*\T_*\}$, $S^3(\C_J)$ is spanned by $\{K_{\pi_J}\pi_J^*\T_*^2\}$, and $S^i(\C_J) = 0$ for all $i\geq 4$.\\
We show that $\{\xi,K_{\pi_J},\pi_J^*\T_*\}\in A^1(\C_J)$ are linearly independent. It has been shown that $K_{\pi_J},\pi_J^*\T_*$ are linearly independent. Hence, it is enough to show that $\xi\not\in <K_{\pi_J},\pi_J^*\T_*>$. \\
Assuming the contrary, we must have $\xi\in\Q.K_{\pi_J}$ since $\xi.K_{\pi_J}=K_{\pi_J}^2=0$ and $K_{\pi_J}.\pi_J^*\T_*\neq 0$. Consequently, we have 
$$(\xi-\dfrac{3}{2}K_{\pi_J}).\pi_J^*\T_*^2=0\implies \xi=\dfrac{3}{2}K_{\pi_J}.$$
 Restriction to a general fiber over $J_*(3)$ produces the desired contradiction and we have that $\{\xi,K_{\pi_J},\pi_J^*\T_*\}\in A^1(\C_J)$ are linearly independent.
\\ Linear independence of $\{\pi_J^*\T_*^2, \xi\pi_J^*\T_*, K_{\pi_J}\pi_J^*\T_*\}\in A^2(\C_J)$ follows similarly.\\
\\To describe the intersection product on $S^*(\C_J)$, it is enough to express $\xi^2$ in terms of $\pi_J^*\T_*^2, \xi\pi_J^*\T_*, K_{\pi_J}\pi_J^*\T_*$. \\
We must have $\xi^2\in <\xi\pi_J^*\T_*, K_{\pi_J}\pi_J^*\T_*>$ since $\xi.K_{\pi_J}=0$. Let $\overline{\Sigma}:\C_*\times_{M_*}\C_*\ri\C_J$ be the natural extension of $\Sigma$. \\
Let $h$ be the hyperelliptic involution on the family $\pi_J$ and $Z:= \overline{\Sigma}(\C_*\times_{M_*}\C_*)= \overline{\Sigma(\C_*\times_{M_*}\C_*\setminus\tilde{\Delta})}$.\\
 We have $[Z]=\xi-\dfrac{1}{2}K_{\pi_J}+\pi_J^*\T_*$ by (\ref{chern}). The divisors $h^*(\xi)+\xi, 3K_{\pi_J}$ have identical restrictions on every fiber of $\pi_J$. Thus,
  $$h^*(\xi)+\xi - 3K_{\pi_J}= b\pi_J^*\T_*\text{ for some }b\in\Q.$$ 
  Using $h^*(\xi).K_{\pi_J}= h^*(\xi.h^*(K_{\pi_J})) = h^*(\xi.K_{\pi_J})= 0$, we have \begin{align*}
 	bK_{\pi_J}\pi_J^*\T_*=0\implies b =0\implies 
 	h^*([Z]) = \dfrac{5}{2}K_{\pi_J}-\xi+\pi_J^*\T_*\implies [Z]h^*([Z]) = -\xi^2+2K_{\pi_J}\pi_J^*\T_*+\pi_J^*\T_*^2.
 \end{align*}
 Consequently, we have
 $$[Z]h^*[Z] = h^*([Z]h^*[Z]) = -h^*(\xi^2)+2K_{\pi_J}\pi_J^*\T_*+\pi_J^*\T_*^2\implies h^*(\xi^2)=\xi^2.$$ 
 Therefore, $\xi^2\in \Q.K_{\pi_J}\pi_J^*\T_*$ since $\xi.\pi_J^*\T_*$ is not invariant under $h^*$. Let $\xi^2 = cK_{\pi_J}\pi_J^*\T_*$, then 
 $$[Z]h^*([Z]) = (2-c)K_{\pi_J}\pi_J^*\T_*+\pi_J^*\T_*^2\text{ and }
 [Z]h^*([Z])\pi_J^*\T_*=(2-c)K_{\pi_J}\pi_J^*\T_*^2.$$
  We have $[Z]h^*([Z])\pi_J^*\T_*= i_*(i^*[Z]i^*(h^*[Z]))$ where $i:\C_*\times_{M_*}\C_*\simeq {\C_*\times_{M_*}\T_*}\hr \C_J$ is the natural inclusion obtained from $\T_*\hr J_*(1)$. Using $i$ and restricting $[Z]h^*([Z])$, we get
   \begin{multline*}i^*[Z] = \Delta + \sigma_1\times_{M_*}\C_* = \Delta+\dfrac{1}{2}(\tilde{\psi}_1+\tilde{\psi}_2), i^*(h^*([Z]))=\tilde{\Delta}+\dfrac{1}{2}(\tilde{\psi}_1+\tilde{\psi}_2)\implies\\ i^*[Z]i^*(h^*[Z])= 3\tilde{\psi}_1\tilde{\psi}_2
  \implies [Z]h^*([Z])\pi_J^*\T_* = i_*(3\tilde{\psi}_1\tilde{\psi}_2) = 12i_*(\sigma_1\times_{M_*}\sigma_1) = 3K_{\pi_J}\pi_J^*\T_*^2. \end{multline*}
  Therefore, $c=-1$ and $\xi^2 = -K_{\pi_J}\pi_J^*\T_*$ giving all the generating relations in $S^*(\C_J)$.
\end{proof}
\begin{rem}
	The arguments in Lemma \ref{chow-3} do not extend over to the corresponding open in $\times_{M_*}^3\C_*$ directly. This further limits the scope of computations in Theorem \ref{chow-curve} to $C_J$. 
	Consequently, the arguments for $A^*(\C_J)$ cannot be obtained plainly from the ones in the aforementioned theorem.\\
\end{rem}
% \begin{defi*}
% We shall move our attention to the rings $S^*(\tilde{\p}_\xi), S^*(\tilde{U}(2,3,2))$ which denote the images of pullbacks of $A^*(bl_{\mathcal{C}_2}\p_J), A^*(\mathcal{U}(2,3,2))$ in $A^*(\tilde{\p}_\xi),A^*(\tilde{U}(2,3,2))$ respectively, for the same reason.
% \end{defi*}
\noindent From the definition of $\p_\xi$, we have $\p_\xi = {Proj}_{J_*(3)}\Sym^\bullet (\pi_{J*}(K_{\pi_J}\otimes\xi))^\vee$. \\
Applying the Grothendieck-Riemann-Roch to the morphism $\pi_J$ on $\xi\ri \C_J$, we have \begin{equation*}ch(\pi_{J*}(K_{\pi_J}\otimes\xi))= \pi_{J*}(ch(K_{\pi_J}\otimes\xi)Td(K_{\pi_J}^\vee)) =\pi_{J*}((1+K_{\pi_J}+\xi-\dfrac{1}{2}K_{\pi_J}\pi_J^*\T_*)(1-\dfrac{1}{2}K_{\pi_J}))= 4-\T_*.\end{equation*}
Therefore, $c_1(\pi_{J*}(K_{\pi_J}\otimes\xi)) = -\T_*, c_2(\pi_{J*}(K_{\pi_J}\otimes\xi)) = \dfrac{1}{2}\T_*^2, c_i(\pi_{J*}(K_{\pi_J}\otimes\xi))=0\forall i\geq 3$. 
\begin{cor}\label{chow-p-xi}
Let $H_J:= c_1(\mo_{\p_\xi} (1))\in A^1(\p_\xi)$ and $\T_\xi:= \pi_\xi^*\T_*$ be the pullback of the theta divisor on $J_*(3)$. We have
$$A^*(\p_\xi) = \Q[\T_\xi, H_J]/(\T_\xi^3,H_J^4+H_J^3\T_\xi+\dfrac{1}{2}H_J^2\T_\xi^2).$$
\end{cor}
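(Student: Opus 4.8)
The plan is to deduce the presentation directly from the projective bundle formula applied to the Chern-class data computed just above the statement, so the argument is short. First I record that $\mathcal{E}:=\pi_{J*}(K_{\pi_J}\otimes\xi)$ is locally free of rank $4$ and that its formation commutes with base change: for any degree $3$ line bundle $L$ on a smooth genus $2$ curve $C$ one has $\deg(K_C\otimes L)=5\geq 2g-1$, hence $h^1(K_C\otimes L)=0$ and $h^0(K_C\otimes L)=4$, and Cohomology and Base Change applies. Thus $\p_\xi=\operatorname{Proj}_{J_*(3)}\Sym^\bullet\mathcal{E}^\vee$ is a $\mathbb{P}^3$-bundle over $J_*(3)$, and by the projective bundle formula $A^*(\p_\xi)$ is a free $A^*(J_*(3))$-module with basis $1,H_J,H_J^2,H_J^3$, subject to the single Grothendieck relation expressing $H_J^4$ through lower powers of $H_J$ with coefficients pulled back from $A^*(J_*(3))$, namely the Chern classes of $\mathcal{E}$ (equivalently of $\mathcal{E}^\vee$) in the subspace convention.

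Next I feed in the Chern data from the Grothendieck--Riemann--Roch computation displayed immediately before the statement. That computation is legitimate because all the classes involved lie in $A^*(\C_J)$ and obey the relations $K_{\pi_J}^2=0$, $\xi K_{\pi_J}=0$, $\xi^2=-K_{\pi_J}\pi_J^*\T_*$ of Theorem \ref{chow-curve}; it gives $ch(\mathcal{E})=4-\T_*$, hence $c_1(\mathcal{E})=-\T_*$, $c_2(\mathcal{E})=\tfrac12\T_*^2$ and $c_i(\mathcal{E})=0$ for $i\geq 3$ (the vanishing of $c_3,c_4$ is also forced by $\T_*^3=0$, Theorem \ref{t-s6}). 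Inserting these into the Grothendieck relation and writing $\T_\xi:=\pi_\xi^*\T_*$ turns the single defining relation into $H_J^4+H_J^3\T_\xi+\tfrac12 H_J^2\T_\xi^2=0$.

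Finally I assemble the ring. Substituting $A^*(J_*(3))\cong\Q[\T_*]/(\T_*^3)$ (Theorem \ref{t-s6}) into the free-module description, the relation above is monic of degree $4$ in $H_J$, so it cuts $\Q[\T_\xi,H_J]/(\T_\xi^3)$ down to a free $\Q[\T_\xi]/(\T_\xi^3)$-module on $1,H_J,H_J^2,H_J^3$ — precisely the rank the projective bundle formula predicts, so no further relations can appear. This yields $A^*(\p_\xi)=\Q[\T_\xi,H_J]/(\T_\xi^3,\;H_J^4+H_J^3\T_\xi+\tfrac12 H_J^2\T_\xi^2)$, as claimed.

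There is no real obstacle here: everything reduces to the already established facts $A^*(J_*(3))\cong\Q[\T_*]/(\T_*^3)$ and $ch(\mathcal{E})=4-\T_*$. The only points requiring care are keeping the projective bundle sign conventions consistent (this is what produces the sign of the $H_J^3\T_\xi$ term, via the passage between $c_1(\mathcal{E})$ and $c_1(\mathcal{E}^\vee)$) and making sure the Grothendieck--Riemann--Roch is evaluated using the intersection relations of Theorem \ref{chow-curve} inside $A^*(\C_J)$ — both already handled in the lines preceding the corollary.
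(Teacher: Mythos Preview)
Your proof is correct and is exactly the argument the paper has in mind: the corollary is stated without proof precisely because it follows immediately from the projective bundle formula applied with the Chern data $c_1(\mathcal{E})=-\T_*$, $c_2(\mathcal{E})=\tfrac12\T_*^2$, $c_3=c_4=0$ computed in the lines preceding it, together with $A^*(J_*(3))\cong\Q[\T_*]/(\T_*^3)$. Your handling of the sign convention (subspace convention, so the Grothendieck relation involves $c_i(\mathcal{E}^\vee)=(-1)^i c_i(\mathcal{E})$) is the only point requiring care, and you have it right.
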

\begin{lemma}\label{normal-cj}
	The total chern class of the normal bundle $\mathcal{N}_{\C_J/\p_\xi}$ is given by
	$$c(\mathcal{N}_{\C_J/\p_\xi}) = 1 + (5K_{\pi_J}+4\xi+\pi_J^*\T_* )+ \pi_J^*\T_*(3\xi-2K_{\pi_J})+\dfrac{1}{2}\pi_J^*\T_*^2\in A^*(\C_J).$$
\end{lemma}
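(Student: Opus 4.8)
The plan is to compute $c(\mathcal{N}_{\C_J/\p_\xi})$ from the relative normal bundle sequence of the closed embedding $\iota\colon\C_J\hookrightarrow\p_\xi$, which is a morphism over $J_*(3)$ between smooth $J_*(3)$-schemes. This gives
$$0\to T_{\C_J/J_*(3)}\to \iota^*T_{\p_\xi/J_*(3)}\to \mathcal{N}_{\C_J/\p_\xi}\to 0,$$
so $c(\mathcal{N}_{\C_J/\p_\xi})=\iota^*c(T_{\p_\xi/J_*(3)})\cdot c(T_{\C_J/J_*(3)})^{-1}$, and the task reduces to computing the two factors on the right. By the Cartesian squares in (\ref{universal}), the embedding $\C_J\hookrightarrow\p_\xi$ is the base change along $J_*(3)\to J_2^3$ of $C_J\hookrightarrow\p_J$, so $\mathcal{N}_{\C_J/\p_\xi}$ — and hence its total Chern class — lies in $S^*(\C_J)$, where the relations of Theorem \ref{chow-curve} are available; I will carry out the calculation there.

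First I would handle $c(T_{\C_J/J_*(3)})$: since $\pi_J\colon\C_J=\C_*\times_{M_*}J_*(3)\to J_*(3)$ is the base change of the family $\C_*\to M_*$, we have $T_{\C_J/J_*(3)}=K_{\pi_J}^\vee$ with $K_{\pi_J}$ the pullback of $\mathcal{K}=[K_{\C_*/M_*}]$; since $A^*(\C_*)=\Q[\mathcal{K}]/(\mathcal{K}^2)$ this forces $K_{\pi_J}^2=0$, hence $c(T_{\C_J/J_*(3)})^{-1}=1+K_{\pi_J}$. For the other factor I would use the relative Euler sequence of the projective bundle $\pi_\xi\colon\p_\xi\to J_*(3)$. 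Writing $\mathcal{G}:=\pi_{J*}(K_{\pi_J}\otimes\xi)$, which has rank $4$ and carries the tautological quotient $\pi_\xi^*\mathcal{G}\twoheadrightarrow\mathcal{O}_{\p_\xi}(1)$, the Euler sequence $0\to\mathcal{O}\to\pi_\xi^*\mathcal{G}^\vee\otimes\mathcal{O}_{\p_\xi}(1)\to T_{\p_\xi/J_*(3)}\to 0$ gives $c(T_{\p_\xi/J_*(3)})=c(\pi_\xi^*\mathcal{G}^\vee\otimes\mathcal{O}_{\p_\xi}(1))$. Feeding in $c(\mathcal{G})=1-\T_*+\tfrac{1}{2}\T_*^2$ (the Grothendieck--Riemann--Roch computation recorded just before Corollary \ref{chow-p-xi}, so $c(\mathcal{G}^\vee)=1+\T_\xi+\tfrac{1}{2}\T_\xi^2$) yields
$$c(T_{\p_\xi/J_*(3)})=(1+H_J)^4+\T_\xi(1+H_J)^3+\tfrac{1}{2}\T_\xi^2(1+H_J)^2.$$
It then remains to pull back along $\iota$: clearly $\iota^*\T_\xi=\pi_J^*\T_*$, and since $\iota$ is the embedding cut out by the surjection $\pi_J^*\mathcal{G}=\pi_J^*\pi_{J*}(K_{\pi_J}\otimes\xi)\twoheadrightarrow K_{\pi_J}\otimes\xi$, the quotient line bundle $\mathcal{O}_{\p_\xi}(1)$ restricts to $K_{\pi_J}\otimes\xi$, so $\iota^*H_J=K_{\pi_J}+\xi$.

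Finally I would substitute $H_J\mapsto K_{\pi_J}+\xi$ and $\T_\xi\mapsto\pi_J^*\T_*$ into the last display, multiply by $1+K_{\pi_J}$, and reduce modulo the relations of Theorem \ref{chow-curve} — namely $K_{\pi_J}^2=0$, $\xi K_{\pi_J}=0$, $\xi^2=-K_{\pi_J}\pi_J^*\T_*$, $(\xi-\tfrac{3}{2}K_{\pi_J})\pi_J^*\T_*^2=0$ and $\pi_J^*\T_*^3=0$ — collecting terms by codimension. This should give $1$ in degree $0$, $5K_{\pi_J}+4\xi+\pi_J^*\T_*$ in degree $1$, $\pi_J^*\T_*(3\xi-2K_{\pi_J})+\tfrac{1}{2}\pi_J^*\T_*^2$ in degree $2$, and $0$ in degrees $\geq 3$ (which is forced, $\mathcal{N}_{\C_J/\p_\xi}$ having rank $2$, and serves as a consistency check), i.e. the asserted formula. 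I expect the one genuinely delicate point to be tracking the projective-bundle convention carefully enough to pin down $\iota^*H_J=K_{\pi_J}+\xi$ — and, relatedly, the exact form of the Euler sequence and of the relation $H_J^4+\T_\xi H_J^3+\tfrac{1}{2}\T_\xi^2H_J^2=0$ in $A^*(\p_\xi)$; once that is fixed, everything else is a routine, if slightly lengthy, expansion.
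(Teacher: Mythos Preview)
Your proposal is correct and follows essentially the same approach as the paper: both use the relative Euler sequence on $\p_\xi\to J_*(3)$ together with the (co)normal bundle sequence for $\C_J\hookrightarrow\p_\xi$, the identification $\mo_{\p_\xi}(1)|_{\C_J}\simeq K_{\pi_J}\otimes\xi$, and the relations in $S^*(\C_J)$ from Theorem \ref{chow-curve}. The only cosmetic difference is that you work with tangent bundles and compute $c(\mathcal{N}_{\C_J/\p_\xi})$ directly, whereas the paper works dually with cotangent bundles and computes $c(\mathcal{N}_{\C_J/\p_\xi}^\vee)$ first.
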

\begin{proof}
	We have the Euler sequence on the projective bundle $\p_\xi\xrightarrow{\pi_\xi}J_*(3)$ 
	$$0\ri \Omega^1_{\p_\xi/J_*(3)}\ri \pi_\xi^*\pi_{J*}(K_{\pi_J}\otimes\xi)\otimes \mo_{\p_\xi}(-1)\ri \mo_{\p_\xi}\ri 0$$ 
	which restricts to $0\ri \Omega^1_{\p_\xi/J_*(3)}|_{\C_J}\ri \pi_J^*\pi_{J*}(K_{\pi_J}\otimes\xi)\otimes \underbrace{\mo_{\p_\xi}(-1)|_{\C_J}}_{(K_{\pi_J}\otimes\xi)^\vee}\ri \mo_{\C_J}\ri 0$ over $\C_J$. Additionally, the conormal bundle sequence 
	$$0\ri \mathcal{N}_{\C_J/\p_\xi}^\vee\ri \Omega^1_{\p_\xi/J_*(3)}|_{\C_J}\ri K_{\pi_J}\ri 0\text{ gives }$$
	\begin{align*}c(\mathcal{N}_{\C_J/\p_\xi}^\vee)= c(\Omega^1_{\p_\xi/J_*(3)}|_{\C_J})(1-K_{\pi_J}) = c(\pi_J^*\pi_{J*}(K_{\pi_J}\otimes\xi)\otimes (K_{\pi_J}\otimes\xi)^\vee)(1-K_{\pi_J})\\=1-(5K_{\pi_J}+4\xi+\pi_J^*\T_*)+(3\xi-2K_{\pi_J})\pi_J^*\T_*+\dfrac{1}{2}\pi_J^*\T_*^2\end{align*}
	\text{ since }$c(\pi_{J*}(K_{\pi_J}\otimes\xi))=1-\T_*+\dfrac{1}{2}\T_*^2\in A^*(J_*(3))$. Therefore, $c(\mathcal{N}_{\C_J/\p_\xi})= 1 + (5K_{\pi_J}+4\xi+\pi_J^*\T_* )+ \pi_J^*\T_*(3\xi-2K_{\pi_J})+\dfrac{1}{2}\T_*^2$ as claimed.
\end{proof}
\begin{lemma}\label{expressions}
We have the following expressions for cycles in terms of generators in $A^*(\p_\xi)$.
 $$[\C_J]= 5H_J^2+3H_J\T_\xi+\dfrac{1}{2}\T_\xi^2,
[K_{\pi_J}]=2H_J^3+2H_J^2\T_\xi+H_J\T_\xi^2, [\xi] = 3H_J^3+H_J^2\T_\xi-\dfrac{1}{2}H_J\T_\xi^2$$
where we identify $K_{\pi_J},\xi \in A^1(\C_J)$ with their pushforwards in $A^3(\p_\xi)$. Consequently, $[\pi^*\s_i] = \dfrac{1}{2}[K_{\pi_J}] = H_J^3+H_J^2\T_\xi+\dfrac{1}{2}H_J\T_\xi^2\forall 1\leq i\leq 6$.
\end{lemma}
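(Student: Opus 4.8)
The plan is to work inside the ring $A^*(\p_\xi)=\Q[\T_\xi,H_J]/\bigl(\T_\xi^3,\;H_J^4+H_J^3\T_\xi+\tfrac12H_J^2\T_\xi^2\bigr)$ from Corollary \ref{chow-p-xi}, exploiting the codimension-$2$ regular embedding $j\colon\C_J\hookrightarrow\p_\xi$ of diagram (\ref{universal}) (both $\C_J$ and $\p_\xi$ are smooth). First I would record the two pullback identities $j^*H_J=K_{\pi_J}+\xi$ — since $\mo_{\p_\xi}(1)$ restricts to $K_{\pi_J}\otimes\xi$ on $\C_J$, as used in the proof of Lemma \ref{normal-cj} — and $j^*\T_\xi=\pi_J^*\T_*$, because $\pi_\xi\circ j=\pi_J$. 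Combined with the relations $K_{\pi_J}^2=\xi K_{\pi_J}=0$, $\xi^2=-K_{\pi_J}\pi_J^*\T_*$ and $(\xi-\tfrac32K_{\pi_J})\pi_J^*\T_*^2=0$ of Theorem \ref{chow-curve}, which hold in $S^*(\C_J)\subseteq A^*(\C_J)$, this gives $j^*(H_J^2)=(K_{\pi_J}+\xi)^2=-K_{\pi_J}\pi_J^*\T_*$, $j^*(H_J^3)=0$, and $(K_{\pi_J}+\xi)\pi_J^*\T_*^2=\tfrac52K_{\pi_J}\pi_J^*\T_*^2$.

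To find $[\C_J]$, I would write $[\C_J]=\alpha H_J^2+\beta H_J\T_\xi+\gamma\T_\xi^2$ (these three classes form a $\Q$-basis of $A^2(\p_\xi)$) and apply $j^*$. By the self-intersection formula $j^*[\C_J]=j^*j_*(1)=c_2(\mathcal{N}_{\C_J/\p_\xi})$, which equals $3\xi\pi_J^*\T_*-2K_{\pi_J}\pi_J^*\T_*+\tfrac12\pi_J^*\T_*^2$ by Lemma \ref{normal-cj}. Expanding the left side gives $(\beta-\alpha)K_{\pi_J}\pi_J^*\T_*+\beta\,\xi\pi_J^*\T_*+\gamma\,\pi_J^*\T_*^2$; comparing coefficients against the triple $\{K_{\pi_J}\pi_J^*\T_*,\ \xi\pi_J^*\T_*,\ \pi_J^*\T_*^2\}$, which is linearly independent in $S^2(\C_J)$ by the proof of Theorem \ref{chow-curve}, yields $\beta=3$, $\alpha=5$, $\gamma=\tfrac12$, i.e.\ $[\C_J]=5H_J^2+3H_J\T_\xi+\tfrac12\T_\xi^2$.

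For $[K_{\pi_J}]$ and $[\xi]$, the projection formula gives $[K_{\pi_J}]+[\xi]=j_*(j^*H_J)=H_J\cdot[\C_J]=5H_J^3+3H_J^2\T_\xi+\tfrac12H_J\T_\xi^2$, so it remains to split this sum. Writing $[K_{\pi_J}]=aH_J^3+bH_J^2\T_\xi+cH_J\T_\xi^2$ and multiplying by $\T_\xi$ (which kills the $H_J\T_\xi^2$ term, since $\T_\xi^3=0$), I would use $\T_\xi\cdot[K_{\pi_J}]=j_*\bigl(K_{\pi_J}\cdot j^*\T_\xi\bigr)=j_*\bigl(K_{\pi_J}\pi_J^*\T_*\bigr)=-j_*\bigl(j^*H_J^2\bigr)=-H_J^2\,[\C_J]=2H_J^3\T_\xi+2H_J^2\T_\xi^2$ (after reducing modulo the defining relation of $A^*(\p_\xi)$), which forces $a=b=2$. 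The remaining coefficient $c$ is obtained by applying the self-intersection formula once more: $j^*[K_{\pi_J}]=c_2(\mathcal{N}_{\C_J/\p_\xi})\cdot K_{\pi_J}=\tfrac12K_{\pi_J}\pi_J^*\T_*^2$, whereas $j^*\bigl(aH_J^3+bH_J^2\T_\xi+cH_J\T_\xi^2\bigr)=\bigl(-b+\tfrac52c\bigr)K_{\pi_J}\pi_J^*\T_*^2$, giving $c=1$. Hence $[K_{\pi_J}]=2H_J^3+2H_J^2\T_\xi+H_J\T_\xi^2$ and $[\xi]=H_J[\C_J]-[K_{\pi_J}]=3H_J^3+H_J^2\T_\xi-\tfrac12H_J\T_\xi^2$; finally $[\pi^*\sigma_i]=\tfrac12[K_{\pi_J}]=H_J^3+H_J^2\T_\xi+\tfrac12H_J\T_\xi^2$ follows from $K_{\pi_J}=2\pi^*\sigma_i$ in $A^1(\C_J)$ (\S\ref{C*}).

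The main delicacy is the coefficient of $H_J\T_\xi^2$ in $j_*(K_{\pi_J})$: it is invisible to intersection with $\T_\xi$, so one genuinely needs the excess-intersection identity $j^*j_*(-)=c_2(\mathcal{N}_{\C_J/\p_\xi})\cup(-)$ together with the ``last'' relation $(\xi-\tfrac32K_{\pi_J})\pi_J^*\T_*^2=0$ of Theorem \ref{chow-curve}. One must also be careful to carry out every coefficient comparison inside $S^*(\C_J)$, where the relevant products are known to be linearly independent, rather than in the a priori larger ring $A^*(\C_J)$.
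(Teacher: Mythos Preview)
Your argument is correct, but it runs in the opposite order from the paper's and uses different leverage. The paper first computes $[K_{\pi_J}]$ by noting that $H_J\cdot[K_{\pi_J}]=j_*\bigl((K_{\pi_J}+\xi)\bullet K_{\pi_J}\bigr)=0$ via the projection formula and $K_{\pi_J}^2=\xi K_{\pi_J}=0$; since multiplication by $H_J$ from $A^3(\p_\xi)$ to $A^4(\p_\xi)$ has a one-dimensional kernel, this pins $[K_{\pi_J}]$ down up to a scalar, and the scalar is fixed by restricting to a fibre of $\pi_\xi$ (degree $2$). The same trick with $H_J\cdot[\xi]=j_*\bigl((K_{\pi_J}+\xi)\bullet\xi\bigr)=-[K_{\pi_J}]\T_\xi$ handles $[\xi]$. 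Finally $[\C_J]$ is obtained from $H_J\cdot[\C_J]=[K_{\pi_J}]+[\xi]$, using that $\cdot H_J\colon A^2(\p_\xi)\to A^3(\p_\xi)$ is an isomorphism. So the paper never invokes the self-intersection formula $j^*j_*=c_2(\mathcal N_{\C_J/\p_\xi})\cup(-)$ at this point and does not rely on Lemma~\ref{normal-cj}; multiplying by $H_J$ rather than $\T_\xi$ avoids losing the $H_J\T_\xi^2$-coefficient. Your route via excess intersection is equally valid but imports the normal-bundle computation one step earlier than the paper does; its payoff is that $[\C_J]$ comes out first and independently, whereas the paper's route is slightly more self-contained but derives $[\C_J]$ only at the end.
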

\begin{proof}
We start with proving the claimed expressions for $K_{\pi_J},\xi$. By Corollary \ref{chow-p-xi}, $A^3(\p_\xi)$ is freely spanned by $H_J^3, H_J^2\T_\xi, H_J\T_\xi^2$.
Let $\bullet$ be the intersection product in $A^*(\C_J)$. We have
$$H_J.[K_{\pi_J}] = j_{ *}((K_{\pi_J}+\xi)\bullet K_{\pi_J}) = 0\implies [K_{\pi_J}] = 2H_J^3+2H_J^2\T_\xi+H_J\T_\xi^2 $$
 since $[K_{\pi_J}],2H_J^3$ have identical restrictions on fibers of $\pi_\xi$. \\
Using a similar argument for $[\xi],3H_J^3$, we have 
\begin{align*}H_J.[\xi] = j_{ *}((K_{\pi_J}+\xi)\bullet \xi) = -j_{ *}(K_{\pi_J}\pi_J^*\T_*) = -j_{ *}(K_{\pi_J})\T_\xi = -[K_{\pi_J}].\T_\xi = -(2H_J^3\T_\xi+2H_J^2\T_\xi^2)\implies
	\\ [\xi] = 3H_J^3+H_J^2\T_\xi-\dfrac{1}{2}H_J\T_\xi^2\implies
H_J.[\C_J] = j_*(H_J|_{\C_J}) = j_*(K_{\pi}+\xi) = 5H_J^3+3H_J^2\T_\xi+\dfrac{1}{2}H_J\T_\xi^2.\end{align*}
 Using Corollary \ref{chow-p-xi}, it follows that $A^2(\p_\xi)$ is freely spanned by $\{H_J^2, H_J\T_\xi, H_J\T_\xi^2\}$. Therefore, we have 
 $$[\C_J]=5H_J^2+3H_J\T_\xi+\dfrac{1}{2}\T_\xi^2.$$
\end{proof}\noindent
Let $E_{\sigma}$ be the exceptional divisor of $\sigma_\pi$, and $i_\sigma:E_\sigma\hr \tilde{\p}_\xi$ be the inclusion map.\\
Following the notational setup for $\C_J$, let $S^*(\tilde{\p}_\xi)=Im(\tilde{q}^*)$ where $\tilde{q}:\tilde{\p}_\xi\ri Bl_{C_J}(\p_J)$ is the map induced by $q_C,q_J$.
 Applying Theorem \ref{blowup-chow} to Theorem \ref{chow-curve} and Corollary \ref{chow-p-xi} we get the following theorem.
\begin{theorem}
Let $\xi_\s:= i_{\sigma *}(\sigma_\pi|_{E_\sigma}^*\xi),\text{ and } K_\s:= i_{\sigma *}(\sigma_\pi|_{E_\sigma}^*K_{\pi_J})$. We have
\begin{gather*}S^*(\tilde{\p}_\xi)=\Q[\s_\pi^*H_J, E_\sigma, \xi_\s, \sigma_\pi^*\T_\xi, K_\s]/(\s_\pi^*(H_J^4+H_J^3\T_\xi+\dfrac{1}{2}H_J^2\T_\xi^2), \sigma_\pi^*\T_\xi^3, \xi_\s^2+E_\s K_\s \T_\xi, E_\s K_\s-K_\s\s_\pi^*\T_\xi+\\
	\s_\pi^*(2H_J^3+2H_J^2\T_\xi+H_J\T_\xi^2),
  (\xi_\s-\dfrac{3}{2}K_\s)\s_\pi^*\T_\xi^2,K_\s\s_\pi^*H_J, E_\s\s_\pi^*H_J-(K_\s+\xi_\s), \xi_\s\s_\pi^*H_J+K_\s\s_\pi^*\T_\xi,\\
  E_\s^2-(5K_\s+4\xi_\s+E_\s\s_\pi^*\T_\xi)+\s_\pi^*(5H_J^2+3H_J\T_\xi+\dfrac{1}{2}\T_\xi^2), K_\s^2, \xi_\s K_\s,
 E_\s\xi_\s-\xi_\s\s_\pi^*\T_\xi+4K_\s\s_\pi^*\T_\xi+\\
 \s_\pi^*(3H_J^3+H_J^2\T_\xi-\dfrac{1}{2}H_J\T_\xi^2))).\end{gather*}
\end{theorem}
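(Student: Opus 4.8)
The plan is to apply the blow-up formulas of Theorem \ref{blowup-chow} (together with the direct-sum description of \cite[Proposition 0.1.3]{beauville}) to $\tilde{\p}_\xi = Bl_{\C_J}(\p_\xi)$, and to intersect the resulting description with $\mathrm{Im}(\tilde{q}^*)$, feeding in the rings $A^*(\p_\xi)$ from Corollary \ref{chow-p-xi} (which coincides with $\mathrm{Im}(q_\p^*)$ by Lemma \ref{pj-pxi}) and $S^*(\C_J)$ from Theorem \ref{chow-curve}, with Lemma \ref{normal-cj} and Lemma \ref{expressions} supplying the numerical input.

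\emph{Generators.} Since $q_\p:\p_\xi\ri\p_J$ is finite and flat and $\C_J = q_\p^{-1}(C_J)$, blow-up commutes with this base change, so $\tilde{\p}_\xi\simeq \p_\xi\times_{\p_J}Bl_{C_J}(\p_J)$ and $\tilde{q}^*$ is compatible with all the morphisms appearing in Theorem \ref{blowup-chow}. Applying that theorem to $Bl_{C_J}(\p_J)$, pulling back, and using that $q_\p^*$ is an isomorphism, one finds that $S^*(\tilde{\p}_\xi)$ is generated as a ring by $\s_\pi^*A^*(\p_\xi)$ together with the classes on $E_\s$ descending to $S^*(\C_J)\subset A^*(\C_J)$ along $\s_\pi|_{E_\s}$. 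As $\C_J$ has codimension $2$ in $\p_\xi$, the exceptional divisor $E_\s = \p(\mathcal{N}_{\C_J/\p_\xi})$ is a $\p^1$-bundle over $\C_J$, so $A^*(E_\s)$ is generated over $A^*(\C_J)$ by $\zeta_\s := c_1(\mo_{E_\s}(1)) = -i_\s^*[E_\s]$; combined with $i_\s^*\s_\pi^*\T_\xi = \s_\pi|_{E_\s}^*\pi_J^*\T_*$, the projection formula rewrites every such class as a polynomial in $[E_\s]$, $K_\s$ and $\xi_\s$. With $\s_\pi^*H_J$ and $\s_\pi^*\T_\xi$ generating $A^*(\p_\xi)$ by Corollary \ref{chow-p-xi}, this yields exactly the five generators of the statement.

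\emph{Relations.} These split into three families. (i) Pulling back the two defining relations of $A^*(\p_\xi)$ (Corollary \ref{chow-p-xi}) gives $\s_\pi^*\T_\xi^3 = 0$ and $\s_\pi^*(H_J^4+H_J^3\T_\xi+\frac{1}{2}H_J^2\T_\xi^2)=0$. (ii) Pushing the relations of $S^*(\C_J)$ (Theorem \ref{chow-curve}) forward via $j_*\gamma\cdot j_*\delta = -j_*(\zeta_\s\gamma\delta)$ and the projection formula: $K_{\pi_J}^2=0$, $\xi K_{\pi_J}=0$, $\xi^2+K_{\pi_J}\pi_J^*\T_*=0$ and $(\xi-\frac{3}{2}K_{\pi_J})\pi_J^*\T_*^2=0$ produce $K_\s^2=0$, $\xi_\s K_\s=0$, $\xi_\s^2+E_\s K_\s\s_\pi^*\T_\xi=0$ (the factor $E_\s$ entering through $\zeta_\s=-i_\s^*[E_\s]$) and $(\xi_\s-\frac{3}{2}K_\s)\s_\pi^*\T_\xi^2=0$. (iii) The incidence relations use $\mo_{\p_\xi}(1)|_{\C_J}=K_{\pi_J}\otimes\xi$ together with the blow-up identity $\s_\pi^*\circ (i_{\C_J})_* = i_{\s*}\big((\s_\pi|_{E_\s}^*c_1(\mathcal{N}_{\C_J/\p_\xi})+\zeta_\s)\cdot \s_\pi|_{E_\s}^*(-)\big)$ (a form of \cite[Theorem 6.7]{fulton}; cf.\ Theorem \ref{transform}). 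Applying this to $1$, $K_{\pi_J}$ and $\xi$ in $S^*(\C_J)$, comparing the left-hand sides with the classes $[\C_J]$, $[K_{\pi_J}]$, $[\xi]\in A^*(\p_\xi)$ of Lemma \ref{expressions}, and reducing $c_1(\mathcal{N}_{\C_J/\p_\xi})\cdot\{1,K_{\pi_J},\xi\}$ by means of Lemma \ref{normal-cj} and the relations of $S^*(\C_J)$, gives the self-intersection relation $E_\s^2-(5K_\s+4\xi_\s+E_\s\s_\pi^*\T_\xi)+\s_\pi^*(5H_J^2+3H_J\T_\xi+\frac{1}{2}\T_\xi^2)=0$ together with $E_\s K_\s-K_\s\s_\pi^*\T_\xi+\s_\pi^*(2H_J^3+2H_J^2\T_\xi+H_J\T_\xi^2)=0$ and $E_\s\xi_\s-\xi_\s\s_\pi^*\T_\xi+4K_\s\s_\pi^*\T_\xi+\s_\pi^*(3H_J^3+H_J^2\T_\xi-\frac{1}{2}H_J\T_\xi^2)=0$. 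Finally, $E_\s\s_\pi^*H_J=K_\s+\xi_\s$, $K_\s\s_\pi^*H_J=0$ and $\xi_\s\s_\pi^*H_J+K_\s\s_\pi^*\T_\xi=0$ follow from $\s_\pi^*H_J\cdot i_{\s*}\gamma = i_{\s*}(\s_\pi|_{E_\s}^*(K_{\pi_J}+\xi)\cdot\gamma)$ applied to $\gamma=1,\ \s_\pi|_{E_\s}^*K_{\pi_J},\ \s_\pi|_{E_\s}^*\xi$, using $(K_{\pi_J}+\xi)K_{\pi_J}=0$ and $(K_{\pi_J}+\xi)\xi=-K_{\pi_J}\pi_J^*\T_*$ in $S^*(\C_J)$.

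\emph{Completeness.} Because $\C_J$ has codimension two, the parameter $c=d-2$ of \cite[Proposition 0.1.3]{beauville} is $0$, so the decomposition reduces to $A^q(\tilde{\p}_\xi)\simeq A^q(\p_\xi)\oplus A^{q-1}(\C_J)$, and compatibility of $\tilde{q}^*$ with the Cartesian squares restricts it to $S^q(\tilde{\p}_\xi)\simeq S^q(\p_\xi)\oplus S^{q-1}(\C_J)$, whose graded dimensions are known from Corollary \ref{chow-p-xi} and Theorem \ref{chow-curve}. The remaining, and main, task is to check that the presented quotient ring has dimension no larger than this in each codimension: a finite but delicate Gr\"obner-type reduction, using the relations (i)--(iii) to push every monomial in $\s_\pi^*H_J$, $\s_\pi^*\T_\xi$, $E_\s$, $K_\s$, $\xi_\s$ into the span of a basis adapted to the above direct sum and verifying that no unexpected collapse occurs. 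The derivation of the individual relations is mechanical once the excess-intersection and projection-formula bookkeeping is arranged; the genuinely delicate point is establishing that the relation list is complete.
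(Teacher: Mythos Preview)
Your strategy is correct and follows essentially the same approach as the paper: both apply Theorem \ref{blowup-chow} and \cite[Proposition 0.1.3]{beauville} to the blow-up $\tilde{\p}_\xi = Bl_{\C_J}(\p_\xi)$, feeding in Corollary \ref{chow-p-xi}, Theorem \ref{chow-curve}, Lemma \ref{normal-cj} and Lemma \ref{expressions}, and both derive the three ``$H_J$-incidence'' relations from $\mo_{\p_\xi}(1)|_{\C_J}=K_{\pi_J}\otimes\xi$ via the projection formula. The only noteworthy difference is in the completeness step: the paper simply writes down an explicit list of twenty classes adapted to the direct-sum decomposition $S^q(\tilde{\p}_\xi)\simeq S^q(\p_\xi)\oplus S^{q-1}(\C_J)$ and invokes Beauville for their linear independence, whereas you propose a Gr\"obner-type reduction to match the graded dimensions; the paper's route is shorter and avoids the ``delicate'' bookkeeping you flag, so you may prefer to replace that paragraph by the direct basis-exhibition argument.
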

\begin{proof}
Following the notational setup we have developed thus far, let $S^*(E_\s)$ by the image of the pullback map $A^*(\tilde{E})\ri A^*(E_\s)$ where $\tilde{E}$ is the exceptional divisor of the blowup of $\p_J$ along $C_J$. By Lemma \ref{normal-cj} and the projective bundle formula, we have 
\begin{multline}\label{E_s} A^*(\p\mathcal{N}_{\C_J/\p_\xi})\supset S^*(E_\s)= \Q[\sigma^*K_{\pi_J}, \sigma^*\pi_J^*\Theta_*,\sigma^*\xi,\zeta]/(\sigma^*K_{\pi_J}^2,\sigma^*\xi^2 + \s^*(K_{\pi_J}\pi_J^*\T_*), \sigma^*(\xi K_{\pi_J}),\\
\sigma^*((\xi-\dfrac{3}{2}K_{\pi_J})\pi_J^*\T_*^2), \sigma^*\T_\xi^3,\zeta^2+\zeta\sigma^*(5K_{\pi_J}+4\xi+\pi_J^*\T_*)+\s^*((3\xi-2K_{\pi_J})\pi_J^*\T_*+\dfrac{1}{2}\pi_J^*\T_*^2))\end{multline}
where $\s^*Z:= \s_\pi|_{E_\s}^*Z\quad\forall Z\in S^*(\C_J)$, and $\zeta$ is the relative hyperplane divisor in $E_\s \simeq Proj_{\C_J}\Sym^\bullet(\mathcal{N}_{\C_J/\p_\xi})^\vee$. \\
Furthermore, for $Z\in S^*(\C_J)$ we have $$i_{\s *}\sigma_\pi|_{E_\sigma}^*(Z\pi_J^*\T_*) = i_{\s *}(\s_\pi|_{E_\s}^*Z)\s_\pi^*\T_\xi.$$
 Let $\s_*:= \s_\pi|_{E_\s *}$. Using \cite[Proposition 0.1.3]{beauville}, we have 
  $$z = \s_\pi^*\s_{\pi *}z-i_{\s *}\s^*\s_*i_{\s}^*z \forall z\in A^*(\tilde{\p}_\xi).$$
Applying this formula to the case where $z = i_{\s *}(t)$ for $t\in A^*(E_\s)$ gives
$$z = \s_\pi^*\s_{\pi *}(i_{\s *} t)-i_{\s *}\s^*\s_*i_\s^*i_{\s *}(t) = \s_\pi^*j_*\s_* t+i_{\s *}\s^*\s_*(\zeta t)\quad\forall t\in A^*(E_\s).$$ 
Using $E_\s^2=-i_{\s*}(\zeta), E_\s K_\s = -i_{\s *}(\zeta \s^* K_{\pi_J}), E_\s\xi_\s =-i_{\s *}(\zeta \s^*\xi)$, (\ref{E_s}), and Lemma \ref{expressions}, we get the following elements in the ideal of relations of $S^*(\tilde{\p}_\xi)$:
\begin{multline*} E_\s^2-(5K_\s+4\xi_\s+E_\s\s_\pi^*\T_\xi)+\s_\pi^*(5H_J^2+3H_J\T_\xi+\dfrac{1}{2}\T_\xi^2), E_\s K_\s-K_\s\s_\pi^*\T_\xi+\s_\pi^*(2H_J^3+2H_J^2\T_\xi+H_J\T_\xi^2),\\
	E_\s\xi_\s-\xi_\s\s_\pi^*\T_\xi+4K_\s\s_\pi^*\T_\xi+\s_\pi^*(3H_J^3+H_J^2\T_\xi-\dfrac{1}{2}H_J\T_\xi^2) ,\xi_\s^2+E_\s K_\s\s_\pi^*\T_\xi, K_\s^2, \xi_\s K_\s,(\xi_\s-\dfrac{3}{2}K_\s)\s_\pi^*\T_\xi^2.\end{multline*}
With the above relations in mind, the following equations describe $S^*(\tilde{\p}_\xi)$
 \begin{align*}& i_{\s *}(Y)\s_\pi^*H_J^a\T_\xi^b = i_{\s *}(Y\s_{\pi}|_{E_\s}^*(K_{\pi_J}+\xi)^a\pi_J^*\T_*^b)\forall a,b\geq 0, Y\in A^*(E_\s)\implies\\ &  K_\s\s_\pi^*H_J = 0,\xi_\s\s_\pi^*H_J = -K_\s\s_\pi^*\T_\xi, E_\s\s_\pi^*H_J = K_\s+\xi_\s.\end{align*} 
 Since \begin{align*}&\{[\tilde{\p}_\xi], \s_\pi^*H_J, \s_\pi^*\T_\xi, E_\s, \s_\pi^*H_J^2, \s_\pi^*\T_\xi^2, \s_\pi^*(H_J\T_\xi), K_\s, \xi_\s, E_\s\s_\pi^*\T_\xi, \s_\pi^*H_J^3, \s_\pi^*(H_J^2\T_\xi),\\& \s_\pi^*(H_J\T_\xi^2), E_\s \s_\pi^*\T_\xi^2, K_\s\s_\pi^*\T_\xi,\xi_\s\s_\pi^*\T_\xi, \s_\pi^*(H_J^3\T_\xi), \s_\pi^*(H_J^2\T_\xi)^2, K_\s\s_\pi^*\T_\xi^2, \s_\pi^*(H_J^3\T_\xi^2)\}\end{align*}
  are independent linear generators of $S^*(\tilde{\p}_\xi)$ by \cite[Proposition 0.1.3]{beauville}.
\end{proof}
\subsection{Some loci in the blowup}\label{loci-blowup}
We define some loci in $\p_\xi$ and consider their proper transforms in $\tilde{\p}_\xi$.\\
Let $\pi_\phi:\p'\ri \C_*\times_{M_*}\C_*$ be the pullback of $\pi_\xi:\p_\xi\ri J_*(3)$ over $\phi\otimes\s_1: \C_*\times_{M_*}\C_*\ri J_*(3)$, let $\overline{\s}_j$ be the pullback of the section $\pi^*\s_j$ over $\phi\otimes\s_1$ for $ 1\leq j\leq 6$, and let $\p_{\phi(\s_1)}:\p'\ri \p_\xi$ be the induced map.
Put $\{\Delta_{i}\}_{i=1,2}$ as the diagonal sections for the morphism $\pi_{12}:\times^3_{M_*}\C_*\ri\C_*\times_{M_*}\C_*$.\\
% Pulling back the embedding $\C_J\hr \p_\xi$ over to $\C_*\times_{M_*}\C_*$, and using the morphisms defined in the proof of Theorem \ref{chow-curve}, we get the following diagram:$$
$$\begin{tikzcd}
	& &\p_\xi\arrow[dd, "\pi_\xi", bend left=70]\\
	\p'\arrow[rru, "\p_{\phi(\s_1)}", bend left = 20]\arrow[rd, "\pi_\phi"',bend right=20 ]&\times^3_{M_*}\C_*\arrow[l,hook']\arrow[r, "\Phi_\C"]\arrow[d, "\pi_{12}"]\arrow[dr, phantom, "\square"]&\C_J\arrow[u,hook]\arrow[d, "\pi_J",bend left=20]\\
&\C_*\times_{M_*}\C_*\arrow[r,"\phi\otimes\s_1"']\arrow[u, "\Delta_i{,} \overline{\s}_j"', bend left=70]& J_*(3).\arrow[u, "\pi^* \s_j", bend left=20]
\end{tikzcd}
$$
We construct $L_1$ to be the $\p^1$-bundle over $\C_*\times_{M_*}\C_*$, the fibers of which are given by lines joining the sections $\Delta_1,\Delta_2$ over $\pi_\phi$.\\
More concretely, $L_1:= \p_{\C_*\times_{M_*}\C_*}(\pi_{\phi *}(\mathcal{F}|_{\Delta_1\cup \Delta_2}))^\vee\hr \p'$ where $\mathcal{F}:= \Phi_\C^*(K_{\pi_J}\otimes\xi)$. 
That is, $\p' = \p_{\C_*\times_{M_*}\C_*}(\pi_{\phi *}(\mathcal{F}))^\vee$.
The embedding $L_1\hr \p'$ follows from the exact sequence 
\begin{equation*} 0\ri\mathcal{F}(-\Delta_1-\Delta_2)\ri\mathcal{F}\ri \mathcal{F}|_{\Delta_1\cup \Delta_2}\ri 0,\end{equation*}
and $R^1 \pi_{12*}(\mathcal{F}(-\Delta_1-\Delta_2))=0$.
We use the exact sequence 
$$0\ri \mathcal{F}(-\Delta_1-\Delta_2-\overline{\s}_2)\ri \mathcal{F}\ri\mathcal{F}|_{\Delta_1\cup\Delta_2\cup\overline{\s}_2}\ri 0 $$ 
to define a hyperplane bundle $H_{2}:= \p_{\C_*\times_{M_*}\C_*}(\pi_{\phi *}(\mathcal{F}|_{\Delta_1\cup\Delta_2\cup\overline{\s}_2}))^\vee\hr \p'$.\\
To see that $H_2$ is a hyperplane bundle, we note that 
$$R^1\pi_{12*}(\mathcal{F}(-\Delta_1-\Delta_2-\overline{\s}_2))=0.$$
This is due to the cohomology and base change theorem along with the observation that $\mathcal{F}(-\Delta_1-\Delta_2-\overline{\s}_2), \mo(\overline{\s}_1+\overline{\s}_2)$ restrict to identical divisors on ever fiber of $\pi_{12}$.\\
Equivalently, $H_{2}\hr \p'$ is the hyperplane-bundle, the fiber of which over $q\in \C_*\times_{M_*}\C_*$, is given by the hyperplane which contains the tangent line to $(\times_{M_*}^3\C_*)_q$ at $(\overline{\s}_2)_q$ and $\{(\Delta_i)_q\}_{i=1,2}$.
\begin{defi*}
Let $L_{\xi_1}:= \p_{\phi(\s_1)}(L_1), H_{\xi_2}:= \p_{\phi(\s_1)}(H_2) $ be the images of these loci in $\p_\xi$. 
We define $\{H_{\xi_i}\}_{2\leq i\leq 6}$ using $\overline{\s}_i$ instead of $\overline{\s}_2$ and $\{L_{\xi_i}\}_{2\leq i\leq 6}$ analogously using the maps $\phi\otimes\s_i$ instead of $\phi\otimes\s_1$.
\end{defi*}
\noindent The next locus we define is a $\p^1$-bundle over $J_*(3)$ named $L_{2\s_1}$, the fiber of which over $p\in J_*(3)$ is given by line tangent to $(\C_J)_p$ at the point $(\pi^*\s_1)_p$.
\begin{defi*}
	Put $L_{2\s_1}:= \p_{J_*(3)}\pi_{\xi *}((K_{\pi_J}\otimes \xi)|_{2\s_1})$ where the ideal of the closed subscheme $2\s_1 \hr \C_J$ is given by $\mathcal{I}_{2\s_1/\p_\xi} := \mathcal{I}_{\pi^*\s_1/\p_\xi}^2$.
The embedding $L_{2\s_1}\hr \p_\xi$ is defined by the natural surjection $K_{\pi_J}\otimes \xi\twoheadrightarrow (K_{\pi_J}\otimes \xi)|_{2\s_1} $.\\
We shall also use the loci $\{L_{2\s_i}\}_{2\leq i\leq 6}$ and the loci $\{L_{\s_j\s_k}\}_{1\leq j<k\leq 6}$ , which are defined analogously: $\p_{J_*(3)}\pi_{\xi *}((K_{\pi_J}\otimes \xi)|_{X})$ with $\mathcal{I}_X = \mathcal{I}_{\pi^*\s_i/\p_\xi}^2 \text{ or }\mathcal{I}_{\pi^*\s_j/\p_\xi}\mathcal{I}_{\pi^*\s_k/\p_\xi}$ respectively.
\end{defi*}
\begin{rem}\label{h-2}
	It is worth noting that $L_{2\s_1}$ is a $\p^1$-bundle over $J_*(3)$ and $L_1$ is a $\p^1$-bundle over $\C_*\times_{M_*}\C_*$.
	 However, $L_{\xi_1}$ is not a $\p^1$-bundle over $J_*(3)$. More conceretely, $L_{\xi_1}$ is a $\p^1$-bundle bundle to $J_*(3)\setminus \mathcal{K}_1$ and $L_{\xi_1}, Q_\xi$ have identical restrictions over $\mathcal{K}_1$.\\
	 Similarly, $H_{\xi_2}$ is not a $\p^2$-bundle over $J_*(3)$. It is a $\p^2$-bundle over $J_*(3)\setminus \mathcal{K}_1$ and $H_{\xi_2}, \p_\xi$ have identical restrictions over $\mathcal{K}_1$.
\end{rem}
\noindent We can now calculate expressions for these loci and their proper transforms in $\p_\xi, \tilde{\p}_\xi$.
\begin{lemma}\label{expressions_below}
Fr all $1\leq i,j,k,l\leq 6$, we have the following expressions for the aforementioned loci in $A^*(\p_\xi)$:
	$$[H_{\xi_i}]= H_J+\T_\xi, [Q_\xi] = 2H_J+\T_\xi,  [L_{2\s_i}]=[L_{\s_j\s_k}] = H_J^2+H_J\T_\xi+\dfrac{1}{2}\T_\xi^2, [L_{\xi_l}] = H_J^2+2H_J\T_\xi+\dfrac{1}{2}\T_\xi^2.$$
\end{lemma}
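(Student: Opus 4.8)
The plan is to compute each class in the free $A^*(J_*(3))$-module $A^*(\p_\xi)$ with basis $1,H_J,H_J^2,H_J^3$ afforded by Corollary~\ref{chow-p-xi} and $A^*(J_*(3))=\Q[\T_*]/(\T_*^3)$ (Theorem~\ref{t-s6}), so that a divisor class has the shape $aH_J+b\T_\xi$ and a codimension-two class the shape $aH_J^2+bH_J\T_\xi+c\T_\xi^2$. First I would read off the leading coefficient $a$ by restricting to a general fibre $\p^3$ of $\pi_\xi$, where $\T_\xi\mapsto 0$ and, by the fibrewise picture of Bertram's construction (Theorem~\ref{main-bertram}, Remark~\ref{rulings}), $H_{\xi_i}$ becomes a hyperplane, $Q_\xi$ the quadric $Q_L\subset\p_L$, and $L_{2\s_i}$, $L_{\s_j\s_k}$, $L_{\xi_l}$ respectively a tangent line, a secant line, and a secant line of the canonical curve $C\hr\p_L$; this forces $a=1,2,1,1,1$. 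The remaining, $\T_\xi$-divisible, coefficients I would obtain by realising each locus as the zero scheme of a section of an explicit line or rank-two bundle on $\p_\xi$ (or on the auxiliary space $\p'$) and computing Chern classes using $ch\bigl(\pi_{J*}(K_{\pi_J}\otimes\xi)\bigr)=4-\T_*$, the ring $S^*(\C_J)$ of Theorem~\ref{chow-curve}, and Grothendieck--Riemann--Roch.

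For the two divisors: $Q_\xi$ is the fibrewise unique family of quadrics through $\C_J$ (Remark~\ref{rulings}), hence the zero scheme of the tautological section of $\mo_{\p_\xi}(2)\otimes\pi_\xi^*\mathcal{K}^\vee$ attached to the line subbundle $\mathcal{K}:=\ker\bigl(\Sym^2\pi_{J*}(K_{\pi_J}\otimes\xi)\to\pi_{J*}((K_{\pi_J}\otimes\xi)^{\otimes 2})\bigr)$, where surjectivity of this restriction map and $\operatorname{rk}\mathcal{K}=10-9=1$ come from the projective normality of $C\xrightarrow{|K_C+L|}\p^3$ (a line bundle of degree $5\ge 2g+1$ is normally generated). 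Then $[Q_\xi]=2H_J-\pi_\xi^*c_1(\mathcal{K})$ and $c_1(\mathcal{K})=c_1\bigl(\Sym^2\pi_{J*}(K_{\pi_J}\otimes\xi)\bigr)-c_1\bigl(\pi_{J*}((K_{\pi_J}\otimes\xi)^{\otimes 2})\bigr)=(-5\T_*)-(-4\T_*)=-\T_*$ by Grothendieck--Riemann--Roch (the second term via $S^*(\C_J)$), giving $[Q_\xi]=2H_J+\T_\xi$. For $H_{\xi_i}$, pushing the defining sequence $0\to\mathcal{F}(-\Delta_1-\Delta_2-\overline{\s}_i)\to\mathcal{F}\to\mathcal{F}|_{\Delta_1\cup\Delta_2\cup\overline{\s}_i}\to 0$ on $\times^3_{M_*}\C_*$ down along $\pi_{12}$ (the needed $R^1$-vanishing is the one already used to introduce $H_i$) presents $H_i\hr\p'$ as the zero scheme of a section of $\mo_{\p'}(1)\otimes\pi_\phi^*\mathcal{S}'^\vee$ with $\mathcal{S}':=\pi_{\phi*}\mathcal{F}(-\Delta_1-\Delta_2-\overline{\s}_i)$ a line bundle, so $[H_i]=\p_{\phi(\s_1)}^*H_J-\pi_\phi^*c_1(\mathcal{S}')$; computing $c_1(\mathcal{S}')$ by Grothendieck--Riemann--Roch over $\C_*\times_{M_*}\C_*$ (using $S^*(\C_J)$ and $A^*(\C_*\times_{M_*}\C_*)$ from Lemma~\ref{cxc}) and pushing forward by $\p_{\phi(\s_1)}$, which is generically $2{:}1$ onto $H_{\xi_i}$ since the two preimages $(x_0,y_0),(y_0,x_0)$ of a general point of $J_*(3)$ under $\phi\otimes\s_1$ span the same plane $\langle[x_0],[y_0],[\s_i]\rangle$ of $\p_L$ (Lemma~\ref{conjugate}), yields $[H_{\xi_i}]=\tfrac12\,\p_{\phi(\s_1)*}[H_i]=H_J+\T_\xi$.

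For the curves $L_{2\s_i}$ and $L_{\s_j\s_k}$: the defining surjection $K_{\pi_J}\otimes\xi\twoheadrightarrow(K_{\pi_J}\otimes\xi)|_D$ with $D=2\pi^*\s_i$ or $D=\pi^*\s_j+\pi^*\s_k$ pushes to a surjection $\pi_{J*}(K_{\pi_J}\otimes\xi)\twoheadrightarrow\mathcal{R}_D$ onto a rank-two bundle (the relevant $R^1$ vanishes, as $(K_{\pi_J}\otimes\xi)(-D)$ has relative degree $3\ge 2g-1$), and $L_D$ is the corresponding sub-bundle of $\p_\xi$, cut out by the induced section of the rank-two bundle $\mo_{\p_\xi}(1)\otimes\pi_\xi^*\mathcal{S}_D^\vee$ with $\mathcal{S}_D:=\pi_{J*}\bigl((K_{\pi_J}\otimes\xi)(-D)\bigr)$. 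Since $[\pi^*\s_j]=[\pi^*\s_k]=\tfrac12K_{\pi_J}$ in $A^1(\C_J)$ (because $[\s_j]=[\s_k]$ and $K_\pi=2\s_j$ in $A^1(\C_*)$), in both cases $(K_{\pi_J}\otimes\xi)(-D)$ has first Chern class $\xi$, so the same GRR computation gives $ch(\mathcal{S}_D)=2-\T_*$, hence $c_1(\mathcal{S}_D^\vee)=\T_*$, $c_2(\mathcal{S}_D^\vee)=\tfrac12\T_*^2$, and $[L_{2\s_i}]=[L_{\s_j\s_k}]=c_2\bigl(\mo_{\p_\xi}(1)\otimes\pi_\xi^*\mathcal{S}_D^\vee\bigr)=H_J^2+H_J\T_\xi+\tfrac12\T_\xi^2$. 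For $L_{\xi_l}$ one works on $\p'$, where $L_l$ is an honest $\p^1$-bundle over $\C_*\times_{M_*}\C_*$, the projectivization of $\pi_{\phi*}(\mathcal{F}|_{\Delta_1\cup\Delta_2})$: compute $[L_l]\in A^*(\p')$ from the projective-bundle formula and the Chern classes of $\pi_{\phi*}(\mathcal{F}|_{\Delta_1\cup\Delta_2})$ (GRR over $\times^3_{M_*}\C_*$, descended to $\C_*\times_{M_*}\C_*$), then use the base-change identity $\p_{\phi(\s_1)*}\circ\pi_\phi^*=\pi_\xi^*\circ(\phi\otimes\s_l)_*$ together with the fact that $\p_{\phi(\s_1)}|_{L_l}$ is again generically $2{:}1$ onto $L_{\xi_l}$ (the two orderings $(x_0,y_0),(y_0,x_0)$ give the same secant line of $C$, by Lemma~\ref{conjugate} and Remark~\ref{lin-sys}) to conclude $[L_{\xi_l}]=\tfrac12\,\p_{\phi(\s_1)*}[L_l]=H_J^2+2H_J\T_\xi+\tfrac12\T_\xi^2$.

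The hardest part will be the analysis of $H_{\xi_i}$ and $L_{\xi_l}$: by Remark~\ref{h-2} these are not projective sub-bundles of $\p_\xi$ over $J_*(3)$, so one is forced onto $\p'$ and, through $\mathcal{F}=\Phi_\C^*(K_{\pi_J}\otimes\xi)$, onto the Chow theory of $\times^3_{M_*}\C_*$, which is only partially understood (cf.\ the Remark after Theorem~\ref{chow-curve}); the remedy is to push the relevant exact sequences down to $\C_*\times_{M_*}\C_*$, where Lemma~\ref{cxc} applies, before extracting Chern classes, and to track carefully the degree of $\p_{\phi(\s_1)}$ on each locus, which rests on Bertram's fibrewise picture (Theorem~\ref{main-bertram}, Lemma~\ref{conjugate}, Remark~\ref{lin-sys}). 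By contrast the computations for $Q_\xi$, $L_{2\s_i}$ and $L_{\s_j\s_k}$ are routine Grothendieck--Riemann--Roch once the correct defining bundles are written down, subject only to the usual care with the projective-bundle conventions fixed in the Notation.
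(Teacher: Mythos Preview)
Your proposal is correct and takes a genuinely different route from the paper. The paper proceeds by the method of undetermined coefficients plus test intersections: each class is written as $aH_J^2+bH_J\T_\xi+c\T_\xi^2$ (or $aH_J+b\T_\xi$), the leading coefficient $a$ is fixed by the fibre restriction as you do, and the remaining coefficients are pinned down by computing a few geometric intersections, e.g.\ $\pi_{\xi*}(L_{2\s_1}\cdot L_{2\s_2})=\T_*$, $\pi_{\xi*}(L_{2\s_1}\cdot\pi^*\s_2)=[\mathcal K_2]$ (vertices of the relevant cones), $[L_{\xi_1}]\cdot\T_\xi$ decomposed via its irreducible components, $[L_{\xi_1}]\cdot[L_{\s_2\s_3}]$ read off from an explicit locus, $[H_{\xi_2}]\cdot[\pi^*\s_3]$ on $J_*(3)\setminus\mathcal K_1$, and $[Q_\xi]\cdot[L_{\s_1\s_2}]$ on a further open. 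No GRR, no auxiliary pushforwards from $\p'$.

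Your approach---realise each locus as a zero scheme of an explicit bundle and compute Chern classes via GRR using $ch(\pi_{J*}(K_{\pi_J}\otimes\xi))=4-\T_*$ and the ring $S^*(\C_J)$---is cleaner and more uniform for $Q_\xi$, $L_{2\s_i}$, $L_{\s_j\s_k}$; your computations there are correct on the nose (e.g.\ $c_1(\mathcal K)=-5\T_*+4\T_*=-\T_*$, $ch(\mathcal S_D)=2-\T_*$). For $H_{\xi_i}$ and $L_{\xi_l}$ your plan is sound but the parts you leave as ``yields'' are where all the work sits: you must compute $c_1(\mathcal S')$ (resp.\ $c(\pi_{12*}(\mathcal F|_{\Delta_1\cup\Delta_2}))$) by GRR on $\times_{M_*}^3\C_*$, then evaluate the pushforward $(\phi\otimes\s_l)_*$ from $A^*(\C_*\times_{M_*}\C_*)$ to $A^*(J_*(3))$, for which the paper does not give a ready formula (only Lemma~\ref{cxc} and Theorem~\ref{t-s6}). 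This is doable but is exactly the bookkeeping the paper sidesteps by its intersection-theoretic shortcuts; be aware also that the paper's $\Phi_\C$ in the \S\ref{loci-blowup} diagram and in the proof of Theorem~\ref{chow-curve} differ by a permutation of factors, so keep your own conventions straight.
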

\begin{proof}
	We start with $[L_{2\s_1}]$. The loci $\{L_{2\s_i}\}_{1\leq i\leq 6}$ are permuted under the $S_6$-action on $\p_\xi$ and thus have the same class in $A^*(\p_\xi)$ since the generators $H_J,\T_\xi$ are $S_6$-invariant. 
	Put $[L_{2\s_1}]=H_J^2+aH_J\T_\xi+b\T_\xi^2$ for some $a,b\in \Q$.\\
 Since $L_{2\s_1},L_{2\s_2}$ intersect transversally on the locus of vertices of cones in $Q_\xi$, and $L_{2\s_1},\pi^*\s_2$ intersect transversally on the vertex of the cones in $Q_\xi$ that are fibered over $\mathcal{K}_2$, we have $\pi_{\xi *}(L_{2\s_1}.L_{2\s_2})=\T_*, \pi_{\xi *}(L_{2\s_1}.\pi^*\s_2) = \mathcal{K}_2$. Hence, $2a-1=1\implies a=1, b =\dfrac{1}{2}$. Thus, $$[L_{2\s_1}] = H_J^2+H_J\T_\xi+\dfrac{1}{2}\T_\xi^2.$$
 Let $\mathcal{K}_{123}\hr J_*(3)$ be the locus of points in $J_*(3)$ over which $\xi, \mo(\s_1+\s_2+\s_3)\in Pic(\C_J)$ define identical divisors.
 It follows that $[\mathcal{K}_{123}]=[\mathcal{K}_1]=\dfrac{1}{2}\T_*^2$ in $A^*(J_*(3))$. We have
$$\pi_{\xi *}(L_{\s_1\s_2}.L_{\s_3\s_4})=\T_*, \pi_{\xi *}(L_{\s_1\s_2}.\pi^*\s_3) = \mathcal{K}_{123}.$$
Using similar arguments as above, we get 
$$[L_{\s_j\s_k}] = H_J^2+H_J\T_\xi+\dfrac{1}{2}\T_\xi^2\forall 1\leq j< k\leq 6.$$
	Let $[L_{\xi_1}]=H_J^2+cH_J\T_\xi+d\T_\xi^2$ for some $c,d\in\Q$.\\
The irreducible components of $L_{\xi_1}|_{\T_*}$ are: $L_{2\s_1}|_{\T_*}$ and $Q_\xi|_{\mathcal{K}_1}$. Each of them have multiplicity one when restricted to a general fiber over $\T_*$. Thus, 
$$[L_{\xi_1}]\T_\xi = [L_{2\s_1}]\T_\xi + \dfrac{1}{2}\T_\xi^2.[Q_\xi] = H_J^2\T_\xi+H_J\T_\xi^2+\dfrac{1}{2}\T_\xi^2.(2H_J) = H_J^2\T_\xi + 2H_J\T_\xi^2\implies c = 2.$$ 
For $1\leq i,j\leq 6$, let $\xi_{ij}\hr J_*(3)$ be the locus of points such that $\{H^0(\C_p, (\xi(-\s_i-\s_j))|_{\C_p})> 0\}_{p\in J_*(3)}$. 
We have that $L_{\xi_1}, L_{\s_2\s_3}$ coincide over the locus $\mathcal{K}_{123}$ and intersect transversally along $\pi^*\s_2,\pi^*\s_3$ over the loci $\xi_{12},\xi_{13}$ respectively. Using Theorem \ref{expressions}, we have
	\begin{gather*}[L_{\xi_1}][L_{\s_2\s_3}] = L_{\s_2\s_3}.\mathcal{K}_{123}+[\pi^*\s_2].\pi_\xi^*\xi_{12}+[\pi^*\s_3].\pi_\xi^*\xi_{13}  = \dfrac{1}{2}L_{\s_2\s_3}\T_\xi^2+[K_{\pi_J}]_{\p_\xi}\T_\xi\implies\\ 2H_J^3\T_\xi+(2+d)H_J^2\T_\xi^2 = 2H_J^3\T_\xi+\dfrac{5}{2}H_J^2\T_\xi^2
		\implies d=\dfrac{1}{2}\implies [L_{\xi_1}]=H_J^2+2H_J\T_\xi+\dfrac{1}{2}\T_\xi^2.\end{gather*}
	Moving to $H_{\xi_2}$, put $[H_{\xi_2}]=H_J+\alpha\T_\xi$ for some $\alpha\in \Q$.\\
	We restrict our attention to $J_*(3)\setminus \mathcal{K}_1$.\\
	The transverse intersection of $(H_{\xi_2})_{J_*(3)\setminus \mathcal{K}_1},(\pi^*\s_3)_{J_*(3)\setminus \mathcal{K}_1}$ shows that 
	$$\pi_{\xi *}([(H_{\xi_2})_{J_*(3)\setminus \mathcal{K}_1}\cap(\pi^*\s_3)_{J_*(3)\setminus \mathcal{K}_1}]) = \T_*,$$
	where we identify $\pi_\xi$ with it's restriction over $J_*(3)\setminus \mathcal{K}_1$, giving us $\alpha =1$.\\
	Lastly, we find $[Q_\xi] = 2H_J+e\T_*$ for some $e\in \Q$.\\
	Restricting to the open $T:= J_*(3)\setminus (\mathcal{K}_1\cup \mathcal{K}_2)\subset J_*(3)$, we have $$(Q_\xi.L_{\s_1\s_2})_T = (2H_J+e\T_*)(H_J^2+H_J\T_*)= 2H_J^3+(e+2)H_J^2\T_\xi.$$
	The intersection $(Q_\xi\cap L_{\s_1\s_2})_T$ is transverse over $(\pi^*\s_1)_T,(\pi^*\s_2)_T, L_{\s_1\s_2}|_{\xi_{12}}$. That is, 
	$$(Q_\xi.L_{\s_1\s_2})_T = [\pi^*\s_1]_T+[\pi^*\s_2]_T+([L_{\s_1\s_2}].\xi_{12})_T = 2H_J^3+3H_J^2\T_\xi\implies e=1\implies [Q_\xi]=2H_J+\T_\xi.$$
\end{proof}
\noindent We can calculate the proper transforms of these cycles in $A^*(\tilde{\p}_\xi)$. The proper transform of $Q_\xi$ is $E_\xi$. Let $\tilde{L}_{\alpha}, \tilde{H_\beta}$ be the proper transforms of the loci $L_\alpha, H_\beta$ for $\alpha\in \{\xi_i,2\s_i\}_{1\leq i\leq 6}, \beta\in \{\xi_j\}_{2\leq j\leq 6}$ respectively.
Additionally,
\begin{equation}\label{tilde-h-xi2}
[\tilde{H}_{\xi_2}] = \s_\pi^*([H_{\xi_2}]) = \s_\pi^*(H_J+\T_\xi) \in A^*(\tilde{\p}_\xi).
\end{equation}
\begin{theorem}\label{exp-blowup}
	We have the following expressions for the aforementioned loci in $A^*(\tilde{\p}_\xi)$:
	$$[E_\xi] = \s_\pi^*(2H_J+\T_\xi)-E_\s, [\tilde{L}_{2\s_1}]=\s_\pi^*(H_J^2+H_J\T_\xi+\dfrac{1}{2}\T_\xi^2)-K_\s, [\tilde{L}_{\xi_1}] = \s_\pi^*(H_J^2+2H_J\T_\xi+\dfrac{1}{2}\T_\xi^2)-\xi_\s-E_\s\s_\pi^*\T_\xi.$$
 Let $\tilde{\s}_i := \s_\pi^{-1}(\pi^*\s_i)\cap E_\xi\text{ for } 1\leq i\leq 6$, it follows that $[\tilde{\s}_i] = \s_\pi^*(H_J^3+H_J^2\T_\xi+\dfrac{1}{2}H_J\T_\xi^2)$.
\end{theorem}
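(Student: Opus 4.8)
\emph{Strategy.} The loci $E_\xi$, $\tilde L_{2\s_1}$, $\tilde L_{\xi_1}$ are strict transforms under $\s_\pi:\tilde\p_\xi\ri\p_\xi$, the blowup of $\p_\xi$ along the center $\C_J=B^0(\xi)$, and the plan is to compute each from its class in $A^*(\p_\xi)$ (Lemma \ref{expressions_below}) together with a description of its scheme-theoretic intersection with $\C_J$, applying the corollary to Theorem \ref{transform} (or Theorem \ref{transform} itself when that intersection picks up a small low-dimensional component). Write $g:=\s_\pi|_{E_\s}:E_\s\ri\C_J$ for the bundle projection and $i_\s:E_\s\hr\tilde\p_\xi$ for the exceptional inclusion; then, straight from the definitions of $K_\s,\xi_\s$ and the projection formula, $i_{\s*}g^*K_{\pi_J}=K_\s$, $i_{\s*}g^*\xi=\xi_\s$, $i_{\s*}g^*(\pi_J^*\T_*)=E_\s\s_\pi^*\T_\xi$ and $i_{\s*}g^*[\C_J]=E_\s$. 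All the final expressions will then be obtained by feeding the class of $L\cap\C_J\in A^*(\C_J)$ into these formulas, while $[\tilde\s_i]$ will be computed by intersecting inside the already-determined ring $S^*(\tilde\p_\xi)$.

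For $E_\xi$, the strict transform of $Q_\xi$: in each fibre over $J_*(3)$ the curve $(\C_J)_p$ is a degree-$5$ curve lying on the quadric surface $(Q_\xi)_p$, so $\C_J$ is a relative Cartier divisor in $Q_\xi$ and $Q_\xi\cap\C_J=\C_J$, of codimension $1$ in $Q_\xi$ and codimension $2$ in $\p_\xi$. Hence the second case of the corollary to Theorem \ref{transform} applies and, with Lemma \ref{expressions_below}, $[E_\xi]=\s_\pi^*[Q_\xi]-i_{\s*}g^*[\C_J]=\s_\pi^*(2H_J+\T_\xi)-E_\s$.

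For $\tilde L_{2\s_1}$ and $\tilde L_{\xi_1}$ the work is to identify $L\cap\C_J$ as a cycle on $\C_J$. Fix $p\in J_*(3)$ lying over a curve $C$ with corresponding degree-$3$ bundle $L_p$, so that $(\C_J)_p=C$ sits in $(\p_\xi)_p=\p^3$ via $|K_C\otimes L_p|$. The fibre $(L_{2\s_1})_p$ is the tangent line to $C$ at $(\pi^*\s_1)_p$, which for generic $p$ meets $C$ in the length-$2$ scheme $2(\pi^*\s_1)_p$; the loci where it acquires a further intersection point form a curve in $J_*(3)$, too small to affect the top term of the Segre class, so $[L_{2\s_1}\cap\C_J]=2[\pi^*\s_1]=K_{\pi_J}$ and Theorem \ref{transform} gives $[\tilde L_{2\s_1}]=\s_\pi^*(H_J^2+H_J\T_\xi+\frac12\T_\xi^2)-K_\s$. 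The fibre $(L_{\xi_1})_p$ is, for generic $p$, the secant line through the unique effective divisor $x+y$ with $x+y\sim L_p-\s_1$; since then $x+y+\s_1\sim L_p$ one has $h^0(K_C\otimes L_p-x-y-\s_1)=h^0(K_C)=2$, so this line also passes through $(\pi^*\s_1)_p$, whence $L_{\xi_1}\cap\C_J=Z\cup\pi^*\s_1$ with $Z=\overline{\Sigma(\C_*\times_{M_*}\C_*\setminus\tilde\Delta)}$ (all three intersections reduced for generic $p$). Therefore $[L_{\xi_1}\cap\C_J]=[Z]+[\pi^*\s_1]=(\xi-\frac12 K_{\pi_J}+\pi_J^*\T_*)+\frac12 K_{\pi_J}=\xi+\pi_J^*\T_*$ by (\ref{chern}) and Lemma \ref{expressions}, and pushing forward yields $[\tilde L_{\xi_1}]=\s_\pi^*(H_J^2+2H_J\T_\xi+\frac12\T_\xi^2)-\xi_\s-E_\s\s_\pi^*\T_\xi$. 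The identity (\ref{tilde-h-xi2}) for $\tilde H_{\xi_2}$ is the easy case in which $H_{\xi_2}$ meets $\C_J$ in the expected codimension, so its strict transform equals its total transform.

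Finally, for $\tilde\s_i$: since $\pi^*\s_i\subset\C_J$ is the center, $\s_\pi^{-1}(\pi^*\s_i)=g^{-1}(\pi^*\s_i)$, whose cycle class is $i_{\s*}g^*[\pi^*\s_i]=\frac12 K_\s$ because $g$ is flat and $[\pi^*\s_i]=\frac12 K_{\pi_J}$ (Lemma \ref{expressions}). One checks that $E_\xi$ and $\s_\pi^{-1}(\pi^*\s_i)$ meet properly with reduced intersection $\tilde\s_i$ — inside $E_\xi\cap E_\s\cong\C_J$ it is exactly $\pi^*\s_i$ — so $[\tilde\s_i]=[E_\xi]\cdot\frac12 K_\s$; substituting $[E_\xi]=\s_\pi^*(2H_J+\T_\xi)-E_\s$ and using the relations $K_\s\s_\pi^*H_J=0$ and $E_\s K_\s=K_\s\s_\pi^*\T_\xi-\s_\pi^*(2H_J^3+2H_J^2\T_\xi+H_J\T_\xi^2)$ of $S^*(\tilde\p_\xi)$ collapses this to $\s_\pi^*(H_J^3+H_J^2\T_\xi+\frac12 H_J\T_\xi^2)$. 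The main obstacle is exactly this bookkeeping of the intersections $L\cap\C_J$ — pinning down the multiplicity $2$ from tangency, the extra reduced component $\pi^*\s_1$ forced by the collinearity $x+y+\s_1\sim L_p$, and verifying that the degenerate loci in $J_*(3)$ (special tangents or secants, or points over $\mathcal{K}_1$) have dimension low enough not to enter the relevant term of the Segre class — together with the transversality claim needed for $[\tilde\s_i]$.
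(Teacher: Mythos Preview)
Your proposal is correct and follows essentially the same approach as the paper: both apply the strict-transform formula (the corollary to Theorem~\ref{transform}) after identifying the scheme-theoretic intersection of each locus with the center $\C_J$, and both compute $[\tilde\s_i]$ as the intersection product $\frac12 K_\s\cdot[E_\xi]$ followed by simplification in $S^*(\tilde\p_\xi)$. Your write-up is in fact somewhat more detailed than the paper's --- you explain geometrically why the secant $(L_{\xi_1})_p$ passes through $(\pi^*\s_1)_p$ (via $h^0(K_C\otimes L_p-x-y-\s_1)=h^0(K_C)=2$) and why the tangency forces multiplicity $2$ in $L_{2\s_1}\cap\C_J$, whereas the paper simply records $L_{\xi_1}\cap\C_J=Z\cup\pi^*\s_1$ and invokes (\ref{chern}); the paper also cites the $S_6$-action to pass from $\tilde\s_1$ to $\tilde\s_i$, while you treat all $i$ uniformly.
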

\begin{proof}
	For a subvariety $V\hr \p_\xi$ of dimension $d$, let $\tilde{V}$ be its proper tranform. If $\dim (V\cap \C_J) \leq d-1$, by Theorem \ref{transform}, we have
	$$[\tilde{V}] = \s_\pi ^*(V)- i_{\s *}(\s_\pi|_{E_\s} ^*(V\cap\C_J)_{(d-1)}).$$ 
	Let $Z= Im(\overline{\Sigma})$ be as defined in the proof of Theorem \ref{chow-curve}, we have that 
$$[E_\xi] = \s_\pi^*([Q_\xi])-E_\s, [\tilde{L}_{2\s_1}]=\s_\pi^*([L_{2\s_1}])-K_\s, [\tilde{L}_{\xi_1}]= \s_\pi^*([L_{\xi_1}])- i_{\s *}(\s_\pi|_{E_\s} ^*(Z\cup \pi^*\s_1)).$$ 	
Applying (\ref{chern}), we have
	$$ [Z\cup \pi^*\s_1] = \xi + \pi_J^*\T_*\implies [\tilde{L}_{\xi_1}]= \s_\pi^*([L_{\xi_1}])-\xi_\s-E_\s\s_\pi^*\T_\xi.$$ 
The intersection of $\s_\pi^{-1}(\pi^*\s_1), E_\xi$ is generically transverse and the intersection maps isomorphically onto $\pi^*\s_1$ over $J_*(3)\setminus \mathcal{K}_1$ via $\s_\pi$.\\
Hence, $[\tilde{\s}_1] = i_{\s *}(\s^* \pi^*\s_1).E_\xi = \dfrac{1}{2}K_\s E_\xi$. 
Using the expression for $E_\xi$ and Theorem \ref{blowup-chow}, we have the claimed expression for $[\tilde{\s}_1]$
$$[\tilde{\s}_1] = \s_\pi^*(H_J^3+H_J^2\T_\xi+\dfrac{1}{2}H_J\T_\xi^2).$$ 
The claim for $\tilde{\s}_i$ for other values of $i$ follows from the $S_6$-action on $\tilde{\p}_\xi$ defined using the $S_6$-actions on $\C_J, \p_\xi$.
\end{proof}
\begin{defi*} 
Let $L_{2\s_1}|_{\T}, \tilde{L}_{2\s_1}|_{\T}, \tilde{\s}_1|_\T$ be the restriction of $L_{2\s_1}, \tilde{L}_{2\s_1}, \tilde{\s}_1$ to $\T_*$ respectively and 
$\p_{2\s_1}$ be the proper transform of $L_{2\s_1}|_{\T}$ under $\s_\pi$. We have 
$$\tilde{L}_{2\s_1}|_{\T} = \p_{2\s_1}\cup \s_\pi^{-1}(\Delta)$$
where $\Delta\hr \C_J$ is the image of $\Delta_1\cap (\C_*\times_{M_*}\C_*\times_{M_*}\s_1)$ by the map $\Phi_\C: \times_{M_*}^3\C_*\ri \C_J$.
Let $\tilde{\s}:= \overline{\tilde{\s}_1|_\T\setminus \s_{\pi}^{-1}(\Delta|_{\mathcal{K}_1})}$. The locus $\tilde{\s}_1|_\T$ is reducible with irreducible components $\tilde{\s},\s_\pi^{-1}(\Delta|_{\mathcal{K}_1})$. Hence 
$$\tilde{\s}_1|_\T = \tilde{\s}\cup \s_\pi^{-1}(\Delta|_{\mathcal{K}_1}).$$
\end{defi*}
\begin{theorem}\label{tilde-s-pi-delta}
We have the following relations in $A^*(\tilde{\p}_\xi)$:
$$[\s_\pi^{-1}(\Delta)]=(\xi_\s-K_\s)\s_\pi^*\T_\xi+\dfrac{1}{2}E_\s\s_\pi^*\T_\xi^2,\quad
[\tilde{\s}] = \s_\pi^*(H_J^3\T_\xi+H_J^2\T_\xi^2)-\dfrac{1}{4}K_\s\s_\pi^*\T_\xi^2.$$
\end{theorem}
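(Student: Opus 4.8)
The plan is to obtain both classes by pulling a cycle on $\C_J$ through the blow‑down $\s_\pi:\tilde\p_\xi\to\p_\xi$ and applying the projection formula. Write $g:=\s_\pi|_{E_\s}:E_\s\to\C_J$ for the (flat) $\p^1$‑bundle $\p(\mathcal N_{\C_J/\p_\xi}^{\vee})$. For any closed $Y\subseteq\C_J$ one has $\s_\pi^{-1}(Y)=g^{-1}(Y)$, hence $[\s_\pi^{-1}(Y)]=i_{\s*}g^*[Y]$; and since $g^*\xi,\;g^*K_{\pi_J}$ push forward via $i_{\s*}$ to $\xi_\s,\;K_\s$, while $g^*\pi_J^*\T_*=i_\s^*\s_\pi^*\T_\xi$ and $i_{\s*}(1)=E_\s$, the projection formula gives, for $Z\in S^*(\C_J)$,
\[
i_{\s*}g^*(\xi\cdot Z)=\xi_\s\cdot i_{\s*}g^*Z,\qquad i_{\s*}g^*(K_{\pi_J}\cdot Z)=K_\s\cdot i_{\s*}g^*Z,\qquad i_{\s*}g^*(\pi_J^*\T_*\cdot Z)=\s_\pi^*\T_\xi\cdot i_{\s*}g^*Z .
\]
Thus everything reduces to computing the classes $[\Delta]$ and $[\Delta|_{\mathcal K_1}]$ in $A^*(\C_J)$.

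For $[\Delta]$: since $2\s_1\sim K_{\pi_J}$, the locus $\Delta$ (the image of $\Delta_1\cap\{z=\s_1\}$, i.e. $\Sigma(\C_*\times_{M_*}\s_1)$) equals $\{(x,\mo_{\C_*}(x)\otimes K_{\pi_J})\}$, which is pulled back from $C_J$; hence $[\Delta]\in S^2(\C_J)=\langle\xi\pi_J^*\T_*,\,K_{\pi_J}\pi_J^*\T_*,\,\pi_J^*\T_*^2\rangle$ by Theorem \ref{chow-curve}, say $[\Delta]=a\,\xi\pi_J^*\T_*+b\,K_{\pi_J}\pi_J^*\T_*+c\,\pi_J^*\T_*^2$. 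I would pin down $a,b,c$ by pairing with the three divisors $\xi,K_{\pi_J},\pi_J^*\T_*$ — each product lies in $S^3(\C_J)=\Q\,K_{\pi_J}\pi_J^*\T_*^2$, using the relations $\xi^2=-K_{\pi_J}\pi_J^*\T_*$, $\xi K_{\pi_J}=0$, $\xi\pi_J^*\T_*^2=\tfrac32 K_{\pi_J}\pi_J^*\T_*^2$, $\pi_J^*\T_*^3=0$ of Theorem \ref{chow-curve}. Geometrically $[\Delta]\cdot K_{\pi_J}=2[\Delta\cap\pi_\C^{-1}\s_1]=2[(\pi^*\s_1)(\mathcal K_1)]=2\cdot\tfrac12 K_{\pi_J}\cdot\tfrac12\pi_J^*\T_*^2=\tfrac12 K_{\pi_J}\pi_J^*\T_*^2$ (by Lemma \ref{expressions} and $[\mathcal K_1]=\tfrac12\T_*^2$), so $c=\tfrac12$; $\pi_J|_\Delta$ is birational onto the theta divisor $\T_*=\T_3\subset J_*(3)$, whence $\pi_{J*}[\Delta]=\T_*$ and $3a+2b=1$; and pairing with $\xi$ — computed after restricting to a general fibre over $M_*$, where $\Delta$ becomes the graph of the Abel–Jacobi embedding $x\mapsto\mo_C(x)\otimes K_C$ and $\xi$ restricts to the Poincaré class (whose square is $-K_C\cdot\pi_J^*\T_*$, consistent with the relation above) — fixes the last coefficient and gives $a=1$, $b=-1$. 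So $[\Delta]=\xi\pi_J^*\T_*-K_{\pi_J}\pi_J^*\T_*+\tfrac12\pi_J^*\T_*^2$, and the displayed identities yield $[\s_\pi^{-1}(\Delta)]=\xi_\s\s_\pi^*\T_\xi-K_\s\s_\pi^*\T_\xi+\tfrac12 E_\s\s_\pi^*\T_\xi^2$. (As a consistency check, the same class can be re-derived from the decomposition $\tilde L_{2\s_1}|_\T=\p_{2\s_1}\cup\s_\pi^{-1}(\Delta)$ together with Theorems \ref{transform} and \ref{exp-blowup}.)

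For $[\tilde\s]$: the decomposition $\tilde\s_1|_\T=\tilde\s\cup\s_\pi^{-1}(\Delta|_{\mathcal K_1})$ reads, as a cycle identity, $[\tilde\s_1]\cdot\s_\pi^*\T_\xi=[\tilde\s]+[\s_\pi^{-1}(\Delta|_{\mathcal K_1})]$, with multiplicity one on each component (which I would justify by transversality of the theta divisor to the two loci at a general point of each). By Theorem \ref{exp-blowup}, $[\tilde\s_1]=\s_\pi^*(H_J^3+H_J^2\T_\xi+\tfrac12 H_J\T_\xi^2)$, so with $\T_\xi^3=0$ (Corollary \ref{chow-p-xi}) the left side is $\s_\pi^*(H_J^3\T_\xi+H_J^2\T_\xi^2)$. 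Since $\Delta|_{\mathcal K_1}=(\pi^*\s_1)(\mathcal K_1)$, Lemma \ref{expressions} and $[\mathcal K_1]=\tfrac12\T_*^2$ give $[\Delta|_{\mathcal K_1}]=[\pi^*\s_1]\cdot\pi_J^*[\mathcal K_1]=\tfrac14 K_{\pi_J}\pi_J^*\T_*^2$, hence $[\s_\pi^{-1}(\Delta|_{\mathcal K_1})]=i_{\s*}g^*(\tfrac14 K_{\pi_J}\pi_J^*\T_*^2)=\tfrac14 K_\s\s_\pi^*\T_\xi^2$ by the displayed identities. Therefore $[\tilde\s]=\s_\pi^*(H_J^3\T_\xi+H_J^2\T_\xi^2)-\tfrac14 K_\s\s_\pi^*\T_\xi^2$.

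The main obstacle is the precise determination of $[\Delta]$ — concretely, the intersection number $[\Delta]\cdot\xi$, i.e. the class of the Abel–Jacobi graph in $A^*(\C_J)$, which must be identified via the self‑intersection formula $\xi^2=-K_{\pi_J}\pi_J^*\T_*$ and the push–pull relations rather than read off directly. A secondary point requiring care is checking that the set‑theoretic decompositions of $\tilde\s_1|_\T$ (and, if used, of $\tilde L_{2\s_1}|_\T$) are cycle identities with multiplicity one on each component, i.e. that the relevant intersections with $\s_\pi^*\T_\xi$ are generically transverse.
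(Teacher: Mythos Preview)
Your overall strategy is sound and the second formula for $[\tilde\s]$ is handled exactly as in the paper: both use the decomposition $\tilde\s_1|_\T=\tilde\s\cup\s_\pi^{-1}(\Delta|_{\mathcal K_1})$, the expression $[\tilde\s_1]=\s_\pi^*(H_J^3+H_J^2\T_\xi+\tfrac12H_J\T_\xi^2)$ from Theorem~\ref{exp-blowup}, and the identification $[\s_\pi^{-1}(\Delta|_{\mathcal K_1})]=\tfrac14K_\s\s_\pi^*\T_\xi^2$.

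Where you diverge is in computing $[\Delta]\in A^*(\C_J)$. You write $[\Delta]=a\,\xi\pi_J^*\T_*+b\,K_{\pi_J}\pi_J^*\T_*+c\,\pi_J^*\T_*^2$ and pin down the coefficients by pairing with $K_{\pi_J}$, pushing forward along $\pi_J$, and pairing with $\xi$. The first two constraints ($c=\tfrac12$ and $3a+2b=1$) are clean, but the third --- the intersection $[\Delta]\cdot\xi$ --- is precisely the computation you flag as the main obstacle, and you do not actually carry it out. The paper avoids this entirely: it observes that the restriction $Z|_{\T_*}$ of the locus $Z=\overline{\Sigma(\C_*\times_{M_*}\C_*\setminus\tilde\Delta)}$ decomposes (with multiplicity one, checked on a general fibre and along $\pi^*\s_2|_{\T_*}$) as $\Delta\cup\pi^*\s_1|_{\T_*}\cup\pi_J^{-1}(\mathcal K_1)$. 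Since $[Z]=\xi-\tfrac12K_{\pi_J}+\pi_J^*\T_*$ is already known from equation~(\ref{chern}) in Theorem~\ref{chow-curve}, this immediately yields
\[
[\Delta]=[Z]\cdot\pi_J^*\T_*-\tfrac12K_{\pi_J}\pi_J^*\T_*-\tfrac12\pi_J^*\T_*^2=(\xi-K_{\pi_J}+\tfrac12\pi_J^*\T_*)\pi_J^*\T_*,
\]
with no further intersection computation needed. So the paper's route is shorter and sidesteps exactly the step you were uncertain about; your method would also succeed, but requires an honest evaluation of $[\Delta]\cdot\xi$ (which, for the record, equals $-\tfrac14K_{\pi_J}\pi_J^*\T_*^2$).
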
	
\begin{proof}
The restriction of $Z$ to $\T_*$, namely $Z|_{\T_*}$, is reducible with irreducible components $\Delta, \pi^*\s_1|_{\T_*}, \pi_J^{-1}(\mathcal{K}_1)$. 
Restriction to a general fiber over $\T_*$ shows that multiplicities of $\Delta, \pi^*\s_1|_{\T_*} $ are one. The multiplicity of $\pi_J^{-1}(\mathcal{K}_1)$ in $[Z|_{\T_*}]$ follows from restriction to $\pi^*\s_2|_{\T_*}$, giving us the following in $A^*(\C_J)$
	\begin{gather*}[Z|_{\T_*}] = [\Delta]+[\pi^*\s_1|_{\T_*}]+[\pi_J^{-1}(\mathcal{K}_1)]\implies (\xi-\dfrac{1}{2}K_{\pi_J}+\T_*)\T_* = [\Delta]+\dfrac{1}{2}K_{\pi_J}\T_*+\dfrac{1}{2}\T_*^2      \\
		\implies [\Delta] = (\xi-K_{\pi_J}+\dfrac{1}{2}\T_*)\T_* .\end{gather*}
	This translates to the following in $A^*(\tilde{\p}_\xi)$ 
	\begin{equation*}\label{s-pi-Delta}
		[\s_\pi^{-1}(\Delta)] =i_{\s *}\s^*([\Delta]_{A^*(\C_J)}) = (\xi_\s-K_\s)\s_\pi^*\T_\xi+\dfrac{1}{2}E_\s\s_\pi^*\T_\xi^2.\end{equation*} 
The definition of $\tilde{\s}$ provides the following relation in $A^*(\tilde{\p}_\xi)$
\begin{equation*}\label{tilde-s}
	[\tilde{\s}_1]\s_{\pi}^*\T_\xi = [\tilde{\s}]+[\s_{\pi}^{-1}(\Delta)]\dfrac{1}{2}\s_{\pi}^*\T_\xi \implies [\tilde{\s}] = \s_\pi^*(H_J^3\T_\xi+H_J^2\T_\xi^2)-\dfrac{1}{4}K_\s\s_\pi^*\T_\xi^2.\end{equation*}
	\end{proof}
\subsection{\textbf{Linear generators of }$\mathbf{A_*(B_*)}$}\label{gen-b-*}
	We stratify $B_* = B_V\cup B_\T$ to compute generators of $A^*(B_*)$. The excision sequence is then 
	\begin{equation}\label{excision-B}A_{*-1}(B_\T)\ri A_*(B_*)\ri A_*(B_V)\ri 0\tag{\textdagger\textdagger}.\end{equation} 
	We calculate Chow groups of $B_V, B_\T$ and then use this sequence to find generators of $A_*(B_*)$.
\subsubsection{$\mathbf{A^*(B_V)}$}\label{B-V}
 Let $E_V , Q_V, \p_V, \tilde{\p}_V  \text{ and } \C_V\xrightarrow{\pi_V}V$ be the restrictions of $E_\xi,Q_\xi,\p_\xi,\tilde{\p}_\xi$  and the family $\pi_J$ to $V$ respectively. 
 Put $\p^K_*:= P_{V}(\pi_{V*}K_{\pi_V})^\vee, \p^\xi_*:= P_{V}(\pi_{V*}\xi|_{\C_V})^\vee$. We have the following isomorphism.
\begin{lemma}\label{isom-KL}
There is a canonical isomorphism 
$$\pi_{V*}(K_{\pi_V}\otimes \xi|_{\C_V})\simeq \pi_{V*}(K_{\pi_V})\otimes \pi_{V*}(\xi|_{\C_V}).$$
\end{lemma}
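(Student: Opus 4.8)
The plan is to prove that the canonical multiplication (cup-product) map is an isomorphism. Concretely, the adjunction counits $\pi_V^*\pi_{V*}K_{\pi_V}\to K_{\pi_V}$ and $\pi_V^*\pi_{V*}(\xi|_{\C_V})\to \xi|_{\C_V}$ tensor to a map $\pi_V^*\big(\pi_{V*}K_{\pi_V}\otimes_{\mathcal{O}_V}\pi_{V*}(\xi|_{\C_V})\big)\to K_{\pi_V}\otimes\xi|_{\C_V}$, whose adjoint
\[
m\colon\ \pi_{V*}K_{\pi_V}\otimes_{\mathcal{O}_V}\pi_{V*}(\xi|_{\C_V})\ \longrightarrow\ \pi_{V*}(K_{\pi_V}\otimes\xi|_{\C_V})
\]
is the asserted canonical map; I will show $m$ is an isomorphism. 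First I would record that all three sheaves in question are locally free and that formation of each commutes with base change: over a fibre $\C_v$ ($v\in V$), a smooth genus-$2$ curve, the bundles $K_{\C_v}$, $\xi_v$, $K_{\C_v}\otimes\xi_v$ have degrees $2,3,5$, so by Riemann--Roch their $h^0$ are the constants $2,2,4$ and the higher cohomology of $\xi_v$ and $K_{\C_v}\otimes\xi_v$ vanishes (that of $K_{\C_v}$ is $1$, still constant); cohomology and base change then makes $\pi_{V*}K_{\pi_V}$, $\pi_{V*}(\xi|_{\C_V})$, $\pi_{V*}(K_{\pi_V}\otimes\xi|_{\C_V})$ locally free of ranks $2,2,4$ with fibres the corresponding $H^0$'s. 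In particular the source and target of $m$ are locally free of rank $4$, so it suffices to check that $m$ is an isomorphism on each fibre.

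The fibre of $m$ at $v$ is the cup product $\mu_v\colon H^0(\C_v,K_{\C_v})\otimes H^0(\C_v,\xi_v)\to H^0(\C_v,K_{\C_v}\otimes\xi_v)$. Since $\C_v$ has genus $2$, $K_{\C_v}$ is a base-point-free pencil (the hyperelliptic $g^1_2$), so I would invoke the base-point-free pencil trick: tensoring the Koszul sequence $0\to K_{\C_v}^{-1}\to H^0(K_{\C_v})\otimes\mathcal{O}_{\C_v}\to K_{\C_v}\to 0$ with $\xi_v$ and taking cohomology (using $H^1(\xi_v)=0$) yields
\[
0\to H^0(\xi_v\otimes K_{\C_v}^{-1})\to H^0(K_{\C_v})\otimes H^0(\xi_v)\xrightarrow{\ \mu_v\ } H^0(\xi_v\otimes K_{\C_v})\to H^1(\xi_v\otimes K_{\C_v}^{-1})\to 0 .
\]
The line bundle $\xi_v\otimes K_{\C_v}^{-1}$ has degree $1$, hence $h^1=h^0$ on it, and it is effective (equivalently $h^0>0$) exactly when $\xi_v\cong K_{\C_v}(p)$ for some $p\in \C_v$, i.e. exactly when $\xi_v$ lies in $\T_*$ (which inside $J_*(3)$ is precisely the locus $\{K_{\C_v}(p):p\in \C_v\}$). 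Since $v\in V=J_*(3)\setminus\T_*$, we get $h^0(\xi_v\otimes K_{\C_v}^{-1})=0$, both outer terms vanish, and $\mu_v$ is an isomorphism. A morphism of locally free sheaves of the same rank that is an isomorphism on every fibre is an isomorphism, so $m$ is the desired canonical isomorphism.

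Equivalently, one can run the argument in families: the counit $\pi_V^*\pi_{V*}K_{\pi_V}\to K_{\pi_V}$ is surjective (the genus-$2$ relative canonical is relatively base-point-free) with kernel a line bundle $\mathcal{N}$ restricting to $K_{\C_v}^{-1}$ on each fibre, and applying $R\pi_{V*}$ to the sequence obtained by tensoring $0\to\mathcal{N}\to\pi_V^*\pi_{V*}K_{\pi_V}\to K_{\pi_V}\to 0$ with $\xi|_{\C_V}$ yields a four-term exact sequence whose outer terms $\pi_{V*}(\mathcal{N}\otimes\xi|_{\C_V})$ and $R^1\pi_{V*}(\mathcal{N}\otimes\xi|_{\C_V})$ vanish by the fibrewise computation above. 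The one step that needs care, and which I expect to be the crux, is the identification of the fibrewise degeneracy locus of $\mu$ with $\T_*$ — i.e. verifying that $\xi_v\otimes K_{\C_v}^{-1}$ is non-effective for every $v\in V$ and effective precisely on $\T_*$; after that, the base-point-free pencil trick together with the cohomology-and-base-change bookkeeping finishes the proof.
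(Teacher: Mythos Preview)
Your proof is correct and follows the same approach as the paper: both construct the canonical multiplication map via adjunction of the counit maps and then verify it is an isomorphism on fibres using cohomology and base change. Your argument is in fact more detailed than the paper's, which simply asserts that $(\otimes_{K\xi})_v$ is a fibrewise isomorphism; you supply the missing justification via the base-point-free pencil trick, correctly identifying that $H^0(\xi_v\otimes K_{\C_v}^{-1})$ vanishes precisely off $\T_*$, which is exactly why the lemma is stated over $V=J_*(3)\setminus\T_*$.
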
	
\begin{proof}
The following canonical morphisms are surjections by the cohomology and base change theorem 
$$\pi_{V}^*\pi_{V*}K_{\pi_V}\twoheadrightarrow K_{\pi_V}, \pi_{V}^*\pi_{V*}\xi|_{\C_V}\twoheadrightarrow \xi_{\C_V}.$$
Hence, we have the surjection $\pi_V^*(\pi_{V*}K_{\pi_V}\otimes \pi_{V*}\xi|_{\C_V})\twoheadrightarrow K_{\pi_V}\otimes \xi|_{\C_V}$. Using adjoint properties of $\pi_V^*, \pi_{V*}$; we have the morphism 
$$\pi_{V*}K_{\pi_V}\otimes \pi_{V*}\xi|_{\C_V}\xrightarrow{\otimes_{K\xi}} \pi_{V*}(K_{\pi_V}\otimes \xi|_{\C_V}).$$ 
Another application of cohomology and base change theorem shows that $(\otimes_{K\xi})_v$ is an isomorphism for all $v\in V$.
\end{proof}\noindent
	\begin{theorem}\label{Q_V}
 The isomorphism $Q_V\simeq \p^K_*\times_{V}\p^\xi_*$ defines rulings on $Q_V$ and the embedding $Q_V\hr \p_V$ coincides with the Segre embedding $\p^K_*\times_{V}\p^\xi_*\hr \p_V$ via this isomorphism. 
	\end{theorem}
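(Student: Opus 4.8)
The plan is to build the claimed isomorphism fiberwise and then invoke Zariski's Main Theorem (Theorem \ref{zmt}) to promote a bijection to an isomorphism over the normal base $V$. First I would recall from Remark \ref{rulings} that over each point $v\in V$ the line bundle $\xi|_{\C_v}$ is basepoint free (this is exactly the open condition defining $V = J_*(3)\setminus \T_*$), so that the classical picture applies: the quadric $Q_{\xi_v}\subset \p_{\xi_v}$ is the image of the Segre embedding $\p^{K_{\C_v}}\times \p^{\xi_v}\hr \p_{\xi_v}$ coming from the multiplication isomorphism $H^0(\C_v,K_{\C_v})\otimes H^0(\C_v,\xi_v)\xrightarrow{\sim} H^0(\C_v, K_{\C_v}\otimes\xi_v)$. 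The relative version of this multiplication map is precisely the canonical morphism $\otimes_{K\xi}$ of Lemma \ref{isom-KL}, which is an isomorphism on each fiber, hence an isomorphism of vector bundles on $V$. Dualizing and projectivizing gives the relative Segre embedding $s_V:\p^K_*\times_V\p^\xi_*\hr \p_V$, and its image is a closed subscheme of $\p_V$ which is flat over $V$ with fibers the degree-two hypersurfaces $Q_{\xi_v}$.

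Next I would identify this Segre image with $Q_V$. Both are closed subschemes of $\p_V$, flat and proper over $V$; by Theorem \ref{quadric} (applied fiberwise, using basepoint-freeness over $V$) the fibers of $Q_V$ are the smooth quadrics $Q_{\xi_v}$, and by Remark \ref{rulings} these coincide with $s_V(\p^K_{*,v}\times\p^\xi_{*,v})$ as subvarieties of $\p_{\xi_v}$. So the inclusion $\mathrm{Im}(s_V)\hookrightarrow Q_V$ is a morphism of $V$-flat families inducing an isomorphism on every fiber; since $Q_V$ is reduced and $V$ is a smooth (hence normal) variety, this forces $\mathrm{Im}(s_V) = Q_V$ scheme-theoretically — or, if one prefers to avoid flatness bookkeeping, one applies Theorem \ref{zmt} directly to the map $\p^K_*\times_V\p^\xi_*\to Q_V$ induced by $s_V$: it is a bijection on closed points (fiberwise it is the classical Segre parametrization of the smooth quadric surface), $Q_V$ is normal (it is smooth, being a $\p^1\times\p^1$-bundle over the smooth $V$, as follows from Theorem \ref{quadric} together with the projective-bundle structure), and $\p^K_*\times_V\p^\xi_*$ is connected since $V$ is. This yields the isomorphism $Q_V\simeq \p^K_*\times_V\p^\xi_*$, and the two projections of the right-hand side pull back to the two rulings of $Q_V$.

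Finally, the statement that the embedding $Q_V\hookrightarrow\p_V$ coincides with the Segre embedding under this identification is built into the construction: the map $s_V$ was defined as the projectivization of the adjoint $\otimes_{K\xi}$ of the canonical surjection $\pi_V^*(\pi_{V*}K_{\pi_V}\otimes\pi_{V*}\xi|_{\C_V})\twoheadrightarrow K_{\pi_V}\otimes\xi|_{\C_V}$, and composing $Q_V\xrightarrow{\sim}\p^K_*\times_V\p^\xi_*\xrightarrow{s_V}\p_V$ recovers the tautological inclusion $Q_V\hookrightarrow\p_V$ fiber by fiber, hence globally by another appeal to the bijectivity–normality–connectedness criterion (or simply because two morphisms to the separated $\p_V$ agreeing on a schematically dense open, namely any fiber's worth — more carefully, agreeing on every fiber of a flat family over reduced base — must coincide).

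I expect the main obstacle to be the bookkeeping at the boundary of $V$, i.e. making sure all constructions genuinely stay within the open locus $V=J_*(3)\setminus\T_*$ where $\xi$ is relatively basepoint free: the relevant cohomology-and-base-change statements (that $\pi_{V*}K_{\pi_V}$, $\pi_{V*}(\xi|_{\C_V})$, $\pi_{V*}(K_{\pi_V}\otimes\xi|_{\C_V})$ are vector bundles of the expected ranks $2,2,4$ and that formation commutes with base change) hold over $V$ precisely because $h^1$ is constant there, but fail on $\T_*$. Once one is careful that every pushforward in sight is taken over $V$, the argument is a routine relativization of the classical genus-two picture in \cite{bertram}, \cite{nar-ram}, packaged through Theorem \ref{zmt} and Lemma \ref{isom-KL}.
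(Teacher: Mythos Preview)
Your proposal is correct and follows essentially the same approach as the paper: build the relative Segre map via Lemma \ref{isom-KL}, check fiberwise using Remark \ref{rulings} that its image is $Q_V$, and upgrade the fiberwise identification to a global isomorphism. The paper additionally routes the curve $\C_V$ through $\p^K_*\times_V\p^\xi_*$ via the two linear systems to witness the containment, whereas you compare the Segre image and $Q_V$ directly as flat $V$-families and invoke Theorem \ref{zmt}; these are minor packaging differences rather than a genuinely different argument.
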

	\begin{proof}
		We show that there is a universal analogue of the Segre embedding $\p^K_*\times_{V}\p^\xi_*\hr \p_V\simeq P_V(\pi_{V*}(K_{\pi_V}\otimes \xi|_{\C_V}))^\vee$, as outlined in Remark \ref{rulings}.\\
		Let $\pi^K: \p^K_*\ri V , \pi^\xi: \p^\xi_*\ri V, \pi^{K\xi}: \p^K_*\times_{V}\p^\xi_*\ri V$ be the respective canonical morphisms. We have the surjections $\pi^{K*}\pi_{V*}K_{\pi_V}\twoheadrightarrow \mo_{\p^K_*}(1), \pi^{\xi*}\pi_{V*}\xi|_{\C_V}\twoheadrightarrow \mo_{\p^\xi_*}(1)$, inducing the surjection $\pi^{K\xi *}(\pi_{V*}K_{\pi_V}\otimes \pi_{V*}\xi|_{\C_V})\twoheadrightarrow \pi_1^* \mo_{\p^K_*}(1)\otimes \pi_2^* \mo_{\p^\xi_*}(1)$ where $\pi_1,\pi_2$ are projections onto the first and second factors of $\p^K_*\times_{V}\p^\xi_* $.\\
		We have the surjection $\pi^{K\xi *}\pi_{V*}(K_{\pi_V}\otimes \xi|_{\C_V})\twoheadrightarrow \pi_1^* \mo_{\p^K_*}(1)\otimes \pi_2^* \mo_{\p^\xi_*}(1)$ by Lemma \ref{isom-KL}, inducing the morphism $\p^K_*\times_{V}\p^\xi_*\xrightarrow{i_{K\xi}} \p_V$. 
		The restriction to fibers over $V$ of $i_{K\xi}$ is the Segre embedding by Remark \ref{rulings}. Hence, $i_{K\xi}$ is an embedding and its image is a family of smooth quadric surfaces parametrized by $V$.
		\\The relative linear systems $\C_V\xrightarrow{|K_{\pi_V}|} \p^K_*, \C_V\xrightarrow{|\xi_{\C_V}|} \p^\xi_*$ induce an embedding $\C_V\hr \p^K_*\times_V \p^\xi_*$. This embedding, coupled with $\p^K_*\times_{V}\p^\xi_*\hr \p_V$ implies that $Q_V\simeq \p^K_*\times_{J_*(3)}\p^\xi_*$ since we have fiberwise equality of images over $V$ by Remark \ref{rulings}.
	\end{proof}\noindent
	Consequently, we can conclude the following about $A^*(E_V), A^*(B_V)$.
	\begin{theorem}\label{classes-E_V}
		The restrictions of the cycles $E_\xi, \tilde{L}_{2\s_1}, \tilde{L}_{\xi_1},\tilde{\s}_1$ to $V$ linearly span $A^*(E_V)$. 
		The restriction $(\tilde{L}_{2\s_1})_V$ of $\tilde{L}_{2\s_1}$, to $V$, maps isomorphically onto $B_V$ via $\Phi_\xi$.
	\end{theorem}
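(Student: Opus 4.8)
The plan is to leverage the product structure $Q_V \simeq \p^K_* \times_V \p^\xi_*$ from Theorem \ref{Q_V}, together with the fact that $E_V$ is the strict transform (equivalently, the blowup of $Q_V$ along the embedded copy of $\C_V$), to get an explicit description of $A^*(E_V)$ via the projective bundle formula and the blowup formula (Theorem \ref{blowup-chow}). First I would record that $A^*(V) \simeq \Q[\T_*]/(\T_*^3)$, since $V = J_*(3) \setminus \T_*$ is an open in $J_*(3)$ and excision removes at most $\T_*^2$ in top degree; in fact a quick fiberwise positivity check shows $A^*(V) = \Q[\T_*]/(\T_*^3)$ survives. Then $A^*(\p^K_* \times_V \p^\xi_*)$ is a free module over $A^*(V)$ on the monomials $h_K^a h_\xi^b$ with $0 \le a, b \le 1$ (each factor is a $\p^1$-bundle, since $\pi_{V*}K_{\pi_V}$ and $\pi_{V*}(\xi|_{\C_V})$ are rank $2$), so $A^*(Q_V)$ has rank $4$ over $A^*(V)$, i.e. dimension $12$. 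Since $E_V \to Q_V$ is the blowup along the curve $\C_V$ (which has codimension $1$ in the surface-bundle $Q_V$... — careful: $\C_V \hr Q_V$ has relative codimension $1$ over $V$), the blowup is trivial in the sense that $E_V = Q_V$ when the center is a divisor; but the relevant statement is that $E_V \simeq Q_V$ as we are blowing up $\p_\xi$ (relative dimension $3$) along $\C_J$ (relative dimension $1$), so $E_\xi$ is a $\p^1$-bundle over $\C_J$, and restricting to $V$, $E_V$ is a $\p^1$-bundle over $\C_V$.

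So the cleaner route: $E_V \to \C_V$ is a $\p^1$-bundle (the projectivized normal bundle $\p(\mathcal{N}_{\C_V/\p_V})$, by Theorem \ref{main-bertram}(3) and Lemma \ref{normal-cj} restricted to $V$), and $\C_V \to V$ is the universal curve, so $A^*(\C_V) = \Q[K_{\pi_V}, \pi_V^*\T_*]/(\text{relations from Theorem \ref{chow-curve} restricted to } V)$ — here $\xi$ may or may not survive the restriction to $V$; I expect $\xi|_{\C_V}$ becomes expressible in the other generators over $V$ since the relation $\xi \cdot K_{\pi_J} = 0$ together with $\xi^2 = -K_{\pi_J}\pi_J^*\T_*$ constrains it, but one must check whether $\xi|_{\C_V} \in \langle K_{\pi_V}, \pi_V^*\T_*\rangle$; a fiberwise argument over $V$ should settle this. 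In any case $A^*(E_V)$ is a free $A^*(\C_V)$-module of rank $2$ on $\{1, \zeta\}$ where $\zeta = c_1(\mo_{E_V}(1))$. The task is then to show the four restricted classes $(E_\xi)_V, (\tilde{L}_{2\s_1})_V, (\tilde{L}_{\xi_1})_V, (\tilde{\s}_1)_V$ span this module. Using the explicit formulas from Theorem \ref{exp-blowup} — $[E_\xi] = \s_\pi^*(2H_J+\T_\xi) - E_\s$, $[\tilde{L}_{2\s_1}] = \s_\pi^*(H_J^2+H_J\T_\xi+\tfrac12\T_\xi^2) - K_\s$, $[\tilde{L}_{\xi_1}] = \s_\pi^*(H_J^2+2H_J\T_\xi+\tfrac12\T_\xi^2) - \xi_\s - E_\s\s_\pi^*\T_\xi$, $[\tilde{\s}_1] = \s_\pi^*(H_J^3+H_J^2\T_\xi+\tfrac12 H_J\T_\xi^2)$ — I would restrict each to $E_\xi$ (pull back along $i_\s$, noting $i_\s^*E_\s = -\zeta$) and read off that the terms $K_\s, \xi_\s, E_\s\s_\pi^*\T_\xi$ produce, after restriction, the classes $\s^*K_{\pi_V}$, $\s^*\xi$, $\zeta\cdot\s^*\pi_V^*\T_*$ respectively, which together with the pulled-back classes from $Q_V$ exhibit all of $\{1,\zeta\} \times \{\text{monomials in } K_{\pi_V}, \pi_V^*\T_*\}$. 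The bookkeeping is a linear-algebra exercise over $A^*(\C_V)$: form the change-of-basis matrix from $\{[E_\xi]_V, [\tilde{L}_{2\s_1}]_V, [\tilde{L}_{\xi_1}]_V, [\tilde{\s}_1]_V\}$ (and their products with pullbacks of $H_J, \T_\xi$, which are available since $A^*(E_V)$ is generated as a ring by restrictions of divisors) to the monomial basis, and check it is invertible.

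For the second assertion — that $(\tilde{L}_{2\s_1})_V$ maps isomorphically onto $B_V$ via $\Phi_\xi$ — I would argue as follows. Over a fixed point of $V$ (a curve $C$ with a basepoint-free degree-$3$ line bundle $L$), Remark \ref{rulings} identifies $E_L \simeq Q_L$ with the ruling $E_L \to B_L$ being projection onto the $\p^L$-factor; the fiber of this ruling over a point of $B_L$ is a line in $\p_L$, and $\tilde{L}_{2\s_1}$ is by construction the locus of lines tangent to $\C_J$ at $\pi^*\s_1$ — fiberwise, the tangent line to $C \hr \p_L$ at the point $\s_1$. Because $C \to B_L$ given by $|L|$ has the property (Lemma \ref{conjugate}) that $\Phi_L$ identifies exactly the fibers of $|L|$, the tangent-line locus at a single point $\s_1$ meets each ruling fiber of $E_L$ in exactly one point, hence maps bijectively to $B_L$. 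Globalizing, $(\tilde{L}_{2\s_1})_V \to B_V$ is a bijection on closed points between varieties; $B_V$ is smooth (it is $B^2_{2,3}$ base-changed, and $\mathcal{W}^1_{2,3}$ is smooth by \cite[Theorem 4.3]{brill}) hence normal, and $(\tilde{L}_{2\s_1})_V$ is connected (being a $\p^2$- or lower-dimensional bundle image over the connected $V$ — more precisely it is irreducible as the image of an irreducible space), so Theorem \ref{zmt} gives the isomorphism. The main obstacle I anticipate is the first part: verifying that $\xi|_{\C_V}$ does not introduce an independent generator (if it does, $A^*(E_V)$ could be rank $6$ over $A^*(V)$ rather than $4$, and I would need a fifth class or a relation), and then confirming that the determinant of the spanning matrix is a unit in $A^*(\C_V)$ — this requires care because $A^*(\C_V)$ has nilpotents, so "invertible" means invertible modulo the nilradical, i.e. the reduction to the "degree-zero" part must be nonzero. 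A fiberwise reduction over $V$ (where everything becomes Bertram's quadric surface $Q_L$ and its blowup) should make both points transparent.
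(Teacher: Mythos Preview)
Your computation of $A^*(V)$ is incorrect, and this is the central gap. You write that $A^*(V)\simeq\Q[\T_*]/(\T_*^3)$, but $V=J_*(3)\setminus\T_*$ is precisely the complement of the theta divisor, so the excision sequence
\[
A_{*}(\T_*)\to A_*(J_*(3))\to A_*(V)\to 0
\]
kills the class $[\T_*]$ (and hence $\T_*^2$) in $A^*(V)$. Since $A^*(J_*(3))=\Q[\T_*]/(\T_*^3)$ by Theorem~\ref{t-s6}, this forces $A^*(V)=\Q$. Your ``fiberwise positivity check'' shows nonvanishing in $A^*(J_*(3))$, not in the open subset. With your claimed $A^*(V)$ of dimension $3$, $A^*(E_V)$ would have dimension $12$ (as a rank-$4$ module), and four classes could not possibly \emph{linearly} span it --- the statement asks for linear spanning, not ring generation.

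Once you correct $A^*(V)=\Q$, the paper's argument becomes transparent and is cleaner than your second route via the $\p^1$-bundle $E_V\to\C_V$. Over $V$ the line bundle $\xi$ is basepoint-free fiberwise, so by Theorem~\ref{quadric} and Remark~\ref{rulings} one has $E_V\simeq Q_V\simeq\p^K_*\times_V\p^\xi_*$. This exhibits $E_V$ as a tower of $\p^1$-bundles over $V$ (not over $\C_V$), with sections furnished by the image of $\pi^*\s_1$ under $|K_{\pi_V}|$ and $|\xi|_{\C_V}|$. With $A^*(V)=\Q$, the projective bundle formula gives $\dim_\Q A^*(E_V)=4$, and the four spanning classes are the fundamental class, the two ruling-sections $(\tilde L_{2\s_1})_V$ and $(\tilde L_{\xi_1})_V$, and their intersection $(\tilde\s_1)_V$. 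Your concern about whether $\xi|_{\C_V}$ is an independent generator of $A^*(\C_V)$ is thus irrelevant: the paper bypasses $A^*(\C_V)$ entirely.

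For the second assertion, your Zariski-Main-Theorem approach would work, but the paper's is more direct: under $E_V\simeq\p^K_*\times_V\p^\xi_*$, the map $\Phi_E:E_V\to B_V$ is identified with projection onto $\p^\xi_*$ (Remark~\ref{rulings}), and $(\tilde L_{2\s_1})_V$ is the graph of the section $\s_1^K:V\to\p^K_*$, hence maps isomorphically onto $\p^\xi_*\simeq B_V$. This avoids checking smoothness of $B_V$ from Brill--Noether theory.
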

	\begin{proof}
		By Remark \ref{rulings}, $(\tilde{L}_{2\s_1})_V,(\tilde{L}_{\xi_1})_V$ are contained in $E_V$ and belong to the rulings $\p_*^\xi, \p_*^K$ respectively.\\
		The projections $E_V\xrightarrow{\pi_K} \p_*^K\ri V$ form a tower of $\p^1$-bundles with sections $\gamma_1: V\ri \p_*^K, \gamma_2: \p_*^K\ri E_V$ given by 
		the images of $(\pi^*\s_1)_V$ under the relative linear systems $|K_{\pi_V}|,\p_*^K\times_{V}|\xi_{\C_V}| $ respectively. 
		Hence, $A^*(E_V)$ is linearly spanned by 
		$$E_V, \pi_K^{-1}(Im(\gamma_1)) = \tilde{L}_{2\s_1}, Im(\gamma_2) = \tilde{L}_{\xi_1},\text{ and }Im(\gamma_2)\cap \pi_K^{-1}(Im(\gamma_1)) = \tilde{L}_{2\s_1}\cap \tilde{L}_{\xi_1} = \tilde{\s}_1$$ 
		since $A^*(V) = \mathbb{Q}$. 
		By Theorem \ref{quadric}, we have $E_V\simeq Q_V$  and the fibers of the map $E_V\ri B_V$ are identical to the fibers of the projection $\p^K_*\times_{V}\p^\xi_* \ri \p^\xi_*$ by Remark \ref{rulings}. \\
		The sections $\sigma_i^K: V\xrightarrow{\sigma_i|_V} \C_V\xrightarrow{|K_{\pi_V}|}\p^K_*$ for $1\leq i\leq 6$, induce isomorphisms $b_i:\p^\xi_*\ri B_V$ by Remark \ref{rulings} and Theorem \ref{zmt} where we identify the sections $\sigma_i$ with their pullbacks to $J_*(3)$.
		 In fact, the images of any closed point under $\{b_i\}_{1\leq i\leq 6}$ are equal via by Theorem \ref{quadric} and Theorem \ref{newstead}.\\
		This shows that the maps $b_i$ are equal for all $1\leq i\leq 6$. 
		The isomorphism $b_1$ corresponds to $\Phi_\xi$ restricted to $(\tilde{L}_{2\s_1})_V$ proving the last part of the theorem.
	\end{proof}
\begin{cor}\label{B_V-chow}
The image of $(\tilde{\s}_1)_V$ under $\Phi_\xi$ is a section of the $\p^1$-bundle $B_V$ over $V$. Therefore, $A_*(B_V)$ is linearly spanned by the cycles $[B_V], [\Phi_\xi(\tilde{\s}_1)_V]$.
\end{cor}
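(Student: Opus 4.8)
The plan is to read everything off the structure already established in Theorem \ref{classes-E_V}, so that the corollary is essentially a repackaging of it plus the projective bundle formula. First I would recall from the proof of Theorem \ref{classes-E_V} that, over $V$, one has a tower of $\p^1$-bundles $E_V \xrightarrow{\pi_K} \p^K_* \to V$ with sections $\gamma_1\colon V \to \p^K_*$ and $\gamma_2\colon \p^K_* \to E_V$, that $(\tilde{L}_{2\s_1})_V = \pi_K^{-1}(\mathrm{Im}\,\gamma_1)$ is therefore a $\p^1$-bundle over $\mathrm{Im}\,\gamma_1 \simeq V$, and that $(\tilde{\s}_1)_V = (\tilde{L}_{2\s_1})_V \cap (\tilde{L}_{\xi_1})_V = \mathrm{Im}\,\gamma_2 \cap \pi_K^{-1}(\mathrm{Im}\,\gamma_1) = \gamma_2(\gamma_1(V))$. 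Thus $(\tilde{\s}_1)_V$ is the image of a section of the $\p^1$-bundle $(\tilde{L}_{2\s_1})_V \to V$. Since Theorem \ref{classes-E_V} also gives that $\Phi_\xi$ restricts to an isomorphism $(\tilde{L}_{2\s_1})_V \xrightarrow{\sim} B_V$ over $V$, it carries this section onto a section of the $\p^1$-bundle $B_V \to V$, which is the first assertion.

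For the second assertion I would invoke $A^*(V) = \Q$, which is already used in the proof of Theorem \ref{classes-E_V}; a self-contained verification is the excision sequence $A^{*-1}(\T_*) \to A^*(J_*(3)) \to A^*(V) \to 0$ for the codimension-one relative theta divisor $\T_* \hookrightarrow J_*(3)$, combined with $A^*(J_*(3)) = \Q[\T_*]/(\T_*^3)$ from Theorem \ref{t-s6} and $A^*(\T_*) = A^*(\C_*) = \Q[\mathcal{K}]/(\mathcal{K}^2)$ from \S\ref{C*}: the pushforward has image the ideal $(\T_*)$, since the pushforwards of $1$ and of $\mathcal{K}$ span $\Q\T_*$ and $\Q\T_*^2$, whence $A^*(V) \simeq \Q[\T_*]/(\T_*) = \Q$. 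Then the projective bundle formula applied to $B_V \to V$ yields $A^*(B_V) = \Q[\zeta]/(\zeta^2)$ with $\zeta = c_1(\mo_{B_V}(1))$, and the class of any section of $B_V \to V$ equals $\zeta$ because the correction term lies in $A^1(V) = 0$. Therefore $\{[B_V],\,[\Phi_\xi(\tilde{\s}_1)_V]\}$ is a $\Q$-basis of $A_*(B_V)$.

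The only step that requires genuine care is the identification $(\tilde{\s}_1)_V = \gamma_2(\gamma_1(V))$ — that the intersection of $(\tilde{L}_{2\s_1})_V$ with $(\tilde{L}_{\xi_1})_V$ is exactly this section and nothing larger — but this is immediate from the tower-of-$\p^1$-bundles picture set up in the proof of Theorem \ref{classes-E_V}, and is consistent with the formula $[\tilde{\s}_1] = \s_\pi^*(H_J^3 + H_J^2\T_\xi + \tfrac{1}{2}H_J\T_\xi^2)$ of Theorem \ref{exp-blowup}, which restricts to the class of a single point on a general fibre of $B_V \to V$. Everything else is a formal consequence of the projective bundle formula over $V$.
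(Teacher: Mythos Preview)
Your proposal is correct and is exactly the argument the paper intends: the corollary is stated without proof precisely because it follows from Theorem \ref{classes-E_V} (the identification $(\tilde{\s}_1)_V = \gamma_2(\gamma_1(V))$ and the isomorphism $(\tilde{L}_{2\s_1})_V \xrightarrow{\sim} B_V$) together with $A^*(V)=\Q$ and the projective bundle formula. Your verification that $A^*(V)=\Q$ via excision along $\T_*\hookrightarrow J_*(3)$ is a fine way to make explicit what the paper simply asserts in the proof of Theorem \ref{classes-E_V}.
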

\subsubsection{$\mathbf{A^*(B_\Theta)}$}\label{B_T}
	 Let $E_\T, Q_\T, \p_\T, \tilde{\p}_\T$ and $\C_\T\xrightarrow{\pi_\T}\T_*$ be the pullbacks of $E_\xi, Q_\xi, \p_\xi, \tilde{\p}_\xi$  and the restriction of the family $\pi_J$, respectively, over $\T_*\hr J_*(3)$.\\
	The locus $Q_\T\hr \p_\T$ is a family of quadric cones with vertices parameterized by $\Delta$ by Theorem \ref{quadric}. Let $E_\Delta := \sigma_\pi^{-1}(\Delta)$ be the exceptional locus over $\Delta\hr \C_J$.\\
	The restriction of $\Phi_\xi$ to $E_\Delta$ is an isomorphism and $Q_\T\setminus\Delta\simeq E_\T\setminus E_\Delta$ by Lemma \ref{sig-iso}. 
  \begin{lemma}
  The blowup $bl_\Delta Q_\T$ embeds into $\tilde{\p}_\T$ with image $E_\T$.
  \end{lemma}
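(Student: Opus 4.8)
The plan is to promote the fibrewise picture of Remark~\ref{rulings} to the family over $\T_*$ and then pin down the scheme structure with Theorem~\ref{zmt}. By Theorem~\ref{quadric} the morphism $\sigma_\pi|_{E_\T}\colon E_\T\ri Q_\T$ is, over $\T_*$, the family of the maps $E_{L_p}\ri Q_{L_p}$, each of which Remark~\ref{rulings} identifies canonically with the blowup $bl_{x_p}Q_{L_p}$ of the quadric cone $Q_{L_p}$ at its vertex $x_p$; the vertices sweep out $\Delta\hr\C_J$. In particular $\sigma_\pi|_{E_\T}$ is an isomorphism over $Q_\T\setminus\Delta$ (the statement preceding this lemma, via Lemma~\ref{sig-iso}) with fibres $\simeq\p^1$ over $\Delta$. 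Note also that over $\T_*$ every fibre $Q_{L_p}$ is an \emph{irreducible} quadric cone: $C\hr Q_{L_p}$ is irreducible and spans $\p_{L_p}$, so $Q_{L_p}$ can be neither a pair of planes nor a double plane, and it is not smooth since $|L_p|$ has a basepoint.

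First I would check that $bl_\Delta Q_\T$ is smooth, hence normal. This is local on $Q_\T$: since each fibre is an honest rank-$3$ quadric cone, $Q_\T$ is \'etale-locally a product $\{xy=z^2\}\times(\text{smooth base})$ with $\Delta$ the zero section, and blowing up the ideal $(x,y,z)$ resolves the $A_1$-singularity to a Hirzebruch surface; thus $bl_\Delta Q_\T$ is smooth. Next I would produce the comparison morphism: the scheme-theoretic preimage of $\Delta$ under $\sigma_\pi|_{E_\T}$ is an effective Cartier divisor on $E_\T$ — fibrewise it is the exceptional $\p^1$ of $E_{L_p}\ri Q_{L_p}$, which is Cartier, and $E_\T$ (the proper transform of $Q_\T$, whose fibres are the smooth surfaces $E_{L_p}$) is flat and smooth over the smooth base $\T_*$, so this propagates. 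Hence the universal property of the blowup factors $\sigma_\pi|_{E_\T}$ as $E_\T\xrightarrow{\psi}bl_\Delta Q_\T\ri Q_\T$.

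It then remains to see that $\psi$ is an isomorphism. It is a bijection on closed points: over $Q_\T\setminus\Delta$ it is an isomorphism, and over $\Delta$ it is precisely the fibrewise identification $E_{L_p}\simeq bl_{x_p}Q_{L_p}$ of Remark~\ref{rulings}, matching the exceptional $\p^1$ of $E_{L_p}$ with the projectivized normal cone of $\Delta$. Since $bl_\Delta Q_\T$ is normal and $E_\T$ is connected (it fibres over the connected $\T_*$ with connected fibres), Theorem~\ref{zmt} gives $\psi\colon E_\T\xrightarrow{\ \sim\ }bl_\Delta Q_\T$. Composing $\psi^{-1}$ with the closed immersion $E_\T\hr\tilde{\p}_\T$ then exhibits $bl_\Delta Q_\T$ as a closed subscheme of $\tilde{\p}_\T$ with image $E_\T$, which is the assertion. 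The step I expect to be most delicate is the verification that the preimage of $\Delta$ on $E_\T$ is Cartier — equivalently, that $E_\T$ is flat and smooth over $\T_*$ along $E_\Delta$: $\Delta$ is exactly the locus over which the cone bundle $Q_\T$ degenerates to a singular surface, so this is the only place where the naive fibrewise argument could fail, and it is where the explicit $A_1$-local model above is needed.
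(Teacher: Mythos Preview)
Your argument is correct and follows essentially the same route as the paper: construct a map $E_\T\to bl_\Delta Q_\T$ from the universal property of blowups, check it is a bijection on closed points fibrewise over $\T_*$ via Remark~\ref{rulings}, and conclude by Theorem~\ref{zmt}. The paper's proof differs only in that it first identifies $E_\T$ with $bl_{\C_\T}Q_\T$ (the strict transform of $Q_\T$ under $\tilde{\p}_\T=bl_{\C_\T}\p_\T$, using flat base change along $\T_*\hr J_*(3)$) before producing the comparison map to $bl_\Delta Q_\T$; you instead work directly with $\sigma_\pi|_{E_\T}$. Your version is more explicit about the hypothesis of Theorem~\ref{zmt}: you verify that $bl_\Delta Q_\T$ is smooth via the $A_1$ local model, which the paper leaves implicit. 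Both arguments need, and you correctly flag, that $E_\T$ is smooth along $E_\Delta$ so that the preimage of $\Delta$ is Cartier; the cleanest justification is that $E_\xi\to J_*(3)$ has smooth equidimensional fibres (Hirzebruch surfaces) over a smooth base, hence is flat and therefore smooth.
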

	\begin{proof} 
	We have $\tilde{\p}_\T = bl_{\C_\T} \p_\T$ since $\C_J, Q_\xi$ are flat over $J_*(3)$. Therefore, we have the embedding $bl_{\C_\T}Q_\T\hr \tilde{\p}_\T$ and $E_\T = bl_{\C_\T}Q_\T$ by Remark \ref{blowup-blowdown}.\\
We have an induced morphism $E_\T\ri bl_{\Delta}Q_\T$ which induces a bijection of closed points on each fiber over $\T_*$. 
Therefore, we have $E_\T = bl_{\C_\T}Q_\T \simeq bl_\Delta Q_\T$. 
\end{proof}\noindent
Additionally, $\Delta$ is a Cartier divisor on $\C_\T$ and $\C_\T\hr Q_\T$. Therefore, we have an induced embedding $i_\C:\C_\T\simeq bl_{\Delta}\C_\T\hr bl_{\Delta}Q_\T = E_\T$.
\begin{rem}\label{delta-sig}
By Lemma \ref{sig-iso}, we have $\s_\pi^{-1}(\Delta)$ maps isomorphically onto $B_\T$ via $\Phi_\xi$.
\end{rem}
\noindent Let $Y:= P_{\T_*}(\pi_{\T *}K_{\pi_\T})^\vee\xrightarrow{p_Y}\T_*$ be the $g^1_2$ space for $\C_\T\xrightarrow{\pi_\T}\T_*$. Following a similar line of reasoning as in Remark \ref{cone-ruling}, the composition $\phi_\xi: \C_\T \xrightarrow{i_\C} E_\T \xrightarrow{\Phi_\xi} B_\T$ is invariant under the hyperelliptic involution, namely $h_\T$, of $\C_\T$. Hence,
$\phi_\xi$ factors through $\tilde{\phi}_\xi:Y \xrightarrow{\sim} B_\T$ and is an isomorphism by Theorem \ref{zmt}. \\
% $Y$ is therefore a trivial $\p^1$-bundle since it is the pullback of the $g^1_2$ bundle obtained from $\C_*\ri M_*$ in \S\ref{cover}.\\
%\\ Let $ \zeta_1,\zeta_2$ be the respective hyperplane divisors on $\p_\T, \p_\Delta$ then $\zeta_1 = f_\lb^*\zeta_2 + \pi_\Delta^*\lb = f_\lb^*\zeta_2-\dfrac{1}{2}\prescript{}{\T}{\pi}^{*}K_{\pi}$ in $A^1(\p_\T)$ and $Y:=P_{\T_*}(\pi_{\T *}K_{\pi_\T})^\vee\xrightarrow{\pi_Y}\T_*$ be the $g_2^1$ space for the family $\pi_\T$.\\
 Let $\delta\in H^1(\T_*, \lb_\Delta^\vee\otimes \pi_{\T *}K_{\pi_\T}^2)$ be the extension class of the following exact sequence on $\T_*$ 
$$0\ri \pi_{\T *}K_{\pi_\T}^2\ri \pi_{\T *}(K_{\pi_\T}^2(\Delta))\ri {\lb_\Delta}\ri 0$$ 
where $\lb_\Delta:= \pi_{\T *}(\mo_\Delta(K_{\pi_\T}^2(\Delta)))$. The sequence of morphisms 
$$H^1(\T_*, \lb_\Delta^\vee\otimes \pi_{\T *}K_{\pi_\T}^2)\xrightarrow{\pi_Y^*}H^1(Y, \pi_Y^*\pi_{\T *}K_{\pi_\T}^2\otimes\pi_Y^*\lb_\Delta^\vee)\simeq H^1(Y, \pi_Y^*\pi_{Y*}\mo_Y(2)\otimes\pi_Y^*\lb_\Delta^\vee)\ri H^1(Y, \mo_Y(2)\otimes\pi_Y^*\lb_\Delta^\vee)$$
 maps $\delta$ to the class of an extension 
 $$0\ri \mo_Y(2)\ri\E\ri \pi_Y^*\lb_\Delta\ri 0$$ 
 on $Y$. Let $\p_Y:= P_Y(\E^\vee)\xrightarrow{p_\E}Y$.
 The surjection $\E\twoheadrightarrow \pi_Y^*\lb_\Delta$ defines a section $s: Y\ri \p_Y$.
\begin{theorem}\label{diagrams-pY}
    We have $\p_Y\simeq E_\T$ and the induced morphism $\p_Y\ri Q_\T$ maps the section $s:Y\ri\p_Y$ to the section $\Delta:\T_*\ri \C_\T\hr \p_\T$. 
\end{theorem}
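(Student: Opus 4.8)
The plan is to identify $\p_Y$ with $E_\T$ by showing both are built from the same data over $Y\simeq B_\T$, and then to check the compatibility of sections by a fiberwise argument. First I would recall the fiberwise picture from \S\ref{bertram-construct}: over a point $p\in\T_*$ with curve $\C_p$ and line bundle $\xi_p$ of degree $3$ having a basepoint, Theorem \ref{quadric} and Remark \ref{rulings} say $E_{\xi_p}$ is the Hirzebruch surface $bl_{\text{vertex}}Q_{\xi_p}$, the blowup of the quadric cone at its vertex; equivalently it is the $\p^1$-bundle over $B_{\xi_p}\simeq \p^1$ coming from the ruling. The section $\Delta$ of $\C_\T\hr\p_\T$ records exactly the vertex, so $E_\T = bl_\Delta Q_\T$ is, over each fiber, this Hirzebruch surface. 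On the other hand, $\p_Y = P_Y(\E^\vee)$ is by construction a $\p^1$-bundle over $Y$, and via $\tilde\phi_\xi:Y\xrightarrow{\sim}B_\T$ it is a $\p^1$-bundle over $B_\T$; so the content is that the two rank-$2$ bundles on $Y$ defining these $\p^1$-bundles agree up to twist.

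The key steps, in order, would be: (1) Identify $Y$ with $B_\T$ via $\tilde\phi_\xi$ (already established above) and with the ruling base of $E_\T$; the projection $E_\T\ri B_\T$ from Theorem \ref{main-bertram}(3) exhibits $E_\T$ as the projectivization of a rank-$2$ bundle $\mathcal{V}$ on $B_\T\simeq Y$, which I can take to be $(p_{E}{}_*\mathcal{L})$ for a suitable relatively-degree-one line bundle $\mathcal{L}$ on $E_\T$, e.g. $\mathcal{O}_{E_\T}(\C_\T)$ restricted appropriately, or $\mathcal{O}_{\tilde\p_\T}(1)|_{E_\T}$. (2) Compute the extension presenting $\mathcal{V}$: the sub-line-bundle comes from the fixed section (the one contracted by $Q_\T\ri$ vertex, i.e. $E_\Delta$), and the quotient from the section $\C_\T\hr E_\T$, giving $0\ri \mathcal{O}_Y(2)\ri\mathcal{V}\otimes(\text{twist})\ri \pi_Y^*\lb_\Delta\ri 0$ — here the identification of the sub as $\mathcal{O}_Y(2)$ uses that the degree of $K_{\pi_\T}^2$ on a fiber is $4$, hence degree $2$ on the double cover / $g^1_2$-image, and the identification of the quotient as $\pi_Y^*\lb_\Delta$ comes from the definition of $\lb_\Delta$ via $K_{\pi_\T}^2(\Delta)$ restricted to $\Delta$. (3) Match this extension class with the pulled-back class $\delta$: both are obtained by pushing the relative-canonical-squared sequence $0\ri\pi_{\T*}K_{\pi_\T}^2\ri\pi_{\T*}(K_{\pi_\T}^2(\Delta))\ri\lb_\Delta\ri0$ forward through $\pi_Y$ and restricting, so they coincide by the commutative diagram relating $\pi_\T$, $\pi_Y$, and $p_\E$; hence $\p_Y\simeq P_Y(\mathcal{V}^\vee)\simeq E_\T$ canonically. (4) Finally, trace the section: the surjection $\E\twoheadrightarrow\pi_Y^*\lb_\Delta$ defining $s$ corresponds under the isomorphism to the quotient $\mathcal{V}\twoheadrightarrow\pi_Y^*\lb_\Delta$ coming from $\C_\T\hr E_\T$, but the locus in $E_\T$ cut out by that quotient section is precisely $E_\Delta = \s_\pi^{-1}(\Delta)$, which maps under $E_\T\ri Q_\T$ onto $\Delta$; so $s$ goes to $\Delta$ as claimed.

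The main obstacle I expect is step (2)–(3): pinning down the precise twist so that the sub-bundle is \emph{exactly} $\mathcal{O}_Y(2)$ and the quotient is \emph{exactly} $\pi_Y^*\lb_\Delta$ (not merely up to a line bundle pulled back from $\T_*$), and then verifying that the extension class genuinely equals the image of $\delta$ rather than a scalar multiple or a twist of it. This requires being careful about which Poincaré-type normalization is used for $\mathcal{L}$ on $E_\T$ and invoking $\mathrm{Pic}(M_*)=0$ (hence $\mathrm{Pic}(\T_*)$ and the relevant $\mathrm{Pic}$ groups are controlled) to rule out ambiguous twists, together with cohomology-and-base-change to guarantee the formation of $\pi_{\T*}K_{\pi_\T}^2$, $\lb_\Delta$, and $p_{E*}\mathcal{L}$ commutes with the base changes along $\pi_Y$. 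Once the normalizations are fixed, steps (1) and (4) are formal consequences of Theorem \ref{zmt} (bijectivity on closed points, already available fiberwise from Bertram's construction) and the universal properties of projective bundles.
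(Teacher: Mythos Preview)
Your approach differs substantially from the paper's. Rather than comparing the two $\p^1$-bundles over $Y$ abstractly by matching their underlying rank-$2$ bundles, the paper constructs an explicit morphism $\p_Y\to\p_\T$. It does this by producing a surjection $\pi_Y^*\pi_{\T*}(K_{\pi_\T}^2(\Delta))\twoheadrightarrow\E$, obtained via the snake lemma from the commutative square whose top row is the pullback of the defining sequence on $\T_*$ and whose left vertical arrow is the canonical surjection $\pi_Y^*\pi_{Y*}\mo_Y(2)\twoheadrightarrow\mo_Y(2)$. This yields an embedding $\p_Y\hookrightarrow Y\times_{\T_*}\p_\T$; projecting to $\p_\T$ and checking fiberwise (the fiber over $y\in Y$ maps to the secant line through the two points of $|K_{\pi_\T}|^{-1}(y)$, hence through the vertex $\Delta$) shows the image is $Q_\T$. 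The universal property of blowups together with Theorem~\ref{zmt} then gives $\p_Y\simeq bl_\Delta Q_\T=E_\T$, and the section statement falls out because the quotient $\E\twoheadrightarrow\pi_Y^*\lb_\Delta$ arises from restriction of $K_{\pi_\T}^2(\Delta)$ to $\Delta$.

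Your step (3) is where the plan has a genuine gap. You assert that the extension presenting $\mathcal{V}$ coincides with the image of $\delta$ because ``both are obtained by pushing the relative-canonical-squared sequence forward through $\pi_Y$ and restricting''. For $\E$ this is true by definition, but for $\mathcal{V}$ you have not said what $\mathcal{V}$ is or why it relates to that sequence at all. To extract $\mathcal{V}$ from $E_\T$ you must choose a relative $\mo(1)$ and push forward along $E_\T\to Y$, and there is no a priori link between that pushforward and $\pi_{\T*}K_{\pi_\T}^2(\Delta)$; establishing one amounts to constructing the very map $\p_Y\to\p_\T$ that the paper builds directly. Since $\mathrm{Ext}^1(\pi_Y^*\lb_\Delta,\mo_Y(2))$ is positive-dimensional, matching the extension classes is real content, not normalization bookkeeping that $\mathrm{Pic}(M_*)=0$ can absorb. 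A smaller point on your step (4): $\C_\T\hookrightarrow E_\T$ is a bisection over $Y$ via the $g^1_2$, not a section, so it does not give a line-bundle quotient of $\mathcal{V}$; the section you want is $E_\Delta=\s_\pi^{-1}(\Delta)$.
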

\begin{proof}
In order to define the isomorphism between $\p_Y, E_\T$, we construct a blow-down morphism $\p_Y\ri \p_\T$.
The extension class of $\E$ induces the following commutative diagram:
$$
\begin{tikzcd}
	0\ar[r]&\pi_Y^*\pi_{Y*}\mo_Y(2)\simeq\pi_Y^*\pi_{\T *}K_{\pi_\T}^2\ar[r]\ar[d, twoheadrightarrow]&\pi_Y^*\pi_{\T *}K_{\pi_\T}^2(\Delta)\ar[r]\ar[d]& \pi_Y^*\lb_\Delta\ar[equal]{d}\ar[r]&0\\
	0\ar[r]&\mo_Y(2)\ar[r]&\E\ar[r]& \pi_Y^*\lb_\Delta\ar[r]&0.
\end{tikzcd}
$$
The left and right vertical arrows are surjective. By the snake lemma, the induced map $\pi_Y^*\pi_{\T *}(K_{\pi_\T}^2(\Delta))\ri \E $ is a surjection. This induces an embedding $i_Y:\p_Y\hr Y\times_{\T_*}\p_\Delta$.\\ 
We shall show that the restricted projection $\p_Y\xrightarrow{\pi_\E}\p_\Delta\simeq\p_\T$ is the desired blow-down morphism mapping onto $Q_\T$. The isomorphism between $\p_Y$ and $E_\T$ would then follow from the universal property of blowups.\\
Next, we show that the embedding $\C_\T\xrightarrow{j|_{\C_\T}}\p_\T$ factors through $\pi_\E$. \\
Let $\E':= |K_{\pi_\T}|^*(\E)$ be the pullback of $\E$ to $\C_\T$ via the $g^1_2$ map $|K_{\pi_\T}|:\C_\T\ri Y$ and, for any sheaf $G$ on $\C_\T$, put $\mathcal{F}_G:=ker(\pi_\T^*\pi_{\T *}G\ri G)$.\\
The following diagram shows that the surjection $\pi_\T^*\pi_{\T *}K_{\pi_\T}^2(\Delta)\twoheadrightarrow K_{\pi_\T}^2(\Delta)$ factors via $\pi_{\T}^*\pi_{\T *}K_{\pi_\T}^2(\Delta)\twoheadrightarrow \E'$
$$\begin{tikzcd}
	&\mathcal{F}_{K_{\pi_\T}^2}\arrow[d,hook]\arrow[r,hook]&\mathcal{F}_{K_{\pi_\T}^2(\Delta)}\arrow[d,hook] & &\\
	0\arrow[r]&\pi_\T^*\pi_{\T *}K_{\pi_\T}^2\arrow[r]\arrow[d,twoheadrightarrow]&\pi_\T^*\pi_{\T *}K_{\pi_\T}^2(\Delta)\arrow[r]\arrow[d, twoheadrightarrow]\arrow[dd, twoheadrightarrow, bend left = 40]& \pi_\T^*\lb_\Delta\arrow[equal]{d}\arrow[r]&0\\
	0\arrow[r]&K_{\pi_\T}^2\arrow[r]\arrow[rd]&\E'\arrow[d, dashed, twoheadrightarrow]\arrow[r]& \pi_\T^*\lb_\Delta\arrow[r]\arrow[d, twoheadrightarrow]&0\\
 & &K_{\pi_\T}^2(\Delta)\arrow[r,twoheadrightarrow]&K_{\pi_\T}^2(\Delta)|_{\Delta} \arrow[r]&0.
\end{tikzcd}$$
A straightforward diagram chase shows that $ker(\pi_\T^*\pi_{\T *}K_{\pi_\T}^2(\Delta)\ri \E') = \mathcal{F}_{K_{\pi_\T}^2}\hr \mathcal{F}_{K_{\pi_\T}^2(\Delta)}$ and the surjection $\pi_\T^*\pi_{\T *}K_{\pi_\T}^2(\Delta)\twoheadrightarrow K_{\pi_\T}^2(\Delta)$ factors via $\E'$, giving us the following diagram
\begin{equation}\label{F_2-uni}
\begin{tikzcd}
	& & \p_Y\arrow[rd, "\pi_\E"]\arrow[d, "p_\E"] \simeq E_\T&\\
	\Delta\arrow[r,hook]&\C_\T\arrow[ru,  bend left = 40]\arrow[r, "|K_{\pi_\T}|"']\arrow[rr, "j|_{\C_\T}", bend right = 50]&Y \arrow[u, "s",  bend left = 40]& \p_\T.	
\end{tikzcd}\end{equation}
The embedding $i_Y$ shows that fibers of $\p_Y$ over $Y$ map to lines in $\p_\Delta$ via $\pi_\E$. For $y\in Y$, the image of $p_\E^{-1}(y)$ embeds in $\p_\T$ as a line via $\pi_\E$, joining the two(possibly degenerate) points in $|K_{\pi_\T}|^{-1}(y)$.\\
Additionally, the image of the section $s$ over $Y$ via $\pi_\E$ is the section $\Delta$ of $\p_\T$ over $\T_*$, where we identify $\Delta$ with its image under $j$.\\
Therefore, $\p_Y$ maps onto $Q_\T$ via $\pi_\E$ using Theorem \ref{quadric}. The universal property of blowups and Remark \ref{rulings} then imply that $\p_Y\simeq E_\T$ as claimed.
\end{proof}
\begin{cor}\label{phi-s}
    The composition $\Phi_\xi\circ s: Y\ri \p_Y\simeq E_\T \ri B_\T$ maps $Y$ isomorphically onto $B_\T$ by Remark \ref{delta-sig}.
\end{cor}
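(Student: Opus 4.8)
The plan is to realise $\Phi_\xi\circ s$ as a composite of two isomorphisms: the identification of $Y$ with the exceptional locus $E_\Delta=\s_\pi^{-1}(\Delta)$ coming out of Theorem \ref{diagrams-pY}, followed by the isomorphism $E_\Delta\xrightarrow{\sim}B_\T$ supplied by Remark \ref{delta-sig}. First I would observe that $s\colon Y\ri\p_Y$, being a section of the projective bundle $p_\E\colon\p_Y\ri Y$, is a closed immersion, hence an isomorphism onto its image $s(Y)$. So the whole problem reduces to identifying $s(Y)$, under the isomorphism $\p_Y\simeq E_\T$ of Theorem \ref{diagrams-pY}, with $\s_\pi^{-1}(\Delta)\hr\tilde{\p}_\T$.

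For that identification I would argue as follows. Theorem \ref{diagrams-pY} exhibits the blow-down $\p_Y\simeq E_\T\xrightarrow{\pi_\E}Q_\T$ as the restriction of $\s_\pi$ to $E_\T$, and shows it carries the section $s$ onto the vertex section $\Delta\hr\C_\T\hr Q_\T$; hence $s(Y)\subseteq(\s_\pi|_{E_\T})^{-1}(\Delta)$. For the reverse inclusion I would use that, by Theorem \ref{quadric}, $\Delta$ is precisely the singular locus of the family $Q_\T$ of quadric cones, so at each vertex the tangent cone of $Q_\T$ is a rank-$3$ quadric cone whose projectivisation contains the tangent direction of $\C_\T$; projecting that conic from this point sweeps out all of $\p(\mathcal N_{\C_\T/\p_\T})$, so the strict transform $E_\T$ meets the full exceptional $\p^1$ of $\s_\pi$ over every point of $\Delta$. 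Consequently $\s_\pi^{-1}(\Delta)\subseteq E_\T$. Now both $s(Y)$ and $\s_\pi^{-1}(\Delta)$ are $\p^1$-bundles over $\T_*$ (through $Y\ri\T_*$, respectively through $\Delta\simeq\T_*$), and both are contained in the $\p^1$-bundle $\s_\pi^{-1}(\Delta)\to\Delta$; a fibrewise comparison, each fibre being a closed $\p^1$ inside the $\p^1$-fibre of $\s_\pi$ over a vertex, forces $s(Y)=\s_\pi^{-1}(\Delta)=E_\Delta$.

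With $s(Y)=E_\Delta$ in hand, Remark \ref{delta-sig}, which rests on Lemma \ref{sig-iso} applied fibrewise since over the theta divisor $\xi$ acquires a base point (the point of $\Delta$), gives that $\Phi_\xi$ restricts to an isomorphism $E_\Delta\xrightarrow{\sim}B_\T$; composing with $s\colon Y\xrightarrow{\sim}E_\Delta$ then finishes the argument. The step I expect to be the main obstacle is exactly the identification $s(Y)=E_\Delta$: it requires reconciling the blow-down manufactured in Theorem \ref{diagrams-pY} with $\s_\pi$ along the vertex locus $\Delta$, which is where $Q_\T$ is singular and where the bookkeeping of tangent cones is most delicate. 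Should that become awkward, I would fall back on applying Theorem \ref{zmt} directly to $\Phi_\xi\circ s$, checking that it is a bijection on closed points via Bertram's fibrewise description of $B_L$ and $\Phi_L$, that $B_\T$ is normal, and that $Y$ is connected.
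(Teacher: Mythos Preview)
Your proposal is correct and follows essentially the same approach as the paper, which states the Corollary without proof as immediate from Theorem \ref{diagrams-pY} and Remark \ref{delta-sig}. The tangent-cone argument you flag as the main obstacle is actually unnecessary: once Theorem \ref{diagrams-pY} gives $s(Y)\subseteq\sigma_\pi^{-1}(\Delta)$, equality follows since both are irreducible of the same dimension (each is a $\p^1$-bundle over a copy of $\T_*$), and the inclusion $\sigma_\pi^{-1}(\Delta)\subseteq E_\T$ then comes for free.
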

\begin{theorem}\label{classes-E_T}
For any $\alpha\in A^*(E_\T)$, put  $\alpha|_{\mathcal{K}_1}:= (\s_\pi\circ\pi_\xi)^{-1}(\mathcal{K}_1)\cap\alpha$ and $E_{\mathcal{K}_1}:= E_\T|_{\mathcal{K}_1}$.
The following cycles linearly span $A^*(E_\T)$ 	
$$E_\T, E_{\mathcal{K}_1}, \s_\pi^{-1}(\Delta), \p_{2\s_1}, \p_{2\s_1}|_{\mathcal{K}_1}, \s_\pi^{-1}(\Delta)|_{\mathcal{K}_1}, \tilde{\s}, \tilde{\s}|_{\mathcal{K}_1}.$$ 
\end{theorem}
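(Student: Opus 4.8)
The plan is to read off $A^*(E_\T)$ from the two nested $\p^1$-bundle structures on it and then to recognise each of the eight listed cycles inside a monomial basis. By Theorem \ref{diagrams-pY} we have $E_\T\simeq \p_Y=P_Y(\E^\vee)\xrightarrow{p_\E}Y$ with $Y=P_{\T_*}(\pi_{\T *}K_{\pi_\T})^\vee\xrightarrow{p_Y}\T_*$, both $\p^1$-bundles; since $\T_*\simeq\C_*$ and $A^*(\C_*)=\Q[\mathcal{K}]/(\mathcal{K}^2)$ by Lemma \ref{c1}, two applications of the projective bundle formula give $\dim_\Q A^*(E_\T)=8$. Writing $u:=c_1(\mo_{\p_Y}(1))$, $v:=p_\E^*c_1(\mo_Y(1))$, $\rho:=p_Y\circ p_\E$ and $w:=\rho^*[\sigma_1]$ (so $w^2=0$, as $[\sigma_1]^2=0$ in $A^*(\C_*)$), the classes $1$; $u,v,w$; $uv,uw,vw$; $uvw$ form a $\Q$-basis, with graded dimensions $1,3,3,1$. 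Since the eight cycles fall into the graded pieces in exactly this pattern ($E_\T$ in degree $0$; $E_{\mathcal{K}_1},\s_\pi^{-1}(\Delta),\p_{2\s_1}$ in degree $1$; $\p_{2\s_1}|_{\mathcal{K}_1},\s_\pi^{-1}(\Delta)|_{\mathcal{K}_1},\tilde\s$ in degree $2$; $\tilde\s|_{\mathcal{K}_1}$ in degree $3$), it suffices to prove linear independence degree by degree.

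In degree $0$, $[E_\T]=1$. In degree $1$ I would argue: $[E_{\mathcal{K}_1}]=\rho^{-1}(\mathcal{K}_1)=c\,w$ with $c\neq 0$, because $\mathcal{K}_1$ restricts to $\sigma_1$ on $\T_*\simeq\C_*$; by Theorem \ref{diagrams-pY} the locus $\s_\pi^{-1}(\Delta)$ inside $E_\T$ is precisely the image of the section $s\colon Y\to\p_Y$ of $p_\E$, so $[\s_\pi^{-1}(\Delta)]=u+a_1v+a_2w$ for some $a_i$, the coefficient of $u$ being $1$; and $\tilde L_{2\s_1}$ lies in the ruling direction of $p_\E$ (the analogue over $\T_*$ of the fact, from Theorem \ref{classes-E_V} and Remark \ref{rulings}, that over $V$ the locus $\tilde L_{2\s_1}|_V$ is $\pi_K^{-1}$ of a section of the $\p^K_*$-ruling), with $\tilde L_{2\s_1}|_\T=\p_{2\s_1}\cup\s_\pi^{-1}(\Delta)$; removing the component $\s_\pi^{-1}(\Delta)$ shows $\p_{2\s_1}=p_\E^{-1}(s'(\T_*))$ for a section $s'$ of $p_Y$, so $[\p_{2\s_1}]=b_1v+b_2w$ with $b_1\neq 0$. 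In the ordered basis $(u,v,w)$ the transition matrix of $\bigl([\s_\pi^{-1}(\Delta)],[\p_{2\s_1}],[E_{\mathcal{K}_1}]\bigr)$ is $\left(\begin{smallmatrix}1&a_1&a_2\\0&b_1&b_2\\0&0&c\end{smallmatrix}\right)$, invertible since $b_1,c\neq 0$; hence these three cycles span $A^1(E_\T)$.

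For degrees $2$ and $3$ I would use that $X|_{\mathcal{K}_1}=X\cap E_{\mathcal{K}_1}$ for $X\in\{\p_{2\s_1},\s_\pi^{-1}(\Delta),\tilde\s\}$ (an equality of restriction and intersection, routine as $\mathcal{K}_1$ is a smooth divisor of $\T_*$), i.e. multiplication by $c\,w$. With $w^2=0$ this gives $[\p_{2\s_1}|_{\mathcal{K}_1}]=cb_1\,vw$ and $[\s_\pi^{-1}(\Delta)|_{\mathcal{K}_1}]=c(uw+a_1vw)$, which span a complement of $\langle uv\rangle$ in $A^2(E_\T)$; it then remains only to see that $[\tilde\s]$ has nonzero $uv$-component. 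This follows because $\tilde\s$ is the part not over $\mathcal{K}_1$ of $\tilde L_{2\s_1}|_\T\cap\tilde L_{\xi_1}|_\T$, with $\tilde L_{\xi_1}$ a section of $p_\E$ (Theorem \ref{classes-E_V}), so $[\tilde\s]=e\,uv+(\text{terms in }uw,vw)$ with $e\neq 0$ — alternatively $e$ is extracted by restricting the expression $[\tilde\s]=\s_\pi^*(H_J^3\T_\xi+H_J^2\T_\xi^2)-\tfrac14 K_\s\s_\pi^*\T_\xi^2$ of Theorem \ref{tilde-s-pi-delta} to $E_\T$. Then $\{[\tilde\s],[\s_\pi^{-1}(\Delta)|_{\mathcal{K}_1}],[\p_{2\s_1}|_{\mathcal{K}_1}]\}$ is a basis of $A^2(E_\T)$, and $[\tilde\s|_{\mathcal{K}_1}]=cw\cdot[\tilde\s]=ce\,uvw$ generates $A^3(E_\T)$; combining the four graded pieces gives the theorem.

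The main difficulty is the identification of the classes of $\p_{2\s_1}$ and $\tilde\s$ in degree $1$, i.e. the assertion that $\tilde L_{2\s_1}$ and $\tilde L_{\xi_1}$ occupy the two ruling directions of $p_\E$. Over $V$ the quadrics $Q_\xi$ are smooth and $Q_V\simeq \p^K_*\times_V\p^\xi_*$ separates the rulings cleanly (Theorem \ref{Q_V}), but over $\T_*$ the quadrics degenerate to cones singular along $\Delta$, so one must instead work on the resolution $E_\T=\p_Y$, determine which ruling each of $\tilde L_{2\s_1},\tilde L_{\xi_1}$ limits into, and account for the extra component $\s_\pi^{-1}(\Delta)$ that $\tilde L_{2\s_1}$ acquires in that limit; the non-vanishing of $b_1$ and $e$ is then confirmed by an explicit intersection computation in $A^*(E_\T)$ of the sort performed in Lemma \ref{expressions_below}.
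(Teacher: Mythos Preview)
Your approach is essentially the same as the paper's: both use the iterated $\p^1$-bundle $E_\T\simeq\p_Y\xrightarrow{p_\E}Y\xrightarrow{p_Y}\T_*$ and the projective bundle formula to get an $8$-dimensional Chow ring, then identify the eight listed cycles within a monomial basis. The difference is in how the geometric cycles are pinned down. The paper identifies $\p_{2\s_1}$ directly as $p_\E^{-1}(\Gamma)$ where $\Gamma=|K_{\pi_\T}|(\s_1)$ is an explicit section of $p_Y$ --- this follows immediately from the description of the ruling of $\p_Y$ in the proof of Theorem \ref{diagrams-pY} (the fibre of $p_\E$ over $y\in Y$ is the line through the two points of $|K_{\pi_\T}|^{-1}(y)$, and for $y=\Gamma$ this is the tangent line at $\s_1$). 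For $\tilde\s$, rather than chasing the $uv$-coefficient, the paper observes that both $\tilde\s$ and $\p_{2\s_1}\cap\s_\pi^{-1}(\Delta)$ are sections of $p_\E$, hence agree modulo $p_\E^*A^*(Y)$, and so swapping one for the other in the basis is harmless.

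Your triangular-matrix argument is correct in principle, but the nonvanishing of $b_1$ and $e$ (which you flag as the main difficulty) is exactly what the paper's direct geometric identifications supply without computation. Your suggestion to extract $e$ from Theorem \ref{tilde-s-pi-delta} would work but requires restricting a class in $A^*(\tilde\p_\xi)$ to $E_\T$, which is more work than simply noting that $\tilde\s$ is a section of $p_\E$.
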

\begin{proof}
	The tower of $\p^1$-bundles $\p_Y\xrightarrow{p_\mathcal{E}} Y\xrightarrow{p_{Y}} \T_*$ with $Y\xrightarrow[\simeq]{p_Y} \p^1\times \T_*$  shows that 
	$$A^*(E_\T) = p_\mathcal{E}^*(A^*(Y))\oplus \s_\pi^{-1}(\Delta).p_\mathcal{E}^*(A^*(Y)), A^*(Y) \simeq \Q[\zeta_Y,p_Y^*(\mathcal{K}_1)]/(p_Y^*(\mathcal{K}_1)^2,\zeta_Y^2)$$
	where $\zeta_Y$ is the relative hyperplane class of $Y$ over $\T_*$. Put $\Gamma:= |K_{\pi_\T}|(\s_1)$. We have that $A^*(Y)$ is linearly spanned by 
	$$[Y],[\Gamma], [p_Y^*(\mathcal{K}_1)], [p_Y^*(\mathcal{K}_1)\cap \Gamma]$$ 
	 since $\Gamma$ is a section of $p_Y$. The locus $\p_{2\s_1}$ coincides with the pullback $p_\mathcal{E}^*(\Gamma)$, which intersects $\s_\pi^{-1}(\Delta)$ transversally. 
	Thus, $A^*(E_\T)$ is linearly spanned by 
	$$E_\T, E_{\mathcal{K}_1}, \s_\pi^{-1}(\Delta), \p_{2\s_1}, \p_{\mathcal{K}_1}, \s_\pi^{-1}(\Delta)|_{\mathcal{K}_1}, \p_{2\s_1}\cap \s_\pi^{-1}(\Delta), (\p_{2\s_1}\cap \s_\pi^{-1}(\Delta))|_{\mathcal{K}_1}$$ 
	as claimed, since $\p_{2\s_1},\s_{\pi}^{-1}(\Delta), (\p_{2\s_1}\cap\s_{\pi}^{-1}(\Delta)) $ are flat over $\T_*$.
	Additionally, $\tilde{\s}, \p_{2\s_1}\cap\s_\pi^{-1}(\Delta)$ map isomorphically onto $Y$ via $p_\E$. 
	Thus, $$<\s_\pi^{-1}(\Delta)|_{\mathcal{K}_1}, \p_{2\s_1}\cap \s_\pi^{-1}(\Delta)> = <\s_\pi^{-1}(\Delta)|_{\mathcal{K}_1},\tilde{\s}>,\quad [(\p_{2\s_1}\cap \s_\pi^{-1}(\Delta))|_{\mathcal{K}_1}]=[\tilde{\s}|_{\mathcal{K}_1}].$$
\end{proof}
\begin{cor}\label{B_T-chow}
Let $B_{\mathcal{K}_1}$ be the restriction of $B_*$ over $\mathcal{K}_1\hr J_*(3)$. The following cycles linearly span $A^*(B_\T)$ 
$$\{[B_\T], [B_{\mathcal{K}_1}], [\Phi_\xi(\tilde{\s}_1)|_{\T_*}], [\Phi_\xi(\tilde{\s}_1)|_{\mathcal{K}_2}]\}.$$
\end{cor}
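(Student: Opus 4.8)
The corollary is the pushforward of Theorem~\ref{classes-E_T} along $\Phi_\xi|_{E_\T}\colon E_\T\to B_\T$, so the plan is to evaluate $\Phi_{\xi *}$ on the eight generators of $A_*(E_\T)$ produced there. First I would note that $\Phi_\xi|_{E_\T}$ is proper and admits a section: by Remark~\ref{delta-sig} it restricts to an isomorphism $\s_\pi^{-1}(\Delta)\xrightarrow{\sim}B_\T$, so $\iota:=\bigl(\s_\pi^{-1}(\Delta)\hookrightarrow E_\T\bigr)\circ\bigl(\Phi_\xi|_{\s_\pi^{-1}(\Delta)}\bigr)^{-1}$ is a section, whence $\Phi_{\xi *}\circ\iota_*=\operatorname{id}$ and $\Phi_{\xi *}\colon A_*(E_\T)\twoheadrightarrow A_*(B_\T)$ is surjective. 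More usefully, under the identification $E_\T\simeq\p_Y=P_Y(\E^\vee)$ of Theorem~\ref{diagrams-pY} the morphism $\Phi_\xi$ becomes the $\p^1$-bundle projection $p_\E$ followed by the isomorphism $\psi:=\Phi_\xi\circ s\colon Y\xrightarrow{\sim}B_\T$ of Corollary~\ref{phi-s}; here one uses that, over $\T_*$, a fibre of $E_\T$ is the Hirzebruch surface $\mathbb{F}_2$ (Remark~\ref{rulings}, Theorem~\ref{quadric}), which has a unique ruling, so the two $\p^1$-fibration structures $p_\E$ and $\Phi_\xi$ on $E_\T$ coincide up to the base isomorphism $\psi$. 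Thus $A_*(B_\T)$ is spanned by the images of the eight cycles of Theorem~\ref{classes-E_T}.

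Next I would evaluate those images. The cycles $\s_\pi^{-1}(\Delta)$ and $\s_\pi^{-1}(\Delta)|_{\mathcal{K}_1}$ map isomorphically onto $B_\T$ and onto $B_\T|_{\mathcal{K}_1}=B_{\mathcal{K}_1}$ respectively (Remark~\ref{delta-sig} and its restriction over $\mathcal{K}_1$), hence push forward to $[B_\T]$ and $[B_{\mathcal{K}_1}]$. The cycles $E_\T,\,E_{\mathcal{K}_1},\,\p_{2\s_1},\,\p_{2\s_1}|_{\mathcal{K}_1}$ push forward to $0$, because each is the $\Phi_\xi$-preimage of a subvariety of $B_\T$ of strictly smaller dimension: $E_\T=\Phi_\xi^{-1}(B_\T)$ and $E_{\mathcal{K}_1}=\Phi_\xi^{-1}(B_{\mathcal{K}_1})$ by definition, while $\p_{2\s_1}=p_\E^{-1}(\Gamma)=\Phi_\xi^{-1}(\psi(\Gamma))$ with $\Gamma=|K_{\pi_\T}|(\s_1)\subset Y$ a section of $Y\to\T_*$ (and similarly over $\mathcal{K}_1$), and $\Phi_\xi$ has one-dimensional fibres.

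It then remains to compare $\Phi_{\xi *}[\tilde{\s}]$ and $\Phi_{\xi *}[\tilde{\s}|_{\mathcal{K}_1}]$ with $[\Phi_\xi(\tilde{\s}_1)|_{\T_*}]$ and $[\Phi_\xi(\tilde{\s}_1)|_{\mathcal{K}_2}]$. For the first I would use the decomposition $\tilde{\s}_1|_\T=\tilde{\s}\cup\s_\pi^{-1}(\Delta|_{\mathcal{K}_1})$ from \S\ref{loci-blowup}: as a cycle in $A_4(E_\T)$ it reads $[\tilde{\s}_1|_\T]=a[\tilde{\s}]+b[\s_\pi^{-1}(\Delta|_{\mathcal{K}_1})]$ with $a\geq 1$, and applying $\Phi_{\xi *}$ (using $\Phi_{\xi *}[\s_\pi^{-1}(\Delta|_{\mathcal{K}_1})]=[B_{\mathcal{K}_1}]$ from the previous step) together with the fact that the cycle $[\Phi_\xi(\tilde{\s}_1)|_{\T_*}]$ has the same two irreducible components $\Phi_\xi(\tilde{\s})$ and $B_{\mathcal{K}_1}$, with a nonzero coefficient on the former, one solves for $\Phi_{\xi *}[\tilde{\s}]$ as a $\Q$-linear combination of $[\Phi_\xi(\tilde{\s}_1)|_{\T_*}]$ and $[B_{\mathcal{K}_1}]$. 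The cycle $\tilde{\s}|_{\mathcal{K}_1}$ is treated similarly; here the restriction over $\mathcal{K}_2$ (rather than $\mathcal{K}_1$) is the one that appears because $\tilde{\s}$ meets $(\s_\pi\circ\pi_\xi)^{-1}(\mathcal{K}_1)$ non-transversally, and one passes from $\Phi_{\xi *}[\tilde{\s}|_{\mathcal{K}_1}]$ to $[\Phi_\xi(\tilde{\s}_1)|_{\mathcal{K}_2}]$ using the $S_6$-equivariance of the configuration and the equality $[\mathcal{K}_1]=[\mathcal{K}_2]$ in $A^*(J_*(3))$. I expect this last paragraph to be the main obstacle: the vanishings and the two isomorphisms are essentially formal once the $\p^1$-bundle picture of Theorem~\ref{diagrams-pY} is in hand, but controlling the multiplicities $a,b$ in the reducible restrictions $\tilde{\s}_1|_\T$ and $\tilde{\s}|_{\mathcal{K}_i}$, and making precise why $\mathcal{K}_2$ and not $\mathcal{K}_1$ enters, calls for exactly the transversality and symmetry bookkeeping assembled in \S\ref{loci-blowup}.
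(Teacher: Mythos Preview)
Your plan is broadly correct but takes a longer route than the paper, and your handling of the last two generators is where the two approaches diverge most.

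The paper does not push forward all eight generators of $A_*(E_\T)$. Instead it works directly on $B_\T$ via the isomorphism $\Phi_\xi|_{\s_\pi^{-1}(\Delta)}\colon \s_\pi^{-1}(\Delta)\xrightarrow{\sim}B_\T$ (Remark~\ref{delta-sig}). Since $\s_\pi^{-1}(\Delta)\simeq Y$ by Theorem~\ref{diagrams-pY}, the four generators of $A^*(Y)$ listed in the proof of Theorem~\ref{classes-E_T}, namely $\s_\pi^{-1}(\Delta),\ \s_\pi^{-1}(\Delta)|_{\mathcal{K}_1},\ \p_{2\s_1}\cap\s_\pi^{-1}(\Delta),\ (\p_{2\s_1}\cap\s_\pi^{-1}(\Delta))|_{\mathcal{K}_1}$, push forward to a spanning set of $A^*(B_\T)$ immediately. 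This bypasses your vanishing computations for $E_\T,E_{\mathcal{K}_1},\p_{2\s_1},\p_{2\s_1}|_{\mathcal{K}_1}$, which are correct but unnecessary.

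For the identification with the claimed generators, the paper's key step is not a multiplicity computation but an \emph{equality of loci}: by Remark~\ref{lin-sys} one has $\Phi_\xi(\tilde{\s})_p=\Phi_\xi(\p_{2\s_1}\cap\s_\pi^{-1}(\Delta))_p$ for every $p\in\T_*$, hence $\Phi_\xi(\p_{2\s_1}\cap\s_\pi^{-1}(\Delta))=\Phi_\xi(\tilde{\s})$ as subvarieties of $B_\T$. Then the reducibility $\Phi_\xi(\tilde{\s}_1)|_{\T_*}=\Phi_\xi(\tilde{\s})\cup B_{\mathcal{K}_1}$ (each with multiplicity one on a general fibre over $\T_*$) gives $[\Phi_\xi(\tilde{\s}_1)|_{\T_*}]=[\Phi_\xi(\tilde{\s})]+[B_{\mathcal{K}_1}]$, so no unknown coefficients $a,b$ enter.

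Your anticipated obstacle with $\mathcal{K}_2$ is resolved in the paper by a direct intersection argument in $A^*(B_\T)$ rather than $S_6$-equivariance or transversality considerations: since $[\mathcal{K}_1]=[\mathcal{K}_2]=\tfrac{1}{2}\T_*^2$ in $A^*(J_*(3))$ and $[B_{\mathcal{K}_1}]\cdot\det_\xi^*\T_3=0$ (this is the relation $\T_*^3=0$ pulled back), one computes
\[
[\Phi_\xi(\tilde{\s})|_{\mathcal{K}_1}]=[\Phi_\xi(\tilde{\s})]\cdot\tfrac{1}{2}\det_\xi^*\T_3=[\Phi_\xi(\tilde{\s}_1)|_{\T_*}]\cdot\tfrac{1}{2}\det_\xi^*\T_3=[\Phi_\xi(\tilde{\s}_1)|_{\mathcal{K}_2}].
\]
So the appearance of $\mathcal{K}_2$ is simply a convenient representative of the class $\tfrac{1}{2}\T_*^2$, not a transversality artifact. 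Your route via pushforward from $E_\T$ would ultimately reach the same point, but with extra bookkeeping that the paper's choice to compute on the section avoids.
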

\begin{proof}
Using Theorem \ref{diagrams-pY}, we have that $A^*(\s_{\pi}^{-1}(\Delta))\simeq A^*(Y)$ is spanned by 
$$\s_{\pi}^{-1}(\Delta), \s_{\pi}^{-1}(\Delta)|_{\mathcal{K}_1},  \p_{2\s_1}\cap \s_\pi^{-1}(\Delta), (\p_{2\s_1}\cap \s_\pi^{-1}(\Delta))|_{\mathcal{K}_1}.$$
The images of these cycles under $\Phi_\xi$ linearly span $A^*(B_\T)$ by Corollary \ref{phi-s}.\\
We have $\Phi_\xi(\tilde{\s})_p = \Phi_\xi(\p_{2\s_1})_p = \Phi_\xi(\p_{2\s_1}\cap \s_\pi^{-1}(\Delta))_p\forall p\in \T_*$ by Remark \ref{lin-sys}. Hence, we have 
$$\Phi_\xi(\p_{2\s_1}\cap \s_\pi^{-1}(\Delta)) = \Phi_\xi(\tilde{\s}), \Phi_\xi(\p_{2\s_1}\cap \s_\pi^{-1}(\Delta))|_{\mathcal{K}_1} = \Phi_\xi(\tilde{\s})|_{\mathcal{K}_1}.$$
On the other hand, $\Phi_\xi(\tilde{\s}_1)|_{\T_*}$ is reducible with irreducible components $\Phi_\xi(\tilde{\s}), B_{\mathcal{K}_1}$, each with multiplicity one on a general fiber over $\T_*$. Thus, 
$$[\Phi_\xi(\tilde{\s}_1)|_{\T_*}] = [\Phi_\xi(\tilde{\s})]+[B_{\mathcal{K}_1}]$$ in $A^*(B_\T)$ and 
$$[\Phi_\xi(\tilde{\s})|_{\mathcal{K}_1}] = [\Phi_\xi(\tilde{\s})].\dfrac{1}{2}det_\xi^* \T_3  = [\Phi_\xi(\tilde{\s}_1)|_{\T_*}].\dfrac{1}{2}det_\xi^* \T_3 = [\Phi_\xi(\tilde{\s}_1)|_{\mathcal{K}_2}].$$ 
\end{proof}
\subsubsection{\normalfont{\textbf{First excision}}}
We use results from \S \ref{B-V}, \S \ref{B_T}; specifically Corollary \ref{B_V-chow}, Corollary \ref{B_T-chow}, and the excision sequence (\ref{excision-B}).
Combining Corollary \ref{B-V}, and Corollary \ref{B_T}; we get the following theorem.
\begin{theorem}\label{B-*}
	The following cycles linearly span $A_*(B_*)$:
	$$[B_*],[B_*|_{\T_*}],[\Phi_\xi(\tilde{\s_1})], [B_*|_{\mathcal{K}_1}], [\Phi_\xi(\tilde{\s_1})|_{\T_*}], [\Phi_\xi(\tilde{\s}_1)|_{\mathcal{K}_2}].$$
\end{theorem}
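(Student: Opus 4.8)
The plan is to assemble the statement by feeding the excision sequence \eqref{excision-B}, namely
$$A_{*-1}(B_\T)\ri A_*(B_*)\ri A_*(B_V)\ri 0,$$
with the explicit linear generators for the two end terms that were produced in \S\ref{B-V} and \S\ref{B_T}. Concretely, by Corollary \ref{B_V-chow} the group $A_*(B_V)$ is spanned by $[B_V]$ and $[\Phi_\xi(\tilde{\s}_1)_V]$, the latter being (the class of) a section of the $\p^1$-bundle $B_V\ri V$; and by Corollary \ref{B_T-chow} the group $A_*(B_\T)$ is spanned by $[B_\T],[B_{\mathcal{K}_1}],[\Phi_\xi(\tilde{\s}_1)|_{\T_*}]$ and $[\Phi_\xi(\tilde{\s}_1)|_{\mathcal{K}_2}]$. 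Exactness of \eqref{excision-B} says that $A_*(B_*)$ is spanned by the pushforward $i_{B_\T *}$ of the four generators of $A_*(B_\T)$ together with any lifts of the two generators of $A_*(B_V)$. So the first step is purely formal: choose lifts and check they are the classes named in the statement.

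Second, I would identify the lifts. The open stratum is $B_V = V\times_{J_*(3)}B_*$ with $V = J_*(3)\setminus \T_*$, so restriction to $B_V$ of the ambient class $[B_*]$ itself gives $[B_V]$, hence $[B_*]$ is a lift of the first generator; and restriction of $[\Phi_\xi(\tilde{\s}_1)]$ to the open stratum gives $[\Phi_\xi(\tilde{\s}_1)_V]$ (since $\tilde{\s}_1|_\T$ and its various refinements are exactly the pieces supported over $\T_*$, as recorded in the Definition preceding Theorem \ref{tilde-s-pi-delta}), so $[\Phi_\xi(\tilde{\s}_1)]$ lifts the second generator. For the closed stratum, the pushforwards $i_{B_\T *}[B_\T]$, $i_{B_\T *}[B_{\mathcal{K}_1}]$, $i_{B_\T *}[\Phi_\xi(\tilde{\s}_1)|_{\T_*}]$, $i_{B_\T *}[\Phi_\xi(\tilde{\s}_1)|_{\mathcal{K}_2}]$ are, under the usual abuse of writing a class supported on a closed subscheme as the class of that subscheme in the ambient space, precisely $[B_*|_{\T_*}]$, $[B_*|_{\mathcal{K}_1}]$, $[\Phi_\xi(\tilde{\s}_1)|_{\T_*}]$, $[\Phi_\xi(\tilde{\s}_1)|_{\mathcal{K}_2}]$. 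Collecting these six classes yields exactly the list in the statement. A small bookkeeping point to address here is that $B_{\mathcal{K}_1}$ (resp. the loci restricted to $\mathcal{K}_2$) is a priori a subscheme of $B_\T$ but has the same class pushed into $B_*$ as into $B_\T$, since the composite closed immersion $B_{\mathcal{K}_1}\hr B_\T \hr B_*$ agrees with $B_*|_{\mathcal{K}_1}\hr B_*$.

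The only genuine content beyond formal bookkeeping is making sure nothing is lost in passing from the two strata to $B_*$: the right-exactness of \eqref{excision-B} guarantees surjectivity onto $A_*(B_V)$ from any set of lifts, and the pushforward $i_{B_\T *}$ fills in the kernel, so the six classes do span. I do not need, and will not claim, that they are a basis — the statement only asserts that they linearly span. Thus the main (and really the only) obstacle is the correct matching of the abstractly-defined generators of $A_*(B_V)$ and $A_*(B_\T)$ with the geometrically-named cycles $[B_*|_{\T_*}]$, $[B_*|_{\mathcal{K}_1}]$, $[\Phi_\xi(\tilde{\s}_1)]$, $[\Phi_\xi(\tilde{\s}_1)|_{\T_*}]$, $[\Phi_\xi(\tilde{\s}_1)|_{\mathcal{K}_2}]$ appearing in the conclusion — in particular verifying that $\Phi_\xi(\tilde{\s}_1)$ restricts on the open stratum to $\Phi_\xi(\tilde{\s}_1)_V$ (so it is a legitimate lift) and that the $\mathcal{K}_1$- and $\mathcal{K}_2$-restricted loci that appear in Corollary \ref{B_T-chow} are the same classes, up to the scaling by $\tfrac12\det_\xi^*\T_3$ already recorded there. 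Once those identifications are in place the theorem follows immediately by combining Corollary \ref{B_V-chow}, Corollary \ref{B_T-chow}, and the excision sequence \eqref{excision-B}.
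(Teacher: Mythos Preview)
Your proposal is correct and follows exactly the same approach as the paper: combine Corollary~\ref{B_V-chow}, Corollary~\ref{B_T-chow}, and the excision sequence~\eqref{excision-B}, lifting the generators of $A_*(B_V)$ and pushing forward those of $A_*(B_\T)$. The paper's own proof is in fact a single sentence to this effect; your version is more detailed (in particular the explicit identification of lifts and pushforwards), but the argument is identical.
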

\noindent We shall prove the linear independence of these cycles and find polynomial relations in \S \ref{U_*}.

\subsection{Chow rings of $\tilde{U}(2,3,2), U(2,3,2)$}\label{U_*}
As outlined in (\ref{excision-U}), we apply the second excision and Lemma \ref{pj-pxi} to get
\begin{equation}\label{ddag}A^{i-2}(B_*)\xrightarrow{i_{B *}} S^i(\tilde{U}(2,3,2))\ri S^i(\tilde{U}(2,3,2)\setminus B_*)\ri 0\quad \forall i\geq 0\tag{\ddag}.\end{equation}
We have $\tilde{U}(2,3,2)\setminus B_*\simeq \tilde{\p}_\xi\setminus E_\xi$. To calculate $S^*(\tilde{\p}_\xi\setminus E_\xi)$, we use the excision sequence
$$S^*(E_\xi)\ri S^*(\tilde{\p}_\xi)\ri S^*(\tilde{\p}_\xi\setminus E_\xi)\ri 0.$$
\begin{lemma}\label{minus-E-xi}
	Excision on $\tilde{\p}_\xi$ yields
	$$S^*(\tilde{\p}_\xi\setminus E_\xi)\simeq \Q[\s_\pi^*H_J,\s_\pi^*\T_\xi]/(\s_\pi^*(H_J^3+H_J^2\T_\xi+\dfrac{1}{2}H_J\T_\xi^2), \s_\pi^*(H_J^2\T_\xi^2)).$$
\end{lemma}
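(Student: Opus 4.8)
# Proof Proposal for Lemma \ref{minus-E-xi}

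The plan is to run the excision sequence
$$S^*(E_\xi)\xrightarrow{i_{\sigma *}} S^*(\tilde{\p}_\xi)\ri S^*(\tilde{\p}_\xi\setminus E_\xi)\ri 0$$
and compute the image of the pushforward $i_{\sigma *}$ applied to the linear generators of $S^*(E_\xi)$ given by the restriction-to-$V$ argument (more precisely, the generators of $S^*(E_\s)$ and the blowup formalism used to build $S^*(\tilde{\p}_\xi)$). First I would recall that $E_\xi$ is the proper transform of $Q_\xi$, so by Theorem \ref{exp-blowup} we have $[E_\xi]=\s_\pi^*(2H_J+\T_\xi)-E_\s$ in $A^*(\tilde{\p}_\xi)$. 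Pushing forward $1\in A^0(E_\xi)$ gives this class, hence $\s_\pi^*(2H_J+\T_\xi)-E_\s$ is killed in the quotient; combined with the relation $E_\s\s_\pi^*H_J=K_\s+\xi_\s$ and its companions from the big presentation of $S^*(\tilde{\p}_\xi)$, this lets me eliminate $E_\s$, and consequently $K_\s$ and $\xi_\s$, from the quotient ring. So the quotient will be generated by $\s_\pi^*H_J$ and $\s_\pi^*\T_\xi$ alone.

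Next I would identify exactly which relations survive. The quotient $S^*(\tilde{\p}_\xi\setminus E_\xi)$ is the quotient of $S^*(\tilde{\p}_\xi)$ by the ideal generated by $i_{\sigma *}S^*(E_\xi)$. Using Theorem \ref{classes-E_V} (the restrictions of $E_\xi,\tilde{L}_{2\s_1},\tilde{L}_{\xi_1},\tilde{\s}_1$ linearly span $A^*(E_V)$) together with the analogous spanning statement over all of $J_*(3)$, I can write down a spanning set for $S^*(E_\xi)$ whose pushforwards I already have as explicit polynomials in $\s_\pi^*H_J,\s_\pi^*\T_\xi,E_\s,K_\s,\xi_\s$ from Theorem \ref{exp-blowup} and the preceding computations. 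The pushforward of the exceptional divisor $E_\sigma$ itself — which lies in $S^*(E_\xi)$ only after restriction, but the class $i_{\sigma*}(\text{its generator})$ — contributes $E_\s E_\xi$; computing $E_\s\cdot[E_\xi]=E_\s\s_\pi^*(2H_J+\T_\xi)-E_\s^2$ using the relation for $E_\s^2$ produces, after substituting the eliminations, the polynomial relation $\s_\pi^*(H_J^3+H_J^2\T_\xi+\tfrac12 H_J\T_\xi^2)=0$ (this is $[\tilde{\s}_i]$ from Theorem \ref{exp-blowup}, which is consistent since $\tilde\s_i\subset E_\xi$). The second relation $\s_\pi^*(H_J^2\T_\xi^2)=0$ should come from pushing forward the top-dimensional generator of $S^*(E_\xi)$ restricted appropriately, or equivalently from multiplying the first relation by suitable classes and using $\s_\pi^*\T_\xi^3=0$ and $\s_\pi^*(H_J^4+H_J^3\T_\xi+\tfrac12H_J^2\T_\xi^2)=0$ from Corollary \ref{chow-p-xi}; I would verify directly that once $E_\xi$ (and hence $E_\s,K_\s,\xi_\s$) are quotiented out, $\s_\pi^*(H_J^3\T_\xi^2)$ and higher already vanish, leaving precisely the stated two relations.

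Finally I would check the ring presentation is not too small, i.e. that nothing further collapses: a dimension count using the $20$-element linear basis of $S^*(\tilde{\p}_\xi)$ from \cite[Proposition 0.1.3]{beauville} against the classes supported on $E_\xi$ should show the quotient has the expected Poincaré series matching $\Q[x,y]/(x^3+x^2y+\tfrac12 xy^2, x^2y^2)$, namely dimensions $1,2,3,2,1$ in codimensions $0$ through $4$ — consistent with $\tilde{\p}_\xi\setminus E_\xi$ being an open in a $5$-dimensional variety whose Chow ring pulls back from $\p_\xi\setminus Q_\xi$ (isomorphic under $\s_\pi$ away from $\C_J$). The main obstacle will be bookkeeping: correctly assembling a generating set for $S^*(E_\xi)$ as a module (not just its linear span in each degree) and tracking all the substitutions $E_\s\mapsto \s_\pi^*(2H_J+\T_\xi)-[E_\xi]$, $K_\s,\xi_\s\mapsto (\ldots)$ through the long list of relations in the presentation of $S^*(\tilde{\p}_\xi)$ without introducing or dropping a relation. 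I expect the cleanest route is actually to argue geometrically first — $\tilde{\p}_\xi\setminus E_\xi\simeq \p_\xi\setminus Q_\xi$ via $\s_\pi$ (the blowup center $\C_J$ lies in $Q_\xi$), so $S^*(\tilde{\p}_\xi\setminus E_\xi)\simeq S^*(\p_\xi\setminus Q_\xi)$, and then use excision on $Q_\xi\hr\p_\xi$ with $[Q_\xi]=2H_J+\T_\xi$ and $S^*(Q_\xi)$ spanned by the rulings — and only afterward reconcile with the blowup computation as a consistency check.
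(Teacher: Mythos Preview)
Your proposal is essentially the paper's approach: run excision on $E_\xi\hookrightarrow\tilde{\p}_\xi$, identify generators of the pushforward ideal $\mathcal{I}=i_{\xi*}A^*(E_\xi)$, set them to zero, and eliminate $E_\s,K_\s,\xi_\s$ in favor of $\s_\pi^*H_J,\s_\pi^*\T_\xi$. The paper is simply more direct about the generating set than you are: combining Theorem~\ref{classes-E_V} and Theorem~\ref{classes-E_T} it lists exactly five cycles $[E_\xi],[\tilde{L}_{2\s_1}],[\tilde{L}_{\xi_1}],[\tilde{\s}_1],[\s_\pi^{-1}(\Delta)]$ that generate $\mathcal{I}$, and then reads off from Theorem~\ref{exp-blowup} the substitutions $E_\s=\s_\pi^*(2H_J+\T_\xi)$, $K_\s=\s_\pi^*(H_J^2+H_J\T_\xi+\tfrac12\T_\xi^2)$, $\xi_\s=\s_\pi^*(H_J^2-\tfrac12\T_\xi^2)$. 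Your uncertainty about where the second relation $\s_\pi^*(H_J^2\T_\xi^2)=0$ comes from is resolved in the paper simply by plugging these substitutions back into the existing relations of $S^*(\tilde{\p}_\xi)$; no separate dimension count or multiplication trick is needed. Your geometric alternative via $\tilde{\p}_\xi\setminus E_\xi\simeq\p_\xi\setminus Q_\xi$ is a valid shortcut and arguably cleaner, but the paper does not take that route.
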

\begin{proof}
 Let $\mathcal{I}:= i_{\xi *}(A^*(E_\xi))$ be the ideal generated by pushing forward the cycles in $A^*(E_\xi)$ to $\tilde{\p}_\xi$ via the inclusion $i_\xi: E_\xi\hr \tilde{\p}_\xi$. It follows from (\ref{ddag}) that $A^*(\tilde{\p}_\xi\setminus E_\xi)\simeq A^*(\tilde{\p}_\xi)/\mathcal{I}$.\\
The ideal $\mathcal{I}$ is generated by 
$$[E_\xi],[\tilde{L}_{2\s_1}], [\tilde{L}_{\xi_1}], [\tilde{\s}_1], [\s_{\pi}^{-1}(\Delta)]$$
 in $A^*(\tilde{\p}_\xi)$ by Theorem \ref{classes-E_V} and Theorem \ref{classes-E_T}.\\
Thus, we get the additional relations 
$$E_\s - \s_\pi^*(2H_J+\T_\xi) = 0, \s_\pi^*(H_J^2-\dfrac{1}{2}\T_\xi^2)-\xi_\s =  \s_\pi^*(H_J^2+H_J\T_\xi+\dfrac{1}{2}\T_\xi^2)-K_\s= 0$$
 using Theorem \ref{tilde-s-pi-delta} and Theorem \ref{exp-blowup}. 
\end{proof}
\noindent Following Theorem \ref{blowup-chow}, Lemma \ref{pj-pxi}, Remark \ref{blowdown} and (\ref{universal}), we have 
\begin{equation}\label{p-xi}S^i(\tilde{\p}_\xi)\simeq S^i(\tilde{U}(2,3,2))\oplus A^{i-1}(B_*)\forall i\geq 1.\end{equation}
Theorem \ref{B-*} and (\ref{ddag}) give us the following upper bounds:
\begin{enumerate}
	\item $\dim S^1(\tilde{U}(2,3,2))= 2, \dim A^0(B_*) =1$
	\item $\dim S^2(\tilde{U}(2,3,2))\leq 4,\dim A^1(B_*)\leq 2$
	\item $\dim S^3(\tilde{U}(2,3,2))\leq 4, \dim A^2(B_*)\leq 2$
	\item $\dim S^4(\tilde{U}(2,3,2))\leq 2,\dim A^3(B_*)\leq 1$
	\item $\dim S^5(\tilde{U}(2,3,2))\leq 1, \dim A^4(B_*) = 0$
\end{enumerate}
For any $ 1\leq j\leq 5$, we have equality in bound $(j)$ for $\dim S^j(\tilde{U}(2,3,2))$ if and only if (\ref{ddag}) is left exact for codimension $j$ and the generators of $A^{j-2}(B_*)$ in Theorem \ref{B-*} are linearly independent.\\
We have the following dimensions of $S^i(\tilde{\p}_\xi)$ from Theorem \ref{chow-p-xi}:
\begin{enumerate}
	\item $\dim S^1(\tilde{\p}_\xi) = 3$
	\item $\dim S^2(\tilde{\p}_\xi) = 6$
	\item $\dim S^3(\tilde{\p}_\xi) = 6$
	\item $\dim S^4(\tilde{\p}_\xi) = 3$
	\item $\dim S^5(\tilde{\p}_\xi) = 1.$
\end{enumerate}
\noindent Applying (\ref{p-xi}) to the above dimensions, we get the following Corollary.
\begin{cor}\label{inject}
The aforementioned upper bounds on $\{\dim S^i(\tilde{U}(2,3,2))\}_{1\leq i\leq 5}$ are thus in fact equalities for all $1\leq i\leq 5$, implying:
\begin{itemize}
	\item The linear generators  for $A^*(B_*)$, as described in Theorem \ref{B-*}, are linearly independent.
	\item The pushforward map in (\ref{ddag}) is injective for all values of $i$.
\end{itemize}
\end{cor}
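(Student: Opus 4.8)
The plan is to run a pure dimension count, playing the blowup decomposition (\ref{p-xi}) off against the excision sequence (\ref{ddag}); no new geometric input is required, since the substantive computations have already been made in the explicit presentations of $S^*(\tilde{\p}_\xi)$, $A^*(B_V)$ and $A^*(B_\T)$.

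First I would record the consequence of (\ref{p-xi}): for every $i \geq 1$,
$$\dim S^i(\tilde{\p}_\xi) = \dim S^i(\tilde{U}(2,3,2)) + \dim A^{i-1}(B_*).$$
Then I would juxtapose this with the two inputs already tabulated: on one side the exact values $\dim S^i(\tilde{\p}_\xi) = 3,6,6,3,1$ for $i = 1,\dots,5$ (and $0$ for $i \geq 6$) coming from the explicit presentation of $S^*(\tilde{\p}_\xi)$; on the other the upper bounds $\dim S^i(\tilde{U}(2,3,2)) \leq 2,4,4,2,1$ and $\dim A^{i-1}(B_*) \leq 1,2,2,1,0$ coming from (\ref{ddag}) and Theorem \ref{B-*}. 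The key observation is that in each codimension the two bounds sum exactly to the known value of $\dim S^i(\tilde{\p}_\xi)$, i.e.\ $2+1=3$, $4+2=6$, $4+2=6$, $2+1=3$, $1+0=1$. A strict inequality in either bound would therefore make the right-hand side of the displayed identity strictly smaller than the left-hand side, which is impossible. Hence both bounds are attained for every $i$; in particular $\dim S^j(\tilde{U}(2,3,2))$ equals its stated bound for all $1 \leq j \leq 5$, and $\dim A^k(B_*)$ equals the number of generators of $A^k(B_*)$ listed in Theorem \ref{B-*} for every $k$ (those generators sitting in codimensions $0 \leq k \leq 3$, with $A^k(B_*) = 0$ for $k \geq 4$ since $S^i(\tilde{\p}_\xi) = 0$ for $i \geq 6$).

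Finally I would invoke the equivalence stated immediately above the corollary: equality in bound $(j)$ for $\dim S^j(\tilde{U}(2,3,2))$ holds if and only if the pushforward $i_{B*}$ in (\ref{ddag}) is injective in codimension $j$ and the generators of $A^{j-2}(B_*)$ from Theorem \ref{B-*} are linearly independent. Since we have just established equality for all $1 \leq j \leq 5$, both conclusions follow in those codimensions; in the remaining codimensions $i_{B*}$ has vanishing source, so it is trivially injective. Assembling the pieces yields exactly the two asserted consequences. The only point requiring care is the index bookkeeping — (\ref{ddag}) in codimension $i$ involves $A^{i-2}(B_*)$ whereas the tabulated bounds pair $\dim S^i(\tilde{U}(2,3,2))$ with $\dim A^{i-1}(B_*)$ — so I would keep the two indexings aligned throughout; beyond that there is no genuine obstacle, as all the hard analysis was carried out in the earlier lemmas.
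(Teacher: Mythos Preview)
Your proposal is correct and follows precisely the paper's own argument: the paper states the corollary as an immediate consequence of applying the blowup decomposition (\ref{p-xi}) to the tabulated dimensions of $S^i(\tilde{\p}_\xi)$, and your write-up simply makes explicit the dimension count that forces both sets of upper bounds to be saturated. Your attention to the index shift between (\ref{ddag}) and (\ref{p-xi}) is appropriate and the argument goes through without issue.
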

\noindent Let $\T_U:= det_\xi^*\T_3 = \Phi_{\xi *}\s_\pi^*\T_\xi, H_U:= \Phi_{\xi *}(\s_\pi^*H_J) \in A^1(\tilde{U}(2,3,2))$.
By Lemma \ref{minus-E-xi}, $\T_U, H_U$ form a basis of $A^1(\tilde{U}(2,3,2))$.
The relations $$\Phi_\xi^*\T_U = \s_\pi^*\T_\xi, \T_U^3=0$$ follow from the injectivity of $\Phi_\xi^*$ and shall prove useful in computations to follow.
\begin{lemma}\label{pullback-hu}
Using \cite[Proposition 0.1.3]{beauville} and Theorem \ref{exp-blowup}, we have
\begin{gather*}
 \Phi_\xi^*(H_U) = \s_\pi^*(3H_J+\T_\xi)-E_\s ,
\Phi_{\xi *}(E_\s) = 2H_U+\T_U \\
 \Phi_{\xi *}(\s_\pi^*H_J^2)+H_U\T_U+\dfrac{1}{2}\T_U^2-\Phi_{\xi *}(K_\s) = B_*,
 \Phi_{\xi *}(\s_\pi^*H_J^2) = \Phi_{\xi *}(\xi_\s) + \dfrac{1}{2}\T_U^2.
\end{gather*}
\end{lemma}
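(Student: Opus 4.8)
**Proof strategy for Lemma 5.?? (the computation of $\Phi_\xi^*H_U$, $\Phi_{\xi*}E_\s$, etc.).**

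The plan is to use the blow-up/blow-down formalism of Theorem \ref{blowup-chow} and Beauville's description (\cite[Proposition 0.1.3]{beauville}) for the map $\Phi_\xi:\tilde{\p}_\xi\to \tilde U(2,3,2)$, which by Remark \ref{blowdown} is the blow-up of $\tilde U(2,3,2)$ along $B_*$ with exceptional divisor $E_\xi$. First I would record the Beauville projection formula in the form already used in the proof of Theorem \ref{chow-curve}: for $z\in A^*(\tilde{\p}_\xi)$ one has $z = \Phi_\xi^*\Phi_{\xi*}z - (i_\xi)_*\bigl(\text{correction supported on }E_\xi\bigr)$, together with the push-pull identities $\Phi_\xi^*\alpha\cdot (i_\xi)_*\gamma = (i_\xi)_*(\gamma\cdot \Phi_\xi|_{E_\xi}^*i_B^*\alpha)$. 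Since $\dim A^1(\tilde U(2,3,2)) = 2$ with basis $\{H_U,\T_U\}$ (from Lemma \ref{minus-E-xi} and the excision sequence), and $\Phi_\xi^*$ is injective, the class $\Phi_\xi^*H_U$ must lie in the span of $\{\s_\pi^*H_J,\s_\pi^*\T_\xi,E_\s\}$; I would pin down the coefficients by pushing forward. Concretely, $\Phi_{\xi*}\s_\pi^*H_J = H_U$ and $\Phi_{\xi*}\s_\pi^*\T_\xi = \T_U$ by definition, while $\Phi_{\xi*}E_\s$ is computed from the fact that $E_\s\hookrightarrow \tilde{\p}_\xi$ is the exceptional divisor of $\s_\pi$ and maps (via $\Phi_\xi$) dominantly onto $\tilde U(2,3,2)$ generically finite of degree equal to the number of points of $\Phi_\xi^{-1}(\text{general }[E])$ lying on $E_\s$ — but a cleaner route is to use the strict-transform description $[E_\xi] = \s_\pi^*(2H_J+\T_\xi)-E_\s$ from Theorem \ref{exp-blowup} and the identity $\Phi_{\xi*}[E_\xi]$ being supported on $B_*$, i.e. contributing $0$ in codimension $1$ after one push-forward only if we are careful; instead I would directly compute $\Phi_{\xi*}(E_\s\cdot\s_\pi^*\T_\xi)$ etc. by restricting to $E_\xi$ and using Theorems \ref{classes-E_V}, \ref{classes-E_T}.

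Next I would obtain $\Phi_\xi^*H_U$ by the following bootstrapping: write $\Phi_\xi^*H_U = a\,\s_\pi^*H_J + b\,\s_\pi^*\T_\xi + c\,E_\s$ and apply $\Phi_{\xi*}$ using $\Phi_{\xi*}\Phi_\xi^*H_U = H_U$ (degree-one property of the blow-down) plus the already-known $\Phi_{\xi*}$ of the three generators; this gives one linear relation. A second relation comes from intersecting with $\s_\pi^*H_J^3$ or $\s_\pi^*\T_\xi^2$ and using the relations in $S^*(\tilde{\p}_\xi)$ together with the fact that $\Phi_\xi^*H_U$ must restrict trivially on the fibers $\Phi_\xi^{-1}([E])\simeq\p^1$ over $B_*$ that $\Phi_\xi$ contracts — i.e. $\Phi_\xi^*H_U\cdot (\text{fiber class of }E_\xi\to B_*) = 0$. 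A third normalization is that $\Phi_\xi^*H_U - \s_\pi^*(\text{something})$ should be supported on $E_\s$ only to the minimal extent forced; comparing with the known expression $[E_\xi]=\s_\pi^*(2H_J+\T_\xi)-E_\s$ and the geometry that $H_U$ pulls back to a class vanishing on the contracted $\p^1$'s, I expect $\Phi_\xi^*H_U = \s_\pi^*(3H_J+\T_\xi)-E_\s$; then $\Phi_{\xi*}E_\s = \Phi_{\xi*}(\s_\pi^*(3H_J+\T_\xi)-\Phi_\xi^*H_U) = 3H_U+\T_U-H_U = 2H_U+\T_U$. For $B_*$ itself, I would use that $[E_\xi]$ is the exceptional divisor, so $[B_*] = \Phi_{\xi*}\bigl(\tfrac12[E_\xi]^2\bigr)$ or more simply $B_* = \Phi_{\xi*}(E_\xi\cdot \s_\pi^*H_J)$-type identities; combining $[E_\xi]=\s_\pi^*(2H_J+\T_\xi)-E_\s$ with the relations $E_\s K_\s - K_\s\s_\pi^*\T_\xi + \s_\pi^*(2H_J^3+2H_J^2\T_\xi+H_J\T_\xi^2)=0$ and $E_\s\s_\pi^*H_J = K_\s+\xi_\s$ from the computation of $S^*(\tilde{\p}_\xi)$, I would push forward $E_\s^2$ (using $E_\s^2 = -(i_\s)_*\zeta$) and match with $A^2(\tilde U(2,3,2))$ to read off $\Phi_{\xi*}(\s_\pi^*H_J^2) + H_U\T_U + \tfrac12\T_U^2 - \Phi_{\xi*}(K_\s) = B_*$ and $\Phi_{\xi*}(\s_\pi^*H_J^2) = \Phi_{\xi*}(\xi_\s) + \tfrac12\T_U^2$. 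The last of these is essentially the push-forward of the relation $E_\s\s_\pi^*H_J = K_\s+\xi_\s$ intersected appropriately, or of the relation $[E_\xi]^2$ expanded, combined with $\s_\pi^*H_J^2 = \xi_\s + \s_\pi^*(\tfrac12\T_\xi^2) + \dots$ coming from $[\tilde L_{\xi_1}]$ in Theorem \ref{exp-blowup}.

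The main obstacle I anticipate is keeping the Beauville correction terms straight — in particular, the push-forward $\Phi_{\xi*}$ of classes that are \emph{not} of the form $\Phi_\xi^*(\cdot)$ or $(i_\xi)_*(\cdot)$, such as $\s_\pi^*H_J^2$ and $\xi_\s,K_\s$, requires expressing each in the Beauville basis adapted to the blow-down $\Phi_\xi$ (whose center is $B_*$, \emph{not} $\C_J$) rather than the blow-up basis adapted to $\s_\pi$ (whose center is $\C_J$). Since $E_\xi$ is the strict transform of $Q_\xi$ and meets $E_\s$ along $\tilde\s_i$-type loci, translating between the two projective-bundle structures on $E_\xi$ (the ruling $\to B_*$ used by $\Phi_\xi$ versus the restriction of the ruling $E_\s\to\C_J$) is where the bookkeeping is delicate; I would manage this using Theorems \ref{classes-E_V} and \ref{classes-E_T}, which already give linear spanning sets for $A^*(E_V)$ and $A^*(E_\T)$, so that the necessary intersection numbers on $E_\xi$ can be computed stratum by stratum ($V$ and $\T_*$) and then assembled via excision $(\dagger\dagger)$. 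The final check is dimensional consistency: each displayed formula must be compatible with $\dim S^i(\tilde{\p}_\xi) = \dim S^i(\tilde U(2,3,2)) + \dim A^{i-1}(B_*)$ from \eqref{p-xi}, which serves as a strong sanity constraint on the coefficients.
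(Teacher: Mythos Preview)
Your overall strategy is sound, and for the second formula you correctly observe that $\Phi_{\xi*}[E_\xi]=0$ (the exceptional divisor of a blow-up pushes forward to zero in codimension one), whence $\Phi_{\xi*}E_\s=2H_U+\T_U$ follows immediately from $[E_\xi]=\s_\pi^*(2H_J+\T_\xi)-E_\s$. The paper does exactly this.

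For $\Phi_\xi^*H_U$, however, your undetermined-coefficients approach leaves a gap: after using $\Phi_{\xi*}\Phi_\xi^*=\mathrm{id}$ you have only two linear relations in $(a,b,c)$, and your proposed third constraint (triviality on the contracted $\p^1$-fibres of $E_\xi\to B_*$) requires you to actually compute $\s_\pi^*H_J\cdot[\text{fibre}]=1$ and, crucially, $E_\s\cdot[\text{fibre}]=3$. The latter comes from the fact that over $V$ the contracted ruling line of $Q_V\simeq\p^K_*\times_V\p^\xi_*$ meets $\C_V$ in $\deg|\xi|=3$ points; you never carry this out, so the step ``I expect $\Phi_\xi^*H_U=\s_\pi^*(3H_J+\T_\xi)-E_\s$'' is not justified. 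The paper avoids this intersection computation entirely by working instead with the auxiliary divisor $\tilde H_{\xi_2}$: its class $\s_\pi^*(H_J+\T_\xi)$ is already known from (\ref{tilde-h-xi2}), and its restriction to $E_\xi$ decomposes \emph{geometrically} as $[\tilde L_{\xi_1}]+[\tilde L_{2\s_2}]$ (one of each ruling type). Since $\Phi_{E*}[\tilde L_{\xi_1}]=0$ and $\Phi_{E*}[\tilde L_{2\s_2}]=[B_*]$, one application of Beauville's formula gives $[\tilde H_{\xi_2}]=\Phi_\xi^*(H_U+\T_U)-[E_\xi]$, and the first formula drops out.

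For the last two identities your route through $E_\s^2$ is unnecessarily indirect. The paper simply pushes forward the explicit expressions for $[\tilde L_{2\s_1}]$ and $[\tilde L_{\xi_1}]$ from Theorem~\ref{exp-blowup}: since $(\tilde L_{2\s_1})_V$ maps isomorphically onto $B_V$ (Theorem~\ref{classes-E_V}) one has $\Phi_{\xi*}[\tilde L_{2\s_1}]=B_*$, giving the third identity directly; and since $\tilde L_{\xi_1}$ lies in the contracted ruling one has $\Phi_{\xi*}[\tilde L_{\xi_1}]=0$, which together with $\s_\pi^*\T_\xi=\Phi_\xi^*\T_U$ and the projection formula yields the fourth identity. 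You gesture at $[\tilde L_{\xi_1}]$ in your final sentence but never state these two push-forward values, which are the entire content of the argument.
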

\begin{proof}
	% Consider $\tilde{H}_{\xi_2}\hr \tilde{\p}_\xi$ as defined in the paragraph preceding Remark \ref{h-2}.
Consider $i_\xi^*[\tilde{H}_{\xi_2}]\in A^1(E_\xi) = <[\tilde{L}_{\xi_1}],[\tilde{L}_{2\s_1}],[E_\T]>$.
Restricting to a fiber over $V\subset J_*(3)$, we have 
\begin{equation}\label{h-xi-2}i_\xi^*[\tilde{H}_{\xi_2}] = [\tilde{L}_{\xi_1}]+[\tilde{L}_{2\s_1}]+t[E_\T]\end{equation}
for some $t\in\Q$. Since $\tilde{H}_{\xi_2}\cap E_\xi$ has $\tilde{L}_{2\s_2},L_{\xi_1}$ as its irreducible components, we have $t=0$.\\
Hence, $\Phi_{E*}(i_\xi^*[\tilde{H}_{\xi_2}]) = B_*$ since $\Phi_{\xi *}([\tilde{L}_{2\s_2}]) = [B_*], \Phi_{\xi *}([\tilde{L}_{\xi_1}]) = 0$. 
Therefore, we have 
\begin{gather*}
[\tilde{H}_{\xi_2}] = \Phi_\xi^*(H_U+\T_U) - E_\xi\implies \Phi_\xi^*(H_U) = \s_\pi^*(3H_J+\T_\xi)-E_\s \\
\Phi_{\xi *}(E_\xi) = 0 \implies \Phi_{\xi *}(E_\s) = 2H_U+\T_U \\
\Phi_{\xi *}(\tilde{L}_{2\s_1}) = B_*\implies \Phi_{\xi *}(\s_\pi^*H_J^2)+H_U\T_U+\dfrac{1}{2}\T_U^2-\Phi_{\xi *}(K_\s) = B_*\\
\Phi_{\xi *}(\tilde{L}_{\xi_1}) = 0\implies \Phi_{\xi *}(\s_\pi^*H_J^2) = \Phi_{\xi *}(\xi_\s) + \dfrac{1}{2}\T_U^2.
\end{gather*}
\end{proof}
\begin{rem}
The expression for $\Phi_\xi^*(H_U)$ can also be derived using \cite[Proposition 4.7]{bertram}.
\end{rem}
	\begin{lemma}
	The pushforwards of $B_*, \s_\pi^*H_J^2, \xi_\s, K_\s$ under $\Phi_\xi$ are given by the following relations:
\begin{gather*}\label{phi-xi-*}
	\Phi_\xi^*B_* = \s_\pi^*(H_J^2+H_J\T_\xi+\dfrac{1}{2}\T_\xi^2)-K_\s,
\Phi_{\xi *}(\s_\pi^*H_J^2) = H_U^2-B_*, \Phi_{\xi *}(\xi_\s) = H_U^2-\dfrac{1}{2}\T_U^2-B_*,\\ \Phi_{\xi *}(K_\s) = H_U^2+H_U\T_U+\dfrac{1}{2}\T_U^2-2B_*.
\end{gather*}
\end{lemma}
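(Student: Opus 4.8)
The plan is to deduce all four formulas from the relations already established in Lemma~\ref{pullback-hu}, together with the blow-up structure $\tilde{\p}_\xi\simeq bl_{B_*}\tilde{U}(2,3,2)$ of Remark~\ref{blowdown} and the explicit presentation of $S^*(\tilde{\p}_\xi)$. The one preparatory identity I would record is
$$\s_\pi^*H_J \;=\; \Phi_\xi^*H_U - [E_\xi], \qquad \Phi_\xi^*\T_U = \s_\pi^*\T_\xi,\qquad \Phi_{\xi*}[E_\xi]=0,$$
the first of which is immediate on combining $\Phi_\xi^*H_U = \s_\pi^*(3H_J+\T_\xi)-E_\s$ (Lemma~\ref{pullback-hu}) with $[E_\xi]=\s_\pi^*(2H_J+\T_\xi)-E_\s$ (Theorem~\ref{exp-blowup}); the second was recorded just before Lemma~\ref{pullback-hu}; and the third holds because $E_\xi$ is a $\p^1$-bundle over $B_*$ via $\Phi_\xi$. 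I also record the self-intersection computation $\Phi_{\xi*}\big([E_\xi]^2\big)=-B_*$: writing $\Phi_E\colon E_\xi=\p(\mathcal N_{B_*/\tilde U(2,3,2)})\ri B_*$ for the exceptional $\p^1$-bundle and $\eta_\xi$ for its relative hyperplane class, one has $i_\xi^*[E_\xi]=c_1(\mo_{E_\xi}(-1))=-\eta_\xi$, hence $[E_\xi]^2=-i_{\xi*}\eta_\xi$, and pushing forward along $\Phi_\xi$ (which on classes supported on $E_\xi$ factors through $\Phi_E$) gives $-\Phi_{E*}\eta_\xi=-B_*$.

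Granting this, the three pushforward formulas are short. Squaring $\s_\pi^*H_J=\Phi_\xi^*H_U-[E_\xi]$ and applying $\Phi_{\xi*}$ with the projection formula together with $\Phi_{\xi*}[E_\xi]=0$ and $\Phi_{\xi*}([E_\xi]^2)=-B_*$ yields $\Phi_{\xi*}(\s_\pi^*H_J^2)=H_U^2-B_*$, which is the second identity. The third and fourth then follow by pure substitution into Lemma~\ref{pullback-hu}: plugging this into $\Phi_{\xi*}(\s_\pi^*H_J^2)=\Phi_{\xi*}(\xi_\s)+\tfrac12\T_U^2$ gives $\Phi_{\xi*}(\xi_\s)=H_U^2-\tfrac12\T_U^2-B_*$, and plugging it into $\Phi_{\xi*}(\s_\pi^*H_J^2)+H_U\T_U+\tfrac12\T_U^2-\Phi_{\xi*}(K_\s)=B_*$ gives $\Phi_{\xi*}(K_\s)=H_U^2+H_U\T_U+\tfrac12\T_U^2-2B_*$.

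For the first identity I would show $\Phi_\xi^*B_*=[\tilde L_{2\s_1}]$, whose right-hand side is exactly $\s_\pi^*(H_J^2+H_J\T_\xi+\tfrac12\T_\xi^2)-K_\s$ by Theorem~\ref{exp-blowup}. Both $\Phi_\xi^*B_*$ and $[\tilde L_{2\s_1}]$ push forward to $B_*$ under $\Phi_{\xi*}$ — the former since $\Phi_\xi$ is birational, the latter by Lemma~\ref{pullback-hu} — so their difference lies in $\ker\Phi_{\xi*}$, which by the blow-up decomposition \cite[Proposition 0.1.3]{beauville} (equivalently (\ref{p-xi})) is $i_{\xi*}\Phi_E^*A^1(B_*)$; write the difference as $i_{\xi*}\Phi_E^*\nu$ with $\nu\in A^1(B_*)$. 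Multiplying by $[E_\xi]$ and pushing forward, one computes, using $i_\xi^*[E_\xi]=-\eta_\xi$, $\Phi_{E*}\eta_\xi=1$ and the projection formula, that $\Phi_{\xi*}\big(i_{\xi*}\Phi_E^*\nu\cdot[E_\xi]\big)=-\,i_{B*}\nu$, while $\Phi_{\xi*}(\Phi_\xi^*B_*\cdot[E_\xi])=B_*\cdot\Phi_{\xi*}[E_\xi]=0$; hence $i_{B*}\nu=\Phi_{\xi*}\big([\tilde L_{2\s_1}]\cdot[E_\xi]\big)$ up to sign. Expanding $[\tilde L_{2\s_1}]\cdot[E_\xi]$ via the explicit classes of Theorem~\ref{exp-blowup} and the relations of $S^*(\tilde{\p}_\xi)$ (notably $K_\s\s_\pi^*H_J=0$, $E_\s\s_\pi^*H_J=K_\s+\xi_\s$, $\xi_\s\s_\pi^*H_J=-K_\s\s_\pi^*\T_\xi$), then pushing forward using the second and third identities just proven together with $\Phi_{\xi*}(E_\s)=2H_U+\T_U$ and $\Phi_{\xi*}(\s_\pi^*H_J)=H_U$, I expect $\Phi_{\xi*}\big([\tilde L_{2\s_1}]\cdot[E_\xi]\big)=0$; since $i_{B*}$ is injective on $A^1(B_*)$ by Corollary~\ref{inject}, this forces $\nu=0$.

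The routine part is everything except the final step: the computation $\Phi_{\xi*}([\tilde L_{2\s_1}]\cdot[E_\xi])=0$ is what pins down the Beauville correction term $\nu$, and it implicitly encodes $c_1(\mathcal N_{B_*/\tilde U(2,3,2)})$, so it is the only place real care is needed. Alternatively, as remarked after Lemma~\ref{pullback-hu}, $\Phi_\xi^*B_*$ can be obtained by globalizing \cite[Proposition 4.7]{bertram}.
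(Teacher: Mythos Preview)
Your approach is correct and takes a genuinely different route from the paper's proof. The paper proceeds by writing $\Phi_{\xi*}(\s_\pi^*H_J^2)=H_U^2+mB_*$ with $m$ unknown, then applies the full Beauville reconstruction formula $z=\Phi_\xi^*\Phi_{\xi*}z-i_{\xi*}\Phi_E^*\Phi_{E*}i_\xi^*z$ separately to $z=K_\s$ and $z=\s_\pi^*H_J^2$; each application requires computing $\Phi_{E*}i_\xi^*z$ in $A^*(B_*)$ via explicit geometric intersections inside $E_\xi$ (e.g.\ $\tilde L_{2\s_i}\cdot\tilde L_{2\s_j}=\s_\pi^{-1}(\Delta)$, $\tilde L_{\xi_i}\cdot\tilde L_{2\s_j}=\tilde\s_i$, and the identification $\Phi_E^*(\Phi_\xi(\tilde\s_1))=\tilde L_{\xi_1}$), and comparing the two resulting equations forces $m=-1$. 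Your observation $\s_\pi^*H_J=\Phi_\xi^*H_U-[E_\xi]$ together with the standard blow-up identity $\Phi_{\xi*}([E_\xi]^2)=-B_*$ bypasses all of this and pins down $m=-1$ in one line; the remaining pushforwards are then formal substitutions, exactly as in the paper. For the first identity the paper simply reads it off from the equation ``$(m)$'' obtained en route, whereas you isolate the correction term $\nu\in A^1(B_*)$ and kill it by computing $\Phi_{\xi*}([\tilde L_{2\s_1}]\cdot[E_\xi])$. That computation does indeed vanish: expanding via Theorem~\ref{exp-blowup} and the $S^*(\tilde\p_\xi)$ relations gives $[\tilde L_{2\s_1}]\cdot[E_\xi]=\s_\pi^*(H_J^2\T_\xi+H_J\T_\xi^2)-\xi_\s\s_\pi^*\T_\xi-\tfrac12E_\s\s_\pi^*\T_\xi^2$, and pushing forward with your already-established formulas yields $0$. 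Your argument is shorter and uses only the abstract blow-up package plus Lemma~\ref{pullback-hu}; the paper's route, while longer, has the side benefit of extracting the pullbacks $\Phi_E^*$ of divisors on $B_*$, which feed into the proof of Theorem~\ref{u-odd}.
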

\begin{proof}
 Using Lemma \ref{minus-E-xi}, there exists $ m\in \Q$ such that $\Phi_{\xi *}(\s_\pi^*H_J^2) = H_U^2+mB_*$. Hence, we have 
$$\Phi_{\xi *}(K_\s) = H_U^2+H_U\T_U+\dfrac{1}{2}\T_U^2+(m-1)B_*, \Phi_{\xi *}(\xi_\s)=H_U^2-\dfrac{1}{2}\T_U^2+mB_*.$$
We show that $m=-1$. 
Applying \cite[Theorem 0.1.3]{beauville}, we get 
$$K_\s + i_{\xi *}\Phi_E^*\Phi_{E *}i_{\xi}^*K_\s = \Phi_\xi^*\Phi_{\xi *}(K_\s)= 4\s_\pi^*H_J^2+6\s_\pi^*(H_J\T_\xi)-2E_\s\s_\pi^*\T_\xi-K_\s-2\xi_\s+2\s_\pi^*\T_\xi^2+(m-1)\Phi_\xi^*B_*$$
using $\Phi_E^*(\Phi_\xi(\tilde{\s}_1)) = \tilde{L}_{\xi_1}$, and that $\tilde{\s}_1$ maps isomorphically into $B_*$ via $\Phi_{\xi}$. Thus, we have
$$i_{\xi *}\Phi_E^*\Phi_{E *}i_{\xi}^*K_\s = 2i_{\xi *}\Phi_E^*\Phi_{E *}i_{\xi}^*\s_\pi^{-1}(\pi^*\s_1) = 2i_{\xi *}\Phi_E^*\Phi_{E *}\tilde{\s}_1=2i_{\xi *}\Phi_E^*[\Phi_{\xi}(\tilde{\s}_1)] = 2[\tilde{L}_{\xi_1}].$$
 Plugging this into the equation above and using Theorem \ref{exp-blowup}, we get
\begin{equation}\label{m-1}
	(m-1)B_* = 2K_\s-2\s_\pi^*H_J^2-2\s_\pi^*(H_J\T_\xi)-\s_\pi^*\T_\xi^2.
\end{equation}
Applying similar computations to $\s_\pi^*H_J^2$, we get 
\begin{equation}\label{h^2}\s_\pi^*H_J^2 = (\s_\pi^*(3H_J+\T_\xi)-E_\s)^2+m\Phi_\xi^*B_*-i_{\xi *}\Phi_E^*\Phi_{E *}i_\xi^*H_J^2.\end{equation}
Using $i_\xi^*H_{\xi_2} = \tilde{L}_{2\s_2}+\tilde{L}_{\xi_1}$, as proven in (\ref{h-xi-2}),  we have 
$$i_{\xi}^*\s_\pi^*H_J = \tilde{L}_{2\s_2}+\tilde{L}_{\xi_1} - E_\T = \tilde{L}_{2\s_3}+\tilde{L}_{\xi_4}-E_\T $$ 
in $A^*(E_\xi)$. Considering different loci that have generically transverse intersections in $E_\xi$, we have 
\begin{gather*}
\tilde{L}_{2\s_i}\tilde{L}_{2\s_j}=\s_\pi^{-1}(\Delta)\implies \Phi_{E *}(\tilde{L}_{2\s_i}\tilde{L}_{2\s_j}) = B_\T,
\tilde{L}_{\xi_i}\tilde{L}_{2\s_j} = \tilde{\s}_i\implies \Phi_{E*}(\tilde{L}_{\xi_i}\tilde{L}_{2\s_j}) = \Phi_\xi(\tilde{\s}_i),\\ \Phi_{E *}(\tilde{L}_{\xi_i}\tilde{L}_{\xi_j}) = 0
\end{gather*} 
in $A^*(E_\xi), A^*(B_*)$ for all $1\leq i<j\leq 6$. Using $\Phi_E^*(\Phi_\xi(\tilde{\s}_1)) = \tilde{L}_{\xi_1}$ and $[\tilde{\s}_i]=[\tilde{\s}_1]\forall 1\leq i\leq 6$ in $A^*(E_\xi)$, we have the following equation in $A^*(\tilde{\p}_\xi)$
\begin{align*}i_{\xi *}\Phi_E^*\Phi_{E *}i_\xi^*\s_\pi^*H_J^2 = i_{\xi *}\Phi_E^*(2\Phi_\xi(\tilde{\s}_1-B_\T)) = 2[\tilde{L}_{\xi_1}]-[E_\T] =\\ 2[\tilde{L}_{\xi_1}]-E_\xi.\s_\pi^*\T_\xi = 2\s_\pi^*H_J^2-E_\s\s_\pi^*\T_\xi+2\s_\pi^*(H_J\T_\xi)-2\xi_\s.\end{align*}
Plugging this into (\ref{h^2}), we get 
\begin{equation}\label{m}
m\Phi_\xi^*B_* = K_\s-\s_\pi^*H_J^2-\s_\pi^*(H_J\T_\xi)-\dfrac{1}{2}\s_\pi^*\T_\xi^2.
\end{equation}
Putting the equations (\ref{m-1}), (\ref{m}) together, we have $m=-1$ as claimed.
\end{proof} 
\begin{theorem}\label{u-odd}
	The ring $S^*(\tilde{U}(2,3,2))$ is generated by the cycles $H_U,\T_U,B_*$. In particular,
		\begin{gather*}S^*(\tilde{U}(2,3,2))=\Q[H_U,\T_U,B_*]/(\T_U^3,H_U^2\T_U^2-4B_*\T_U^2,B_*H_U^2-B_*H_U\T_U+\dfrac{1}{2}B_*\T_U^2,\\B_*H_U^2-B_*^2,H_U^3+H_U^2\T_U+\dfrac{1}{2}H_U\T_U^2-4B_*H_U).\end{gather*}
	\end{theorem}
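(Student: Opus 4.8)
The plan is to bootstrap from the structural results already assembled: the excision sequence (\ref{ddag}), the blowup decomposition (\ref{p-xi}), the explicit linear generators of $A^*(B_*)$ from Theorem \ref{B-*}, the linear independence and left-exactness from Corollary \ref{inject}, and the pushforward/pullback formulas for $\Phi_\xi$ established in Lemma \ref{pullback-hu} and the subsequent lemma computing $\Phi_\xi^*B_*$, $\Phi_{\xi *}(\sigma_\pi^*H_J^2)$, $\Phi_{\xi *}(\xi_\s)$, $\Phi_{\xi *}(K_\s)$. The key point is that $\Phi_\xi^*$ is injective (this is exactly the content of Corollary \ref{inject}, via (\ref{p-xi}) and the dimension count), so every relation in $S^*(\tilde U(2,3,2))$ can be verified after pulling back to $S^*(\tilde\p_\xi)$, where we have the completely explicit presentation from the theorem in \S\ref{chow-blowup-ss}. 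So the strategy is: (i) show $H_U,\T_U,B_*$ generate; (ii) produce the five listed relations by pulling back and checking in $S^*(\tilde\p_\xi)$; (iii) confirm no further relations are needed by matching Hilbert functions codimension-by-codimension.

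First I would establish generation. By Lemma \ref{minus-E-xi}, $S^*(\tilde\p_\xi\setminus E_\xi)$ is generated by $\sigma_\pi^*H_J,\sigma_\pi^*\T_\xi$, hence $S^*(\tilde U(2,3,2)\setminus B_*)$ is generated by $H_U,\T_U$; then by the excision sequence (\ref{ddag}), $S^*(\tilde U(2,3,2))$ is generated by $H_U,\T_U$ together with $i_{B*}(A^*(B_*))$. By Theorem \ref{B-*} and Corollary \ref{inject}, $A^*(B_*)$ is spanned as a group by the six listed classes; one checks that under $i_{B*}$ these are hit by $\Q$-combinations of monomials in $H_U,\T_U,B_*$ — concretely $i_{B*}[B_*]=B_*$, $i_{B*}[B_*|_{\T_*}]=B_*\T_U$, and the remaining four ($[\Phi_\xi(\tilde\s_1)]$, $[B_*|_{\mathcal K_1}]$, $[\Phi_\xi(\tilde\s_1)|_{\T_*}]$, $[\Phi_\xi(\tilde\s_1)|_{\mathcal K_2}]$) are expressed via the intersection products $B_*H_U$, $B_*\T_U^2$, $B_*H_U\T_U$, etc., using the explicit formulas of Lemma \ref{pullback-hu} and restriction to $\mathcal K_1\simeq M_*$ (where $[\mathcal K_1]=\tfrac12\T_*^2$, so $\det_\xi^*$ of it is $\tfrac12\T_U^2$). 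This shows $H_U,\T_U,B_*$ generate the ring.

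Next, the five relations. Each is verified by applying $\Phi_\xi^*$ and reducing in $S^*(\tilde\p_\xi)$. Using $\Phi_\xi^*\T_U=\sigma_\pi^*\T_\xi$, the relation $\T_U^3=0$ is immediate from $\sigma_\pi^*\T_\xi^3=0$. Using $\Phi_\xi^*H_U=\sigma_\pi^*(3H_J+\T_\xi)-E_\s$ and $\Phi_\xi^*B_*=\sigma_\pi^*(H_J^2+H_J\T_\xi+\tfrac12\T_\xi^2)-K_\s$, one substitutes into each candidate relation, expands using the presentation of $S^*(\tilde\p_\xi)$ — in particular $E_\s^2=\sigma_\pi^*(5H_J+\ldots)\cdot(\ldots)$, $E_\s K_\s=\ldots$, $K_\s^2=0$, $\xi_\s K_\s=0$, $\xi_\s^2=-E_\s K_\s\sigma_\pi^*\T_\xi$, $K_\s\sigma_\pi^*H_J=0$, $E_\s\sigma_\pi^*H_J=K_\s+\xi_\s$, $\xi_\s\sigma_\pi^*H_J=-K_\s\sigma_\pi^*\T_\xi$ — and checks the result vanishes, using also $\sigma_\pi^*(H_J^4+H_J^3\T_\xi+\tfrac12H_J^2\T_\xi^2)=0$. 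For example $H_U^3+H_U^2\T_U+\tfrac12H_U\T_U^2-4B_*H_U$ pulls back, after collecting the $E_\s$-free part into $\sigma_\pi^*(27H_J^3+\ldots)$ and the $E_\s$-terms via the relations above, to an element that one reads off is zero; similarly for the other four. This is the routine but lengthy part.

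The main obstacle is verifying \emph{completeness} of the relation list — that $\Q[H_U,\T_U,B_*]$ modulo the five listed relations has exactly the right graded dimensions $1,2,4,4,2,1$ in codimensions $0,\dots,5$, matching $\dim S^i(\tilde U(2,3,2))$ from Corollary \ref{inject}. For this I would compute a Gröbner basis / normal-form argument for the quotient ring: choose a monomial order, show the five relations plus their $S$-polynomials reduce everything so that a monomial basis of each graded piece has the asserted size (e.g. in codimension $2$: $\{H_U^2,H_U\T_U,\T_U^2,B_*\}$; in codimension $3$: $\{H_U^3,H_U^2\T_U,H_U\T_U^2,B_*H_U\}$ with $B_*\T_U$ reducible via the third relation and $B_*^2=B_*H_U^2$ via the fourth; etc.). Since Corollary \ref{inject} already gives the surjection $\Q[H_U,\T_U,B_*]/(\text{five relations})\twoheadrightarrow S^*(\tilde U(2,3,2))$ is well-defined (all five relations hold) and the target has known dimensions, it suffices to check the source has dimension $\le$ those numbers in each degree — a finite combinatorial check once the leading terms of the relations are identified. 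Finally, $A^*(\mathcal U(2,3,2))\simeq A^*(\tilde U(2,3,2))$ by Theorem \ref{artin-chow}, and $A^*(U(2,3,2))=S^*(\tilde U(2,3,2))$ by the definition of $S^*$ together with injectivity of $q^*$ from Remark \ref{rem1}, completing the identification.
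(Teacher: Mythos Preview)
Your proposal is correct and follows essentially the same route as the paper: generation via the excision sequence (\ref{ddag}) together with Lemma \ref{minus-E-xi} and Theorem \ref{B-*}, relations verified by pulling back along the injective $\Phi_\xi^*$ into the explicitly presented ring $S^*(\tilde\p_\xi)$, and completeness by matching graded dimensions against Corollary \ref{inject}. One small slip to fix: in your codimension-$3$ parenthetical you say $B_*\T_U$ is reduced by the third relation, but that relation lives in codimension $4$; the two relations operative in codimension $3$ are $\T_U^3=0$ and the fifth one, so the correct basis there is $\{H_U^2\T_U,\,H_U\T_U^2,\,B_*H_U,\,B_*\T_U\}$ (this is exactly the paper's basis), with $H_U^3$ the monomial that gets reduced.
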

\begin{proof}
We use injectivity of $\Phi_\xi^*$ to compute relations in $A^*(\tilde{U}(2,3,2))$.\\
We have $\Phi_\xi^*(B_*) = \tilde{L}_{2\s_1}, [\tilde{\s}_1] = \tilde{L}_{2\s_1}\s_\pi^*H_J$ in $A^*(\tilde{\p}_\xi)$ using Theorem \ref{exp-blowup}. Therefore,
$$[\Phi_\xi(\tilde{\s}_1)] = \Phi_{\xi *}(\tilde{\s}_1) = \Phi_{\xi *}(\tilde{L}_{2\s_1}\s_\pi^*H_J) = \Phi_{\xi *}(\Phi_\xi^*B_*\s_\pi^*H_J) = B_*H_U.$$ 
Applying Theorem \ref{B-*}, Corollary \ref{inject}  and Theorem \ref{minus-E-xi}, we have the following  $\mathbb{Q}$-basis of $S^*(\tilde{U}(2,3,2))$
$$\{[\tilde{U}(2,3,2)],H_U,\T_U,H_U^2,H_U\T_U,\T_U^2,B_*,H_U^2\T_U,H_U\T_U^2,B_*H_U,B_*\T_U,B_*H_U\T_U,B_*\T_U^2,B_*H_U\T_U^2\}.$$
Hence, it is enough to express $H_U^3,B_* H_U^2,H_U^2 \T_U^2,H_U^3\T_U ,B_*^2$ as linear combinations of the basis elements above to describe the ring $S^*(\tilde{U}(2,3,2))$.
Using (\ref{phi-xi-*}), we have 
$$\tilde{L}_{2\s_1}\s_\pi^*H_J^2 =0\implies \Phi_{\xi *}(\s_\pi^*H_J^2.\Phi_\xi^*B_*)=0\implies B_*\Phi_{\xi *}(\s_\pi^*H_J^2)=0\implies B_*H_U^2 = B_*^2$$
in $A^*(\tilde{U}(2,3,2))$. Furthermore, 
$$B_*H_U^2 = [\Phi_\xi(\tilde{\s}_1)]H_U = \Phi_{\xi *}(\tilde{\s}_1.\Phi_\xi^*H_U).$$ 
On the other hand, we have the following 
\begin{gather*}
	\Phi_{\xi *}(\tilde{\s}_1.\Phi_\xi^*H_U) = \Phi_{\xi *}(\s_\pi^*(H_J^3+H_J^2\T_\xi+\dfrac{1}{2}H_J\T_\xi^2)(\s_\pi^*(3H_J+\T_\xi)-E_\s)) = \\\Phi_{\xi *}(\s_\pi^*(H_J^3\T_\xi+H_J^2\T_\xi^2)+\dfrac{1}{2}(K_\s-\xi_\s)\s_\pi^*\T_\xi^2),\\
	\Phi_{\xi *}(\tilde{\s}_1) = B_*H_U=\Phi_{\xi *}(\s_\pi^*(H_J^3+H_J^2\T_\xi+\dfrac{1}{2}H_J\T_\xi^2)) \implies \Phi_{\xi *}(\s_\pi^*(H_J^3)) = B_*(H_U+\T_U)-H_U^2\T_U-\dfrac{1}{2}H_U\T_U^2\\
	\implies B_*H_U^2 = \Phi_{\xi *}(\s_\pi^*H_J^3)\T_U+\Phi_{\xi *}(\s_\pi^*H_J^2)\T_U^2+\dfrac{1}{2}\Phi_{\xi *}(K_\s-\xi_\s)\T_U^2= B_*H_U\T_U-\dfrac{1}{2}B_*\T_U^2.
\end{gather*}
A straightforward computation in $A^*(\tilde{\p}_\xi)$ shows that 
\begin{gather*}\Phi_\xi^*(H_U^3+H_U^2\T_U+\dfrac{1}{2}H_U\T_U^2) = 4\Phi_\xi^*(B_*H_U)\implies H_U^3+H_U^2\T_U+\dfrac{1}{2}H_U\T_U^2 = 4B_*H_U\text{ in } A^*(\tilde{U}(2,3,2)),\\
\Phi_\xi^*(4B_*\T_U^2) = 4(\s_\pi^*(H_J^2\T_\xi^2)-K_\s\s_\pi^*\T_\xi^2) = \Phi_\xi^*(H_U^2\T_U^2)\implies H_U^2\T_U^2 = 4B_*\T_U^2\text{ in } A^*(\tilde{U}(2,3,2)),\\
\implies H_U^3\T_U+H_U^2\T_U^2=4B_*H_U\T_U\implies H_U^3\T_U = 4B_*\T_U(H_U-\T_U)
\end{gather*}
proving the claimed relations.
% This proves the claim for $A^*(U_*(2,3,2))$. We show that the generators are in the image of $q^*$.\\
% The cycles $B_*, \T_U $ are pullbacks of their images due to their Brill-Noether definition.\\
% In order to show the same for $H_U$, we apply \cite[Theorem A.2]{fringuelli} as done in Theorem \ref{u-even}.\\
% We have $\dim A^1(\tilde{U}(2,3,2)) = 2 = \dim A^1(U(2,3,2))$. Therefore, $q^*$ is an isomorphism in codimension $1$ finishing the proof.
\end{proof}
\section{Generators in terms of tautological classes}\label{tautological}
\noindent We use the tautological classes as defined in \S\ref{tautological-intro} to express the generators of $A^*(\tilde{U}(2,d,2))$ as presented in Theorem \ref{u-even} and Theorem \ref{u-odd}.\\ 
% Due to the distinct geometries of $\tilde{U}(2,d,2)$ for different parities of $d$, we shall calculate these expressions separately.
\subsection{Even Degree}\label{even-taut}
We follow the notation as established in \S\ref{even-degree}.
As shown in Theorem \ref{u-odd}, $A^*(\tilde{U}(2,2d,2))$ is generated by the cycles $\nu,\tilde{Z}_d$ in codimension $1$. 
Hence, it is enough to express the generators in Theorem \ref{fringuelli-main}, which are tautological, in terms of $\nu, \tilde{Z}_d$.\\
We further divide our computations based on the parity of $d$. However, in either case, we shall obtain the same expression in Remark \ref{rem-chern}.
\begin{lemma}\label{chern-e0}
	Put $\xi_i := (1\times \tau_{i,3})^*\xi\in Pic(\C_*\times_{M_*}J_*(i))$ for $i=0,1$ and let $\pi_{PJ},\pi_{CP},\pi_{CJ}$ be the projections from $\C_*\times_{M_*}\p(\e)\times_{M_*}J_*(0)$ onto $\p(\e)\times_{M_*}J_*(0),\C_*\times_{M_*}\p(\e), \C_*\times_{M_*}J_*(0)$, respectively.\\
	Using (\ref{diag_1}), the pullback 
	$$(f\times 1)^*:Pic(|2\T_*|_\pi\times_{M_*}J_*(0))\ri Pic(\p(\e)\times_{M_*}J_*(0))$$
	 is injective. Furthermore,
	$$(f\times 1)^*(\nu-\dfrac{1}{8}\tilde{Z}_0) = (f\times 1)^*(\pi_\T^*\zeta-2\pi_J^* \T_0 )= \pi_{PJ*}(c_2(\pi_{CP}^*(\mathcal{P}\otimes (1\times p_J)^*\xi_1^{-1})\otimes \pi_{CJ}^*(\xi_0))).$$
\end{lemma}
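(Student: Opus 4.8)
The plan is to prove injectivity of $(f\times 1)^*$ first, and then the displayed identity.

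\textbf{Injectivity.} I would use that, by the proof of Theorem~\ref{isom}, $f$ is the composite $\p(\e)\xrightarrow{\sim}\p(\mathcal{F}^\vee)\hookrightarrow\p_{J_*(1)}(\pi^*\pi_*L_{\T_*}^2)=|2\T_*|_\pi\times_{M_*}J_*(1)\xrightarrow{\mathrm{pr}_1}|2\T_*|_\pi$, the middle arrow being the closed immersion attached to the subbundle inclusion $\mathcal{F}^\vee\hookrightarrow\pi^*\pi_*L_{\T_*}^2$. Base changing by $J_*(0)\to M_*$, $f\times 1$ factors as a closed immersion realising $\p(\e)\times_{M_*}J_*(0)$ as a relative projective subbundle of $(|2\T_*|_\pi\times_{M_*}J_*(1))\times_{M_*}J_*(0)$ over $J_*(1)\times_{M_*}J_*(0)$, followed by the projection onto $|2\T_*|_\pi\times_{M_*}J_*(0)$. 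The first map induces an isomorphism on Picard groups (sending relative $\mathcal{O}(1)$ to relative $\mathcal{O}(1)$, identity on the base), and the second has a section obtained by pulling back the rigidifying section $\s_1\colon M_*\to J_*(1)$, so its Picard pullback is split injective; composing gives the claim.

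\textbf{The left-hand side.} The first equality follows at once from Theorem~\ref{u-even} together with equation~(\ref{t-Z}) at $d=0$, which give $q_\Gamma^*(\nu-\tfrac18\tilde Z_0)=\pi_\T^*\zeta-2\pi_J^*\T_0$ on $|2\T_*|_\pi\times_{M_*}J_*(0)$; so I only need to evaluate $(f\times 1)^*$ of this class. Since $\pi_\T\circ(f\times 1)$ factors through $f$, the first summand reduces to $f^*\zeta$, which is the restriction to $\p(\mathcal{F}^\vee)$ of the relative hyperplane class of $\p_{J_*(1)}(\pi^*\pi_*L_{\T_*}^2)$. Combining $\mathcal{F}^\vee\simeq L_{\T_*}^2\otimes L_J^\vee\otimes\e$ from Theorem~\ref{isom}, the standard comparison of relative hyperplane classes under a twist by a line bundle from $J_*(1)$, and Lemma~\ref{c_1} ($c_1(L_J)=4\T_*$), I get $f^*\zeta=H+2p_J^*\T_*$ with $H:=c_1(\mathcal{O}_{\p(\e)}(1))$. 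As $\pi_J\circ(f\times 1)$ is the projection to $J_*(0)$, the second summand is the pullback $\eta_0$ of $\T_0$. Hence the left-hand side equals $H+2\eta-2\eta_0$, where $\eta,\eta_0$ are the pullbacks of $\T_*$ and $\T_0$ to $\p(\e)\times_{M_*}J_*(0)$.

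\textbf{The right-hand side.} Write $\mathcal{A}:=\mathcal{P}\otimes(1\times p_J)^*\xi_1^{-1}$ (the Poincaré bundle defining $s$) and $\mathcal{Q}:=\pi_{CP}^*\mathcal{A}\otimes\pi_{CJ}^*\xi_0$, so the right-hand side is $\pi_{PJ*}c_2(\mathcal{Q})$. By \cite[Remark~1]{shubham}, $\mathcal{P}$ is a relative universal extension, so $\mathcal{A}$ fits in $0\to\pi_{\p(\e)}^*\mathcal{O}_{\p(\e)}(1)\otimes(1\times p_J)^*\xi_1^{-1}\to\mathcal{A}\to(1\times p_J)^*\xi_1\to 0$, which restricts on curve fibres to the Narasimhan--Ramanan extension $0\to L^{-1}\to E\to L\to 0$. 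This gives $c_1(\mathcal{A})=H$ and $c_2(\mathcal{A})=H\cdot(1\times p_J)^*c_1(\xi_1)-(1\times p_J)^*c_1(\xi_1)^2$. I then expand $c_2(\mathcal{Q})$ by the rank-two twisting formula $c_2(\mathcal{Q})=\pi_{CP}^*c_2(\mathcal{A})+\pi_{CP}^*c_1(\mathcal{A})\cdot\pi_{CJ}^*c_1(\xi_0)+\pi_{CJ}^*c_1(\xi_0)^2$ and push forward by $\pi_{PJ}$: flat base change converts $\pi_{PJ*}\pi_{CP}^*$ and $\pi_{PJ*}\pi_{CJ}^*$ into fibrewise integrals over $\C_*$, and the projection formula takes care of the mixed term. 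The only curve integrals required are $\pi_*c_1=\deg$ and $\pi_*c_1^2=-2(\text{theta})$ for a Poincaré class, plus the vanishing of the terms with $c_1(\xi_i)\cdot\pi_C^*\s_1$ (rigidification along $\s_1$) and with $\pi_C^*\s_1^2$ (as $A^2(\C_*)=0$). Assembling the three contributions produces $H+2\eta-2\eta_0$, which is exactly the left-hand side.

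\textbf{The hard part.} The real work is the bookkeeping of normalisations: which twist of $\mathcal{O}_{\p(\e)}(1)$ is the sub-line-bundle of the universal extension, and the precise ($\s_1$-rigidified, correctly degree-shifted) shape of $\xi_0$ and $\xi_1$; these determine the coefficient of $H$ in $\pi_{PJ*}c_2(\mathcal{Q})$, and that coefficient must come out equal to $1$. As a robust check that also forces these constants, I would restrict the whole identity to a fibre $p=[C,(\s_i)]\in M_*$: since $A^1(M_*)=0$, the classes $H,\eta,\eta_0$ restrict to linearly independent classes, and the fibrewise statement is the classical Chern-class computation for the Narasimhan--Ramanan Poincaré bundle on the $\p(H^1(C,L^{-2}))$-bundle over $J^1(C)\times J^0(C)$, which follows from \cite[Theorem~2]{nar-ram} and the explicit form of that bundle.
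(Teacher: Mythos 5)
Your proposal is correct and follows essentially the same route as the paper: you compute $f^*\zeta=\mathcal{O}_{\p(\e)}(1)+2p_J^*\T_*$ from the identification $\mathcal{F}^\vee\simeq L_{\T_*}^2\otimes L_J^\vee\otimes\e$ together with Lemma~\ref{c_1}, and you evaluate $\pi_{PJ*}c_2$ via the defining extension of $\mathcal{P}$, the rank-two twisting formula, and the standard pushforwards $\pi_*c_1(\xi_1)=1$, $\pi_*c_1(\xi_i)^2=-2\T_i$, $\pi_*c_1(\xi_0)=0$, exactly as in the paper's proof. Your injectivity argument (closed immersion of projective subbundles followed by a projection split by a section of $J_*(1)\to M_*$) is a slightly more explicit unwinding of the paper's one-line appeal to the subbundle inclusion~(\ref{zeta-c1}), but it is the same underlying fact.
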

\begin{proof}
	The bundle $\mathcal{P}$ is determined by an extension class on $\C_J\times_{J_*(1)}\p(\e)$. We have the exact sequence
	$$0\ri \pi_{P(\e)}^*\mo_{P(\e)}(1)\ri \mathcal{P}\ri(1\times p_J)^*\xi_1^2  \ri 0.$$
Thus, we have 
	 $$c(\mathcal{P}\otimes (1\times p_J)^*\xi_1^{-1}) = (1+\pi_{P(\e)}^*\mo(1)-(1\times p_J)^*\xi_1)(1+(1\times p_J)^*\xi_1)\implies $$
	\begin{gather}\label{pushforward-chern}\pi_{PJ*}(c_2(\pi_{CP}^*(\mathcal{P}\otimes (1\times p_J)^*\xi_1^{-1})\otimes \pi_{CJ}^*(\xi_0))) = \pi_{PJ*}((\pi_{P(\e)}^*\mo(1)-\pi_{CP}^*(1\times p_J)^*\xi_1+\pi_{CJ}^*\xi_0)\times
		\\(\pi_{CP}^*(1\times p_J)^*\xi_1+\pi_{CJ}^*\xi_0)).\nonumber\end{gather}
	Applying arguments analogous to Theorem \ref{chow-curve}, we have
	\begin{align*}\pi_{PJ*}(\pi_{CP}^*(1\times p_J)^*c_1(\xi_1)^2) =-2\pi_{P(\e)}^*p_J^*\T_1, \pi_{PJ*}(\pi_{CJ}^*c_1(\xi_0)^2)=-2\pi_{J}^*\T_0,\\
		 \pi_{PJ*}(\pi_{CP}^*(1\times p_J)^*c_1(\xi_1)) = 1, \pi_{PJ*}(\pi_{CJ}^*c_1(\xi_0)) =0 \in A^1(P(\e)\times_{M_*}J_*(0)) \implies 
\\\pi_{PJ*}(c_2(\pi_{CP}^*(\mathcal{P}\otimes (1\times p_J)^*\xi_1^{-1})\otimes \pi_{CJ}^*(\xi_0))) = \pi_{P(\e)}^*\mo(1)+2\pi_{P(\e)}^*p_J^*\T_1-2\pi_J^*\T_0.\end{align*}
The embedding $\p(\e)\hr JU_*$ in Theorem \ref{isom} was given by the injection 
\begin{equation}\label{zeta-c1}
	\e\otimes L_J^\vee\otimes L_{\T_*}^2\simeq \mathcal{F}^\vee \hr \pi^*\pi_*L_{\T_*}^2.
\end{equation}
Let $\zeta$ be the relative hyperplane divisor on $\p(\E)$. We have 
$$f^*\zeta = \mo_{\p(\e)}(1)+p_J^*(c_1(L_J)-2\T_1) = \mo_{\p(\e)}(1)+2p_J^*\T_1.$$
 by Lemma \ref{c_1}. Plugging these values in (\ref{pushforward-chern}) and using (\ref{t-Z}), we get
$$\pi_{PJ*}(c_2(\pi_{CP}^*(\mathcal{P}\otimes (1\times p_J)^*\xi_1^{-1})\otimes \pi_{CJ}^*(\xi_0))) = \pi_{P(\e)}^*f^*\zeta-2\pi_J^*\T_0 = (f\otimes 1)^*(\nu-\dfrac{1}{8}\tilde{Z}_0).$$ 
The injectivity of $(f\times 1)^*$ on Picard groups follows from (\ref{zeta-c1}).
\end{proof}
\begin{rem}\label{rem-chern}
	The Poincar\'e bundle on $\C_*\times_{M_*}\p(\e)\times_{M_*}J_*(0)$ defining the map $f\otimes 1: \p(\e)\times J_*(0)\ri \tilde{U}(2,0,2)$ is given by $\pi_{CP}^*(\mathcal{P}\otimes (1\times p_J)^*\xi_1^{-1})\otimes \pi_{CJ}^*(\xi_0)$. Hence, we have 
	$$(f\otimes 1)^*(\nu-\dfrac{1}{8}\tilde{Z}_0) = (f\otimes 1)^*\kappa_{-1,0,1}\implies \nu -\dfrac{1}{8}\tilde{Z}_0 = \kappa_{-1,0,1}.$$
	Furthermore, we have 
	$$c_1(\mathcal{P}\otimes (1\times p_J)^*\xi_1^{-1}\otimes \pi_{CJ}^*\xi_i)= \pi_{P(\e)}^*\mo(1)+2\pi_{CJ}^*\xi_i\in A^1(\C_*\times_{M_*0}\p(\e)\times_{M_*}J_*(i))\quad\forall i\in \mathbb{Z}.$$
\end{rem}
\begin{theorem}\label{nu-xi-even}
	Let $\Xi,\T$ be the generators of $Pic(U(2,2d,2))$ as described in Theorem \ref{fringuelli-main}. For $d\in \{0,1\}$, we have 
	$$\T = -\nu-\dfrac{3+2d-4\lfloor\dfrac{d+1}{2}\rfloor}{8}{\tilde{Z}_d}, \Xi = -\dfrac{1}{2}\tilde{Z}_d\in A^1(U(2,2d,2)).$$
\end{theorem}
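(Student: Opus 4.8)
The plan is to verify both identities after pulling them back to $A^1(\mathscr{U}(2,2d,2))$. First I would note that $\Lambda(1,0,0)$ is torsion in $\mathrm{Pic}(\mathcal{U}(2,2d,2))$ by the relation $\Lambda(1,0,0)^{10}=\mathcal{O}$ of Theorem \ref{fringuelli-main}, hence rationally trivial; so by Theorem \ref{fringuelli-main} the pair $\{\Xi,\T\}$ is a $\mathbb{Q}$-basis of $A^1(\mathcal{U}(2,2d,2))$, which by Theorem \ref{artin-chow} is identified with $A^1(U(2,2d,2))$, a two-dimensional space with basis $\{\nu,\tilde{Z}_d\}$ by Theorem \ref{u-even}. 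Thus the assertion amounts to an identity of change-of-basis coefficients, and since the pullback $\nu_{2,2d}^*\colon A^1(\mathcal{U}(2,2d,2))\hookrightarrow A^1(\mathscr{U}(2,2d,2))$ is injective (Theorem \ref{pullback-nu}), it suffices to prove the corresponding relations among the pullbacks of $\Xi,\T,\nu,\tilde{Z}_d$ inside $A^1(\mathscr{U}(2,2d,2))$, where the twisted-$\kappa$ classes are available.

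Next I would compute $\Xi$ and $\T$ in terms of twisted-$\kappa$ classes by Grothendieck--Riemann--Roch applied to $\pi_{\mathscr{U}}$. Writing $c_1=c_1(\mathcal{E}_{2d})$, $c_2=c_2(\mathcal{E}_{2d})$ and $K=K_{\pi_{\mathscr{U}}}$, one has $c_1(\Lambda(m,n,0))=\pi_{\mathscr{U}*}\bigl[\,e^{mK+nc_1}\,\mathrm{td}(K^\vee)\,\bigr]_{2}$ for the line bundles $K^{\otimes m}\otimes\det\mathcal{E}_{2d}^{\otimes n}$, and for $\Lambda(0,0,1)=d_{\pi_{\mathscr{U}}}(\mathcal{E}_{2d})$ one uses $\mathrm{ch}(\mathcal{E}_{2d})=2+c_1+\tfrac12(c_1^2-2c_2)+\cdots$; note $\Lambda(0,0,1)$ occurs only as a line bundle raised to the scalar power $2n_{2,d-1}/n_{2,d}$ in $\T$, so no higher tensor power of $\mathcal{E}_{2d}$ enters. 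Since $A^1(\mathcal{M}_2;\mathbb{Q})=0$ the class $\kappa_{1,0,0}=\pi_{\mathscr{U}*}(K^2)$ vanishes, and each $c_1(\Lambda(m,n,l))$ becomes an explicit $\mathbb{Q}$-combination of $\kappa_{0,1,0}$, $\kappa_{-1,2,0}$ and $\kappa_{-1,0,1}$. Substituting into the formulas $\Xi=\Lambda(0,1,0)^{(d+1)/n_{2,d-1}}\otimes\Lambda(1,1,0)^{-(d-1)/n_{2,d-1}}$ and $\T=\Lambda(0,0,1)^{2n_{2,d-1}/n_{2,d}}\otimes\Lambda(0,1,0)^{\alpha}\otimes\Lambda(1,1,0)^{\beta}$ of Theorem \ref{fringuelli-main} then expresses $\Xi$ and $\T$ as explicit vectors in $\langle\kappa_{0,1,0},\kappa_{-1,2,0},\kappa_{-1,0,1}\rangle$; here the condition $\alpha(d-1)+\beta(d+1)=-n_{2,d-1}(d-2)/n_{2,d}$ is exactly what makes the answer independent of the choice of $(\alpha,\beta)$, i.e.\ $\mathcal{B}\mathbb{G}_m$-invariant, and the sub-cases $d=0$ ($n_{2,-1}=1$, $n_{2,0}=2$) and $d=1$ ($n_{2,0}=2$, $n_{2,1}=1$) are handled in parallel.

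It then remains to express $\nu$ and $\tilde{Z}_d$ in the same $\kappa$-coordinates and invert. For $\nu$, Lemma \ref{chern-e0} and Remark \ref{rem-chern} already give $\nu-\tfrac18\tilde{Z}_0=\kappa_{-1,0,1}$ when $d=0$; for $d=1$ I would run the analogous computation of the second Chern class of the Poincar\'e bundle $\mathcal{P}\otimes(1\times p_J)^*\xi_1^{-1}\otimes\pi_{CJ}^*\xi_1$ over $\p(\e)\times_{M_*}J_*(1)$, using the displayed identity $c_1(\mathcal{P}\otimes(1\times p_J)^*\xi_1^{-1}\otimes\pi_{CJ}^*\xi_i)=\pi_{\p(\e)}^*\mo(1)+2\pi_{CJ}^*\xi_i$ of Remark \ref{rem-chern} together with the identification $\nu+\tfrac18\tilde{Z}_1=\T_{gen}$ (Theorem \ref{u-even}) and Theorem \ref{t-gen}. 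For $\tilde{Z}_d=\det_d^*\tau_{2d,2}^*Z$ I would use that the classes $\kappa_{a,b,0}$ on $\mathscr{U}(2,2d,2)$ are determined by $\det\mathcal{E}_{2d}$, hence are $\det_d$-pullbacks of the corresponding tautological classes on the universal degree-$2d$ Jacobian up to classes from $\mathcal{M}_2$ (rationally trivial in degree one), so that Theorem \ref{larson-main} and Theorem \ref{t-s6} — in particular $Z_\C=4\T_2$ and the description of the generator $2d\kappa_{0,1,0}-\kappa_{-1,2,0}$ of $A^1(J_{2}^{2d})$ — pin down $\tau_{2d,2}^{*}Z$ and hence $\tilde{Z}_d$ as an explicit combination of $\kappa_{0,1,0}$ and $\kappa_{-1,2,0}$. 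Solving the resulting linear system for $\Xi$ and $\T$ in terms of $\nu$ and $\tilde{Z}_d$ yields the stated formulas; one checks that the coefficient $\tfrac{3+2d-4\lfloor(d+1)/2\rfloor}{8}$ evaluates to $\tfrac38$ at $d=0$ and $\tfrac18$ at $d=1$, so a single expression covers both cases.

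The main obstacle will be bookkeeping the exact rational constants: principally the normalization of $\tilde{Z}_d$ against the Larson generator of $A^1(J_2^{2d})$ — tracking the factor from $Z_\C=4\T_2$ in Theorem \ref{t-s6} and the shift of the theta-class under the twist $\tau_{2d,2}$ by $(2-2d)\sigma_1$ — and correctly handling the parity-dependent exponents $2n_{2,d-1}/n_{2,d}$ and $(d\pm1)/n_{2,d-1}$ in the definitions of $\Xi$ and $\T$, so that the closed formula with the floor function is valid simultaneously for $d=0$ and $d=1$. By contrast the Grothendieck--Riemann--Roch computations are routine and collapse considerably once $\kappa_{1,0,0}=0$ is used.
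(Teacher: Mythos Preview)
Your proposal is correct and follows essentially the same approach as the paper: both compute the $\Lambda$-classes in terms of twisted-$\kappa$ classes via Grothendieck--Riemann--Roch, identify $\nu$ through the second Chern class computation of Lemma~\ref{chern-e0}/Remark~\ref{rem-chern} (resp.\ the identification $\T_{gen}=\nu+\tfrac18\tilde{Z}_1$ from Theorem~\ref{u-even} for $d=1$), and identify $\tilde{Z}_d$ via the Jacobian-side relation $\xi_d^2=-\pi_C^*K_\pi\cdot\pi_{J_*(d)}^*\T_d$ together with (\ref{t-Z}). The only organizational difference is that the paper computes $\Xi$ in $\kappa$-coordinates first and then matches it directly to $-\tfrac12\tilde{Z}_d$, whereas you propose to compute $\tilde{Z}_d$ in $\kappa$-coordinates and then invert; these are interchangeable.
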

\begin{proof}
	Applying the Grothendieck-Riemann-Roch theorem, we have
	$$\Lambda(0,1,0) = \dfrac{1}{2}(\kappa_{-1,2,0}-\kappa_{0,1,0}), \Lambda(1,1,0) = \dfrac{1}{2}(\kappa_{-1,2,0}+\kappa_{0,1,0}), \Lambda(0,0,1) = \Lambda(0,1,0)-\kappa_{-1,0,1}.$$
By Theorem \ref{pullback-nu}, we have the following:
\begin{itemize}
	\item $\Xi = \kappa_{-1,2,0}-2\kappa_{0,1,0} \in Im(\det_1^*)$ for $d=1$
	\item $\Xi = \kappa_{-1,2,0} \in Im(\det_0^*)$ for $d=0$. 
\end{itemize}
	Using the Poincar\'e bundle $\xi_{d}\ri \C_*\times_{M_*}J_*(d)$, where $\xi_{d} := (1\times \tau_{d,3})^*\xi$. 
	Applying Theorem \ref{chow-curve}, we get
	$$\xi_d^2 = -\pi_C^*K_\pi.\pi_{J_*(d)}^*\T_d \in A^2(\C_*\times_{M_*}J_*(d)).$$
	Applying (\ref{t-Z}) for $d=0,1$, we get
	$$\Xi = -2det_d^*\tau_{2d,1}^*(\T_*) = - \dfrac{1}{2} \tilde{Z}_d.$$ 
	For $d$ odd, we fix $d=1$ and apply Theorem \ref{u-even} and Remark \ref{rem-chern} to get 
	$$\T=\Lambda (0,0,1) = -\T_{gen} = -\nu-\dfrac{1}{8}\tilde{Z}_1,$$
	which proves the claimed expression for $\T$. \\
	For $d$ even, we fix $d=0$ and apply Lemma \ref{chern-e0} to get 
	$$\T = \Lambda(0,0,1)+\Lambda(1,1,0) =\kappa_{-1,2,0}-\kappa_{-1,0,1} = -\nu-\dfrac{3}{8}\tilde{Z}_0 .$$ 
\end{proof}

\subsection{Odd Degree}\label{odd-taut}
We follow the notation in \S\ref{odd-degree}.
As established in Theorem \ref{u-odd} and Theorem \ref{artin-chow}, the rings $S^*(\tilde{U}(2,3,2))\simeq A^*(U(2,3,2))\simeq A^*(\mathcal{U}(2,3,2))$ are generated by $H_U,\T_U,\text{ and } B_*$. 
\begin{theorem}\label{b-*-taut}
Let $\pi_{\mathscr{U}}:\C_2\times_{\mathcal{M}_2}\mathscr{U}(2,3,2)\ri \mathscr{U}(2,3,2)$ be the projection map.
 We have $$B_* = \dfrac{1}{2}\left(\dfrac{1}{2}(\kappa_{-1,2,0}-\kappa_{0,1,0})-\kappa_{-1,0,1}\right)^2- \dfrac{1}{2}\left( \dfrac{\kappa_{-1,3,0}}{3}-{\ka_{-1,1,1}}-\dfrac{\ka_{0,2,0}}{2}+{\ka_{0,0,1}}\right)\in A^2(\mathcal{U}(2,3,2)).$$ 
\end{theorem}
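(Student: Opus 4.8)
The plan is to compute the class of the universal Brill--Noether locus $B_*\in A^2(\mathcal{U}(2,3,2))$ by pulling it back along a suitable ``universal extension with Poincar\'e bundle'' map and recognizing the result as a polynomial in twisted-kappa classes. Concretely, recall from Theorem \ref{u-odd} and (\ref{phi-xi-*}) that $\Phi_\xi^*B_* = [\tilde{L}_{2\s_1}] = \s_\pi^*(H_J^2+H_J\T_\xi+\tfrac12\T_\xi^2)-K_\s$ inside $A^*(\tilde{\p}_\xi)$, and that by Theorem \ref{exp-blowup} the blowup--blowdown $\Phi_\xi$ is injective on Chow rings. So it suffices to identify, as a tautological class on $\mathscr{U}(2,3,2)$, the image of $B_*$ under the map $\tilde{\p}_\xi\xrightarrow{\Phi_\xi}\tilde{U}(2,3,2)\to U(2,3,2)$ together with the pullbacks to $\mathscr{U}(2,3,2)$. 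The honest mechanism is: the bundle $\mathscr{E}^1$ on $\C_J\times_{J_*(3)}\tilde{\p}_\xi$ of Theorem \ref{poincare-bertram} induces $\Phi_\xi$, so $\Phi_\xi^*$ of every twisted-kappa class on $\mathscr{U}(2,3,2)$ equals the corresponding $\pi_{\tilde{\p}_\xi *}\big(K_{\pi}^{a+1}c_1(\mathscr{E}^1)^bc_2(\mathscr{E}^1)^c\big)$. Thus the strategy is to (i) compute $c_1(\mathscr{E}^1)$ and $c_2(\mathscr{E}^1)$ in $A^*(\C_J\times_{J_*(3)}\tilde{\p}_\xi)$ in terms of $\s_\pi^*H_J$, $\s_\pi^*\T_\xi$, $K_{\pi_J}$, $\xi$, $E_\s$, and (ii) push forward along $\pi_{\tilde{\p}_\xi}$ to express $H_U,\T_U,B_*$ in terms of the $\kappa_{a,b,c}$, then invert.

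First I would pin down $c(\mathscr{E}^1)$. From the construction in Theorem \ref{poincare-bertram}, $\mathscr{E}^1 = \ker(\mathscr{E}^0 \to \mathscr{E}^0|_{\C\times E_\s}\xrightarrowdbl{} (1,\s_\pi)^*\mathscr{L}_\lb^0)$ where $\mathscr{E}^0 = (1,\s_\pi)^*\mathscr{E}_\lb$ and $\mathscr{E}_\lb$ is the Bertram--Poincar\'e bundle on $\C_J\times_{J_*(3)}\p_\xi$ sitting in $0\to \pi_{\p}^*\mo(1)\to \mathscr{E}_\lb\to \pi_\C^*\xi\to 0$. So $c_1(\mathscr{E}^1) = c_1(\mathscr{E}^0) - [\C\times E_\s] + (\text{correction from }\mathscr{L}_\lb^0)$; unwinding, $c_1(\mathscr{E}^1) = \s_\pi^*H_J + \xi - (\text{diagonal/exceptional terms})$, and $c_2$ is computed from the three-term filtration. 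The cleanest route is to work over the Jacobian family, where by Theorem \ref{chow-curve}, Corollary \ref{chow-p-xi}, and Theorem \ref{exp-blowup} all the relevant Chow rings are explicitly known, compute $\pi_{\tilde{\p}_\xi *}(c_1(\mathscr{E}^1)^2)$, $\pi_{\tilde{\p}_\xi *}(c_2(\mathscr{E}^1))$, $\pi_{\tilde{\p}_\xi *}(K_\pi c_1(\mathscr{E}^1))$, etc. Then I express $\ka_{-1,0,1} = \pi_*c_2(\mathscr{E}^1)$, $\ka_{0,1,0}=\pi_*(K_\pi c_1(\mathscr{E}^1))$, $\ka_{-1,2,0}=\pi_*c_1(\mathscr{E}^1)^2$, and crucially the codimension-two classes $\ka_{-1,1,1} = \pi_*(c_1 c_2)$, $\ka_{0,2,0}=\pi_*(K_\pi c_1^2)$, $\ka_{-1,3,0}=\pi_*(c_1^3)$, $\ka_{0,0,1}=\pi_*(K_\pi c_2)$ in terms of $H_U,\T_U,B_*$, invert the resulting linear system, and read off the stated formula. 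The ``square'' term $\tfrac12(\tfrac12(\ka_{-1,2,0}-\ka_{0,1,0})-\ka_{-1,0,1})^2$ is (a multiple of) $H_U^2$ by Theorem \ref{b-*-taut}'s companion computation $H_U = \tfrac12(4\ka_{-1,0,1}-\ka_{-1,2,0})$ in Theorem \ref{B}, and the second parenthesis is a genuine codimension-two tautological combination, so the identity $B_* = H_U^2 - (\cdots)$ should follow from $\Phi_{\xi *}(\s_\pi^*H_J^2) = H_U^2 - B_*$ (already proven in (\ref{phi-xi-*})) once $\s_\pi^*H_J^2$ is matched with the pushforward of the appropriate Chern-class monomial.

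The main obstacle I anticipate is the bookkeeping in computing $c_2(\mathscr{E}^1)$ across the blowup: $\mathscr{E}^1$ is defined as a kernel involving the exceptional divisor $E_\s$ of $\s_\pi$ and the rank-one quotient $\mathscr{L}_\lb^0$ on $\C\times_S B^0(\lb)$ (which itself carries a diagonal twist $\mo(\Delta)$), so tracking how $c_2$ changes requires the excess-intersection/blowup formulas of Theorem \ref{blowup-chow} and Theorem \ref{transform}, applied to the product $\C_J\times_{J_*(3)}\tilde{\p}_\xi$ rather than to $\tilde{\p}_\xi$ itself --- and Lemma \ref{chow-3} and the remark after Theorem \ref{chow-curve} warn that the Chow ring of $\times^3_{M_*}\C_*$ is not fully accessible, so one must stay on the $J_*(3)$-family side and only descend at the end. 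A secondary subtlety is that $\Phi_\xi$ contracts $E_\xi$, so certain pushforwards vanish ($\Phi_{\xi *}(\tilde L_{\xi_1})=0$, $\Phi_{\xi *}(E_\xi)=0$), which is exactly what forces $B_*$ to pick up the correction terms; getting the coefficients $\tfrac13$, $-1$, $-\tfrac12$, $1$ right in the second parenthesis is the delicate part and amounts to carefully applying GRR ($ch(\pi_{J*}(K_{\pi_J}\otimes\xi)) = 4-\T_*$ from \S\ref{chow-blowup-ss}) to translate between Chern classes of $\mathscr{E}^1$ and the projective-bundle generators $H_J,\T_\xi$. Once the translation table is in hand, the final formula is a one-line linear-algebra check against the basis of $S^2(\tilde U(2,3,2))$ furnished by Corollary \ref{inject}.
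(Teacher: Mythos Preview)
Your proposal is correct in principle and would work, but it takes a genuinely different and much more laborious route than the paper's own proof.

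The paper's argument is a direct two-line application of Grothendieck--Riemann--Roch to $\e_3$ on the stack $\mathscr{U}(2,3,2)$ itself, without ever descending to $\tilde{\p}_\xi$. The key observation you are missing is that $B_*$ has an \emph{intrinsic} interpretation: since every stable rank~$2$ degree~$3$ bundle on a genus~$2$ curve has $h^0\ge 1$ and $\chi=1$, the sheaf $\pi_{\mathscr{U}*}\e_3$ is a line bundle (so $c_2(\pi_{\mathscr{U}*}\e_3)=0$), while $R^1\pi_{\mathscr{U}*}\e_3$ is a torsion sheaf supported exactly on $B_*$, generically of rank~$1$ there because $\mathcal{W}_{2,3}^2=\varnothing$. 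Hence $c_1(R^1\pi_{\mathscr{U}*}\e_3)=0$ and $B_* = -c_2(R^1\pi_{\mathscr{U}*}\e_3)$. Writing $ch(\pi_{\mathscr{U}!}\e_3)=ch(\pi_{\mathscr{U}*}\e_3)-ch(R^1\pi_{\mathscr{U}*}\e_3)$ and comparing degree-$1$ and degree-$2$ parts gives $c_1(\pi_{\mathscr{U}*}\e_3)=ch_1(\pi_{\mathscr{U}!}\e_3)$ and $B_* = \tfrac12 c_1(\pi_{\mathscr{U}*}\e_3)^2 - ch_2(\pi_{\mathscr{U}!}\e_3)$, and GRR expresses both $ch_1$ and $ch_2$ of $\pi_{\mathscr{U}!}\e_3$ directly as polynomials in the $\kappa_{a,b,c}$. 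That is the entire proof.

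Your approach --- pulling back along $\Phi_\xi$, computing $c(\mathscr{E}^1)$ across the blowup, pushing forward, and inverting a linear system --- is exactly the method the paper uses for $H_U$ (Theorem~\ref{h-u}) and explicitly mentions, in the remark immediately following Lemma~\ref{chern-classes-odd}, as an alternative \emph{verification} of the $B_*$ formula. So your strategy is sound and consistent with the paper's toolkit, but it trades a one-step GRR argument for the substantial bookkeeping of Lemma~\ref{chern-classes-odd} and the blowup formulas. The payoff of the paper's route is that it bypasses $\tilde{\p}_\xi$ entirely for this particular class; the payoff of yours is that it gives an independent check once the Chern classes of $\mathscr{E}^1$ are in hand anyway.
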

\begin{proof}
Following a similar argument to that of \cite[Lemma 6.4]{larson-24}, we apply Grothendieck-Riemann-Roch to $\mathcal{E}_3$ and the map $\pi_{\mathscr{U}}$
\begin{gather*}
	ch(\pi_{\mathscr{U}!}\e_3) = \pi_{\mathscr{U}*}\left(ch(\e_3)\left(1-\dfrac{K_{\pi_{\mathscr{U}}}}{2}\right)\right)\implies
	 c_1(\pi_{\mathscr{U}*}\e_3) = \dfrac{1}{2}(\kappa_{-1,2,0}-\kappa_{0,1,0})-\kappa_{-1,0,2} ,\\
	  B_*=-c_2(R^1\pi_{\mathscr{U}*}\e_3) = \dfrac{1}{2}\left(\dfrac{1}{2}(\kappa_{-1,2,0}-\kappa_{0,1,0})-\kappa_{-1,0,1}\right)^2- \dfrac{1}{2}\left( \dfrac{\kappa_{-1,3,0}}{3}-{\ka_{-1,1,1}}-\dfrac{\ka_{0,2,0}}{2}+{\ka_{0,0,1}}\right)\in A^2(\mathcal{U}(2,3,2))\end{gather*}
since $c_1(R^1\pi_{\mathscr{U}*}\e_3) = 0, c_2(\pi_{\mathscr{U}*}\e_3) = 0$. The tautological cycle on the right hand side can be shown to be in the image of $\nu_{2,3}^*$ by Theorem \ref{pullback-nu}.
\end{proof}
\begin{thm}[{{\cite[Lemma 6.4]{larson-24}}}]
The universal $\T$-divisor $\T_U\in A^1(\tilde{U}(2,3,2))$ is given by
\begin{equation*}\label{t-u}\T_U = \dfrac{1}{2}\left(3\ka_{0,1,0}-\ka_{-1,2,0}\right)\in A^1(\tilde{U}(2,3,2)).\end{equation*}
\end{thm}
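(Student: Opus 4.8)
The plan is to identify $\T_U$ — the universal generalized $\Theta$-divisor on $\tilde U(2,3,2)$ — with a first Chern class built from the Poincar\'e bundle $\e_3$ on $\C_2\times_{\mathcal M_2}\mathcal U(2,3,2)$, and then compute that Chern class by Grothendieck--Riemann--Roch, exactly as in \cite[Lemma 6.4]{larson-24}. First I would recall that, fiberwise over a genus $2$ curve $C$, the generalized theta divisor on $U_C(2,3)$ is the locus $\{E : h^0(C,E(-x))>0\}$ for a suitable twist, equivalently the degeneracy locus of a map of vector bundles coming from $R\pi_{\mathscr U*}$ of $\e_3$ twisted to relative degree $0$ (or, in Larson's normalization, directly the determinant of cohomology line bundle $\det R\pi_{\mathscr U*}\e_3$ with an appropriate sign). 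Since $\T_U\in A^1(\tilde U(2,3,2))\simeq A^1(\mathcal U(2,3,2))$ and the pullback $\nu_{2,3}^*:A^*(\mathcal U(2,3,2))\hookrightarrow A^*(\mathscr U(2,3,2))$ is injective by Theorem \ref{pullback-nu}, it suffices to verify the identity after pulling back to $\mathscr U(2,3,2)$, where everything is expressible in tautological $\kappa$-classes.

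The computational core is the Grothendieck--Riemann--Roch calculation already displayed just above in the proof of Theorem \ref{b-*-taut}: applying GRR to $\e_3$ along $\pi_{\mathscr U}$ gives
\[
ch(\pi_{\mathscr U!}\e_3)=\pi_{\mathscr U*}\!\left(ch(\e_3)\Bigl(1-\tfrac{K_{\pi_{\mathscr U}}}{2}+\tfrac{K_{\pi_{\mathscr U}}^2}{12}\Bigr)\right),
\]
and reading off the codimension-one part yields $c_1(\pi_{\mathscr U!}\e_3)$ as a polynomial in $\kappa_{-1,2,0}$, $\kappa_{0,1,0}$, $\kappa_{-1,1,0}=\kappa$-type classes. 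Then I would invoke the identification of the universal $\Theta$-divisor with $-\det R\pi_{\mathscr U*}(\e_3\otimes(\text{twist to degree }0))=-c_1(\pi_{\mathscr U!}\e_3)$ up to the chosen normalization, and simplify using the known relations among $\kappa$-classes on $\C_2$ (in particular that pullbacks of $\kappa$-classes from $\mathcal M_2$ that are not $\kappa_{-1,2,0}$ or $\kappa_{0,1,0}$ either vanish rationally or are absorbed — here $A^*(M_2)=\mathbb Q$ forces $\kappa$-classes pulled back from $\mathcal M_2$ in positive codimension to be rational multiples of the ones retained). Collecting terms should produce exactly $\tfrac12(3\kappa_{0,1,0}-\kappa_{-1,2,0})$. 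Throughout, one must track the precise twist: the generalized $\Theta$-divisor is defined for bundles of relative degree $3$ by $\{h^0(E)>0\}$ giving Euler characteristic zero only after twisting by $\mo(-\sigma_i)$ type corrections in the even case, but here $\chi(E)=3+2(1-2)=1$ for a rank $2$ degree $3$ bundle on genus $2$, so one twists by a degree $-1$ class; in the universal setting this twist contributes a correction term $\propto \kappa_{0,1,0}$ and $\kappa_{-1,2,0}$ (via $c_1(\e_3)$ and $K_{\pi_{\mathscr U}}$) that must be bookkept carefully.

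The main obstacle is precisely this normalization bookkeeping: making sure that the line bundle whose first Chern class is $\T_U$ is the correct determinant-of-cohomology bundle, with the correct degree-$-1$ twist, so that the GRR output matches the stated formula rather than an off-by-a-multiple-of-$\kappa_{0,1,0}$ variant. I expect the cleanest route is to quote \cite[Lemma 6.4]{larson-24} directly — as the excerpt's theorem statement does (``\cite[Lemma 6.4]{larson-24}'') — and then only \emph{check} compatibility: namely that Larson's $u$-class restricted to the genus $2$, rank $2$ situation, pushed through the map $\tilde U(2,3,2)\to\mathcal U(2,3,2)$, indeed has the claimed tautological expression, using Theorem \ref{larson-main}'s identification $u\mapsto d\kappa_{0,1,0}-(g-1)\kappa_{-1,2,0}$ as the rank-one template and the fact that the rank-two theta divisor pulls back compatibly along the extension-space construction of \S\ref{gen-U}. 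So the proof is essentially: (i) cite the GRR computation; (ii) identify $\T_U$ with $-c_1(\pi_{\mathscr U!}\e_3)$ up to the degree-$1$ twist correction; (iii) simplify the resulting $\kappa$-polynomial using $A^*(M_2)=\mathbb Q$ and $c_1(R^1\pi_{\mathscr U*}\e_3)=0$; (iv) observe the answer lies in $\operatorname{Im}(\nu_{2,3}^*)$ by Theorem \ref{pullback-nu}, so it is the honest class on $\mathcal U(2,3,2)$ and descends to $\tilde U(2,3,2)$.
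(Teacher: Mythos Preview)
You have misidentified the object $\T_U$. In this paper $\T_U$ is \emph{not} the rank-$2$ generalized theta divisor $\{E:h^0(E)>0\}$; it is defined just before Lemma~\ref{pullback-hu} as $\T_U:=\det_\xi^*\T_3$, the pullback along the determinant map of the ordinary theta divisor on the degree-$3$ universal Jacobian $J_2^3$. Consequently the relevant GRR is the rank-$1$ computation on $\mathscr{J}_{3,2}$ (Larson's Lemma~6.4), whose output is then pulled back via $\det$, using $c_1(\det\e_3)=c_1(\e_3)$ so that $\det^*\kappa_{a,b,0}^{\mathscr J}=\kappa_{a,b,0}^{\mathscr U}$. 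The paper gives no proof beyond the citation precisely because the statement is just Larson's formula for $\T_3$ on $J_2^3$ transported along $\det_\xi$.

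Your proposed route---GRR for the rank-$2$ bundle $\e_3$ and identification with $-c_1(\pi_{\mathscr U!}\e_3)$---computes a different class. Indeed that very computation is already displayed in the proof of Theorem~\ref{b-*-taut}: $c_1(\pi_{\mathscr U*}\e_3)=\tfrac12(\kappa_{-1,2,0}-\kappa_{0,1,0})-\kappa_{-1,0,1}$, which contains a $\kappa_{-1,0,1}$ term that cannot be removed by any degree-$(-1)$ twist and does not match $\tfrac12(3\kappa_{0,1,0}-\kappa_{-1,2,0})$. Your later remarks about ``the rank-one template'' and ``quote Larson directly'' are on the right track, but the surrounding scaffolding about $R\pi_{\mathscr U!}\e_3$, the rank-$2$ generalized $\Theta$, and degree-correcting twists is not needed and would lead you to the wrong class if carried out.
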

\noindent We compute the chern classes of the Poincar\'e bundle as described in \S\ref{poincare-bertram}. 
\begin{lemma}\label{chern-classes-odd}
	Let $\s_E:= \s_\pi|_{E_\s}:E_\s\ri \C_J$ be the restriction of $\s_\pi$, and $\pi_{\C_J}: \C_J\times_{J_*(3)}\tilde{\p}_\xi\ri \C_J, \pi_{\tilde{\p}_\xi}: \C_J\times_{J_*(3)}\tilde{\p}_\xi\ri \tilde{\p}_\xi$ be the projection maps. \\
	The chern classes of the bundle $\e^1$ defined in Theorem \ref{poincare-bertram} are given by 
	$$c_1(\e^1) = \pi_{\tilde{\p}_\xi}^*(\s_\pi^*H_J-E_\s)+\pi_{\C_J}^*\xi, c_2(\e^1) = \pi_{\C_J}^*\xi\pi_{\tilde{\p}_\xi}^*(\s_\pi^*H_J-E_\s)+A$$
where $A:= (1\times i_\s)_*(1\times \s_E)^*\Delta_J$ and $\Delta_J\hr \C_J\times_{J_*(3)}\C_J$ is the diagonal embedding of $\C_J$.
\end{lemma}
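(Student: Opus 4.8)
The plan is to compute $c_1$ and $c_2$ of $\e^1$ directly from the two-step construction in Theorem \ref{poincare-bertram}, using the exact sequences that define $\mathscr{E}^0$ and $\mathscr{E}^1$ together with the description of the Poincar\'e bundle $\mathscr{E}_\xi$ on $\C_J\times_{J_*(3)}\p_\xi$ and the blowup formulas. First I would record the Chern classes of $\mathscr{E}_\xi$: by \cite[Remark 1]{shubham} applied to $\xi^{\otimes 2}\to\C_J$ (as in Theorem \ref{isom}), $\mathscr{E}_\xi$ sits in an extension $0\to\pi_{\p_\xi}^*\mo_{\p_\xi}(1)\to\mathscr{E}_\xi\to\pi_{\C_J}^*\xi\to 0$ on $\C_J\times_{J_*(3)}\p_\xi$, so $c_1(\mathscr{E}_\xi)=\pi_{\p_\xi}^*H_J+\pi_{\C_J}^*\xi$ and $c_2(\mathscr{E}_\xi)=\pi_{\p_\xi}^*H_J\cdot\pi_{\C_J}^*\xi$. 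Pulling back along $1\times\sigma_\pi$ gives $\mathscr{E}^0$, hence $c_1(\mathscr{E}^0)=\pi_{\tilde\p_\xi}^*\sigma_\pi^*H_J+\pi_{\C_J}^*\xi$ and $c_2(\mathscr{E}^0)=\pi_{\tilde\p_\xi}^*\sigma_\pi^*H_J\cdot\pi_{\C_J}^*\xi$, where here I abuse notation and write $\pi_{\C_J}$ for the projection from $\C_J\times_{J_*(3)}\tilde\p_\xi$ as in the statement.

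Next I would run the elementary-modification exact sequence defining $\mathscr{E}^1$, namely
\[
0\to\mathscr{E}^1\to\mathscr{E}^0\to \iota_*\bigl((1\times\s_\pi)^*\mathscr{L}_\xi^0\bigr)\to 0,
\]
where $\iota$ is the inclusion of $\C_J\times_{J_*(3)}E_\s$ (the product of $\C_J$ with the exceptional divisor of $\sigma_\pi$) and $\mathscr{L}_\xi^0$ is the line bundle from the lemma preceding Theorem \ref{poincare-bertram}, i.e. $\pi_{E_\s}^*\mo(1)\otimes\mo(\Delta)$ restricted appropriately. Writing $\mathcal{Q}:=\iota_*((1\times\s_\pi)^*\mathscr{L}_\xi^0)$ for this torsion sheaf supported in codimension $1$, I would use $\operatorname{ch}(\mathscr{E}^1)=\operatorname{ch}(\mathscr{E}^0)-\operatorname{ch}(\mathcal{Q})$ and GRR for the closed immersion $\iota$ to get $\operatorname{ch}(\mathcal{Q})=\iota_*\bigl(\operatorname{ch}((1\times\s_\pi)^*\mathscr{L}_\xi^0)\cdot\operatorname{td}(\mathcal{N}_\iota)^{-1}\bigr)$; since $\iota$ is a divisorial embedding with normal bundle $\pi_{\C_J}^*(\text{stuff})\otimes(\text{the exceptional line bundle})$, the leading terms give $c_1(\mathscr{E}^1)=c_1(\mathscr{E}^0)-[\C_J\times E_\s]=\pi_{\tilde\p_\xi}^*(\sigma_\pi^*H_J-E_\s)+\pi_{\C_J}^*\xi$, which is the claimed $c_1$. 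For $c_2$, I would expand $\operatorname{ch}_2(\mathscr{E}^1)=\operatorname{ch}_2(\mathscr{E}^0)-\operatorname{ch}_2(\mathcal{Q})$ and convert back via $c_2=\tfrac12(c_1^2-2\operatorname{ch}_2)$; the cross term produces $\pi_{\C_J}^*\xi\cdot\pi_{\tilde\p_\xi}^*(\sigma_\pi^*H_J-E_\s)$, and the remaining contribution is a codimension-$2$ class supported on $\C_J\times_{J_*(3)}E_\s$ which I must identify with $A=(1\times i_\s)_*(1\times\s_E)^*\Delta_J$.

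The main obstacle — and the step deserving genuine care — is identifying that leftover codimension-$2$ class with $A$. The point is that over a fixed curve $C_s$ and a point $x\in C_s$, the modification takes place along $\sigma^{-1}(x)\simeq\p_{L_s(-2x)}$, and Bertram's construction (Theorem \ref{main-bertram}(2)) tells us the restricted Poincar\'e bundle there is the one from \cite[Theorem 2]{nar-ram}; its second Chern class, pushed to $C_s\times\tilde\p_{L_s}$, picks up precisely the class of the point $x$ in the $C_s$-factor, i.e. the diagonal. So I would argue fiberwise: restrict the ambient codimension-$2$ class to $\C_J\times_{J_*(3)}E_\s$, use that $E_\s$ is the projectivized normal bundle $\p(\mathcal{N}_{\C_J/\p_\xi})$ over $\C_J$ (as in Lemma \ref{normal-cj}), and observe that the modification sheaf $\mathscr{L}_\xi^0=\pi_{E_\s}^*\mo(1)\otimes\mo(\Delta)$ is twisted exactly by the diagonal $\Delta\hr\C_J\times_S E_\s$ — this is why the $\mo(\Delta)$ factor was built into $\mathscr{L}_\xi^0$ in the first place. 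Tracking how $\Delta$ enters the Chern character of $\iota_*(\cdots)$ and matching it against $A$ via the pushforward $(1\times i_\s)_*(1\times\s_E)^*$ then finishes the computation. I would present the curve-by-curve identification as the crux, invoking Theorem \ref{zmt}-style rigidity only insofar as needed to conclude the global cycle from its fiberwise description, and otherwise keep the GRR bookkeeping compressed.
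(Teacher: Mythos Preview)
Your approach is correct and essentially identical to the paper's: the same two exact sequences, GRR for the divisorial embedding $\iota=(1\times i_\s)$, and the Whitney formula $c(\e^1)=c(\e^0)\cdot c(\mathcal{Q})^{-1}$.

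The only point where you diverge is the final paragraph, and there you are making the problem harder than it is. The ``main obstacle'' you flag is not an obstacle at all: the class $A$ does not need to be \emph{identified} via a fiberwise argument or any ZMT-type rigidity. It appears directly from the GRR bookkeeping. Since $\mathscr{L}_\xi^0=(1\times\s_E)^*\bigl(\mo(\Delta_J)\otimes\pi_2^*(K_{\pi_J}\otimes\xi)\bigr)$, the degree-$1$ part of its Chern character contains $(1\times\s_E)^*[\Delta_J]$ explicitly; pushing forward by $(1\times i_\s)_*$ lands this in degree $2$ as $A=(1\times i_\s)_*(1\times\s_E)^*\Delta_J$ on the nose. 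The remaining degree-$2$ terms in $c_2(\mathcal{Q})$ come from the twist by $K_{\pi_J}\otimes\xi$ and the Todd class of the normal bundle, and they are all pulled back from $\tilde{\p}_\xi$; using the blowup relation $E_\s\cdot\s_\pi^*H_J=K_\s+\xi_\s$ they cancel against the cross terms in $c_2(\e^0)-c_1(\e^0)c_1(\mathcal{Q})+c_1(\mathcal{Q})^2$, leaving exactly $\pi_{\C_J}^*\xi\cdot\pi_{\tilde{\p}_\xi}^*(\s_\pi^*H_J-E_\s)+A$. So drop the fiberwise heuristic and just carry out the GRR expansion carefully; the answer falls out.
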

\begin{proof}
	The bunde $\e^1$ is defined using the following exact sequences
	\begin{gather*}
		0\ri \e^1\ri (1\times \s_\pi)^*\e^0\ri (1\times i_\s)_*(1\times \s_E)^*(\mo(\Delta_J)\otimes \pi_2^*\mo(1)|_{j(\C_J)})\ri 0,\\
		0\ri \pi_{\tilde{\p}_\xi}^*\s_\pi^*\mo_{\p_\xi}(1)\ri  (1\times \s_\pi)^*\e^0\ri \pi_{\C_J}^*\xi\ri 0.
	\end{gather*}
	Let $\mathcal{F}:= (1\times i_\s)_*(1\times \s_E)^*(\mo(\Delta_J)\otimes \pi_{E_\s}^*(K_\s\otimes\xi_\s))$ where $\pi_{E_\s}:\C_J\times_{J_*(3)}E_\s\ri E_\s$ be the projection map. We have
	\begin{gather*}c(\e^1)= c(\mathcal{F})^{-1}(1\times\s_\pi)^*c(\e^0) = c(\mathcal{F})^{-1}(1+\pi_{\tilde{\p}_\xi}^*\s_\pi^*H_J)(1+\pi_{\C_J}^*\xi)
	 = 1+\pi_{\tilde{\p}_\xi}^*\s_\pi^*H_J+\pi_{\C_J}^*\xi-c_1(\mathcal{F}) -\\
	 c_1(\mathcal{F})(\pi_{\tilde{\p}_\xi}^*\s_\pi^*H_J+\pi_{\C_J}^*\xi)-c_2(\mathcal{F})+c_1(\mathcal{F})^2+\pi_{\tilde{\p}_\xi}^*\s_\pi^*H_J\pi_{\C_J}^*\xi.\end{gather*}
	Applying the Grothendieck-Riemann-Roch Theorem to the embedding $(1\times i_\s)$, we get 
\begin{gather*}ch(\mathcal{F}) = (1\times i_\s)_*\left((1\times\s_E)^*ch(\Delta_J\otimes \pi_{E_\s}^*\s^*(K_{\pi_J}\otimes\xi))\pi_{E_\s}^*td(\mathcal{N}_{E_\s/\tilde{\p}_\xi})^{-1}\right)\implies
	c_1(\mathcal{F}) = \pi_{\tilde{\p}_\xi}^*E_\s\implies \\
	 c_2(\mathcal{F}) = \dfrac{1}{2}\pi_{\tilde{\p}_\xi}^*E_\s^2 -(1\times i_\s)_*(1\times\s_E)^*\Delta_J - (1\times i_\s)_*\pi_{E_\s}^*\s^*(K_{\pi_J}+\xi)+\dfrac{1}{2}(1\times i_\s)_*\pi_{E_\s}^*\mathcal{N}_{E_\s/\tilde{\p}_\xi}=\\
	  \dfrac{1}{2}\pi_{\tilde{\p}_\xi}^*E_\s^2-A-\pi_{\tilde{\p}_\xi}^*(K_\s+\xi_\s)+\dfrac{1}{2}\pi_{\tilde{\p}_\xi}^*i_{\s *}\mo_{E_\s}(E_\s) = \pi_{\tilde{\p}_\xi}^*(E_\s^2-K_\s-\xi_\s)-A .\end{gather*}
	  Plugging these values back into the expression for $c(\e^1)$, we get 
	  $$c_1(\e^1) = \pi_{\tilde{\p}_\xi}^*(\s_\pi^*H_J-E_\s)+\pi_{\C_J}^*\xi\text{ and } $$
	  $$c_2(\e^1)= \pi_{\tilde{\p}_\xi}^*\s_\pi^*H_J\pi_{\C_J}^*\xi-\pi_{\tilde{\p}_\xi}^*E_\s(\pi_{\tilde{\p}_\xi}^*\s_\pi^*H_J+\pi_{\C_J}^*\xi)+\pi_{\tilde{\p}_\xi}^*E_\s^2+A+\pi_{\tilde{\p}_\xi}^*(K_\s+\xi_\s-E_\s^2). $$
Simplifying the expression, we get $c_2(\e^1)= \pi_{\tilde{\p}_\xi}^*\s_\pi^*H_J\pi_{\C_J}^*\xi+A-\pi_{\tilde{\p}_\xi}^*E_\s\pi_{\C_J}^*\xi$.
\end{proof}
\begin{rem}
	Lemma \ref{b-*-taut} can be further verified by applying $\Phi_\xi^*(B_*) = \s_\pi^*(H_J^2+H_J\T_\xi+\dfrac{1}{2}\T_\xi^2)-K_\s,$ and calculating the associated $\kappa$-classes in $A^*(\tilde{\p}_\xi)$ using Lemma \ref{chern-classes-odd}. 
\end{rem}
\begin{theorem}\label{h-u}
	We have the following expression of $H_U$ in terms of tautological classes 
	$$H_U = \dfrac{1}{2}\left(4\ka_{-1,0,1}-\ka_{-1,2,0}\right)\in A^1(\tilde{U}(2,3,2)).$$
\end{theorem}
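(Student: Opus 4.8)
The plan is to verify the asserted identity after pulling back along $\Phi_\xi$, where injectivity of $\Phi_\xi^*$ (Corollary \ref{inject}) lets us descend the conclusion. From Lemma \ref{pullback-hu} we already have $\Phi_\xi^*(H_U)=\s_\pi^*(3H_J+\T_\xi)-E_\s$ in $A^1(\tilde{\p}_\xi)$, so it suffices to show that $\tfrac12(4\ka_{-1,0,1}-\ka_{-1,2,0})$, viewed as a class on $\tilde{U}(2,3,2)$, pulls back to the same expression.

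The key tool is the Poincar\'e bundle $\mathscr{E}^1$ on $\C_J\times_{J_*(3)}\tilde{\p}_\xi$ from Theorem \ref{poincare-bertram}: it defines a lift $\tilde{\Phi}_\xi:\tilde{\p}_\xi\ri\mathscr{U}(2,3,2)$ of the moduli map determined by $\Phi_\xi$, with $(\tilde{\Phi}_\xi\times 1)^*\mathcal{E}_3\simeq\mathscr{E}^1$. Base change along the Cartesian square relating $\pi_{\tilde{\p}_\xi}$ to $\pi_{\mathscr{U}}$ then gives $\tilde{\Phi}_\xi^*\ka_{-1,0,1}=\pi_{\tilde{\p}_\xi *}\bigl(c_2(\mathscr{E}^1)\bigr)$ and $\tilde{\Phi}_\xi^*\ka_{-1,2,0}=\pi_{\tilde{\p}_\xi *}\bigl(c_1(\mathscr{E}^1)^2\bigr)$. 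I would now feed in the Chern class formulas of Lemma \ref{chern-classes-odd}, the projection formula, base change in the form $\pi_{\tilde{\p}_\xi *}\pi_{\C_J}^*(-)=(\text{pullback to }\tilde{\p}_\xi\text{ of })\pi_{J*}(-)$, the degree identities $\pi_{J*}K_{\pi_J}=2$ and $\pi_{J*}\xi=3$, and the relation $\xi^2=-K_{\pi_J}\pi_J^*\T_*$ of Theorem \ref{chow-curve} (so that $\pi_{J*}(\xi^2)=-2\T_*$), together with the observation that the correction term $A=(1\times i_\s)_*(1\times\s_E)^*\Delta_J$ is the graph of $\s_E$ and hence $\pi_{\tilde{\p}_\xi *}(A)=E_\s$. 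These computations should yield $\tilde{\Phi}_\xi^*\ka_{-1,0,1}=3\s_\pi^*H_J-2E_\s$ and $\tilde{\Phi}_\xi^*\ka_{-1,2,0}=6\s_\pi^*H_J-2\s_\pi^*\T_\xi-6E_\s$, whence $\tilde{\Phi}_\xi^*\bigl(\tfrac12(4\ka_{-1,0,1}-\ka_{-1,2,0})\bigr)=3\s_\pi^*H_J-E_\s+\s_\pi^*\T_\xi=\Phi_\xi^*(H_U)$.

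To legitimately land the equality in $A^1(\tilde{U}(2,3,2))$ I would check that $4\ka_{-1,0,1}-\ka_{-1,2,0}$ is invariant under twisting $\mathcal{E}_3\mapsto\mathcal{E}_3\otimes\pi^*L$: for a rank two bundle one has $\ka_{-1,0,1}\mapsto\ka_{-1,0,1}+3c_1(L)$ and $\ka_{-1,2,0}\mapsto\ka_{-1,2,0}+12c_1(L)$, so the combination is unchanged. Hence it lies in the image of $\nu_{2,3}^*$ by Theorem \ref{pullback-nu} and descends to $A^1(\mathcal{U}(2,3,2))\simeq A^1(U(2,3,2))$ (recall $d$ is odd). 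Since $\tilde{\Phi}_\xi$ refines $\Phi_\xi$ composed with the moduli map, $\Phi_\xi^*$ of the descended class coincides with $\tilde{\Phi}_\xi^*\bigl(\tfrac12(4\ka_{-1,0,1}-\ka_{-1,2,0})\bigr)$, so the computation above and injectivity of $\Phi_\xi^*$ give $H_U=\tfrac12(4\ka_{-1,0,1}-\ka_{-1,2,0})$.

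The step I expect to be the main obstacle is the bookkeeping of the pushforwards along $\pi_{\tilde{\p}_\xi}$: correctly identifying $\pi_{\tilde{\p}_\xi *}(A)$ with $E_\s$ (the graph of $\s_E$ maps isomorphically onto the exceptional divisor) and invoking base change to reduce $\pi_{\tilde{\p}_\xi *}\pi_{\C_J}^*(\xi^2)$ to the curve computation of Theorem \ref{chow-curve}, all while keeping the rigidifications of $\xi$ and $\mathscr{E}^1$ straight. The $\mathbb{G}_m$-invariance and descent step is routine but must be made explicit so that the final identity is read in $A^1(\tilde{U}(2,3,2))$ rather than only in $A^1(\tilde{\p}_\xi)$.
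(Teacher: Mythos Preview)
Your proposal is correct and follows essentially the same argument as the paper: both verify the identity by pulling back along $\Phi_\xi$ via the Poincar\'e bundle $\mathscr{E}^1$, use Lemma \ref{chern-classes-odd} together with $\pi_{\tilde{\p}_\xi *}A=E_\s$ and the curve relations from Theorem \ref{chow-curve} to compute the relevant pushforwards, check twisting invariance to descend the class, and conclude by injectivity of $\Phi_\xi^*$. The only cosmetic difference is that the paper pushes forward the combination $4c_2(\mathscr{E}^1)-c_1(\mathscr{E}^1)^2$ in one step, whereas you compute $\ka_{-1,0,1}$ and $\ka_{-1,2,0}$ separately; also, injectivity of $\Phi_\xi^*$ comes from Remark \ref{blowdown} (the blowup identification) rather than Corollary \ref{inject}.
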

\begin{proof}
The claimed tautological class is in the image of $\nu_{2,3}^*$ by Theorem \ref{pullback-nu} since 
$$ \pi_{S*}(4c_2(\e_S)-c_1(\e_S)^2) =  \pi_{S*}(4c_2(\e_S\otimes \pi_{S}^*\lb)-c_1(\e_S\otimes \pi_{S}^*\lb)^2)$$
for any family $\e_S\ri \C_S\xrightarrow{\pi_S} S$ and $\lb\in Pic(S)$.\\
Using Lemma \ref{chern-classes-odd} and Lemma \ref{pullback-hu}, we have 
\begin{gather*}
	\pi_{\tilde{\p}_\xi *}A =\pi_{\tilde{\p}_\xi *}(1\times i_\s)_*(1\times \s_E)^*\Delta_J =\pi_{E_\s *}(1\times \s_E)^*\Delta_J = E_\s\implies\\
\pi_{\tilde{\p}_\xi *}(4c_2(\e^1)-c_1(\e^1)^2) = 4\pi_{\tilde{\p}_\xi *}(\pi_{\C_J}^*\xi\pi_{\tilde{\p}_\xi}^* (\s_\pi^*H_J-E_\s)+A)-\pi_{\tilde{\p}_\xi *}(\pi_{\tilde{\p}_\xi}^*(\s_\pi^*H_J-E_\s)+\pi_{\C_J}^*\xi)^2 \\
=6\s_\pi^*H_J+2\s_\pi^*\T_\xi - 2E_\s = 2\Phi_\xi^*(H_U).
\end{gather*}
The theorem now follows from the injectivity of $\Phi_\xi^*$. 
\end{proof}
\begin{cor}
	Similar computations also yield the following relation in $A^2(\mathscr{U}(2,3,2))$
	$$4\ka_{0,0,1}-\ka_{0,2,0} = \dfrac{1}{2}\left(4\ka_{-1,0,1}-\ka_{-1,2,0}\right)^2.$$
\end{cor}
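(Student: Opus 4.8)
The plan is to exploit the map $f \colon \C_J \times_{J_*(3)} \tilde{\p}_\xi \to \mathscr{U}(2,3,2)$ — more precisely the classifying morphism $\tilde{\p}_\xi \to \mathscr{U}(2,3,2)$ determined by the Poincar\'e bundle $\e^1$ of Theorem~\ref{poincare-bertram} — exactly as in the proof of Theorem~\ref{h-u}, but now pushing forward a weight-$2$ rather than a weight-$1$ tautological combination. The class $4\ka_{0,0,1}-\ka_{0,2,0}$ is, by Theorem~\ref{pullback-nu}, in the image of $\nu_{2,3}^*$, since $\pi_{S*}(4K_{\pi_S}c_2(\e_S) - K_{\pi_S}c_1(\e_S)^2)$ is manifestly invariant under $\e_S \mapsto \e_S \otimes \pi_S^*\lb$ (the correction terms all carry a factor $K_{\pi_S}\cdot \pi_S^*(\text{class from }S)$ whose pushforward along $\pi_S$ land in the twist-invariant part — indeed $\ka_{0,0,1}$ and $\ka_{0,2,0}$ are the $a=0$ analogues of the $a=-1$ classes used for $H_U$). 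So it descends to a well-defined class on $\tilde U(2,3,2)$, and by injectivity of $\Phi_\xi^*$ (Corollary~\ref{inject}) it suffices to verify the identity after pulling back along $\Phi_\xi$, i.e. to compute $\Phi_\xi^*$ of both sides inside $S^*(\tilde{\p}_\xi)$.

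First I would set up the pushforward. Writing $\pi_{\tilde{\p}_\xi}$ for the projection $\C_J \times_{J_*(3)}\tilde{\p}_\xi \to \tilde{\p}_\xi$ and recalling $K_{\pi_{\tilde{\p}_\xi}} = \pi_{\C_J}^*K_{\pi_J}$, the left side pulls back to $\pi_{\tilde{\p}_\xi *}\big(\pi_{\C_J}^*K_{\pi_J}(4c_2(\e^1) - c_1(\e^1)^2)\big)$. Now substitute the formulas from Lemma~\ref{chern-classes-odd}: $c_1(\e^1) = \pi_{\tilde{\p}_\xi}^*(\s_\pi^*H_J - E_\s) + \pi_{\C_J}^*\xi$ and $c_2(\e^1) = \pi_{\C_J}^*\xi\,\pi_{\tilde{\p}_\xi}^*(\s_\pi^*H_J - E_\s) + A$. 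Expanding $4c_2(\e^1) - c_1(\e^1)^2$ and multiplying by $\pi_{\C_J}^*K_{\pi_J}$, I would then use: $K_{\pi_J}\cdot \xi = 0$ and $K_{\pi_J}^2 = 0$ in $A^*(\C_J)$ (Theorem~\ref{chow-curve}); the projection formula; $\pi_{\tilde{\p}_\xi *}\pi_{\C_J}^*(\text{anything}) = 0$ on classes that aren't fibre-top; $\pi_{\tilde{\p}_\xi *}(\pi_{\C_J}^*K_{\pi_J}) = 1$ (the section coming from $\pi^*\s_1$, as in Theorem~\ref{chow-curve}); and the key identity $\pi_{\tilde{\p}_\xi *}(\pi_{\C_J}^* K_{\pi_J}\cdot A) = \s^*_E(K_{\pi_J})$ pushed onto $E_\s$, i.e. $= i_{\s*}(\s_E^* K_{\pi_J}) = K_\s$, since $A$ is the class of the diagonal-along-$E_\s$ locus and multiplying by $K_{\pi_J}$ from the $\C_J$-factor cuts it down just as in the computation $\pi_{\tilde{\p}_\xi *}A = E_\s$ in Theorem~\ref{h-u}. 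The surviving terms should assemble to a multiple of $\s_\pi^*H_J$, $\s_\pi^*\T_\xi$ and $K_\s$, and by Lemma~\ref{pullback-hu} (namely $K_\s = \s_\pi^*(H_J^2 + H_J\T_\xi + \tfrac12\T_\xi^2) - \Phi_\xi^*B_*$ and $\Phi_\xi^*H_U = \s_\pi^*(3H_J + \T_\xi) - E_\s$) I would rewrite everything in terms of $\Phi_\xi^*H_U$, $\Phi_\xi^*\T_U$, $\Phi_\xi^*B_*$.

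For the right side, $\tfrac12(4\ka_{-1,0,1} - \ka_{-1,2,0})^2$ pulls back to $\tfrac12(\Phi_\xi^*H_U)^2$ by Theorem~\ref{h-u}, so expand $(\s_\pi^*(3H_J+\T_\xi) - E_\s)^2$ using the relations of Theorem~\ref{chow-blowup-ss}/the presentation of $S^*(\tilde{\p}_\xi)$ — in particular $E_\s^2 = 5K_\s + 4\xi_\s + E_\s\s_\pi^*\T_\xi - \s_\pi^*(5H_J^2+3H_J\T_\xi+\tfrac12\T_\xi^2)$, $E_\s\s_\pi^*H_J = K_\s + \xi_\s$, and $\s_\pi^*(H_J^4 + H_J^3\T_\xi + \tfrac12 H_J^2\T_\xi^2) = 0$. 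Then compare with the left-hand expression; the identity should reduce to a relation already known to hold in $S^*(\tilde{\p}_\xi)$. The main obstacle I anticipate is bookkeeping rather than conceptual: correctly pushing forward the term $\pi_{\C_J}^*K_{\pi_J}\cdot A$ (one must be careful that $A = (1\times i_\s)_*(1\times \s_E)^*\Delta_J$ and that intersecting the $\C_J$-diagonal with $K_{\pi_J}$ from one factor genuinely yields $\s_E^*K_{\pi_J}$, not $\s_E^*(2K_{\pi_J})$ or a shifted class — this is exactly the subtlety that produced the factor in $\pi_{\tilde{\p}_\xi*}A = E_\s$), and then checking that no leftover $E_\s$-supported terms or mixed $\s_\pi^*H_J^3$-terms survive once the defining relations of $S^*(\tilde{\p}_\xi)$ are imposed. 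If the two sides match after this reduction, injectivity of $\Phi_\xi^*$ finishes the proof.
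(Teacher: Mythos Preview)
Your proposal is correct and follows exactly the approach the paper intends: the paper gives no proof beyond the phrase ``Similar computations also yield,'' and you have spelled out precisely those computations---pushing forward $\pi_{\C_J}^*K_{\pi_J}(4c_2(\e^1)-c_1(\e^1)^2)$ using Lemma~\ref{chern-classes-odd}, identifying $\pi_{\tilde{\p}_\xi *}(\pi_{\C_J}^*K_{\pi_J}\cdot A)=K_\s$, and comparing with $(\Phi_\xi^*H_U)^2$ via the $E_\s^2$ relation in $S^*(\tilde{\p}_\xi)$. One small numerical slip: since $4\ka_{-1,0,1}-\ka_{-1,2,0}=2H_U$ by Theorem~\ref{h-u}, the right-hand side pulls back to $2(\Phi_\xi^*H_U)^2$, not $\tfrac12(\Phi_\xi^*H_U)^2$; with that correction the two sides do match after invoking $E_\s^2=5K_\s+4\xi_\s+E_\s\s_\pi^*\T_\xi-\s_\pi^*(5H_J^2+3H_J\T_\xi+\tfrac12\T_\xi^2)$.
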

\section*{References}
	\nocite{*}
	\renewcommand{\section}[2]{}%
	%\begingroup
	%\let\itshape\upshape
	%\renewcommand{\chapter}[2]{}% for other classes
	\bibliographystyle{plainnat}
	{\small\bibliography{draft_6.bib}}

\end{document}